\theoremstyle{plain}
\newtheorem{thm}{Theorem}[section]
\newtheorem{cor}[thm]{Corollary}
\newtheorem{lem}[thm]{Lemma}
\newtheorem{prop}[thm]{Proposition}
\theoremstyle{definition}
\newtheorem{defn}{Definition}[section]
\theoremstyle{remark}
\newtheorem{rem}{Remark}[section]
\newtheorem{ex}{Example}[section]
\numberwithin{equation}{section}
\newcommand{\ra}{\rightarrow}
\newcommand{\Ra}{\Rightarrow}
\newcommand{\Lra}{\Leftrightarrow}
\begin{document}

\title{Spectral computations for birth and death chains}

\author[G.-Y. Chen]{Guan-Yu Chen$^1$}

\author[L. Saloff-Coste]{Laurent Saloff-Coste$^2$}

\thanks{$^1$Partially supported by NSC grant NSC100-2115-M-009-003-MY2 and NCTS, Taiwan}

\address{$^1$Department of Applied Mathematics, National Chiao Tung University, Hsinchu 300, Taiwan}
\email{gychen@math.nctu.edu.tw}

\thanks{$^2$Partially supported by NSF grant DMS-1004771}

\address{$^2$Malott Hall, Department of Mathematics, Cornell University, Ithaca, NY 14853-4201}
\email{lsc@math.cornell.edu}

\keywords{Birth and death chains, spectrum}

\subjclass[2000]{60J10,60J27}

\begin{abstract}
We consider the spectrum of birth and death chains on a $n$-path. An iterative scheme is proposed to compute any eigenvalue with exponential convergence rate independent of $n$. This allows one to determine the whole spectrum in order $n^2$ elementary operations. Using the same idea, we also provide a lower bound on the spectral gap, which is of the correct order on some classes of examples.
\end{abstract}

\maketitle

\section{Introduction}\label{s-intro}
Let $G=(V,E)$ be the undirected finite path with vertex set $V=\{1,2,...n\}$ and edge set $E=\{\{i,i+1\}:i=1,2,...,n-1\}$. Given two positive measures $\pi,\nu$ on $V,E$ with $\pi(V)=1$, the Dirichlet form and variance associated with $\nu$ and $\pi$ are defined by
\[
 \mathcal{E}_\nu(f,g):=\sum_{i=1}^{n-1}[f(i)-f(i+1)][g(i)-g(i+1)]\nu(i,i+1)
\]
and
\[
 \text{Var}_\pi(f):=\pi(f^2)-\pi(f)^2,
\]
where $f,g$ are functions on $V$. When convenient, we set $\nu(0,1)=\nu(n,n+1)=0$. The spectral gap of $G$ with respect to $\pi,\nu$ is defined as
\[
 \lambda^G_{\pi,\nu}:=\min\left\{\frac{\mathcal{E}_\nu(f,f)}{\text{Var}_\pi(f)}
 \bigg|f\text{ is non-constant}\right\}.
\]
Let $M^G_{\pi,\nu}$ be a matrix given by $M^G_{\pi,\nu}(i,j)=0$ for $|i-j|>1$ and
\[
 M^G_{\pi,\nu}(i,j)=-\frac{\nu(i,j)}{\pi(i)},\,\forall |i-j|=1,\quad M^G_{\pi,\nu}(i,i)
 =\frac{\nu(i-1,i)+\nu(i,i+1)}{\pi(i)}.
\]
Obviously, $\lambda^G_{\pi,\nu}$ is the smallest non-zero eigenvalue of $M^G_{\pi,\nu}$.

Undirected paths equipped with measures $\pi,\nu$ are closely related to birth and death chains. A birth and death chain on $\{0,1,2,...,n\}$ with birth rate $p_i$, death rate $q_i$ and holding rate $r_i$ is a Markov chain with transition matrix $K$ given by
\begin{equation}\label{eq-bdc}
 K(i,i+1)=p_i,\quad K(i,i-1)=q_i,\quad K(i,i)=r_i,\quad\forall 0\le i\le n,
\end{equation}
where $p_i+q_i+r_i=1$ and $p_n=q_0=0$. Under the assumption of irreducibility, that is, $p_iq_{i+1}>0$ for $0\le i<n$, $K$ has a unique stationary distribution $\pi$ given by $\pi(i)=c(p_0\cdots p_{i-1})/(q_1\cdots q_i)$, where $c$ is the positive constant such that $\sum_{i=0}^n\pi(i)=1$. The smallest non-zero eigenvalue of $I-K$ is exactly the spectral gap of the path on $\{0,1,...,n\}$ with measures $\pi,\nu$, where $\nu(i,i+1)=\pi(i)p_i=\pi(i+1)q_{i+1}$ for $0\le i<n$.

Note that if $\mathbf{1}$ is the constant function of value $1$ and $\psi$ is a minimizer for $\lambda^G_{\pi,\nu}$, then $\psi-\pi(\psi)\mathbf{1}$ is an eigenvector of $M^G_{\pi,\nu}$. This implies that any minimizer $\psi$ for $\lambda^G_{\pi,\nu}$ satisfying $\pi(\psi)=0$ satisfies the Euler-Lagrange equation,
\begin{equation}\label{eq-EL}
 \lambda^G_{\pi,\nu}\pi(i)\psi(i)=[\psi(i)-\psi(i-1)]\nu(i-1,i)+[\psi(i)-\psi(i+1)]\nu(i,i+1),
\end{equation}
for all $1\le i\le n$. Assuming the connectedness of $G$ (i.e., the superdiagonal and subdiagonal entries of $M^G_{\pi,\nu}$ are positive), the rank of $M^G_{\pi,\nu}-\lambda I$ is at least $n-1$. This implies that all eigenvalues of $M^G_{\pi,\nu}$ are simple. See Lemma \ref{l-mat} for an illustration. Observe that, by (\ref{eq-EL}), any non-trivial eigenvector of $M^G_{\pi,\nu}$ has mean $0$ under $\pi$. This implies that all minimizers for the spectral gap are of the form $a\psi+b\mathbf{1}$, where $a,b$ are constants and $\psi$ is a nontrivial solution of (\ref{eq-EL}). In 2009, Miclo obtained implicitly the following result.
\begin{thm}\label{t-Miclo}\cite[Proposition 1]{M09}
If $\psi$ is a minimizer for $\lambda^G_{\pi,\nu}$, then $\psi$ must be monotonic, that is, either $\psi(i)\le\psi(i+1)$ for all $1\le i<n$ or $\psi(i)\ge\psi(i+1)$ for all $1\le i<n$.
\end{thm}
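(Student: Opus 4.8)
The plan is to argue in two stages: first I would show that any minimizer changes sign at most once along the path, and then I would feed this back into the Euler--Lagrange equation (\ref{eq-EL}) to upgrade it to full monotonicity. Since all eigenvalues of $M^G_{\pi,\nu}$ are simple, it is enough to prove that one fixed nonzero eigenfunction $\psi$ of $M^G_{\pi,\nu}$ for the eigenvalue $\lambda^G_{\pi,\nu}$ is monotone: every minimizer has the form $a\psi+b\mathbf{1}$, and such a function is monotone exactly when $\psi$ is. Such a $\psi$ is non-constant with $\pi(\psi)=0$, hence takes both strictly positive and strictly negative values; moreover a quick inspection of (\ref{eq-EL}) shows that $\psi$ has no two consecutive zeros and no zero at vertex $1$ or $n$ (either would propagate along the path and force $\psi\equiv0$), so the vertex set decomposes into maximal blocks $I_1,\dots,I_m$ on each of which $\psi$ keeps a constant strict sign, with consecutive blocks carrying opposite signs and separated by at most a single zero.

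For the first stage I would suppose $m\ge3$ and put $f_\ell:=\psi\mathbf{1}_{I_\ell}$, so $\psi=\sum_\ell f_\ell$. The summation-by-parts identity $\mathcal{E}_\nu(f,g)=\sum_i\pi(i)f(i)(M^G_{\pi,\nu}g)(i)$ together with $M^G_{\pi,\nu}\psi=\lambda^G_{\pi,\nu}\psi$ (which is (\ref{eq-EL})) and $f_\ell=\psi$ on $I_\ell$ gives $\mathcal{E}_\nu(f_\ell,\psi)=\lambda^G_{\pi,\nu}\,\pi(f_\ell^2)$. Expanding $\psi=\sum_{\ell'}f_{\ell'}$ and noting that $\mathcal{E}_\nu(f_\ell,f_{\ell'})=0$ whenever $I_\ell,I_{\ell'}$ are non-adjacent, while for adjacent blocks the single surviving cross-edge term is $-\psi(i)\psi(i+1)\nu(i,i+1)\ge0$ (opposite signs at the two ends of that edge), one obtains $\mathcal{E}_\nu(f_\ell,f_\ell)\le\lambda^G_{\pi,\nu}\,\pi(f_\ell^2)$ for every $\ell$. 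Since $\pi(f_1)$ and $\pi(f_3)$ are nonzero, I can choose $(c_1,c_3)\ne(0,0)$ with $\pi(c_1f_1+c_3f_3)=0$, and set $h:=c_1f_1+c_3f_3$. As $I_1$ and $I_3$ are non-adjacent, $\mathcal{E}_\nu(f_1,f_3)=0$, so
\[
 \mathcal{E}_\nu(h,h)=c_1^2\mathcal{E}_\nu(f_1,f_1)+c_3^2\mathcal{E}_\nu(f_3,f_3)\le\lambda^G_{\pi,\nu}\bigl(c_1^2\pi(f_1^2)+c_3^2\pi(f_3^2)\bigr)=\lambda^G_{\pi,\nu}\,\text{Var}_\pi(h).
\]
But $h$ is non-constant (it vanishes on the non-empty block $I_2$ and is not identically zero), so it would have to be a minimizer, hence an eigenfunction of $M^G_{\pi,\nu}$ for $\lambda^G_{\pi,\nu}$ that is linearly independent of $\psi$ (the two disagree on $I_2$), contradicting simplicity. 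Thus $m\le2$ and $\psi$ changes sign exactly once.

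For the second stage, after replacing $\psi$ by $-\psi$ if necessary, there will be an index $j$ with $\psi(i)\ge0$ for $i\le j$ and $\psi(i)<0$ for $i>j$. Writing $D(i):=[\psi(i+1)-\psi(i)]\nu(i,i+1)$ for $0\le i\le n$ (so $D(0)=D(n)=0$), equation (\ref{eq-EL}) becomes $D(i)-D(i-1)=-\lambda^G_{\pi,\nu}\,\pi(i)\psi(i)$; hence $D$ is non-increasing on $\{0,\dots,j\}$ and non-decreasing on $\{j,\dots,n\}$, and together with $D(0)=D(n)=0$ this forces $D(i)\le0$ for $0\le i\le n-1$, i.e. $\psi(i+1)\le\psi(i)$ for every $i$, so $\psi$ is monotone. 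The hard part is the first stage: one must make sure that when $\psi$ changes sign more than once there genuinely are three sign blocks whose two outermost ones interact across no edge at all, and it is precisely here that the analysis of the (necessarily isolated, sign-changing) zeros and the simplicity of the spectrum are used; the second stage is then just a telescoping with the Euler--Lagrange equation.
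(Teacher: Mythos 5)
The paper does not give its own proof of Theorem~\ref{t-Miclo}: it cites it directly from Miclo~\cite{M09}, and the only related argument the paper supplies is Lemma~\ref{l-polish}, which \emph{assumes} monotonicity and upgrades it to strict monotonicity. So your argument is necessarily a different route — a self-contained proof where the paper has none — and it is correct. Stage one is a discrete Courant-style nodal-domain argument: the observation that zeros of an eigenfunction propagate (hence are isolated and avoid the endpoints) yields the block decomposition into maximal strict-sign intervals; the summation-by-parts computation $\mathcal{E}_\nu(f_\ell,\psi)=\lambda^G_{\pi,\nu}\pi(f_\ell^2)$ together with the nonnegativity of the only surviving cross-edge term $-\psi(i)\psi(i+1)\nu(i,i+1)$ gives $\mathcal{E}_\nu(f_\ell,f_\ell)\le\lambda^G_{\pi,\nu}\pi(f_\ell^2)$; and choosing $h=c_1f_1+c_3f_3$ exploits the non-adjacency of $I_1$ and $I_3$ across the nonempty block $I_2$ both to kill $\mathcal{E}_\nu(f_1,f_3)$ and to make $h$ non-constant yet vanishing on $I_2$, hence a second eigenfunction linearly independent of $\psi$, contradicting the simplicity of the spectrum already established in the introduction. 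Stage two is a clean telescoping: $D(i)-D(i-1)=-\lambda^G_{\pi,\nu}\pi(i)\psi(i)$ with $D(0)=D(n)=0$ forces $D\le 0$ once $\psi$ changes sign once, and your setup handles the possible zero at the crossing index correctly. One small gain of your route is that, combined with Lemma~\ref{l-polish}, it makes the paper self-contained on both monotonicity and strict monotonicity; it also generalizes with little change to the ordering of nodal domains of higher eigenvectors, which is the content of Theorem~\ref{t-Miclo2} cited from~\cite{M08}.
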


One aim of this paper is to provide a scheme to compute the spectrum of $M^G_{\pi,\nu}$, in particular, the spectral gap. Based on Miclo's observation, it is natural to consider the following algorithm.
\begin{equation}\label{alg-sp}\tag{A1}
\begin{aligned}
    &\text{Choose two positive reals $\lambda_0,a$ in advance and set, for $k=0,1,...$,}\\
    &1.\,\psi_k(1)=-a,\\
    &2.\,\psi_k(i+1)=\psi_k(i)+\frac{\{[\psi_k(i)-\psi_k(i-1)]\nu(i-1,i)
    -\lambda_k\pi(i)\psi_k(i)\}^+}{\nu(i,i+1)},\\
    &\quad\text{for }1\le i<n,\,\text{where }t^+=\max\{t,0\},\\
    &3.\,\lambda_{k+1}=\frac{\mathcal{E}_\nu(\psi_k,\psi_k)}{\textnormal{Var}_{\pi}(\psi_k)}.
\end{aligned}
\end{equation}

The following theorems discuss the behavior of $\lambda_k$.
\begin{thm}[Convergence to the exact value]\label{t-main1}
Referring to \textnormal{(A1)}, if $n=2$, then $\lambda_k=\lambda^G_{\pi,\nu}$ for all $k\ge 1$. If $n\ge 3$, then the sequence $(\lambda_k,\psi_k)$ satisfies
\begin{itemize}
 \item[(1)] If $\lambda_0=\lambda^G_{\pi,\nu}$, then $\lambda_k=\lambda^G_{\pi,\nu}$
for all $k\ge 0$.

 \item[(2)] If $\lambda_0\ne\lambda^G_{\pi,\nu}$, then $\lambda_k>\lambda_{k+1}>\lambda^G_{\pi,\nu}$
for $k\ge 1$.

 \item[(3)] Set $(\lambda^*,\psi^*)=\lim\limits_{k\ra\infty}(\lambda_k,\psi_k)$. Then,
$\lambda^*=\mathcal{E}_\nu(\psi^*,\psi^*)/\textnormal{Var}_\pi(\psi^*)=\lambda^G_{\pi,\nu}$ and $\pi(\psi^*)=0$.
\end{itemize}
\end{thm}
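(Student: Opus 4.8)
The plan is to dispatch part~(1) by direct computation and to reduce parts~(2)--(3) to a single key estimate on the one-step map. Write $\psi_\lambda$ for the vector produced by steps~$1$--$2$ of \textnormal{(A1)} from a parameter $\lambda>0$, and set $\Phi(\lambda):=\mathcal E_\nu(\psi_\lambda,\psi_\lambda)/\textnormal{Var}_\pi(\psi_\lambda)$, so that $\psi_k=\psi_{\lambda_k}$ and $\lambda_{k+1}=\Phi(\lambda_k)$. First I would record that $\lambda\mapsto\psi_\lambda$ is continuous (the map $t\mapsto t^+$ is), that $\psi_\lambda$ is non-decreasing (every increment in step~$2$ is nonnegative) and non-constant (its first increment equals $\lambda\pi(1)a/\nu(1,2)>0$), and hence that $\Phi$ is continuous on $(0,\infty)$ with $\Phi\ge\lambda^G_{\pi,\nu}$ by the variational definition of the spectral gap.

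For part (1): let $\psi$ be a minimizer as in Theorem~\ref{t-Miclo} normalized so that $\pi(\psi)=0$; being monotone and non-constant, $\psi$ is, after replacing it by $-\psi$ if necessary, non-decreasing with $\psi(1)<0$, and rescaling we may take $\psi(1)=-a$. Rewriting \eqref{eq-EL} gives $[\psi(i)-\psi(i-1)]\nu(i-1,i)-\lambda^G_{\pi,\nu}\pi(i)\psi(i)=[\psi(i+1)-\psi(i)]\nu(i,i+1)\ge0$ for every $i$, so with $\lambda_0=\lambda^G_{\pi,\nu}$ the truncation in step~$2$ never activates and an induction on $i$ gives $\psi_0=\psi_{\lambda^G_{\pi,\nu}}=\psi$. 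As $\psi$ is an eigenvector, $\Phi(\lambda^G_{\pi,\nu})=\lambda^G_{\pi,\nu}$, and induction on $k$ gives $(\lambda_k,\psi_k)=(\lambda^G_{\pi,\nu},\psi)$ for all $k$. When $n=2$ one has $\Phi\equiv\lambda^G_{\pi,\nu}$, because every non-constant function on a two-point space has Rayleigh quotient $\lambda^G_{\pi,\nu}$.

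The heart of the argument is the lemma: $\Phi(\lambda)=\lambda^G_{\pi,\nu}$ iff $\lambda=\lambda^G_{\pi,\nu}$, and $\Phi(\lambda)<\lambda$ whenever $\lambda>\lambda^G_{\pi,\nu}$. Put $d_i:=\nu(i,i+1)[\psi_\lambda(i+1)-\psi_\lambda(i)]$ (so $d_0=d_n=0$ and $d_i=\{d_{i-1}-\lambda\pi(i)\psi_\lambda(i)\}^+$ for $1\le i<n$) and $e_i:=d_i-d_{i-1}+\lambda\pi(i)\psi_\lambda(i)$. Summation by parts gives $\mathcal E_\nu(\psi_\lambda,\psi_\lambda)=\sum_{i=1}^n(d_{i-1}-d_i)\psi_\lambda(i)$, hence, using $\sum_i(d_{i-1}-d_i)=0$,
\[
 \mathcal E_\nu(\psi_\lambda,\psi_\lambda)-\lambda\,\textnormal{Var}_\pi(\psi_\lambda)=-\sum_{i=1}^n e_i\,[\psi_\lambda(i)-\pi(\psi_\lambda)],\qquad\sum_{i=1}^n e_i=\lambda\,\pi(\psi_\lambda),
\]
with $e_i=\{\lambda\pi(i)\psi_\lambda(i)-d_{i-1}\}^+\ge0$ for $i<n$. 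Monotonicity of $\psi_\lambda$ forces one of two regimes: either $d_i>0$ for all $i<n$ (the truncation never activates), or $d_i$ drops to $0$ at some least index $i_0\le n-1$ and stays $0$ — equivalently $\psi_\lambda$ is constant on $\{i_0,\dots,n\}$. In the first regime $e_i=0$ for $i<n$ and $e_n=\lambda\pi(\psi_\lambda)$, so the left side above is $-\lambda\pi(\psi_\lambda)[\psi_\lambda(n)-\pi(\psi_\lambda)]$; in the second, $\psi_\lambda(i_0)>0$ and the relations $d_{i_0-1}=-\lambda\sum_{i<i_0}\pi(i)\psi_\lambda(i)$ and $\lambda\pi(i_0)\psi_\lambda(i_0)\ge d_{i_0-1}$ force $\pi(\psi_\lambda)>0$ (here $n\ge3$ enters), and, $\psi_\lambda$ being constant on $\{i_0,\dots,n\}$, the left side collapses to $-\lambda\pi(\psi_\lambda)[\psi_\lambda(i_0)-\pi(\psi_\lambda)]<0$. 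Since $\psi_\lambda$ is non-constant, $\psi_\lambda(n)-\pi(\psi_\lambda)>0$ and $\psi_\lambda(i_0)-\pi(\psi_\lambda)>0$, so in both regimes $\mathcal E_\nu(\psi_\lambda,\psi_\lambda)-\lambda\textnormal{Var}_\pi(\psi_\lambda)$ carries the sign of $-\pi(\psi_\lambda)$. Now $\lambda\mapsto\pi(\psi_\lambda)$ is continuous, is negative for small $\lambda$, is positive throughout the second regime, and vanishes only at $\lambda^G_{\pi,\nu}$: if $\pi(\psi_\lambda)=0$ one must be in the first regime, where then $e_n=0$, so $\psi_\lambda$ solves \eqref{eq-EL} also at $i=n$, i.e.\ $\psi_\lambda$ is a monotone mean-zero eigenvector of $M^G_{\pi,\nu}$, and by Chebyshev's sum inequality (monotone functions are positively correlated) it cannot be $\pi$-orthogonal to the monotone $\lambda^G_{\pi,\nu}$-eigenvector, forcing $\lambda=\lambda^G_{\pi,\nu}$. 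Thus $\mathcal E_\nu(\psi_\lambda,\psi_\lambda)-\lambda\textnormal{Var}_\pi(\psi_\lambda)$ has the sign of $\lambda^G_{\pi,\nu}-\lambda$; with $\Phi\ge\lambda^G_{\pi,\nu}$ this proves the lemma once one also checks that $\psi_\lambda$ is a minimizer only when $\lambda=\lambda^G_{\pi,\nu}$ — substitute $(M^G_{\pi,\nu}\psi_\lambda)(i)=\lambda^G_{\pi,\nu}[\psi_\lambda(i)-\pi(\psi_\lambda)]$ into the recursion and use $\psi_\lambda(2)>\psi_\lambda(1)$ and $n\ge3$.

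Granting the lemma, (2) and (3) are bookkeeping: if $\lambda_0\ne\lambda^G_{\pi,\nu}$ then $\lambda_1=\Phi(\lambda_0)\ne\lambda^G_{\pi,\nu}$, so $\lambda_1>\lambda^G_{\pi,\nu}$; inductively $\lambda_k>\lambda^G_{\pi,\nu}$ and $\lambda_{k+1}=\Phi(\lambda_k)<\lambda_k$ for $k\ge1$, so $(\lambda_k)_{k\ge1}$ decreases to some $\lambda^*\ge\lambda^G_{\pi,\nu}$; by continuity $\psi_k\to\psi_{\lambda^*}=:\psi^*$ and $\Phi(\lambda_k)\to\Phi(\lambda^*)$, so $\Phi(\lambda^*)=\lambda^*$, hence $\lambda^*=\lambda^G_{\pi,\nu}$, and $\psi^*=\psi_{\lambda^G_{\pi,\nu}}$ is the eigenvector of part (1), whence $\mathcal E_\nu(\psi^*,\psi^*)/\textnormal{Var}_\pi(\psi^*)=\lambda^G_{\pi,\nu}$ and $\pi(\psi^*)=0$; the case $\lambda_0=\lambda^G_{\pi,\nu}$ is covered by part (1). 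The main obstacle is the sign analysis in the lemma, above all the claim that $\pi(\psi_\lambda)$ changes sign only at $\lambda^G_{\pi,\nu}$.
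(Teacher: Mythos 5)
Your proposal is correct and follows the same overall architecture as the paper: establish the fundamental identity $\mathcal E_\nu(\psi_\lambda,\psi_\lambda)-\lambda\operatorname{Var}_\pi(\psi_\lambda)=-\lambda\pi(\psi_\lambda)[\psi_\lambda(n)-\pi(\psi_\lambda)]$ (the paper's (\ref{eq-Enu})), prove that $\pi(\psi_\lambda)=0$ exactly at $\lambda=\lambda^G_{\pi,\nu}$, and combine with $\Phi\ge\lambda^G_{\pi,\nu}$ to get monotone convergence. Your key lemma is essentially the paper's Proposition~\ref{p-main1} together with Corollary~\ref{c-T}.

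The one place you genuinely depart is the proof of the sign of $\pi(\psi_\lambda)$ on $(\lambda^G_{\pi,\nu},\infty)$ (Corollary~\ref{c-T}). The paper argues by contradiction: if $\pi(\phi_\lambda)<0$ on that interval then $L(\lambda)>\lambda$, and iterating $L$ (which must be bounded, a fact needing Remark~\ref{r-ini}) produces a fixed point $\widetilde\lambda>\lambda^G_{\pi,\nu}$, contradicting the characterization of zeros. You instead compute directly that once the truncation activates at some $i_0\le n-1$ (which happens for $\lambda$ large), positivity of $\psi_\lambda(i_0)$ together with $T_{i_0}\ge0$ forces $\pi(\psi_\lambda)>0$, and then the intermediate value theorem, combined with ``vanishes only at $\lambda^G_{\pi,\nu}$,'' does the rest. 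This is cleaner: it avoids the boundedness-of-$L$ digression entirely and is, in spirit, closer to what the paper later proves as Proposition~\ref{p-shape}. You also replace the paper's step ``$\mathcal E_\nu(\psi,\phi)\ne0$ for two strictly monotone eigenvectors'' (Corollary~\ref{c-polish}) with Chebyshev's sum inequality $\pi(\psi\phi)>0$; these are interchangeable and equally elementary. A few spots are terse (e.g.\ ``substitute into the recursion and use $\psi_\lambda(2)>\psi_\lambda(1)$ and $n\ge3$'' is a compressed version of the paper's $(4)\Rightarrow(3)$ comparison of linear systems, which also needs that a minimizer is strictly monotone so the truncation is inactive), but the ideas are all there and correct.
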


\begin{thm}[Rate of convergence]\label{t-main2}
Referring to {\em Theorem \ref{t-main1}}, there is a constant $\sigma\in(0,1)$ independent of the choice of $(\lambda_0,a)$ such that $0\le \lambda_k-\lambda^G_{\pi,\nu}\le \sigma^{k-1}\lambda_1$ for all $k\ge 1$.
\end{thm}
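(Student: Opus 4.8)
The plan is to realize the iteration as $\lambda_{k+1}=F(\lambda_k)$ for a single continuous scalar map $F$ on $(0,\infty)$ and to prove that $\lambda^G_{\pi,\nu}$ is a fixed point of $F$ at which $F$ contracts, with a contraction rate uniform over the compact set of values visited from step $1$ on. For $\lambda>0$, let $\psi[\lambda]$ be the function produced by steps $1$ and $2$ of \eqref{alg-sp} with parameter $\lambda$, and set $F(\lambda):=\mathcal{E}_\nu(\psi[\lambda],\psi[\lambda])/\text{Var}_\pi(\psi[\lambda])$, so that $\lambda_{k+1}=F(\lambda_k)$ and $\lambda_1=F(\lambda_0)$. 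Steps $1$–$2$ make $\psi[\lambda]$ positively homogeneous of degree one in $a$, and the Rayleigh quotient is scale invariant, so $F$ does not depend on $a$; since $t\mapsto t^{+}$ is continuous and $\text{Var}_\pi(\psi[\lambda])>0$ (because $\psi[\lambda](2)>\psi[\lambda](1)$), $F$ is continuous. I would first record that $\psi[\lambda]$ is constant on $\{2,\dots,n\}$ for all large $\lambda$, so $F(\lambda)$ is eventually constant, while $\psi[\lambda]=-a\mathbf{1}+\lambda v+o(\lambda)$ as $\lambda\to0^{+}$ for a fixed non-constant $v$, so $F(\lambda)$ has a finite limit at $0^{+}$; hence $\Lambda:=\sup_{\lambda>0}F(\lambda)<\infty$. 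By Theorem \ref{t-main1}, each $\lambda_k$ with $k\geq1$ is a Rayleigh quotient, hence $\geq\lambda^G_{\pi,\nu}$, and $(\lambda_k)_{k\geq1}$ is nonincreasing, so $\lambda_k\in[\lambda^G_{\pi,\nu},\Lambda]$ for all $k\geq1$, a compact interval independent of $(\lambda_0,a)$.

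The next step is to show $F(\lambda)<\lambda$ for every $\lambda\in(\lambda^G_{\pi,\nu},\Lambda]$. Since $F$ is continuous on the interval $(0,\infty)$, its range is an interval, which contains $\lambda^G_{\pi,\nu}=F(\lambda^G_{\pi,\nu})$ (Theorem \ref{t-main1}(1)) and has supremum $\Lambda$; so any $\lambda\in(\lambda^G_{\pi,\nu},\Lambda]$ either equals $F(\lambda_0)$ for some $\lambda_0$ — necessarily $\neq\lambda^G_{\pi,\nu}$, so Theorem \ref{t-main1}(2) applied to $\lambda_0$ gives $F(\lambda)=\lambda_2<\lambda_1=\lambda$ — or $\lambda=\Lambda\notin\text{range}(F)$, in which case $F(\Lambda)<\sup F=\Lambda$.

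The heart of the argument is to show that $F$ is differentiable at $\lambda^G_{\pi,\nu}$ with $F'(\lambda^G_{\pi,\nu})=0$. Running \eqref{alg-sp} from $\lambda_0=\lambda^G_{\pi,\nu}$, Theorem \ref{t-main1}(1),(3) show that $\psi^{*}:=\psi[\lambda^G_{\pi,\nu}]$ is a minimizer of the Rayleigh quotient with $\pi(\psi^{*})=0$, hence solves \eqref{eq-EL} (Section \ref{s-intro}); it is also non-decreasing, being built by adding nonnegative increments, with $\psi^{*}(1)=-a$. One checks that $\psi^{*}$ is in fact strictly increasing: a relation $\psi^{*}(i)=\psi^{*}(i+1)$ would, via \eqref{eq-EL} at indices $i$ and $i+1$, force $\psi^{*}(i)=0$ and then, recursing downward, $\psi^{*}(j)=0$ for all $j\leq i$, contradicting $\psi^{*}(1)=-a\neq0$. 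Consequently, at $\lambda=\lambda^G_{\pi,\nu}$ the quantity truncated in step $2$ at index $i$ equals $\nu(i,i+1)\bigl[\psi^{*}(i+1)-\psi^{*}(i)\bigr]>0$ for every $i$, so every $\{\cdot\}^{+}$ is strictly inactive there and, by continuity and induction on $i$, on a whole neighbourhood of $\lambda^G_{\pi,\nu}$. On that neighbourhood $\psi[\lambda]$ satisfies step $2$ with the $\{\cdot\}^{+}$ deleted, a linear recursion, so $\lambda\mapsto\psi[\lambda]$ is polynomial in $\lambda$ and $F$ is $C^{1}$ near $\lambda^G_{\pi,\nu}$. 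Finally, since $\psi^{*}$ is a minimizer of the Rayleigh quotient $R$ over the open set of non-constant functions, it is a critical point of $R$, and applying the chain rule to $\lambda\mapsto R(\psi[\lambda])$ at $\lambda^G_{\pi,\nu}$ gives $F'(\lambda^G_{\pi,\nu})=0$.

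To finish, set $h(\lambda):=\bigl(F(\lambda)-\lambda^G_{\pi,\nu}\bigr)/\bigl(\lambda-\lambda^G_{\pi,\nu}\bigr)$ for $\lambda\in(\lambda^G_{\pi,\nu},\Lambda]$ and $h(\lambda^G_{\pi,\nu}):=0$. By the previous step ($F'(\lambda^G_{\pi,\nu})=0$ and the mean value theorem), $h$ is continuous on the compact interval $[\lambda^G_{\pi,\nu},\Lambda]$, and $0\leq h<1$ there (the upper bound from the subordination step, the lower bound since $F\geq\lambda^G_{\pi,\nu}$), so $\sigma:=\max\bigl\{\tfrac{1}{2},\,\max_{[\lambda^G_{\pi,\nu},\Lambda]}h\bigr\}$ lies in $(0,1)$ and depends only on $(\pi,\nu)$. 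Then for $k\geq1$, since $\lambda_k\in[\lambda^G_{\pi,\nu},\Lambda]$, one gets $\lambda_{k+1}-\lambda^G_{\pi,\nu}=h(\lambda_k)\bigl(\lambda_k-\lambda^G_{\pi,\nu}\bigr)\leq\sigma\bigl(\lambda_k-\lambda^G_{\pi,\nu}\bigr)$, and iterating from $k=1$ yields $0\leq\lambda_k-\lambda^G_{\pi,\nu}\leq\sigma^{k-1}\bigl(\lambda_1-\lambda^G_{\pi,\nu}\bigr)\leq\sigma^{k-1}\lambda_1$ (the case $n=2$ is immediate, as then $F\equiv\lambda^G_{\pi,\nu}$). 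The step I expect to be the main obstacle is the third one: the inequality $F\leq\mathrm{id}$ by itself gives only $F'(\lambda^G_{\pi,\nu})\leq1$, which is too weak, and upgrading it to a genuine contraction requires both the strict monotonicity of the limiting eigenvector — which is precisely what keeps the truncations dormant near the fixed point, so that $F$ is differentiable there — and the observation that this fixed point corresponds to a critical point of the Rayleigh quotient.
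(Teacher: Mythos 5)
Your proof is correct and uses the same high-level strategy as the paper (realize the iteration as $\lambda_{k+1}=L(\lambda_k)$ for a single scalar map that fixes $\lambda^G_{\pi,\nu}$ and is strictly subordinate to the identity above it, then extract a uniform contraction factor), but it differs at the one point that actually matters. The paper's proof of Theorem~\ref{t-main2} simply invokes Lemma~\ref{l-conv}, whose stated conclusion $C_b<1$ does \emph{not} follow from the stated hypotheses alone: a continuous $f$ with $f(a)=a$, $f(x)\in[a,x)$ for $x>a$ (e.g.\ $f(x)=x-(x-a)^2$ near $a$, flattened to a constant farther out) can have $(f(x)-a)/(x-a)\to 1$ as $x\to a^+$, giving $C_b=1$. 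What rescues the paper's argument is that the specific map $L$ is smooth at the fixed point with $L'(\lambda^G_{\pi,\nu})=0$; the paper only establishes this later, in Section~\ref{s-framework}, via the explicit spectral decomposition of $\mathcal{L}$ in \eqref{eq-Lsp}--\eqref{eq-L''}. You supply exactly this missing ingredient, but by a cleaner and more self-contained route: you use Lemma~\ref{l-polish} (strict monotonicity of the limiting eigenvector) to show the truncations $\{\cdot\}^+$ are dormant on a neighbourhood of $\lambda^G_{\pi,\nu}$, so $\lambda\mapsto\psi[\lambda]$ is polynomial there and $F$ is $C^1$; and you then get $F'(\lambda^G_{\pi,\nu})=0$ for free from the chain rule, because a minimizer of the Rayleigh quotient over the open set of non-constant functions is a critical point of it. This avoids the whole spectral machinery of Section~\ref{s-framework} and makes the Section~2 proof genuinely self-contained. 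The remaining steps (boundedness of $F$ via the saturation for large $\lambda$ and the $O(\lambda)$ expansion near $0$; $F<\mathrm{id}$ on $(\lambda^G_{\pi,\nu},\Lambda]$ via Theorem~\ref{t-main1}(2) and the intermediate value theorem; taking $\sigma$ as the max of the continuous ratio $h$ on the compact interval) are all sound. One small optional simplification: your hands-on derivation that $\psi^*$ is strictly increasing can be replaced by a direct citation of Lemma~\ref{l-polish}, which the paper proves for exactly this purpose.
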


By Theorem \ref{t-main2}, we know that the sequence $\lambda_k$ generated in (\ref{alg-sp}) converges to the spectral gap exponentially but the rate $(-\log \sigma)$ is undetermined. The following alternative scheme is based on using more information on the spectral gap and will provide convergence at a constant rate.

\begin{equation}\label{alg-sp2}\tag{A2}
\begin{aligned}
    &\textnormal{Choose $a>0,L_0<\lambda^G_{\pi,\nu}<U_0$ in advance and set, for $k=0,1,...$,}\\
    &1.\,\psi_k(1)=-a,\,\lambda_k=\tfrac{1}{2}(L_k+U_k)\\
    &2.\,\psi_k(i+1)=\psi_k(i)+\frac{\{[\psi_k(i)-\psi_k(i-1)]\nu(i-1,i)
    -\lambda_k\pi(i)\psi_k(i)\}^+}{\nu(i,i+1)},\\
    &\quad\text{for }1\le i<n,\,\text{where }t^+=\max\{t,0\},\\
    &3.\,\begin{cases}L_{k+1}=L_k,\,U_{k+1}=\lambda_k&\text{if }\pi(\psi_k)>0\\
    L_{k+1}=\lambda_k,\,U_{k+1}=U_k&\text{if }\pi(\psi_k)<0\\L_{k+1}=U_{k+1}=\lambda_k&\text{if }\pi(\psi_k)=0\end{cases}.
\end{aligned}
\end{equation}

\begin{thm}[Dichotomy method]\label{t-main3}
Referring to \textnormal{(\ref{alg-sp2})}, it holds true that
\[
 0\le\max\{U_k-\lambda^G_{\pi,\nu},\lambda^G_{\pi,\nu}-L_k\}\le (U_0-L_0) 2^{-k},\quad\forall
k\ge 0.
\]
\end{thm}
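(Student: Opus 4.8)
The plan is to establish, by induction on $k$, the invariant $L_k\le\lambda^G_{\pi,\nu}\le U_k$; the stated estimate then follows immediately, because in every branch of step 3 of (\ref{alg-sp2}) one has $U_{k+1}-L_{k+1}\le\tfrac12(U_k-L_k)$ (with equality in the first two branches since $\lambda_k$ is the midpoint of $[L_k,U_k]$, and $U_{k+1}-L_{k+1}=0$ in the third), so $U_k-L_k\le(U_0-L_0)2^{-k}$, and then $0\le U_k-\lambda^G_{\pi,\nu}\le U_k-L_k$ and $0\le\lambda^G_{\pi,\nu}-L_k\le U_k-L_k$. The base case is the hypothesis $L_0<\lambda^G_{\pi,\nu}<U_0$, and, reading off step 3, the inductive step is exactly the claim that, writing $\psi_\lambda$ for the vector produced by step 2 with parameter $\lambda$ (so $\psi_\lambda(1)=-a$), the sign of $\pi(\psi_\lambda)$ coincides with the sign of $\lambda-\lambda^G_{\pi,\nu}$ for every real $\lambda$. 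So the whole proof reduces to this sign criterion, which is where the effort goes.

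For the criterion I would first record the soft facts: $\lambda\mapsto\psi_\lambda(i)$ is continuous for each $i$ (step 2 iterates the continuous map $t\mapsto t^+$ finitely often), so $\lambda\mapsto\pi(\psi_\lambda)$ is continuous; for $\lambda\le0$ every update in step 2 is truncated to $0$, so $\psi_\lambda\equiv-a$ and $\pi(\psi_\lambda)=-a<0$; and $\psi_\lambda(i)\ge\psi_\lambda(2)=-a\bigl(1-\lambda\pi(1)/\nu(1,2)\bigr)\to+\infty$ as $\lambda\to+\infty$ for each $i\ge2$, so $\pi(\psi_\lambda)\to+\infty$. By the intermediate value theorem the criterion then follows once one proves $\pi(\psi_\lambda)=0\iff\lambda=\lambda^G_{\pi,\nu}$. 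The direction ``$\Leftarrow$'' is short: by Theorem \ref{t-Miclo} the minimizer for $\lambda^G_{\pi,\nu}$ is monotone, and a non-constant monotone $\pi$-mean-zero vector cannot be of one sign, so the (simple) $\lambda^G_{\pi,\nu}$-eigenvector $\varphi$ may be normalized to be nondecreasing with $\varphi(1)=-a$; rearranging (\ref{eq-EL}) gives $[\varphi(i+1)-\varphi(i)]\nu(i,i+1)=[\varphi(i)-\varphi(i-1)]\nu(i-1,i)-\lambda^G_{\pi,\nu}\pi(i)\varphi(i)\ge0$, so this expression equals its own positive part and $\varphi$ satisfies the step-2 recursion with $\lambda=\lambda^G_{\pi,\nu}$; by uniqueness of that recursion $\psi_{\lambda^G_{\pi,\nu}}=\varphi$, which has $\pi$-mean $0$ (as does every nonzero eigenvector of $M^G_{\pi,\nu}$).

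The real work is the direction ``$\Rightarrow$'': $\pi(\psi_\lambda)=0\Rightarrow\lambda=\lambda^G_{\pi,\nu}$, and this is the step I expect to be the main obstacle. Since $\psi_\lambda\equiv-a$ for $\lambda\le0$, I may assume $\lambda>0$. Put $h_\lambda(i):=[\psi_\lambda(i)-\psi_\lambda(i-1)]\nu(i-1,i)-\lambda\pi(i)\psi_\lambda(i)$, with the conventions $\nu(0,1)=0$, $h_\lambda(0)^+:=0$; then step 2 says $[\psi_\lambda(i+1)-\psi_\lambda(i)]\nu(i,i+1)=h_\lambda(i)^+$ for $1\le i<n$, hence $h_\lambda(i)=h_\lambda(i-1)^+-\lambda\pi(i)\psi_\lambda(i)$ for $1\le i\le n$, and summing the latter over $i\le j$ (using $t^+-t=t^-$ with $t^-:=\max\{-t,0\}$) gives
\[
 \lambda\sum_{i=1}^{j}\pi(i)\psi_\lambda(i)=\sum_{i=1}^{j-1}h_\lambda(i)^--h_\lambda(j),\qquad 1\le j\le n.
\]
Now $\psi_\lambda$ is nondecreasing (its increments $h_\lambda(i)^+/\nu(i,i+1)$ are $\ge0$) and $\psi_\lambda(1)=-a<0$; if $\pi(\psi_\lambda)=0$ then $\psi_\lambda$ is non-constant and the partial sums $\sum_{i\le j}\pi(i)\psi_\lambda(i)$ first decrease (while $\psi_\lambda\le0$) and then increase, beginning and ending at $0$, so they are $\le0$ for every $j$. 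Because $\lambda>0$, the displayed identity then forces $h_\lambda(j)\ge\sum_{i<j}h_\lambda(i)^-$ for all $j$, and an immediate induction gives $h_\lambda(j)\ge0$ for all $j$, with $h_\lambda(n)=0$ on taking $j=n$. Thus no truncation ever occurs in step 2, (\ref{eq-EL}) holds at every index, and $\psi_\lambda$ is a genuine eigenvector of $M^G_{\pi,\nu}$ for the eigenvalue $\lambda$ — nondecreasing and non-constant.

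To close, I would argue that a nondecreasing non-constant eigenvector must be the $\lambda^G_{\pi,\nu}$-eigenvector. If $\lambda\ne\lambda^G_{\pi,\nu}$, then $\psi_\lambda\perp\varphi$ in $L^2(\pi)$ by the $\pi$-self-adjointness of $M^G_{\pi,\nu}$; but both $\psi_\lambda$ and $\varphi$ have $\pi$-mean $0$, so $0=\langle\psi_\lambda,\varphi\rangle_\pi=\operatorname{Cov}_\pi(\psi_\lambda,\varphi)=\tfrac12\sum_{i,j}\pi(i)\pi(j)\bigl(\psi_\lambda(i)-\psi_\lambda(j)\bigr)\bigl(\varphi(i)-\varphi(j)\bigr)$, which is strictly positive: every summand is $\ge0$ since $\psi_\lambda$ and $\varphi$ are both nondecreasing, and the $(1,n)$ summand is $>0$ because $\psi_\lambda(1)<\psi_\lambda(n)$, $\varphi(1)<\varphi(n)$ (both non-constant and monotone) and $\pi$ has full support. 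This contradiction gives $\lambda=\lambda^G_{\pi,\nu}$ and completes the proof of the sign criterion, hence of the theorem. The two places that need care are the ``valley'' observation on the partial sums feeding the summed identity (the crux) and checking $\psi_{\lambda^G_{\pi,\nu}}=\varphi$ in the ``$\Leftarrow$'' direction; the remainder is bookkeeping.
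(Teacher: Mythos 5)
Your proof is correct, but it takes a genuinely different route from the paper's. The paper deduces Theorem~\ref{t-main3} immediately from Corollary~\ref{c-T}, whose proof in turn rests on Proposition~\ref{p-main1} and the Dirichlet-form identity~(\ref{eq-Enu}): for $\lambda<\lambda^G_{\pi,\nu}$, the sign of $\pi(\phi_\lambda)$ is read off from~(\ref{eq-L}) together with the Rayleigh-quotient bound $L(\lambda)\ge\lambda^G_{\pi,\nu}$, and for $\lambda>\lambda^G_{\pi,\nu}$ the paper runs a contradiction by iterating $L$ to a putative fixed point above the spectral gap. You instead establish the sign criterion from scratch: after the soft continuity/boundary observations, the key implication $\pi(\psi_\lambda)=0\Rightarrow\lambda=\lambda^G_{\pi,\nu}$ is proved by the telescoping identity $\lambda\sum_{i\le j}\pi(i)\psi_\lambda(i)=\sum_{i<j}h_\lambda(i)^--h_\lambda(j)$ combined with the ``valley'' structure of the partial sums $S_j$ (which vanish at $j=0$ and $j=n$ and are unimodal, hence $\le 0$), yielding $h_\lambda(j)\ge 0$ for all $j$ and $h_\lambda(n)=0$, so no truncation ever fires and $\psi_\lambda$ is an honest eigenvector; the final uniqueness step via $\operatorname{Cov}_\pi(\psi_\lambda,\varphi)>0$ is the same argument as the paper's Corollary~\ref{c-polish} (there phrased as $\mathcal E_\nu(\psi,\phi)\ne 0$ for strictly monotone $\psi,\phi$), just expressed through the covariance rather than the Dirichlet form. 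Your route is more self-contained and avoids the iterated-$L$ contradiction, at the cost of not reusing the machinery~(\ref{eq-Enu})--(\ref{eq-L}) that the paper needs anyway for Theorems~\ref{t-main1}--\ref{t-main2}; both are sound.
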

In Theorem \ref{t-main3}, the convergence to the spectral gap is exponentially fast with explicit rate, $\log 2$. See Remark \ref{r-ini} for a discussion on the choice of $L_0$ and $U_0$.  For higher order spectra, Miclo has a detailed description of the shape of eigenvectors in \cite{M08} and this will motivate the definition of similar algorithms for every eigenvalue in spectrum. See (\ref{alg-evi2}) and Theorem \ref{t-Di} for a generalization of (\ref{alg-sp2}) and Theorem \ref{t-local} for a localized version of Theorem \ref{t-main2}.

The spectral gap is an important parameter in the quantitative analysis of Markov chains. The cutoff phenomenon, a sharp phase transition phenomenon for Markov chains, was introduced by Aldous and Diaconis in early 1980s. It is of interest in many applications. A heuristic conjecture proposed by Peres in 2004 says that the cutoff exists if and only if the product of the spectral gap and the mixing time tends to infinity. Assuming reversibility, this has been proved to hold for $L^p$-convergence with $1<p\le\infty$ in \cite{CSal08}. For the $L^1$-convergence, Ding {\it et al.} \cite{DLP10} prove this conjecture for continuous time birth and death chains. In order to use Peres' conjecture in practice, the orders of the magnitudes of spectral gap and mixing time are required. The second aspect of this paper is to derive a theoretical lower bound on the spectral gap using only the birth and death rates. This lower bound is obtained using the same idea used to analyze the above algorithm. For estimates on the mixing time of birth and death chains, we refer the readers to the recent work \cite{CSal12-3} by Chen and Saloff-Coste. For illustration, we consider several examples of specific interest and show that the lower bound provided here is in fact of the correct order in these examples.

This article is organized as follows. In Section 2, the algorithms in (\ref{alg-sp})-(\ref{alg-sp2}) are explored and proofs for Theorems \ref{t-main1}-\ref{t-main3} are given. In Section 3, the spectrum of $M^G_{\pi,\nu}$ is discussed further and, based on Miclo's work \cite{M08}, Algorithm (\ref{alg-sp2}) is generalized to any specified eigenvalue of $M^G_{\pi,\nu}$. Our method is applicable for paths of infinite length (one-sided) and this is described in Section 4. For illustration, we consider some Metropolis chains and display numerical results of Algorithm (\ref{alg-sp2}) in Section 5. In Section 6, we focus on uniform measures with bottlenecks and determine the correct order of the spectral gap using the theory in Sections 2-3. It is worthwhile to remark that the assumptions in Section 6 can be relaxed using the comparison technique in \cite{DS93-1,DS93-2}. As the work in this paper can also be regarded as a stochastic counterpart of theory of finite Jacobi matrices, we would like to refer the readers to \cite{T96,T00} for a complementary
perspective.

\section{Convergence to the spectral gap}\label{s-gap}
This section is devoted to proving Theorems \ref{t-main1}-\ref{t-main3}. First, we prove Theorem \ref{t-Miclo} in the following form.
\begin{lem}\label{l-polish}
Let $\lambda>0$ and $\psi$ be a non-constant function on $V$. Suppose $(\lambda,\psi)$ solves \textnormal{(\ref{eq-EL})} and $\psi$ is monotonic. Then, $\psi$ is strictly monotonic, that is, either $\psi(i)<\psi(i+1)$ for $1\le i<n$ or $\psi(i)>\psi(i+1)$ for $1\le i<n$.
\end{lem}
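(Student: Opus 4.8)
The plan is to argue by contradiction, using only the sign information encoded in the Euler--Lagrange system (\ref{eq-EL}). Since (\ref{eq-EL}) is linear in $\psi$, we may replace $\psi$ by $-\psi$ if necessary and assume $\psi$ is non-decreasing, so that $\delta(i):=\psi(i+1)-\psi(i)\ge0$ for $1\le i<n$; in this notation (\ref{eq-EL}) at a vertex $i$ reads $\lambda\pi(i)\psi(i)=\delta(i-1)\nu(i-1,i)-\delta(i)\nu(i,i+1)$, the first term being absent when $i=1$ and the second when $i=n$. I would begin by recording that summing (\ref{eq-EL}) over $i=1,\dots,n$ makes the right-hand side telescope to $0$ (the boundary terms vanish by the convention $\nu(0,1)=\nu(n,n+1)=0$), so $\lambda\,\pi(\psi)=0$ and hence $\pi(\psi)=0$ because $\lambda>0$. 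Since $\psi$ is non-constant and non-decreasing, $\pi$ is strictly positive on $V$, and $\pi(\psi)=0$, this forces $\psi(1)<0<\psi(n)$.

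Now suppose, toward a contradiction, that $\delta(j)=0$ for some $1\le j<n$. If $j=1$, then (\ref{eq-EL}) at $i=1$ gives $\lambda\pi(1)\psi(1)=-\delta(1)\nu(1,2)=0$, forcing $\psi(1)=0$ and contradicting $\psi(1)<0$; the case $j=n-1$ is ruled out symmetrically using $\psi(n)>0$. So $2\le j\le n-2$, and both $i=j$ and $i=j+1$ are interior vertices. Evaluating (\ref{eq-EL}) at $i=j$ gives $\lambda\pi(j)\psi(j)=\delta(j-1)\nu(j-1,j)\ge0$, hence $\psi(j)\ge0$; evaluating it at $i=j+1$ gives $\lambda\pi(j+1)\psi(j+1)=-\delta(j+1)\nu(j+1,j+2)\le0$, hence $\psi(j+1)\le0$. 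Since $\delta(j)=0$ means $\psi(j)=\psi(j+1)$, we conclude $\psi(j)=\psi(j+1)=0$, and then the equation at $i=j$ yields $\delta(j-1)\nu(j-1,j)=0$, so $\delta(j-1)=0$ and $\psi(j-1)=0$.

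Finally I would propagate this flatness down to vertex $1$: whenever $\psi(i)=\psi(i+1)=0$ for some $2\le i\le j$, equation (\ref{eq-EL}) at $i$ reduces to $0=\delta(i-1)\nu(i-1,i)$, so $\delta(i-1)=0$ and $\psi(i-1)=\psi(i)=0$; iterating from $i=j$ down to $i=2$ produces $\psi(1)=0$, contradicting $\psi(1)<0$. This contradiction shows $\delta(i)>0$ for every $i$, i.e.\ $\psi$ is strictly increasing, so the original (possibly sign-flipped) $\psi$ is strictly monotonic. The step requiring the most care is the middle one: one must invoke (\ref{eq-EL}) at \emph{both} endpoints of the flat pair $\{j,j+1\}$, together with monotonicity, to pin down $\psi(j)=\psi(j+1)=0$ exactly, and the boundary cases $j\in\{1,n-1\}$ — as well as the final step $i=2$ of the downward iteration, where the positivity $\nu(1,2)>0$ is what is used — must be treated separately because the relevant equations are missing one term.
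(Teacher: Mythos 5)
Your proof is correct, and it runs on the same engine as the paper's: reduce to the non-decreasing case, use $\pi(\psi)=0$ to get $\psi(1)<0<\psi(n)$, rule out a flat step at either boundary, and then extract a sign contradiction from the Euler--Lagrange equation applied at the ends of an interior flat stretch. The only genuine difference is how you close the contradiction. The paper looks at the \emph{maximal} flat block $[i,j]$ (whose existence is guaranteed by $\psi(1)<\psi(2)$ and $\psi(n-1)<\psi(n)$) and reads off $\psi(i)>0$ from the left endpoint and $\psi(j)<0$ from the right endpoint, which clashes immediately with $\psi(i)=\psi(j)$. You instead pin down $\psi(j)=\psi(j+1)=0$ at the flat pair and then propagate the vanishing of $\delta$ leftward all the way to vertex $1$, contradicting $\psi(1)<0$. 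Both are valid; the paper's two-endpoint comparison is a one-step contradiction and is a bit tighter, while your propagation argument is slightly longer but avoids having to isolate the maximal flat block and makes the use of $\nu>0$ at each intermediate edge explicit. Nothing is missing, and your explicit treatment of the boundary cases $j\in\{1,n-1\}$ and of the final step $i=2$ is accurate.
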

\begin{proof}
Obviously, (\ref{eq-EL}) implies that $\pi(\psi)=0$. Without loss of generality, it suffices to consider the case when $\psi(1)<0$ and $\psi(n)>0$. Since $\psi$ is non-constant and $\lambda^G_{\pi,\nu}>0$, we have $\psi(1)<\psi(2)$ and $\psi(n-1)<\psi(n)$. Note that if there are $1<i<j<n$ such that $\psi(i-1)<\psi(i)$, $\psi(j)<\psi(j+1)$ and $\psi(k)=\psi(i)=\psi(j)$ for $i\le k\le j$, then (\ref{eq-EL}) yields
\[
 \lambda^G_{\pi,\nu}\pi(i)\psi(i)=[\psi(i)-\psi(i-1)]\nu(i-1,i)+[\psi(i)-\psi(i+1)]\nu(i,i+1)>0
\]
and
\[
 \lambda^G_{\pi,\nu}\pi(j)\psi(j)=[\psi(j)-\psi(j-1)]\nu(j-1,j)+[\psi(j)-\psi(j+1)]\nu(j,j+1)<0,
\]
a contradiction. Thus, $\psi$ is strictly increasing.
\end{proof}

We note the following corollary.
\begin{cor}\label{c-polish}
Let $(\lambda,\psi)$ be a pair satisfying \textnormal{(\ref{eq-EL})}. Then, $\lambda=\lambda^G_{\pi,\nu}$ if and only if $\psi$ is monotonic.
\end{cor}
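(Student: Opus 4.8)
The plan is to establish the two implications separately; the forward one is essentially Theorem~\ref{t-Miclo}, while the converse needs an extra orthogonality argument. Throughout I read $\psi$ as non-constant, as in Lemma~\ref{l-polish} (a constant $\psi$ either is $\equiv 0$ and solves \eqref{eq-EL} for every $\lambda$, or solves it only for $\lambda=0$, so the statement is not meant for that case). First I would record two identities, valid for any solution $(\lambda,\psi)$ of \eqref{eq-EL} and already implicit in Section~\ref{s-intro}: summing \eqref{eq-EL} over $i=1,\dots,n$ telescopes the right-hand side to $0$ (using $\nu(0,1)=\nu(n,n+1)=0$), giving $\lambda\,\pi(\psi)=0$; and multiplying \eqref{eq-EL} by a function $g(i)$ and summing, a summation by parts turns the right-hand side into $\mathcal E_\nu(\psi,g)$, so that $\mathcal E_\nu(\psi,g)=\lambda\,\pi(\psi g)$ for all $g$.

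For $\lambda=\lambda^G_{\pi,\nu}\Rightarrow\psi$ monotonic: since $\lambda^G_{\pi,\nu}>0$, the first identity forces $\pi(\psi)=0$, hence $\mathrm{Var}_\pi(\psi)=\pi(\psi^2)>0$; taking $g=\psi$ in the second identity gives $\mathcal E_\nu(\psi,\psi)=\lambda^G_{\pi,\nu}\pi(\psi^2)$, so the Rayleigh quotient of $\psi$ equals $\lambda^G_{\pi,\nu}$ and $\psi$ is a minimizer. Theorem~\ref{t-Miclo} then gives monotonicity.

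For the converse I would argue as follows. If $(\lambda,\psi)$ solves \eqref{eq-EL} with $\lambda=0$ then $M^G_{\pi,\nu}\psi=0$ makes $\psi$ constant, excluded; so $\lambda>0$, and by Lemma~\ref{l-polish} the monotonic $\psi$ is in fact strictly monotonic --- after replacing $\psi$ by $-\psi$, strictly increasing. Let $\varphi$ be a minimizer for $\lambda^G_{\pi,\nu}$ with $\pi(\varphi)=0$; by Theorem~\ref{t-Miclo} and Lemma~\ref{l-polish}, $\varphi$ is strictly monotonic, say (after a sign change) strictly increasing. Suppose, for contradiction, $\lambda\ne\lambda^G_{\pi,\nu}$. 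Applying the second preliminary identity to $\psi$ with $g=\varphi$ and to $\varphi$ with $g=\psi$ (recall $\varphi$ solves \eqref{eq-EL} with $\lambda^G_{\pi,\nu}$), and using the symmetry $\mathcal E_\nu(\psi,\varphi)=\mathcal E_\nu(\varphi,\psi)$, one gets $(\lambda-\lambda^G_{\pi,\nu})\pi(\psi\varphi)=0$, so $\pi(\psi\varphi)=0$; since $\pi(\psi)=\pi(\varphi)=0$ this is $\mathrm{Cov}_\pi(\psi,\varphi)=0$. But with $\varphi$ strictly increasing, $\psi$ non-constant and all $\pi(i)>0$, the strict form of Chebyshev's sum inequality gives
\[
 \mathrm{Cov}_\pi(\psi,\varphi)=\tfrac12\sum_{i,j}\pi(i)\pi(j)\,[\psi(i)-\psi(j)]\,[\varphi(i)-\varphi(j)]>0,
\]
because every summand is $\ge 0$ and those with $\varphi(i)\ne\varphi(j)$ are $>0$. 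This contradiction forces $\lambda=\lambda^G_{\pi,\nu}$.

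The one genuinely non-formal point will be the converse: it is not the contrapositive of Theorem~\ref{t-Miclo}, and the real content is excluding all the higher eigenvectors of $M^G_{\pi,\nu}$. The combination ``orthogonality of eigenvectors $+$ strict Chebyshev'' does this cleanly (equivalently: a non-constant monotonic mean-zero function changes sign exactly once and so cannot be $L^2(\pi)$-orthogonal to the strictly monotonic first eigenvector); one could instead invoke the standard sign-change count for eigenvectors of Jacobi matrices (cf.\ \cite{T96,T00}), but the argument above is self-contained given what has been developed so far.
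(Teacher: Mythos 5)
Your proof is correct and takes essentially the same route as the paper: both directions hinge on the identity $\lambda\,\pi(\psi\phi)=\mathcal E_\nu(\psi,\phi)=\lambda^G_{\pi,\nu}\,\pi(\phi\psi)$ together with the strict monotonicity from Lemma~\ref{l-polish}. The only (minor) divergence is in the last step of the converse: you infer $\pi(\psi\phi)\neq 0$ from a strict Chebyshev/covariance inequality, whereas the paper simply observes that two strictly monotonic functions give $\mathcal E_\nu(\psi,\phi)=\sum_i[\psi(i)-\psi(i+1)][\phi(i)-\phi(i+1)]\nu(i,i+1)\neq 0$ because all summands share the same nonzero sign --- a one-line remark that bypasses Chebyshev, though of course the two nonvanishing statements are equivalent through the identity with $\lambda>0$.
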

\begin{proof}
One direction is obvious from Theorem \ref{t-Miclo}. For the other direction, assume that $\psi$ is monotonic and let $\phi$ be a minimizer for $\lambda^G_{\pi,\nu}$ with $\pi(\phi)=0$. Since $(\lambda,\psi)$ and $(\lambda^G_{\pi,\nu},\phi)$ are solutions to (\ref{eq-EL}), one has
\[
 \lambda\pi(\psi\phi)=\mathcal{E}_\nu(\psi,\phi)=\lambda^G_{\pi,\nu}\pi(\phi\psi).
\]
By Lemma \ref{l-polish}, $\psi$ and $\phi$ are strictly monotonic and this implies $\mathcal{E}_\nu(\psi,\phi)\ne 0$. As a consequence of the above equations, we have $\lambda=\lambda^G_{\pi,\nu}$.
\end{proof}

The following proposition is the key to Theorem \ref{t-main1}.
\begin{prop}\label{p-main1}
Suppose that $(\lambda,\psi)$ satisfies $\lambda>0$, $\psi(1)<0$ and, for $1\le i<n$,
\begin{equation}\label{eq-psigen}
 \psi(i+1)=\psi(i)+\frac{\{[\psi(i)-\psi(i-1)]\nu(i-1,i)-\lambda\pi(i)\psi(i)\}^+}{\nu(i,i+1)},
\end{equation}
where $t^+=\max\{t,0\}$. Then, the following are equivalent.
\begin{itemize}
 \item[(1)] $\mathcal{E}_\nu(\psi,\psi)=\lambda\textnormal{Var}_\pi(\psi)$.

 \item[(2)] $\pi(\psi)=0$.

 \item[(3)] $\lambda=\lambda^G_{\pi,\nu}$.
\end{itemize}
Furthermore, if $n\ge 3$, then any of the above is equivalent to
\begin{itemize}
 \item[(4)] $\mathcal{E}_\nu(\psi,\psi)=\lambda^G_{\pi,\nu}\textnormal{Var}(\psi)$
\end{itemize}
\end{prop}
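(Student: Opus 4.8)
The plan is to derive one exact identity relating $\mathcal{E}_\nu(\psi,\psi)-\lambda\,\textnormal{Var}_\pi(\psi)$ to $\pi(\psi)$, and then read off all four equivalences from it together with Corollary \ref{c-polish}. First I would fix notation: set $G_\psi(i):=[\psi(i+1)-\psi(i)]\nu(i,i+1)$ for $0\le i\le n$ (so $G_\psi(0)=G_\psi(n)=0$ by the conventions $\nu(0,1)=\nu(n,n+1)=0$), put $F(i):=[\psi(i)-\psi(i-1)]\nu(i-1,i)-\lambda\pi(i)\psi(i)=G_\psi(i-1)-\lambda\pi(i)\psi(i)$, and let $R(i):=G_\psi(i-1)-G_\psi(i)-\lambda\pi(i)\psi(i)$ be the amount by which $(\lambda,\psi)$ fails (\ref{eq-EL}) at $i$. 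The recursion (\ref{eq-psigen}) is exactly $G_\psi(i)=F(i)^+$ for $1\le i\le n-1$, so $R(i)=F(i)-G_\psi(i)=-F(i)^-\le 0$ for $1\le i\le n-1$ while $R(n)=F(n)$; moreover $\psi$ is nondecreasing with $\psi(2)>\psi(1)$ (because $F(1)=-\lambda\pi(1)\psi(1)>0$), hence non-constant, and $\sum_{i=1}^nR(i)=-\lambda\pi(\psi)$ since the $\nu$-terms telescope.

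The heart of the argument — and the step I expect to be the main obstacle — is the following ``terminal block'' lemma: if $R(i_0)<0$ for some $1\le i_0\le n-1$, then $\psi(i_0)=\psi(i_0+1)=\cdots=\psi(n)$. One proves this by noting that $R(i_0)<0$ forces $F(i_0)<0$, hence $\psi(i_0)>G_\psi(i_0-1)/(\lambda\pi(i_0))\ge 0$ and $G_\psi(i_0)=F(i_0)^+=0$, and then inductively $F(j)=-\lambda\pi(j)\psi(i_0)<0$ and $G_\psi(j)=0$ for $j=i_0,\dots,n-1$. Writing $c:=\psi(n)=\max_j\psi(j)$, the lemma says $[\psi(i)-c]R(i)=0$ for every $i$ (for $i=n$ trivially), so expanding $\mathcal{E}_\nu(\psi,\psi)=\sum_i\psi(i)\bigl[\lambda\pi(i)\psi(i)+R(i)\bigr]$ and using $\sum_i[\psi(i)-c]R(i)=0$ together with $\sum_iR(i)=-\lambda\pi(\psi)$ yields
\[
 \mathcal{E}_\nu(\psi,\psi)-\lambda\,\textnormal{Var}_\pi(\psi)=\lambda\pi(\psi)^2-c\,\lambda\pi(\psi)=-\lambda\,\pi(\psi)\,\bigl(c-\pi(\psi)\bigr),
\]
with $c-\pi(\psi)>0$ since $\psi$ is non-constant.

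With this identity, $(1)\Leftrightarrow(2)$ is immediate ($\lambda>0$, $c-\pi(\psi)>0$). For $(2)\Rightarrow(3)$: if $\pi(\psi)=0$ then $\sum_iR(i)=0$, and if some $R(i_0)<0$ with $i_0\le n-1$ the terminal-block lemma forces $\psi\equiv c>0$ on $\{i_0,\dots,n\}$, so $R(j)=-\lambda\pi(j)c<0$ for all $j\in\{i_0,\dots,n\}$ (with $R\le 0$ before $i_0$), contradicting $\sum_iR(i)=0$; hence $R(i)=0$ for $i\le n-1$, and then $R(n)=-\lambda\pi(\psi)=0$ as well, so $(\lambda,\psi)$ solves (\ref{eq-EL}), and since $\psi$ is monotonic and non-constant Corollary \ref{c-polish} gives $\lambda=\lambda^G_{\pi,\nu}$. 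For $(3)\Rightarrow(2)$: take a strictly increasing minimizer $\phi$ with $\pi(\phi)=0$ (Lemma \ref{l-polish}), necessarily with $\phi(1)<0$, and rescale it so that $\phi(1)=-a=\psi(1)$; since $\phi$ solves (\ref{eq-EL}) with parameter $\lambda^G_{\pi,\nu}=\lambda$ and $[\phi(i+1)-\phi(i)]\nu(i,i+1)>0$, an easy induction shows the recursion (\ref{eq-psigen}) regenerates $\phi$, so $\psi=\phi$ and $\pi(\psi)=0$. This closes $(1)\Leftrightarrow(2)\Leftrightarrow(3)$.

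Finally, for the $n\ge 3$ clause, $(2)\Rightarrow(4)$ is trivial since $(2)$ gives both $(3)$ and $(1)$. For $(4)\Rightarrow(2)$ I would argue by contradiction: if $\pi(\psi)\ne0$, the identity gives $(\lambda-\lambda^G_{\pi,\nu})\textnormal{Var}_\pi(\psi)=\lambda(c-\pi(\psi))\pi(\psi)\ne0$, so $\lambda\ne\lambda^G_{\pi,\nu}$; but $(4)$ makes $\psi$ a minimizer, so $\psi-\pi(\psi)\mathbf{1}$ is a strictly monotonic (Lemma \ref{l-polish}) eigenvector for $\lambda^G_{\pi,\nu}$, whence $\psi$ is strictly increasing and $\psi(i)<c$ for $i<n$; the terminal-block lemma then forces $R(i)=0$ for all $1\le i\le n-1$, i.e.\ $G_\psi(i-1)-G_\psi(i)=\lambda\pi(i)\psi(i)$, while the eigenvector relation says $G_\psi(i-1)-G_\psi(i)=\lambda^G_{\pi,\nu}\pi(i)(\psi(i)-\pi(\psi))$; cancelling $\pi(i)$ shows $\psi(i)$ is one and the same constant for all $1\le i\le n-1$, contradicting strict monotonicity once $n-1\ge2$. (For $n=2$ this last step is vacuous, consistent with the fact that $(4)$ then holds automatically and is not equivalent to the rest.)
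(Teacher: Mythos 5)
Your proposal is correct and, despite the reorganization around the residuals $R(i)$ and the ``terminal block'' lemma, it is essentially the same argument as the paper's: the key step in both is the identity $\mathcal{E}_\nu(\psi,\psi)-\lambda\,\textnormal{Var}_\pi(\psi)=-\lambda\,\pi(\psi)\bigl(\psi(n)-\pi(\psi)\bigr)$ (which the paper derives by summing $\psi(i)$ times the Euler--Lagrange relation over the prefix $B^c=\{1,\dots,i_0\}$ where $\psi$ is strictly increasing, exactly your terminal block), followed by Corollary~\ref{c-polish} for $(2)\Rightarrow(3)$ and the same ``$\psi(i)$ constant for $1\le i\le n-1$'' contradiction for the $n\ge3$ clause. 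Your more explicit regeneration argument for $(3)\Rightarrow(2)$ fills in a step the paper states tersely, but this is a presentational refinement rather than a different route.
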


\begin{rem}\label{r-n=2}
For $n=2$, it is an easy exercise to show that $\lambda^G_{\pi,\nu}=\nu(1,2)/(\pi(1)\pi(2))$. By following the formula in (\ref{eq-psigen}), one has $\psi(2)=\psi(1)[1-\lambda\pi(1)/\nu(1,2)]$, which leads to $\mathcal{E}_\nu(\psi,\psi)/\text{Var}_\pi(\psi)=\lambda^G_{\pi,\nu}$.
\end{rem}

\begin{proof}[Proof of Proposition \ref{p-main1}]
Set $B=\{1\le i\le n|\psi(i)=\psi(n)\}$ and $B^c=\{1,2,...,i_0\}$. Since $\psi(1)<0$ and $\lambda>0$, $\psi(1)<\psi(2)$ and $B^c$ is nonempty. According to (\ref{eq-psigen}), $\psi$ is non-decreasing. Note that if $\psi(i)=\psi(i+1)$, then $\psi(i)\ge 0$ and $\psi(i+2)=\psi(i+1)$. This implies $\psi$ is strictly increasing on $\{1,2,...,i_0+1\}$ and, for $1\le i\le i_0$,
\[
 \lambda\pi(i)\psi(i)=[\psi(i)-\psi(i+1)]\nu(i,i+1)+[\psi(i)-\psi(i-1)]\nu(i-1,i).
\]
Multiplying $\psi(i)$ on both sides and summing over all $i$ in $B^c$ yields
\begin{align}
 \lambda\sum_{i=1}^{i_0}\psi(i)^2\pi(i)&=\sum_{i=1}^{i_0-1}[\psi(i)-\psi(i+1)]^2\nu(i,i+1)\notag\\
 &\qquad+\psi(i_0)[\psi(i_0)-\psi(i_0+1)]\nu(i_0,i_0+1)\notag\\
 &=\mathcal{E}_\nu(\psi,\psi)+\psi(i_0+1)[\psi(i_0)-\psi(i_0+1)]\nu(i_0,i_0+1)\notag\\
 &=\mathcal{E}_\nu(\psi,\psi)+\lambda\psi(n)\sum_{i=1}^{i_0}\psi(i)\pi(i).\notag
\end{align}
This is equivalent to
\begin{equation}\label{eq-Enu}
 \mathcal{E}_\nu(\psi,\psi)=\lambda\text{Var}_\pi(\psi)+\lambda\pi(\psi)[\pi(\psi)-\psi(n)],
\end{equation}
which proves (1)$\Lra$(2).

If $\lambda=\lambda^G_{\pi,\nu}$, then $\psi$ is an eigenvector for $M^G_{\pi,\nu}$ associated to $\lambda^G_{\pi,\nu}$. This proves (3)$\Ra$(2). For (2)$\Ra$(3), assume that $\pi(\psi)=0$. In this case, $\psi$ must be strictly increasing. Otherwise, $\psi(i)=\psi(n)>0$ for $i\in B$ and, according to (\ref{eq-psigen}), this implies
\[
 \lambda\text{Var}_\pi(\psi)>\lambda\sum_{i=1}^{n-1}\pi(i)\psi^2(i)\ge\sum_{i=1}^{n-1}[\psi(i)-\psi(i+1)]^2\nu(i,i+1)=\mathcal{E}(\psi,\psi),
\]
which contradicts (1). As $\psi$ is strictly increasing and $\pi(\psi)=0$, $(\lambda,\psi)$ solves (\ref{eq-EL}). By Corollary \ref{c-polish}, $\lambda=\lambda^G_{\pi,\nu}$.

To finish the proof, it remains to show (4)$\Ra$(3) ((3)$\Ra$(4) is obvious from the equivalence among (1), (2) and (3)). Assume that $\mathcal{E}_\nu(\psi,\psi)=\lambda^G_{\pi,\nu}\text{Var}_\pi(\psi)$. By Lemma \ref{l-polish}, $\psi$ is strictly monotonic and this implies, for $1\le i<n$,
\[
 \lambda\pi(i)\psi(i)=[\psi(i)-\psi(i+1)]\nu(i,i+1)+[\psi(i)-\psi(i-1)]\nu(i-1,i).
\]
As $\psi$ is a minimizer for $\lambda^G_{\pi,\nu}$, one has, for $1\le i\le n$,
\[
 \lambda^G_{\pi,\nu}\pi(i)[\psi(i)-\pi(\psi)]=[\psi(i)-\psi(i+1)]\nu(i,i+1)
 +[\psi(i)-\psi(i-1)]\nu(i-1,i).
\]
If $\lambda\ne\lambda^G_{\pi,\nu}$, the comparison of both systems yields
\[
 \psi(i)=\frac{\lambda^G_{\pi,\nu}\pi(\psi)}{\lambda^G_{\pi,\nu}-\lambda},\quad\forall 1\le i<n.
\]
As $n\ge 3$, $\psi(1)=\psi(2)$, a contradiction! This forces $\lambda=\lambda^G_{\pi,\nu}$, as desired.
\end{proof}

The following is a simple corollary of Proposition \ref{p-main1}, which plays an important role in proving Theorem \ref{t-main3}.
\begin{cor}\label{c-T}
Let $n\ge 3$. For $\lambda>0$, let $\phi_\lambda$ be the vector generated by \textnormal{(\ref{eq-psigen})} with $\phi(1)<0$. Then, $(\lambda-\lambda^G_{\pi,\nu})\pi(\phi_\lambda)>0$ for $\lambda>0$ and $\lambda\ne\lambda^G_{\pi,\nu}$.
\end{cor}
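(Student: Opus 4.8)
The plan is to regard $\lambda\mapsto\phi_\lambda$ as a continuous one–parameter family and to determine the sign of $\pi(\phi_\lambda)$ separately on the two intervals $(0,\lambda^G_{\pi,\nu})$ and $(\lambda^G_{\pi,\nu},\infty)$. First note that \textnormal{(\ref{eq-psigen})} is homogeneous of degree one in $\psi$ (because $(ct)^+=ct^+$ for $c>0$), so multiplying $\phi_\lambda(1)$ by a positive constant merely rescales the whole vector; in particular the sign of $\pi(\phi_\lambda)$ does not depend on which negative value is assigned to $\phi_\lambda(1)$, and I may fix $\phi_\lambda(1)=-a$ with $a>0$ and write $g(\lambda):=\pi(\phi_\lambda)$. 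A straightforward induction on $i$ shows that every coordinate $\phi_\lambda(i)$, hence $g$, depends continuously on $\lambda\in(0,\infty)$: each step of \textnormal{(\ref{eq-psigen})} is a composition of continuous maps (the truncation $t\mapsto t^+$ included) with no division by zero, the $\nu(i,i+1)$ being fixed positive numbers. By Proposition \ref{p-main1} (the equivalence (2)$\Lra$(3)), $g(\lambda)=0$ exactly when $\lambda=\lambda^G_{\pi,\nu}$, so $g$ keeps a constant nonzero sign on each of $(0,\lambda^G_{\pi,\nu})$ and $(\lambda^G_{\pi,\nu},\infty)$, and it remains only to identify the two signs.

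On $(0,\lambda^G_{\pi,\nu})$ I would invoke identity \textnormal{(\ref{eq-Enu})} from the proof of Proposition \ref{p-main1},
\[
 \mathcal{E}_\nu(\phi_\lambda,\phi_\lambda)=\lambda\,\textnormal{Var}_\pi(\phi_\lambda)+\lambda\,\pi(\phi_\lambda)\,[\pi(\phi_\lambda)-\phi_\lambda(n)].
\]
Since $\phi_\lambda$ is non-constant and (by \textnormal{(\ref{eq-psigen})}) non-decreasing, $\textnormal{Var}_\pi(\phi_\lambda)>0$ and $\phi_\lambda(n)>\pi(\phi_\lambda)$; also $\mathcal{E}_\nu(\phi_\lambda,\phi_\lambda)\ge\lambda^G_{\pi,\nu}\textnormal{Var}_\pi(\phi_\lambda)>\lambda\,\textnormal{Var}_\pi(\phi_\lambda)$ by the variational definition of $\lambda^G_{\pi,\nu}$. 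Hence the last term in the identity is strictly positive, and since $\lambda>0$ while $\pi(\phi_\lambda)-\phi_\lambda(n)<0$ this forces $\pi(\phi_\lambda)<0$. Thus $g<0$ on $(0,\lambda^G_{\pi,\nu})$, i.e. $(\lambda-\lambda^G_{\pi,\nu})\pi(\phi_\lambda)>0$ there.

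On $(\lambda^G_{\pi,\nu},\infty)$ it suffices, by the constant–sign observation, to find one $\lambda$ with $g(\lambda)>0$, and I would just let $\lambda\to\infty$. Using \textnormal{(\ref{eq-psigen})} with $i=1$ and the convention $\nu(0,1)=0$, one computes $\phi_\lambda(2)=a\big(\lambda\pi(1)/\nu(1,2)-1\big)\to+\infty$; since $\phi_\lambda$ is non-decreasing, $\phi_\lambda(i)\ge\phi_\lambda(2)$ for $i\ge2$, whence
\[
 g(\lambda)=\pi(\phi_\lambda)\ \ge\ -a\,\pi(1)+(1-\pi(1))\,\phi_\lambda(2)\ \xrightarrow[\lambda\to\infty]{}\ +\infty,
\]
because $\pi(1)<1$. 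So $g>0$ on $(\lambda^G_{\pi,\nu},\infty)$, and combining the two intervals gives $(\lambda-\lambda^G_{\pi,\nu})\pi(\phi_\lambda)>0$ for every $\lambda>0$ with $\lambda\ne\lambda^G_{\pi,\nu}$.

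The delicate point is the interval $(\lambda^G_{\pi,\nu},\infty)$: identity \textnormal{(\ref{eq-Enu})} together with the variational characterization of $\lambda^G_{\pi,\nu}$ pins down the sign of $\pi(\phi_\lambda)$ at once when $\lambda<\lambda^G_{\pi,\nu}$, but is inconclusive when $\lambda>\lambda^G_{\pi,\nu}$ (it then compares two positive quantities of unknown relative size). This is exactly why the continuity of $\lambda\mapsto\phi_\lambda$ is needed — to spread a single favourable value over the whole half-line — together with the explicit asymptotics of $\phi_\lambda(2)$ to anchor that value; the remaining verifications (continuity is unaffected by $t^+$, and $\pi(1)<1$ since $n\ge2$) are routine.
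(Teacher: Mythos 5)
Your proof is correct, and the structure mirrors the paper's for the lower interval $(0,\lambda^G_{\pi,\nu})$ (both deduce $\pi(\phi_\lambda)<0$ directly from the identity (\ref{eq-Enu}) together with the variational characterization $\mathcal{E}_\nu(\phi_\lambda,\phi_\lambda)\ge\lambda^G_{\pi,\nu}\textnormal{Var}_\pi(\phi_\lambda)>\lambda\,\textnormal{Var}_\pi(\phi_\lambda)$). Where you diverge is on the upper half-line. The paper also reduces to deciding a single sign on $(\lambda^G_{\pi,\nu},\infty)$ via Proposition~\ref{p-main1}\,(2)$\Lra$(3) plus continuity, but then argues by contradiction: assuming $T<0$ there forces $L(\lambda)>\lambda$, so iterating $L$ produces an increasing bounded sequence whose limit would be a fixed point of $L$ in the open interval, contradicting (\ref{eq-L}). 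You instead anchor the sign directly by taking $\lambda\to\infty$ and reading off $\phi_\lambda(2)=a(\lambda\pi(1)/\nu(1,2)-1)\to+\infty$, which, together with monotonicity of $\phi_\lambda$ and $\pi(1)<1$, gives $\pi(\phi_\lambda)\to+\infty$. Your argument is a bit more elementary and self-contained, since it avoids having to verify that $L$ is bounded on $(\lambda^G_{\pi,\nu},\infty)$ (which the paper uses but does not justify at that point — it only becomes transparent after Remark~\ref{r-ini}). The paper's dynamical-systems style argument has the minor advantage of foreshadowing the iteration structure used in Theorem~\ref{t-main2}, but as a proof of Corollary~\ref{c-T} in isolation your asymptotic anchoring is the cleaner route. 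Both are valid.
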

\begin{proof}
Without loss of generality, we fix $\phi_\lambda(1)=-1$ for all $\lambda>0$. Set $T(\lambda)=\pi(\phi_\lambda)$. To prove this corollary, it suffices to show that
\[
 T(\lambda)\begin{cases}<0&\text{if }\lambda<\lambda^G_{\pi,\nu}\\>0&\text{if }\lambda>\lambda^G_{\pi,\nu}\end{cases}.
\]
For $\lambda>0$, define $L(\lambda):=\mathcal{E}_\nu(\phi_\lambda,\phi_\lambda)/\text{Var}_\pi(\phi_\lambda)$. By (\ref{eq-Enu}), one has
\begin{equation}\label{eq-L}
 L(\lambda)-\lambda=\frac{\lambda T(\lambda)[\pi(\phi_\lambda)-\phi_\lambda(n)]}{\text{Var}_\pi(\phi_\lambda)}.
\end{equation}
Since $\phi_\lambda$ is non-constant, $\pi(\phi_\lambda)<\phi_\lambda(n)$. This implies $T(\lambda)<0$ for $\lambda\in(0,\lambda^G_{\pi,\nu})$.

For $\lambda>\lambda^G_{\pi,\nu}$, set $I=(\lambda^G_{\pi,\nu},\infty)$. By Proposition \ref{p-main1}, $T(\lambda)=0$ if and only if $\lambda=\lambda^G_{\pi,\nu}$. By the continuity of $T$, this implies either $T(I)\subset(-\infty,0)$ or $T(I)\subset(0,\infty)$. In the case $T(I)\subset(-\infty,0)$, one has $L(\lambda)>\lambda$ for $\lambda\in I$. As $L(I)$ is bounded, $L^k(\lambda)$ is convergent with limit $\widetilde{\lambda}>\lambda^G_{\pi,\nu}$ and this yields
\[
 0=\lim_{k\ra\infty}[L^{k+1}(\lambda)-L^k(\lambda)]
 =\frac{\widetilde{\lambda}T(\widetilde{\lambda})[\pi(\phi_{\widetilde{\lambda}})
 -\phi_{\widetilde{\lambda}}(n)]}{\text{Var}_\pi(\phi_{\widetilde{\lambda}})}>0,
\]
a contradiction. Hence, $T(\lambda)>0$ for $\lambda>\lambda^G_{\pi,\nu}$.
\end{proof}

\begin{proof}[Proof of Theorem \ref{t-main1}]
The proof for $n=2$ is obvious from a direct computation and we deal with the case $n\ge 3$, here. By the equivalence of Proposition \ref{p-main1} (3)-(4), if $\lambda_0=\lambda^G_{\pi,\nu}$, then $\lambda_k=\lambda^G_{\pi,\nu}$ for all $k\ge 1$. If $\lambda_0\ne\lambda^G_{\pi,\nu}$, then $\lambda_k>\lambda^G_{\pi,\nu}$ for $k\ge 1$. Note that $(\lambda_k,\psi_k)$ solves the system in (\ref{eq-psigen}). By (\ref{eq-Enu}), this implies
\[
 \lambda_{k+1}-\lambda_k=\frac{\lambda_k\pi(\psi_k)[\pi(\psi_k)-\psi_k(n)]}{\text{Var}_\pi(\psi_k)},
 \quad\forall k\ge 0.
\]
The strict monotonicity of $\lambda_k$ in (2) comes immediately from Corollary \ref{c-T}. In (3), the continuity of (\ref{eq-psigen}) in $\lambda$ implies that $(\lambda^*,\psi^*)$ is a solution to (\ref{eq-psigen}) and $\mathcal{E}_\nu(\psi^*,\psi^*)=\lambda^*\text{Var}(\psi^*)$. By Proposition \ref{p-main1}, $\lambda^*=\lambda^G_{\pi,\nu}$ and $\pi(\psi^*)=0$, as desired.
\end{proof}

\begin{proof}[Proof of Theorem \ref{t-main2}]
Recall the notation in the proof of Corollary \ref{c-T}: For $\lambda>0$, let $\phi_\lambda$ be the function defined by (\ref{eq-psigen}) and $L(\lambda)=\mathcal{E}_\nu(\phi_\lambda,\phi_\lambda)/\text{Var}_\pi(\phi_\lambda)$. By (\ref{eq-Enu}) and Corollary \ref{c-T}, $L(\lambda)\in(\lambda^G_{\pi,\nu},\lambda)$ for $\lambda>\lambda^G_{\pi,\nu}$. As $L$ is bounded, Theorem \ref{t-main2} follows from Lemma \ref{l-conv}.
\end{proof}

\begin{proof}[Proof of theorem \ref{t-main3}]
Immediate from Corollary \ref{c-T}.
\end{proof}

In the end of this section, we use the following proposition to find how the shape of the function $\psi$ in (\ref{eq-psigen}) evolves with $\lambda$. In Proposition \ref{p-shape}, we set $\phi_\lambda=\psi$ when $\psi$ is given by (\ref{eq-psigen}). It is easy to see from (\ref{eq-psigen}) that $\phi_\lambda$ is strictly increasing before some constant, say $i_0=i_0(\lambda)$, and then stays constant equal to $\phi_\lambda(i_0)$ after $i_0$. The proposition shows how the constant $i_0(\lambda)$ evolves.
\begin{prop}\label{p-shape}
For $\lambda>0$, let $\phi_\lambda$ be the function generated by \textnormal{(\ref{eq-psigen})} with $\phi_\lambda(1)=-1$ and, for $1\le i\le n$, set $T_i(\lambda)=\sum_{j=1}^i\phi_\lambda(i)\pi(i)$. For $1\le i<n$, let
\[
a_i(\lambda)=1+\pi(i+1)/\pi(i)-\lambda\pi(i+1)/\nu(i,i+1),
\]
\begin{equation}\label{eq-ail}
    A_i(\lambda)=\left(\begin{array}{cccccc}
    a_1(\lambda)&1&0&0&\cdots&0\\
    \frac{\pi(3)}{\pi(2)}&a_{2}(\lambda)&1&0&&\vdots\\
    0&\frac{\pi(4)}{\pi(3)}&a_{3}(\lambda)&\ddots&\ddots&\vdots\\
    0&0&\ddots&\ddots&\ddots&0\\
    \vdots&&\ddots&\ddots&a_{i-1}(\lambda)&1\\
    0&\cdots&\cdots&0&\frac{\pi(i+1)}{\pi(i)}&a_i(\lambda)\end{array}\right),
\end{equation}
and let $\lambda^{(i)}$ be the smallest root of $\det A_i(\lambda)=0$. Then,
\begin{itemize}
\item[(1)] $\lambda^G_{\pi,\nu}=\lambda^{(n-1)}<\lambda^{(n-2)}<\cdots<\lambda^{(1)}$.

\item[(2)] $\phi_\lambda(i)<\phi_\lambda(i+1)=\phi_\lambda(i+2)$ for $\lambda\in[\lambda^{(i)},\lambda^{(i-1)})$ and $1\le i\le n-2$, where $\lambda^{(0)}:=\infty$.

\item[(3)] $\phi_\lambda(n-1)<\phi_\lambda(n)$ for $\lambda\in(0,\lambda^{(n-2)})$.
\end{itemize}
In particular, $T_{i+1}(\lambda)=-\pi(1)\det A_i(\lambda)$ for $\lambda\in(0,\lambda^{(i-1)})$ and $(\lambda-\lambda^{(i)})T_{i+1}(\lambda)>0$ for $\lambda\in(0,\lambda^{(i)})\cup(\lambda^{(i)},\infty)$ with $1\le i\le n-1$.
\end{prop}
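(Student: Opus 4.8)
\emph{Strategy.} The plan is to linearize the dynamics \textnormal{(\ref{eq-psigen})} for $\phi_\lambda$ in terms of its partial sums $T_j(\lambda)=\sum_{k=1}^{j}\pi(k)\phi_\lambda(k)$, to recognize the resulting three–term recursion as the cofactor expansion of $\det A_j(\lambda)$, and then to control all the signs and orderings by elementary symmetric–matrix theory. First I would record structural facts about $\phi=\phi_\lambda$ read off from \textnormal{(\ref{eq-psigen})}: writing $E_j:=\nu(j,j+1)[\phi(j+1)-\phi(j)]$ one has $E_0=0$ and $E_j=\{E_{j-1}-\lambda\pi(j)\phi(j)\}^+$, so $\phi$ is non–decreasing and, exactly as in the proof of Proposition \ref{p-main1}, is strictly increasing on $\{1,\dots,i_0+1\}$ and constant on $\{i_0+1,\dots,n\}$ for some $i_0=i_0(\lambda)\ge 1$; on the strictly increasing range $E_j=-\lambda T_j(\lambda)>0$ (telescoping, since there $E_j=E_{j-1}-\lambda\pi(j)\phi(j)$), and if $i_0<n-1$ then $\phi(i_0+1)>0$ and $T_{i_0+1}\ge 0$. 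On that same range, substituting $\phi(j+1)=\phi(j)-\lambda T_j/\nu(j,j+1)$ and $\phi(j)=(T_j-T_{j-1})/\pi(j)$ into $T_{j+1}=T_j+\pi(j+1)\phi(j+1)$ gives exactly $T_{j+1}=a_j(\lambda)T_j-\tfrac{\pi(j+1)}{\pi(j)}T_{j-1}$; since $\det A_j$ satisfies the same recursion with $\det A_0=1$ and since $T_0=0$, $T_1=-\pi(1)$, induction on $j$ yields $T_{j+1}(\lambda)=-\pi(1)\det A_j(\lambda)$ whenever $0\le j\le i_0(\lambda)$.

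\emph{Linear–algebra input.} Conjugating $A_i(0)$ by $\operatorname{diag}(\pi(j+1)^{-1/2})_{j=1}^{i}$ turns it into a symmetric tridiagonal matrix $\widetilde A_i$ with positive off–diagonal entries, and since $A_i(\lambda)=A_i(0)-\lambda B_i$ with $B_i=\operatorname{diag}(\pi(j+1)/\nu(j,j+1))_{j=1}^{i}\succ 0$ diagonal, one gets $\det A_i(\lambda)=\det(B_i)\det(J_i-\lambda I)$ with $J_i:=B_i^{-1/2}\widetilde A_iB_i^{-1/2}$ symmetric and $J_{i-1}$ its leading $(i-1)\times(i-1)$ block. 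Because $\det A_k(0)=\pi(1)^{-1}\sum_{j=1}^{k+1}\pi(j)>0$ for every $k$, Sylvester's criterion gives $J_i\succ 0$; hence $\lambda^{(i)}=\lambda_{\min}(J_i)>0$ and, for any $\lambda$,
\[
 \lambda<\lambda^{(i)}\iff \det A_k(\lambda)>0\ \text{ for all }0\le k\le i .
\]
I would also use the elementary fact that a symmetric matrix with positive leading principal minors of orders $1,\dots,m-1$ and vanishing determinant is positive semidefinite; applied to $J_i-\lambda I$ this says that $\det A_0(\lambda),\dots,\det A_{i-1}(\lambda)>0$ together with $\det A_i(\lambda)=0$ force $\lambda=\lambda^{(i)}$.

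\emph{Induction and assembly.} The heart is an induction on $i\in\{1,\dots,n-1\}$ of the statement $R(i)$: $\lambda^{(i)}<\lambda^{(i-1)}$ (with $\lambda^{(0)}:=\infty$), and for $\lambda\in(0,\lambda^{(i-1)})$ one has $i_0(\lambda)\ge i$ and $T_{i+1}(\lambda)=-\pi(1)\det A_i(\lambda)$. The case $i=1$ is the explicit computation of the first two steps of \textnormal{(\ref{eq-psigen})} ($a_1$ is affine, decreasing, positive at $0$). For the step, given $R(1),\dots,R(i-1)$ and $\lambda\in(0,\lambda^{(i-1)})\subset(0,\lambda^{(i-2)})$: $R(i-1)$ gives $i_0(\lambda)\ge i-1$ and $T_i=-\pi(1)\det A_{i-1}(\lambda)<0$, so $E_i=\{-\lambda T_i\}^+>0$, whence $i_0(\lambda)\ge i$; the $T$–recursion with $R(i-1),R(i-2)$ then gives $T_{i+1}=-\pi(1)\det A_i$; and evaluating the $\det$–recursion at $\lambda^{(i-1)}$ gives $\det A_i(\lambda^{(i-1)})=-\tfrac{\pi(i+1)}{\pi(i)}\det A_{i-2}(\lambda^{(i-1)})<0$, which with $\det A_i(0)>0$ puts the first root $\lambda^{(i)}$ inside $(0,\lambda^{(i-1)})$. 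The proposition now follows. Part (1) is the chain of $R(i)$'s, plus $\lambda^{(n-1)}=\lambda^G_{\pi,\nu}$ since at $\lambda=\lambda^{(n-1)}$ one has $\pi(\phi_\lambda)=T_n(\lambda)=-\pi(1)\det A_{n-1}(\lambda)=0$ with $\phi_\lambda$ strictly increasing, so $(\lambda^{(n-1)},\phi_{\lambda^{(n-1)}})$ solves \textnormal{(\ref{eq-EL})} with a monotone solution and Corollary \ref{c-polish} applies. Part (2): for $\lambda\in[\lambda^{(i)},\lambda^{(i-1)})$, $R(i)$ gives $\phi_\lambda(i)<\phi_\lambda(i+1)$, while there $\det A_i(\lambda)\le 0$ (otherwise $\det A_0(\lambda),\dots,\det A_i(\lambda)>0$ would force $\lambda<\lambda^{(i)}$), so $E_{i+1}=\{\lambda\pi(1)\det A_i(\lambda)\}^+=0$ and $\phi_\lambda(i+1)=\phi_\lambda(i+2)$. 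Part (3) is $R(n-1)$ on $(0,\lambda^{(n-2)})$. For the last line, $T_{i+1}=-\pi(1)\det A_i$ on $(0,\lambda^{(i-1)})$ is $R(i)$; the sign of $\det A_i$ on $(0,\lambda^{(i)})$ and $(\lambda^{(i)},\lambda^{(i-1)})$ comes from the Sylvester dichotomy and the semidefiniteness fact; and for $\lambda\ge\lambda^{(i-1)}$ one must have $i_0(\lambda)\le i-1$ (else $\det A_0(\lambda),\dots,\det A_{i-1}(\lambda)>0$ force $\lambda<\lambda^{(i-1)}$), so $\phi_\lambda$ is constant and positive on $\{i,\dots,n\}$ and $T_{i+1}(\lambda)=T_{i_0+1}(\lambda)+\phi_\lambda(n)\sum_{j=i_0+2}^{i+1}\pi(j)>0$; in every case $(\lambda-\lambda^{(i)})T_{i+1}(\lambda)>0$.

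\emph{Main obstacle.} The delicate point is the bookkeeping in the induction: keeping precise track, as $\lambda$ grows, of the index $i_0(\lambda)$ up to which the "linear regime" $E_j=-\lambda T_j$ of \textnormal{(\ref{eq-psigen})} persists, and matching it cleanly to the first sign change of $\det A_i$; the two linear–algebra inputs are standard but must be set up with the correct symmetrizing conjugation. The case $n=2$ is trivial, all index ranges above degenerating (cf. Remark \ref{r-n=2}).
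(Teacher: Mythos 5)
Your proof is correct, and it takes a recognizably different route from the paper's, even though the algebraic core (read the step map \textnormal{(\ref{eq-psigen})} as a three--term recursion on the partial sums $T_j$ in the linear regime, and match it against the cofactor expansion of $\det A_j(\lambda)$) is the same. The paper delegates the entire spectral bookkeeping to Lemma \ref{l-matt} in the appendix (quoted as a black box) and then uses a continuity argument that hinges on Corollary \ref{c-T} to propagate the identity $T_{i+1}=-\pi(1)\det A_i$ from a single small $\widetilde\lambda$ up to $\lambda^{(i-1)}$; it also uses Corollary \ref{c-T} again for the identification $\lambda^{(n-1)}=\lambda^G_{\pi,\nu}$. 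You instead make the linear--algebra input self--contained: the conjugation $A_i(\lambda)=A_i(0)-\lambda B_i$, symmetrization of $A_i(0)$, and Sylvester's criterion give the equivalence $\lambda<\lambda^{(i)}\iff\det A_k(\lambda)>0$ for all $k\le i$, and the semidefiniteness fact rules out extra vanishing of $\det A_i$ on $(\lambda^{(i)},\lambda^{(i-1)})$. This lets you fold the interlacing, the sign pattern, and the identity $T_{i+1}=-\pi(1)\det A_i$ into a single clean induction $R(i)$, bypassing Corollary \ref{c-T} entirely; you only need Corollary \ref{c-polish} once at the end, applied to the strictly monotone solution at $\lambda=\lambda^{(n-1)}$. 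Your approach buys a more transparent, self--contained account of why the $\lambda^{(i)}$ decrease and why the determinant signs flip exactly once on the relevant windows; the paper's buys brevity by reusing Lemma \ref{l-matt} and the already--proved Corollary \ref{c-T}, at the cost of a slightly less direct continuity step. Both handle the delicate point---tracking where the "linear regime" of \textnormal{(\ref{eq-psigen})} ends as $\lambda$ varies---correctly, and the only place your write--up is terse, the verification that $\phi_{\lambda^{(n-1)}}$ actually satisfies \textnormal{(\ref{eq-EL})} at $i=n$ (which follows from $T_n=0$ and the telescoping $E_{n-1}=-\lambda T_{n-1}$), is easily supplied.
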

\begin{proof}
By Lemma \ref{l-matt}, $\lambda^{(1)}>\lambda^{(2)}>\cdots>\lambda^{(n-1)}>0$ and, for $1\le i\le n-1$,
\begin{equation}\label{eq-ail2}
 \det A_i(\lambda)\begin{cases}>0&\forall \lambda\in(-\infty,\lambda^{(i)})\\<0&\forall \lambda\in(\lambda^{(i)},\lambda^{(i-1)})\end{cases},
\end{equation}
where $\lambda^{(0)}=\infty$. Note that if $T_i(\lambda)<0$ for some $1\le i\le n-1$, then
\[
 \phi_\lambda(j+1)=\phi_\lambda(j)+\frac{[\phi_\lambda(j)-\phi_\lambda(j-1)]\nu(j-1,j)
 -\lambda\pi(j)\phi_\lambda(j)}{\nu(j,j+1)},\quad\forall 1\le j\le i.
\]
This implies
\begin{equation}\label{eq-phil}
 \phi_\lambda(\ell+1)=\phi_\lambda(\ell)-\frac{\lambda}{\nu(\ell,\ell+1)}\sum_{j=1}^\ell
 \pi(j)\phi_\lambda(j),\quad\forall 1\le\ell\le i.
\end{equation}
Multiplying $\pi(\ell+1)$ and adding up $T_{\ell}(\lambda)$ yields
\[
 T_{\ell+1}(\lambda)=a_\ell(\lambda)T_\ell(\lambda)-\frac{\pi(\ell+1)}{\pi(\ell)}T_{\ell-1}(\lambda),
 \quad\forall 1\le \ell\le i.
\]
From the above discussion, we conclude that if $T_i(\lambda)<0$, then
\begin{equation}\label{eq-TA}
 T_{\ell+1}(\lambda)=-\pi(1)\det A_\ell(\lambda),\quad \forall 1\le\ell\le i.
\end{equation}
When $\ell=i-1$, (\ref{eq-ail2}) implies $\det A_{i-1}(\lambda)>0$ for $\lambda<\lambda^{(i-1)}$. By the continuity of $T_i$ and $\det A_{i-1}$, if there is some $\lambda<\lambda^{(i-1)}$ such that $T_i(\lambda)<0$, then $T_i(\lambda)=-\pi(1)\det A_{i-1}(\lambda)$ for $\lambda<\lambda^{(i-1)}$. As a consequence of (\ref{eq-TA}) with $\ell=i$, this will imply $T_{i+1}(\lambda)=-\pi(1)\det A_i(\lambda)$ for $\lambda<\lambda^{(i-1)}$. Hence, it remains to show that $T_i(\lambda)<0$ for some $\lambda<\lambda^{(i-1)}$. To see this,
according to Corollary \ref{c-T}, one can choose a constant $\widetilde{\lambda}<\min\{\lambda^G_{\pi,\nu},\lambda^{(i-1)}\}$ such that $T_{n-1}(\widetilde{\lambda})<0$. Since $\phi_\lambda(i)$ is non-decreasing in $i$, we obtain $T_i(\widetilde{\lambda})<0$, as desired. This proves $T_{i+1}(\lambda)=-\pi(1)\det A_i(\lambda)$ for $\lambda<\lambda^{(i-1)}$. In particular, $T_n(\lambda)=-\pi(1)\det A_{n-1}(\lambda)$ for $\lambda<\lambda^{(n-2)}$. By Corollary \ref{c-T}, we have $\lambda^{(n-1)}=\lambda^G_{\pi,\nu}$. This proves Proposition \ref{p-shape} (1).

Next, observe that, for $\lambda\in(\lambda^{(i)},\lambda^{(i-1)})$,
\[
 \sum_{j=1}^{i+1}\pi(j)\phi_\lambda(j)=T_{i+1}(\lambda)>0,\quad \sum_{j=1}^i\pi(j)\phi_\lambda(j)=T_i(\lambda)<0.
\]
By (\ref{eq-phil}), it is easy to see that $[\phi_\lambda(i+1)-\phi_\lambda(i)]\nu(i,i+1)=-\lambda T_i(\lambda)>0$ and
\begin{align}
 &[\phi_\lambda(i+2)-\phi_\lambda(i+1)]\nu(i+1,i+2)\notag\\
 =&\left\{[\phi_\lambda(i+1)-\phi_\lambda(i)]\nu(i,i+1)
 -\lambda\pi(i+1)\phi_\lambda(i+1)\right\}^+\notag\\
=&\{-\lambda T_{i+1}(\lambda)\}^+=0.\notag
\end{align}
This proves Proposition \ref{p-shape} (2). To prove Proposition \ref{p-shape} (3), we use (1) to derive
\[
 T_{n-1}(\lambda)=-\pi(1)\det A_{n-2}(\lambda)<0,\quad\forall \lambda\in(0,\lambda^{(n-2)}).
\]
Using (\ref{eq-phil}), this implies $\phi_\lambda(n-1)<\phi_\lambda(n)$.
The last part of Proposition \ref{p-shape} follows easily from (\ref{eq-ail2}) and the  fact that
\[
 T_i(\lambda)\ge 0\Ra T_{i+1}(\lambda)>0  \mbox{ and }
 T_i(\lambda)\le 0\Ra T_{i-1}(\lambda)<0.
\]
\end{proof}

\begin{rem}\label{r-ini}
In Proposition \ref{p-shape}, if $\lambda>\lambda^{(1)}=\nu(1,2)[\pi(1)^{-1}+\pi(2)^{-1}]$, then $\phi_\lambda(i)=\phi_\lambda(2)$ for $i=2,...,n$. Note that, for $\lambda\ge\lambda^{(1)}$, $\phi_\lambda(2)=-1+\lambda\pi(1)/\nu(1,2)$ and
\[
 \pi(\phi_\lambda)=-1+\frac{\lambda\pi(1)(1-\pi(1))}{\nu(1,2)},\quad
\text{Var}_\pi(\phi_\lambda)=\frac{\lambda^2\pi(1)^3(1-\pi(1))}{\nu(1,2)^2}.
\]
By (\ref{eq-L}), this leads to $L(\lambda)=\nu(1,2)/[\pi(1)(1-\pi(1)]$ for $\lambda\ge\lambda^{(1)}$. In the case $n=2$, it is clear that $\nu(1,2)/[\pi(1)(1-\pi(1)]=\nu(1,2)[\pi(1)^{-1}+\pi(2)^{-1}]=\lambda^G_{\pi,\nu}$.
\end{rem}

\section{Convergence to other eigenvalues}\label{s-framework}
In this section, we generalize the algorithms (\ref{alg-sp}) and (\ref{alg-sp2})
so that they can be applied for the computation to any specified eigenvalue.

\subsection{Basic setup and fundamental results}
Recall that $G$ is a graph with vertex set $V=\{1,2,...,n\}$ and edge set $E=\{\{i,i+1\}|i=1,2,...,n-1\}$. Given two positive measures $\pi,\nu$ on $V,E$ with $\pi(V)=1$, let $M^G_{\pi,\nu}$ be a $n$-by-$n$ matrix defined in the introduction and given by
\begin{equation}\label{eq-M}
 M^G_{\pi,\nu}(i,j)=\begin{cases}-\nu(i,j)/\pi(i)&\text{if }|i-j|=1\\ [\nu(i-1,i)+\nu(i,i+1)]/\pi(i)&\text{if }j=i\\0&\text{if }|i-j|>1\end{cases}.
\end{equation}
Since $\nu$ is positive everywhere and $M^G_{\pi,\nu}$ is tridiagonal, all eigenvalues of $M^G_{\pi,\nu}$ have algebraic multiplicity $1$. Throughout this section, let $\{\lambda^G_0<\lambda_1^G<\cdots<\lambda_{n-1}^G\}$ denote the eigenvalues of $M^G_{\pi,\nu}$ with associated $L^2(\pi)$-normalized eigenvectors $\zeta_0=\mathbf{1},\zeta_2,...,\zeta_{n-1}$. Clearly, $\lambda^G_0=0$, $\lambda_1^G=\lambda^G_{\pi,\nu}$ and, for $1\le k\le n$,
\begin{equation}\label{eq-EL2}
 \lambda_i^G\zeta_i(k)\pi(k)=[\zeta_i(k)-\zeta_i(k-1)]\nu(k-1,k)+[\zeta_i(k)-\zeta_i(k+1)]\nu(k,k+1).
\end{equation}
Let $1\le i\le n-1$. As $\zeta_i$ is non-constant, it is clear that $\zeta_i(1)\ne\zeta_i(2)$ and $\zeta_i(n-1)\ne\zeta_i(n)$. Moreover, if $\zeta_i(k)=\zeta_i(k+1)$ for some $1<k<n$, then $\zeta_i(k)\ne\zeta_i(k-1)$ and $\zeta_i(k+1)\ne\zeta_i(k+2)$. Gantmacher and Krein \cite{GK37} showed that there are exactly $i$ sign changes for $\zeta_i$ with $1\le i\le n$. Miclo \cite{M08} gives a detailed description on the shape of $\zeta_i$ as follows.

\begin{thm}\label{t-Miclo2}
For $1\le i\le n-1$, let $\zeta_i$ be an eigenvector associated to the $i$th smallest non-zero eigenvalue of the matrix in \textnormal{(\ref{eq-M})} with $\zeta_i(1)<0$. Then, there are $1=a_1<b_1\le a_2<b_2\le\cdots\le a_i<b_i=n$ with $a_{j+1}-b_j\in\{0,1\}$ such that $\zeta_i$ is strictly increasing on $[a_j,b_j]$ for odd $j$ and is strictly decreasing on $[a_j,b_j]$ for even $j$, and $\zeta_i(a_{j+1})=\zeta_i(b_j)$ for $1\le j<i$.
\end{thm}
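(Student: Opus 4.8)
The plan is to combine the eigenvalue equation (\ref{eq-EL2}) with the Gantmacher--Krein count of sign changes, reading off the sign of each coordinate of $\zeta:=\zeta_i$ from its neighbouring increments. Normalize so that $\zeta(1)<0$, and write $d_k:=\zeta(k+1)-\zeta(k)$; with the convention $\nu(0,1)=\nu(n,n+1)=0$, equation (\ref{eq-EL2}) becomes $\lambda_i^G\pi(k)\zeta(k)=d_{k-1}\nu(k-1,k)-d_k\nu(k,k+1)$. Since $\lambda_i^G>0$ and $\pi>0$, this identity, together with the facts already recorded in the excerpt ($\zeta(1)\ne\zeta(2)$, $\zeta(n-1)\ne\zeta(n)$, and the impossibility of three consecutive equal interior values), will drive everything.

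First I would record the boundary and local behaviour. At $k=1$ the identity reads $\lambda_i^G\pi(1)\zeta(1)=-d_1\nu(1,2)$, so $\zeta(1)<0$ forces $d_1>0$: $\zeta$ starts strictly increasing, which is why $j=1$ is an increasing piece. At $k=n$ it reads $\lambda_i^G\pi(n)\zeta(n)=d_{n-1}\nu(n-1,n)$, so $\mathrm{sgn}\,\zeta(n)=\mathrm{sgn}\,d_{n-1}\neq 0$. Then comes a \emph{sign lemma}: if $1<k<n$ and $d_{k-1}>0\ge d_k$ (a local maximum, possibly the left end of a two--point plateau), the identity gives $\lambda_i^G\pi(k)\zeta(k)=d_{k-1}\nu(k-1,k)-d_k\nu(k,k+1)>0$, hence $\zeta(k)>0$; dually at a local minimum $\zeta(k)<0$. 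Moreover, if $d_{k-1}>0$ and $d_k=0$ then $\zeta(k+1)=\zeta(k)>0$, and evaluating the identity at $k+1$ (interior since $\zeta(n-1)\ne\zeta(n)$) yields $d_{k+1}<0$: a plateau reached from below is left going down, and dually. Finally, if $\zeta(k)=0$ at an interior $k$, the identity forces $d_{k-1}\nu(k-1,k)=d_k\nu(k,k+1)$, so $d_{k-1}$ and $d_k$ have the same sign and, not both being $0$ by the no--triple rule, $\zeta$ is strictly monotone across $k$; thus the zeros of $\zeta$ lie strictly inside maximal monotone runs, and $\zeta(1)$, $\zeta(n)$, and every plateau value are nonzero.

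With these in hand I would build the $a_j,b_j$ inductively. Put $a_1=1$; having chosen $a_j$ at which $\zeta$ begins a run that is prescribed to be increasing for odd $j$ and decreasing for even $j$, let $b_j$ be the largest index up to which $\zeta$ is strictly monotone in that direction. If $b_j=n$, stop. Otherwise $1<b_j<n$: the sign lemma gives $\zeta(b_j)>0$ for odd $j$ and $<0$ for even $j$; if $d_{b_j}\ne 0$ set $a_{j+1}=b_j$, while if $d_{b_j}=0$ the sign lemma shows $d_{b_j+1}$ has the opposite sign and we set $a_{j+1}=b_j+1$. In both cases $a_{j+1}-b_j\in\{0,1\}$, $\zeta(a_{j+1})=\zeta(b_j)$, and $\zeta$ starts at $a_{j+1}$ a strictly monotone run in the direction opposite to run $j$; the indices strictly increase, so the process stops at some $j=i'$ with $b_{i'}=n$. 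It then remains only to show $i'=i$, and the Gantmacher--Krein count does this: each run $[a_j,b_j]$ is strictly monotone and, by the sign computations, passes from a nonzero value of one sign at its left anchor ($\zeta(1)<0$ for $j=1$, else $\zeta(a_j)=\zeta(b_{j-1})$) to a nonzero value of the opposite sign at its right anchor ($\zeta(b_j)$ for $j<i'$, and $\zeta(n)$ with $\mathrm{sgn}\,\zeta(n)=\mathrm{sgn}\,d_{n-1}$ for $j=i'$), so $\zeta$ changes sign exactly once inside each run; across the junctions $b_j\mapsto a_{j+1}$ it does not, since $\zeta(a_{j+1})=\zeta(b_j)\ne 0$. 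Hence $\zeta$ has exactly $i'$ sign changes, which equals $i$, so $i'=i$.

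The step I expect to be the main obstacle is pinning this count down cleanly: verifying that consecutive runs are genuinely oppositely monotone (that a plateau or single turning point is a true extremum, not an inflection) and that the interior zeros of $\zeta$ are accounted for so that the sign--change total is exactly the number of runs; the endpoint cases $b_1=n$ and the parity of $i'$ at the last run need separate but routine checking. One could alternatively mimic Miclo by applying Theorem \ref{t-Miclo} (via Courant--Fischer) to the tridiagonal blocks obtained by restricting $M^G_{\pi,\nu}$ to subpaths determined by the nodal structure, but the direct argument above uses only tools already available in this excerpt.
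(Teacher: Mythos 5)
The paper cites Theorem \ref{t-Miclo2} from Miclo \cite{M08} and gives no proof of its own, so there is no in-paper argument to compare against; your derivation has to stand on its own merits, and it is correct. Writing $d_k=\zeta(k+1)-\zeta(k)$, the boundary cases of (\ref{eq-EL2}) with $\nu(0,1)=\nu(n,n+1)=0$ give $\lambda_i^G\pi(1)\zeta(1)=-d_1\nu(1,2)$ and $\lambda_i^G\pi(n)\zeta(n)=d_{n-1}\nu(n-1,n)$, forcing $d_1>0$ and $\mathrm{sgn}\,\zeta(n)=\mathrm{sgn}\,d_{n-1}\ne 0$; your sign lemma (a turning point or plateau reached from below carries a strictly positive value, dually for one reached from above, and the increment leaving a two--point plateau has the opposite sign) and your zero lemma (an interior zero forces $d_{k-1}$ and $d_k$ to share a sign and hence lies strictly inside a strictly monotone run, so $\zeta(1)$, $\zeta(n)$ and all plateau values are nonzero) both fall out of (\ref{eq-EL2}) together with the facts the paper records immediately before the theorem ($\zeta(1)\ne\zeta(2)$, $\zeta(n-1)\ne\zeta(n)$, no interior triple). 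The inductive construction of the peak--valley points then yields alternating strictly monotone runs $[a_j,b_j]$ with $a_{j+1}-b_j\in\{0,1\}$ and $\zeta(a_{j+1})=\zeta(b_j)$, each run contributing exactly one sign change and no sign change occurring across a junction, so the Gantmacher--Krein count (which the paper also invokes) identifies the number of runs as $i$. One detail worth making fully explicit is that $d_{b_j}=0$ cannot hold with $b_j=n-1$ because $\zeta(n-1)\ne\zeta(n)$; this guarantees $a_{j+1}=b_j+1$ is always an interior index where the sign lemma for the departing increment applies, and your parenthetical ``interior since $\zeta(n-1)\ne\zeta(n)$'' is precisely this observation. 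Relative to the alternative you sketch at the end (restricting to nodal blocks and using Courant--Fischer, closer to Miclo's own route), the present argument is more elementary and local, at the cost of taking the classical Gantmacher--Krein oscillation count as a black box --- which is legitimate here since the paper itself states that result.
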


In the following, we make some analysis related to the Euler-Lagrange equations in (\ref{eq-EL2}).

\begin{defn}\label{def-type}
Fix $n\ge 1$ and let $f$ be a function on $\{1,2,...,n\}$. For $1\le i\le n-1$, $f$ is called ``Type $i$'' if there are $1=a_1<b_1\le a_2<b_2\le\cdots\le a_i<b_i\le n$ satisfying $a_{j+1}-b_j\in\{0,1\}$ such that
\begin{itemize}
\item[(1)] $f$ is strictly monotonic on $[a_j,b_j]$ for $1\le j\le i$.

\item[(2)] $[f(a_j)-f(a_j+1)][f(a_{j+1})-f(a_{j+1}+1)]<0$ for $1\le j<i$.

\item[(3)] $f(a_{j+1})=f(b_j)$, for $1\le j<i$, and $f(k)=f(b_i)$, for $b_i\le k\le n$.
\end{itemize}
The points $a_j,b_j$ will be called ``peak-valley points'' in this paper.
\end{defn}
\begin{rem}
Note that the difference between Definition \ref{def-type} and Theorem \ref{t-Miclo2} is the requirement $b_i\le n$, instead of $b_i=n$. By Theorem \ref{t-Miclo2}, any eigenvector associated to the $i$th smallest non-zero eigenvalue of the matrix in (\ref{eq-M}) must be of type $i$ with $b_i=n$.
\end{rem}

\begin{defn}\label{def-xil}
Let $\pi,\nu$ be positive measures on $V,E$ with $\pi(V)=1$. For $\lambda\in\mathbb{R}$, let $\xi_\lambda$ be a function on $\{1,2,...,n\}$ defined by $\xi_\lambda(1)=-1$ and, for $1\le k<n$,
\[
 \xi_\lambda(k+1)=\xi_\lambda(k)+\frac{[\xi_\lambda(k)-\xi_\lambda(k-1)]\nu(k-1,k)
-\lambda\pi(k)\xi_\lambda(k)}{\nu(k,k+1)}.
\]
\end{defn}

\begin{rem}\label{r-xi1}
Note that $\xi_0=-\mathbf{1}$ and, for $\lambda<0$, $\xi_\lambda$ is strictly decreasing and of type $1$. For $\lambda>0$, if $\xi_\lambda(k-1)<\xi_\lambda(k)=\xi_\lambda(k+1)$, then $\xi_\lambda(k)>0$ and this implies $\xi_\lambda(k+2)<\xi_\lambda(k+1)$. Similarly, if $\xi_\lambda(k-1)>\xi_\lambda(k)=\xi_\lambda(k+1)$, then $\xi_\lambda(k)<0$ and $\xi_\lambda(k+2)>\xi_\lambda(k+1)$. Thus, $\xi_\lambda$ must be of type $i$ for some $1\le i\le n-1$.
\end{rem}

\begin{lem}\label{l-xi}
For $\lambda>0$, let $\xi_\lambda$ be the function in Definition \ref{def-xil}. Suppose that $\xi_\lambda$ is of type $i$ with $1\le i\le n-1$.
\begin{itemize}
\item [(1)] If $\xi_\lambda(n-1)\ne\xi_\lambda(n)$, then there is $\epsilon>0$ such that $\xi_{\lambda+\delta}$ is of type $i$ for $-\epsilon<\delta<\epsilon$.

\item[(2)] If $\xi_\lambda(n-1)=\xi_\lambda(n)$, then there is $\epsilon>0$ such that $\xi_{\lambda+\delta}$ is of type $i+1$ and $\xi_{\lambda-\delta}$ is of type $i$ for $0<\delta<\epsilon$.
\end{itemize}
\end{lem}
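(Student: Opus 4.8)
Here is a plan for proving Lemma~\ref{l-xi}.

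The plan is to recode the combinatorial notion of ``type'' in terms of the sign pattern of the partial sums of $\xi_\lambda$ and then to deduce both parts from a continuity argument backed by a Wronskian estimate. First I would pass to these partial sums: for $0\le\ell\le n$ set $S_\ell(\lambda)=\sum_{j=1}^\ell\pi(j)\xi_\lambda(j)$, so $S_0\equiv0$ and $S_1(\lambda)=-\pi(1)$. Rewriting the recursion of Definition~\ref{def-xil} as $[\xi_\lambda(k+1)-\xi_\lambda(k)]\nu(k,k+1)-[\xi_\lambda(k)-\xi_\lambda(k-1)]\nu(k-1,k)=-\lambda\pi(k)\xi_\lambda(k)$ and summing over $1\le k\le\ell$, the left side telescopes (using $\nu(0,1)=0$) to
\begin{equation}
[\xi_\lambda(\ell+1)-\xi_\lambda(\ell)]\,\nu(\ell,\ell+1)=-\lambda\,S_\ell(\lambda),\qquad 1\le\ell<n.\tag{$\star$}
\end{equation}
Hence for $\lambda>0$ the increment $\xi_\lambda(\ell+1)-\xi_\lambda(\ell)$ has sign $-\operatorname{sign}S_\ell(\lambda)$, and each $\xi_\lambda(k)$, hence each $S_\ell$, is a polynomial in $\lambda$. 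Using $(\star)$ together with Remark~\ref{r-xi1} I would check that a flat spot $\xi_\lambda(k)=\xi_\lambda(k+1)$ with $1\le k<n-1$ always has length exactly one and separates two oppositely oriented strictly monotone runs, that $\xi_\lambda(n-1)=\xi_\lambda(n)$ iff $S_{n-1}(\lambda)=0$, and consequently that $\xi_\lambda$ is of type $1+c(\lambda)$, where $c(\lambda)$ denotes the number of sign changes of the word $\operatorname{sign}S_1(\lambda)\,\operatorname{sign}S_2(\lambda)\cdots\operatorname{sign}S_{n-1}(\lambda)$, read left to right with all zeros deleted.

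Next I would establish a Wronskian estimate. Feeding $(\star)$ into $S_{\ell+1}=S_\ell+\pi(\ell+1)\xi_\lambda(\ell+1)$ and using $\pi(\ell+1)\xi_\lambda(\ell)=\tfrac{\pi(\ell+1)}{\pi(\ell)}(S_\ell-S_{\ell-1})$ gives the three-term recursion $S_{\ell+1}=\bigl(1+\tfrac{\pi(\ell+1)}{\pi(\ell)}-\tfrac{\lambda\pi(\ell+1)}{\nu(\ell,\ell+1)}\bigr)S_\ell-\tfrac{\pi(\ell+1)}{\pi(\ell)}S_{\ell-1}$, $1\le\ell<n$, whose coefficient of $S_\ell$ has $\lambda$-derivative $-\pi(\ell+1)/\nu(\ell,\ell+1)<0$. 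Differentiating in $\lambda$ and putting $W_\ell=S_\ell'S_{\ell-1}-S_{\ell-1}'S_\ell$, a short computation gives $W_{\ell+1}=\tfrac{\pi(\ell+1)}{\pi(\ell)}W_\ell-\tfrac{\pi(\ell+1)}{\nu(\ell,\ell+1)}S_\ell^2$; since $W_1\equiv0$ and $W_2(\lambda)=-\pi(2)\pi(1)^2/\nu(1,2)<0$, induction yields $W_\ell(\lambda)<0$ for every $\lambda$ and every $2\le\ell\le n-1$. From this I extract: (a) every zero of every $S_\ell$ is simple; (b) $S_{\ell-1}$ and $S_\ell$ never vanish simultaneously; (c) if $S_{n-1}(\lambda)=0$ then $S_{n-2}(\lambda)\ne0$ and $S_{n-1}'(\lambda)\,S_{n-2}(\lambda)=W_{n-1}(\lambda)<0$, so $S_{n-1}'(\lambda)$ and $S_{n-2}(\lambda)$ have opposite signs.

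Then the lemma itself. Fix $\lambda>0$ with $\xi_\lambda$ of type $i$, so $c(\lambda)=i-1$, and pick $\epsilon>0$ so small that for $0<|\delta|<\epsilon$ each $S_\ell(\lambda+\delta)$ keeps the (nonzero) sign of $S_\ell(\lambda)$ when $S_\ell(\lambda)\ne0$ and is nonzero when $S_\ell(\lambda)=0$; this is legitimate by (a) and continuity. For part (1), $S_{n-1}(\lambda)\ne0$, so by (b) every vanishing $S_\ell(\lambda)$ has $\ell<n-1$ and, by Remark~\ref{r-xi1}, lies between two fixed, opposite, nonzero signs; filling that slot with either sign leaves the local count of sign changes at one, so $c(\lambda+\delta)=c(\lambda)$ and $\xi_{\lambda+\delta}$ is of type $i$ (also $b_i=n$ here). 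For part (2), $S_{n-1}(\lambda)=0$, so $S_{n-2}(\lambda)\ne0$ by (c); the same reasoning applied to $\operatorname{sign}S_1(\lambda+\delta)\cdots\operatorname{sign}S_{n-2}(\lambda+\delta)$ keeps its sign-change count at $i-1$ with $\operatorname{sign}S_{n-2}(\lambda+\delta)=\operatorname{sign}S_{n-2}(\lambda)$, whereas $S_{n-1}(\lambda+\delta)=S_{n-1}'(\lambda)\delta+o(\delta)$ is nonzero with sign $-\operatorname{sign}S_{n-2}(\lambda)\cdot\operatorname{sign}\delta$ by (c); thus for $\delta>0$ one extra sign change is appended, giving $\xi_{\lambda+\delta}$ of type $i+1$, and for $\delta<0$ none is, giving $\xi_{\lambda-\delta}$ of type $i$.

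The main obstacle is the Wronskian positivity $W_\ell<0$: besides forcing the zeros of $S_{n-1}$ to be simple, it is what pins down on which side of such a zero the new sign change of the word appears, which is exactly the asymmetry (``$\lambda+\delta$ raises the type, $\lambda-\delta$ keeps it'') asserted in part (2); everything else is bookkeeping on the sign word. One must also be somewhat careful in the first step to derive, from Remark~\ref{r-xi1} and $(\star)$ alone, that interior flat spots of $\xi_\lambda$ have length one and that a terminal flat spot contributes no monotone run, since this is what guarantees that the type of $\xi_\lambda$ equals $1+c(\lambda)$.
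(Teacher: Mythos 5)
Your proposal is correct, and the route is genuinely different from the paper's. The paper carries part (2) by identifying the partial sums with characteristic polynomials, $T_{i+1}(\lambda)=-\pi(1)\det A_i(\lambda)$, invoking the interlacing lemma (Lemma~\ref{l-matt}) for the root sets of $\det A_{n-2}$ and $\det A_{n-1}$, and reading off the monotonicity of $\det A_{n-2}$ near its roots from the behavior of both determinants at $\lambda\to-\infty$. You instead differentiate the three-term recursion for $S_\ell$ in $\lambda$ and control the discrete Wronskian (Casoratian) $W_\ell=S_\ell'S_{\ell-1}-S_{\ell-1}'S_\ell$, showing $W_\ell<0$ for $\ell\ge2$ by a one-line induction. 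This gives, in one stroke, simplicity of the zeros of each $S_\ell$, that consecutive $S_{\ell-1},S_\ell$ never vanish together, and the sign relation $S_{n-1}'(\lambda)S_{n-2}(\lambda)<0$ at a zero of $S_{n-1}$ — exactly the asymmetry that distinguishes $\delta>0$ from $\delta<0$ in part (2). Your reduction of ``type'' to the count $1+c(\lambda)$ of sign changes in the word $\operatorname{sign}S_1,\dots,\operatorname{sign}S_{n-1}$ with zeros deleted is also a tidy recoding that makes part (1) a pure continuity statement. Two small remarks worth making explicit if you write this up: the opposite-sign property of $S_{\ell-1}$ and $S_{\ell+1}$ at a zero $S_\ell=0$ follows directly from the three-term recursion ($S_{\ell+1}=-\tfrac{\pi(\ell+1)}{\pi(\ell)}S_{\ell-1}$) without appealing to Remark~\ref{r-xi1}, and one should choose $\epsilon$ small enough that $\lambda-\epsilon>0$ so the sign computation $(\star)$ applies throughout the perturbed range. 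Compared with the paper's argument, yours is more self-contained (it does not rely on Lemma~\ref{l-matt} or the determinant identity), and it is closer in spirit to classical Sturm oscillation theory; the paper's version, on the other hand, exposes the global root-interlacing structure that it reuses elsewhere (e.g.\ Proposition~\ref{p-xi} and Proposition~\ref{p-shape2}).
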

\begin{proof}
Let $a_j,b_j$ be the peak-valley points of $\xi_\lambda$. By the continuity of $\xi_\lambda$ in $\lambda$ and Remark \ref{r-xi1}, one can choose $\epsilon>0$ such that, for $\delta\in(-\epsilon,\epsilon)$, $\xi_{\lambda+\delta}$ remains strictly monotonic on $[a_j,b_j]$ for $j=1,...,i$ and
\[
 [\xi_{\lambda+\delta}(b_j-1)-\xi_{\lambda+\delta}(b_j)]
[\xi_{\lambda+\delta}(a_{j+1}+1)-\xi_{\lambda+\delta}(a_{j+1})]>0,
\]
for $1\le j<i$. In (1), $b_i=n$. Fix $\delta\in(-\epsilon,\epsilon)$ and set $a_1'=a_1=1$, $b_i'=b_i=n$.  For $1<j<i$, set
\[
 \begin{cases}b_j'=a_{j+1}'=b_j &\text{if } [\xi_{\lambda+\delta}(b_j-1)-\xi_{\lambda+\delta}(b_j)][\xi_{\lambda+\delta}(b_j)-\xi_{\lambda+\delta}(a_{j+1})]<0\\
  b_j'=a_{j+1}'=a_{j+1} &\text{if }[\xi_{\lambda+\delta}(b_j-1)-\xi_{\lambda+\delta}(b_j)][\xi_{\lambda+\delta}(b_j)-\xi_{\lambda+\delta}(a_{j+1})]>0\\
  b_j'=b_j,\,a_{j+1}'=a_{j+1}&\text{if }[\xi_{\lambda+\delta}(b_j-1)-\xi_{\lambda+\delta}(b_j)][\xi_{\lambda+\delta}(b_j)-\xi_{\lambda+\delta}(a_{j+1})]=0
 \end{cases}.
\]
Clearly, $\xi_{\lambda+\delta}$ is of type $i$ with peak-valley points $a_j',b_j'$. This proves Lemma \ref{l-xi} (1).

For part (2), we consider $i\le n-2$ and $b_i=n-1$. By similar argument as before, one can choose $\epsilon>0$ such that the restriction of $\xi_{\lambda+\delta}$ to $\{1,2,...,n-1\}$ is of type $i$ for $\delta\in(-\epsilon,\epsilon)$. To finish the proof, it remains to compare $\xi_{\lambda+\delta}(n-1)$ and $\xi_{\lambda+\delta}(n)$. Recall that $T_j(\lambda)=\sum_{k=1}^j\xi_\lambda(k)\pi(k)$ as in the proof for Proposition \ref{p-shape}. Using a similar reasoning as for (\ref{eq-TA}), one shows that $T_{i+1}(\lambda)=-\pi(1)\det A_i(\lambda)$ for $1\le i<n$, where $A_i(\lambda)$ is the matrix in (\ref{eq-ail}). This implies that the non-zero eigenvalues of $M^G_{\pi,\nu}$, say $\lambda^G_1,...,\lambda^G_{n-1}$, are the roots of $\det A_{n-1}(\lambda)=0$. As a consequence of Lemma \ref{l-matt}, $\det A_{n-2}(\lambda)=0$ has exactly $n-2$ distinct roots, say $\alpha_1<\alpha_2<\cdots<\alpha_{n-1}$, and they satisfy the interlacing property $\lambda^G_j<\alpha_j<\lambda^G_{j+1}$ for $1\le j\le n-2$. Note that $\det A_{n-2}(\lambda)$ and $\det A_{n-1}(\lambda)$ tend to infinity as $-\lambda$ tends to infinity. This leads to the fact that if $\det A_{n-2}(\lambda)=0$ and $\det A_{n-1}(\lambda)<0$, then $\det A_{n-2}(\cdot)$ is strictly decreasing in a neighborhood of $\lambda$. If $\det A_{n-2}(\lambda)=0$ and $\det A_{n-1}(\lambda)>0$, then $\det A_{n-2}(\cdot)$ is strictly increasing in a neighborhood of $\lambda$.

Back to the proof of (2). Suppose that $\xi_\lambda(n-2)<\xi_\lambda(n-1)$. By Remark \ref{r-xi1}, it is easy to check that  $T_{n-1}(\lambda)=0$ and $T_n(\lambda)>0$ or, equivalently, $\det A_{n-2}(\lambda)=0$ and $\det A_{n-1}(\lambda)<0$. According to the conclusion in the previous paragraph, we can find $\epsilon>0$ such that $\det A_{n-2}(\cdot)$ is strictly decreasing on $(\lambda-\epsilon,\lambda+\epsilon)$, which yields
\[
 \xi_{\lambda+\delta}(n)=\xi_{\lambda+\delta}(n-1)-\frac{(\lambda+\delta)T_{n-1}(\lambda+\delta)}
{\nu(n-1,n)}\begin{cases}<\xi_{\lambda+\delta}(n-1)&\text{if }0<\delta<\epsilon\\>\xi_{\lambda+\delta}(n-1)&\text{if }-\epsilon<\delta<0\end{cases}.
\]
This gives the desired property in Lemma \ref{l-xi} (2). The other case, $\xi_\lambda(n-2)>\xi_\lambda(n-1)$, can be proved in the same way and we omit
the details.
\end{proof}

The following proposition characterizes the shape of $\xi_\lambda$ for $\lambda>0$.

\begin{prop}\label{p-xi}
For $\lambda>0$, let $\xi_\lambda$ be the function in {\em Definition
\ref{def-xil}}. Let $\lambda^G_1<\cdots<\lambda^G_{n-1}$ be non-zero eigenvalues of $M^G_{\pi,\nu}$ in \textnormal{(\ref{eq-M})} and $\alpha_1<\cdots<\alpha_{n-2}$ be zeros of $\det A_{n-2}(\lambda)$, where $A_{n-2}(\cdot)$ is the matrix in \textnormal{(\ref{eq-ail})}. Then,
\begin{itemize}
\item[(1)] $\lambda^G_j<\alpha_j<\lambda^G_{j+1}$, for $1\le j\le n-2$.

\item[(2)] $\xi_\lambda$ is of type $j$ for $\lambda\in(\alpha_{j-1},\alpha_j]$ and $1\le j\le n-1$, where $\alpha_0:=0$ and $\alpha_{n-1}:=\infty$.
\end{itemize}
\end{prop}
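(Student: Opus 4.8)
The plan is to establish Proposition \ref{p-xi} by combining the local "type-transition" behavior from Lemma \ref{l-xi} with the global relation $T_{i+1}(\lambda)=-\pi(1)\det A_i(\lambda)$ and the interlacing facts already extracted inside the proof of Lemma \ref{l-xi}. Part (1) is essentially free: it is exactly the interlacing property $\lambda^G_j<\alpha_j<\lambda^G_{j+1}$ that was obtained as a consequence of Lemma \ref{l-matt} during the proof of Lemma \ref{l-xi}, coming from the fact that $\det A_{n-2}$ and $\det A_{n-1}$ are, up to sign, the characteristic-type polynomials of nested Jacobi matrices and hence have interlacing zeros; I would simply restate that and cite Lemma \ref{l-matt}.

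For part (2) I would argue by tracking the type of $\xi_\lambda$ as $\lambda$ increases through $(0,\infty)$. First I would record the base case: by Remark \ref{r-xi1}, $\xi_\lambda$ is of type $1$ for all $\lambda$ just above $0$ (indeed $\xi_0=-\mathbf 1$ and small positive $\lambda$ keeps $\xi_\lambda$ strictly increasing, using the sign-change observations there). The key dynamical input is Lemma \ref{l-xi}: the type of $\xi_\lambda$ can only change at values of $\lambda$ where $\xi_\lambda(n-1)=\xi_\lambda(n)$, and at such a value the type jumps up by exactly one as $\lambda$ crosses it (from type $i$ below to type $i+1$ above), while at all other $\lambda$ the type is locally constant. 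So the set where the type equals $j$ is (relatively) open, the types are nondecreasing in $\lambda$ and increase in unit steps, and the transition points are precisely the solutions of $\xi_\lambda(n-1)=\xi_\lambda(n)$. The next step is to identify these transition points with the $\alpha_j$: from $T_{i+1}(\lambda)=-\pi(1)\det A_i(\lambda)$ (established for $\xi_\lambda$ exactly as for Proposition \ref{p-shape}), together with the recursion $\xi_\lambda(n)=\xi_\lambda(n-1)-\lambda T_{n-1}(\lambda)/\nu(n-1,n)$ valid whenever $T_{n-1}(\lambda)<0$, one sees that $\xi_\lambda(n-1)=\xi_\lambda(n)$ forces $T_{n-1}(\lambda)=0$, i.e. $\det A_{n-2}(\lambda)=0$, i.e. $\lambda\in\{\alpha_1,\dots,\alpha_{n-2}\}$. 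Conversely each $\alpha_j$ is such a transition point. Since there are exactly $n-2$ of them and the type must climb from $1$ (near $0$) to $n-1$ (which it must reach: by Remark \ref{r-xi1} the type is always in $\{1,\dots,n-1\}$, and for $\lambda$ large $\xi_\lambda$ collapses after the first step, but more to the point the count of $n-2$ transitions forces each to be a unit jump and forces type $j$ on $(\alpha_{j-1},\alpha_j)$), a counting argument pins down the type on each interval. Finally I would fix the half-open convention: at $\lambda=\alpha_j$ itself, Lemma \ref{l-xi}(2) says $\xi_{\alpha_j}$ (with $\xi_{\alpha_j}(n-1)=\xi_{\alpha_j}(n)$) is still of type $j$ — the jump to $j+1$ only happens strictly above $\alpha_j$ — so $\xi_\lambda$ is of type $j$ on $(\alpha_{j-1},\alpha_j]$, giving the stated intervals with $\alpha_0=0$, $\alpha_{n-1}=\infty$.

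The main obstacle, and the step deserving genuine care, is the passage from the purely local statement in Lemma \ref{l-xi} to the global monotone-staircase picture. Lemma \ref{l-xi} only tells us what happens in a neighborhood of each $\lambda$; to conclude that the type is constant on each open interval between consecutive $\alpha_j$'s I need to rule out any "hidden" type change occurring where $\xi_\lambda(n-1)\ne\xi_\lambda(n)$, which is exactly what Lemma \ref{l-xi}(1) provides, and I need to know that the only candidate transition points are the $\alpha_j$, which is where the identity $T_{n-1}(\lambda)=-\pi(1)\det A_{n-2}(\lambda)$ does the work. One has to be slightly careful that this identity for $\xi_\lambda$ holds for all the relevant $\lambda$: as in Proposition \ref{p-shape} it is valid as long as $T_{n-1}(\lambda)<0$ (equivalently below the relevant root), and one uses continuity of $T_{n-1}$ together with $\det A_{n-2}$ to propagate it, plus the fact that $\xi_\lambda$ is weakly increasing on its initial stretch so that $T_j<0$ for earlier $j$ as well. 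Once the correspondence "transition points $=$ zeros of $\det A_{n-2}$" is nailed down, the rest is the bookkeeping of a strictly increasing integer-valued function on $(0,\infty)$ with exactly $n-2$ jump points, each of size $1$, starting at value $1$ — which forces value $j$ on $(\alpha_{j-1},\alpha_j]$ — and the interlacing in (1) is inherited directly from Lemma \ref{l-matt} applied to the nested Jacobi matrices $A_{n-2}(\lambda)$ and $A_{n-1}(\lambda)$.
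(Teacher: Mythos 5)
Your argument is correct and follows essentially the same route as the paper: use the identity $T_{n-1}(\lambda)=-\pi(1)\det A_{n-2}(\lambda)$ to identify the set $\{\lambda:\xi_\lambda(n-1)=\xi_\lambda(n)\}$ with $\{\alpha_1,\dots,\alpha_{n-2}\}$, then run Lemma \ref{l-xi} (constant type away from these points, unit jump at them) together with the base case (type $1$ near $\lambda=0$) and a counting argument; the paper just phrases the first step by noting that each $\alpha_i$ is an eigenvalue of the leading $(n-1)\times(n-1)$ principal submatrix of $M^G_{\pi,\nu}$. One small correction to your worry: the caveat ``valid as long as $T_{n-1}(\lambda)<0$'' and the propagation/continuity argument apply to $\phi_\lambda$ of Proposition \ref{p-shape}, which has the $\{\,\cdot\,\}^+$ in its recursion; for $\xi_\lambda$ the recursion of Definition \ref{def-xil} is linear and unconditional, so $T_{j+1}(\lambda)=-\pi(1)\det A_j(\lambda)$ holds for \emph{all} $\lambda$ and $1\le j\le n-1$ (as noted in the proof of Lemma \ref{l-xi}), and the aside that ``$\xi_\lambda$ is weakly increasing on its initial stretch'' is false in general (that is a property of $\phi_\lambda$, not $\xi_\lambda$) -- harmlessly so, since no such propagation is actually needed here.
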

\begin{proof}
(1) is immediate from Lemma \ref{l-matt}. For (2), note that $\alpha_i$ is an eigenvalue of the submatix of $M^G_{\pi,\nu}$ obtained by removing the $n$th row and column. This implies  $\xi_{\alpha_i}(n-1)=\xi_{\alpha_i}(n)$ for $i=1,...,n-2$ and $\xi_\lambda(n-1)\ne\xi_\lambda(n)$ for $\lambda>0$ and $\lambda\notin\{\alpha_1,...,\alpha_{n-2}\}$. By Lemma \ref{l-xi}, $\xi_{\lambda}$ is of type $i$ for $\alpha_{i-1}<\lambda\le\alpha_i$.
\end{proof}

Given $\lambda>0$, the above proposition provides a simple criterion to determine to which of the intervals $(\alpha_j,\alpha_{j+1}]$ $\lambda$ belongs to, that is, the type of $\xi_\lambda$. However, knowing the type of $\xi_\lambda$ is not sufficient to determine whether $\lambda$ is bigger or smaller than $\lambda^G_i$. We need the following remark.

\begin{rem}\label{r-xil}
Using the same argument as the proof of Proposition \ref{p-shape}, one can show that $\pi(\xi_\lambda)=-\pi(1)\det A_{n-1}(\lambda)$, where $A_{n-1}(\lambda)$ is the matrix in (\ref{eq-ail}). Clearly, $\pi(\xi_\lambda)$ has zeros $\lambda^G_1,...,\lambda^G_{n-1}$ and tends to minus infinity as $\lambda$ tends to minus infinity. This implies that $\pi(\xi_\lambda)<0$, for $\lambda<\lambda^G_1$, and
\[
 \pi(\xi_\lambda)>0\quad \forall \lambda\in(\lambda^G_{2i-1},\lambda^G_{2i}),\quad
\pi(\xi_\lambda)<0\quad \forall \lambda\in(\lambda^G_{2i},\lambda^G_{2i+1}),
\]
for $i\ge 1$, where $\lambda^G_n:=\infty$.
\end{rem}

As a consequence of Proposition \ref{p-xi} and Remark \ref{r-xil}, we obtain the following dichotomy algorithm, which is a generalization of (\ref{alg-sp2}). Let $1\le i\le n-1$.
\begin{equation}\label{alg-evi2}\tag{D$i$}
\begin{aligned}
    &\text{Choose positive reals $L_0<\lambda^G_i<U_0$ and set, for $\ell=0,1,...$,}\\
    &1.\,\xi_{\lambda_\ell}\text{ be the function generated by $\lambda_\ell=(L_\ell+U_\ell)/2$ in Definition \ref{def-xil}},\\
    &2.\,\text{According to Definition \ref{def-type}, set}\\
&\,\,\begin{cases}L_{\ell+1}=L_\ell,\, U_{\ell+1}=\lambda_\ell&\text{if $\xi_{\lambda_\ell}$ is of type $j$ with $j>i$,}\\&
 \text{or if $\xi_{\lambda_\ell}$ is of type $i$ and $(-1)^{i-1}\pi(\xi_{\lambda_\ell})>0$}\\U_{\ell+1}=U_{\ell},\,L_{\ell+1}=\lambda_\ell&\text{if $\xi_{\lambda_\ell}$ is of type $j$ with $j<i$,}\\&\text{or if $\xi_{\lambda_\ell}$ is of type $i$ and $(-1)^{i-1}\pi(\xi_{\lambda_\ell})<0$}\\
 L_{\ell+1}=U_{\ell+1}=\lambda_\ell&\text{if $\xi_{\lambda_\ell}$ is of type $i$ and $\pi(\xi_{\lambda_\ell})>0$}\end{cases}.
\end{aligned}
\end{equation}

\begin{thm}\label{t-Di}
Referring to \textnormal{(\ref{alg-evi2})},
\[
 0\le\max\{U_\ell-\lambda^G_i,\lambda^G_i-L_\ell\}\le(U_0-L_0)2^{-\ell},\quad\forall \ell\ge 0.
\]
\end{thm}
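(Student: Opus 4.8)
The plan is to show that the bisection scheme (\ref{alg-evi2}) maintains the invariant $L_\ell < \lambda^G_i < U_\ell$ (or terminates with $L_\ell = U_\ell = \lambda^G_i$) at every step, after which the claimed geometric bound follows immediately from the fact that each iteration halves the length of the bracketing interval $[L_\ell, U_\ell]$.

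First I would establish the termination/update correctness, which is the heart of the matter. Fix $\ell$ and assume inductively that $L_\ell < \lambda^G_i < U_\ell$, so $\lambda_\ell = (L_\ell + U_\ell)/2 \in (L_\ell, U_\ell)$. The goal is to determine, from the observable data $\xi_{\lambda_\ell}$ (its type and the sign of $\pi(\xi_{\lambda_\ell})$), on which side of $\lambda^G_i$ the point $\lambda_\ell$ lies. By Proposition \ref{p-xi}(2), $\xi_{\lambda_\ell}$ is of type $j$ exactly when $\lambda_\ell \in (\alpha_{j-1}, \alpha_j]$, and by Proposition \ref{p-xi}(1) the zeros $\alpha_j$ of $\det A_{n-2}$ interlace the eigenvalues: $\lambda^G_j < \alpha_j < \lambda^G_{j+1}$. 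Hence: if $\xi_{\lambda_\ell}$ has type $j > i$, then $\lambda_\ell > \alpha_{j-1} \ge \alpha_i > \lambda^G_i$, so we correctly shrink from above; if it has type $j < i$, then $\lambda_\ell \le \alpha_j \le \alpha_{i-1} < \lambda^G_i$, so we correctly shrink from below. In the remaining case $\xi_{\lambda_\ell}$ has type exactly $i$, meaning $\lambda_\ell \in (\alpha_{i-1}, \alpha_i]$, an interval containing $\lambda^G_i$ but possibly points on either side of it. Here I invoke Remark \ref{r-xil}: $\pi(\xi_\lambda) = -\pi(1)\det A_{n-1}(\lambda)$ has simple zeros exactly at $\lambda^G_1, \dots, \lambda^G_{n-1}$ and alternates sign, so on $(\alpha_{i-1}, \alpha_i]$ the quantity $\pi(\xi_\lambda)$ has the sign of $(-1)^{i}$ for $\lambda < \lambda^G_i$ and the sign of $(-1)^{i-1}$ for $\lambda > \lambda^G_i$ (with the convention $\lambda^G_n = \infty$ covering the endpoint bracket; one checks $\pi(\xi_\lambda)$ crosses zero only at $\lambda^G_i$ inside this interval). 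Thus $(-1)^{i-1}\pi(\xi_{\lambda_\ell}) > 0$ forces $\lambda_\ell > \lambda^G_i$ and $(-1)^{i-1}\pi(\xi_{\lambda_\ell}) < 0$ forces $\lambda_\ell < \lambda^G_i$, matching the update rule; and $\pi(\xi_{\lambda_\ell}) = 0$ forces $\lambda_\ell = \lambda^G_i$, at which point the scheme sets $L_{\ell+1} = U_{\ell+1} = \lambda_\ell$ and the estimate holds trivially thereafter.

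With the invariant in hand, the induction closes: in every non-terminating case the new interval $[L_{\ell+1}, U_{\ell+1}]$ is one of the two halves of $[L_\ell, U_\ell]$ and still contains $\lambda^G_i$, so $U_{\ell+1} - L_{\ell+1} = \tfrac12(U_\ell - L_\ell)$, giving $U_\ell - L_\ell = (U_0 - L_0)2^{-\ell}$ by induction on $\ell$. Since $\lambda^G_i \in [L_\ell, U_\ell]$, we get $\max\{U_\ell - \lambda^G_i, \lambda^G_i - L_\ell\} \le U_\ell - L_\ell = (U_0 - L_0)2^{-\ell}$, which is the assertion.

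The main obstacle is the case analysis in the middle paragraph — specifically, being careful that when $\xi_{\lambda_\ell}$ has type $i$ the sign of $\pi(\xi_{\lambda_\ell})$ really does pin down the side of $\lambda^G_i$. This requires that $\lambda^G_i$ be the \emph{only} zero of $\pi(\xi_\lambda)$ in the interval $(\alpha_{i-1}, \alpha_i]$, which follows from the interlacing $\lambda^G_{i-1} < \alpha_{i-1} < \lambda^G_i < \alpha_i < \lambda^G_{i+1}$ together with the list of zeros in Remark \ref{r-xil}; and it requires matching the sign convention $(-1)^{i-1}$ in the algorithm against the alternation pattern of $\pi(\xi_\lambda)$ recorded in Remark \ref{r-xil}. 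Both are bookkeeping once the interlacing and the sign-alternation are granted, so the proof is short modulo invoking Propositions \ref{p-xi}, and Remark \ref{r-xil}.
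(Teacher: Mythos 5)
Your proof is correct and is exactly the argument the paper intends; the paper's own proof is simply "Immediate from Proposition \ref{p-xi} and Remark \ref{r-xil}," and you have supplied the bookkeeping that remark elides. The one place worth noting is that the third branch of (\ref{alg-evi2}) as printed reads $\pi(\xi_{\lambda_\ell})>0$, which must be a typo for $\pi(\xi_{\lambda_\ell})=0$ (otherwise the three branches overlap and fail to cover the case $\pi(\xi_{\lambda_\ell})=0$); you correctly read it that way, and your sign analysis (that $\pi(\xi_\lambda)$ has sign $(-1)^i$ on the left of $\lambda^G_i$ and $(-1)^{i-1}$ on the right, within $(\alpha_{i-1},\alpha_i]$) matches Remark \ref{r-xil} including the edge cases $i=1$ and $i=n-1$ under the conventions $\alpha_0=0$, $\alpha_{n-1}=\infty$, $\lambda^G_n=\infty$.
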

\begin{proof}
Immediate from Proposition \ref{p-xi} and Remark \ref{r-xil}.
\end{proof}

Proposition \ref{p-xi} (2) bounds the eigenvalues using the shape of $\xi_\lambda$ generated from one end point. We now introduce some other criteria to bound eigenvalues using the shape of $\xi_\lambda$ from either boundary point. Those results will be used to prove Theorem \ref{t-lower}.

\begin{prop}\label{p-twosided}
For $\lambda>0$, let $\xi_\lambda$ be the function in Definition \ref{def-xil} and $\widetilde{\xi}_\lambda$ be a function given by
\[
 \widetilde{\xi}_\lambda(k-1)=\widetilde{\xi}_\lambda(k)+\frac{[\widetilde{\xi}_\lambda(k)-\widetilde{\xi}_\lambda(k+1)]\nu(k,k+1)-\lambda\pi(k)\widetilde{\xi}_\lambda(k)}{\nu(k-1,k)},
\]
for $k=n,n-1,...,2$ with $\widetilde{\xi}_\lambda(n)=-1$. Let $\lambda^G_0<\cdots<\lambda^G_{n-1}$ be eigenvalues of $M^G_{\pi,\nu}$ in \textnormal{(\ref{eq-M})} and let $f|_B$ be the restriction of $f$ to a subset $B$ of $V$. Suppose $1\le k_0\le n$.
\begin{itemize}
\item[(1)] If $\xi_{\lambda}|_{\{1,...,k_0\}}$ is of type $i$ with $(-1)^i\xi_\lambda(k_0)>0$ and $\widetilde{\xi}_\lambda|_{\{k_0,...,n\}}$ is of type $j$ with $(-1)^j\widetilde{\xi}_\lambda(k_0)>0$, then $\lambda^G_{i+j-2}<\lambda<\lambda^G_{i+j-1}$.

\item[(2)] If $\xi_{\lambda}|_{\{1,...,k_0\}}$ is of type $i$ with $(-1)^i\xi_\lambda(k_0)<0$ and $\widetilde{\xi}_\lambda|_{\{k_0,...,n\}}$ is of type $j$ with $(-1)^j\widetilde{\xi}_\lambda(k_0)<0$, then $\lambda^G_{i+j-1}<\lambda<\lambda^G_{i+j+1}$.

\item[(3)] If $\xi_{\lambda}|_{\{1,...,k_0\}}$ is of type $i$ with $(-1)^i\xi_\lambda(k_0)>0$ and $\widetilde{\xi}_\lambda|_{\{k_0,...,n\}}$ is of type $j$ with $(-1)^j\widetilde{\xi}_\lambda(k_0)<0$, then $\lambda^G_{i+j-2}<\lambda<\lambda^G_{i+j}$.
\end{itemize}
\end{prop}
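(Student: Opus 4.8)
\textbf{Proof proposal for Proposition \ref{p-twosided}.}

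The plan is to reduce the two-sided statement to counting sign changes of a global eigenfunction-like vector obtained by matching $\xi_\lambda$ (built from the left) and $\widetilde\xi_\lambda$ (built from the right) at the index $k_0$. First I would observe that both recursions are exactly the Euler--Lagrange recursion (\ref{eq-EL2}) for $M^G_{\pi,\nu}$ away from the starting boundary: $\xi_\lambda$ satisfies the three-term relation at every $k$ with $1\le k<n$, and $\widetilde\xi_\lambda$ satisfies it at every $k$ with $1<k\le n$. Hence, after rescaling $\widetilde\xi_\lambda$ by the positive constant $\xi_\lambda(k_0)/\widetilde\xi_\lambda(k_0)$ (legitimate since the hypotheses on the signs of $\xi_\lambda(k_0)$ and $\widetilde\xi_\lambda(k_0)$ force both to be nonzero), one obtains a single vector $\Phi$ on $\{1,\dots,n\}$ which satisfies (\ref{eq-EL2}) at every interior index except possibly $k_0$. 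I would then quantify the ``defect'' of (\ref{eq-EL2}) at $k_0$: the left piece forces the value $[\Phi(k_0)-\Phi(k_0-1)]\nu(k_0-1,k_0)$, the right piece forces $[\Phi(k_0)-\Phi(k_0+1)]\nu(k_0,k_0+1)$, and $\lambda$ is an eigenvalue of $M^G_{\pi,\nu}$ precisely when the sum of these two plus the known contributions equals $\lambda\Phi(k_0)\pi(k_0)$; the sign of the residual is what distinguishes the three cases.

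The key computational step is to relate the number of sign changes of $\Phi$ to the indices $i$ and $j$. On $\{1,\dots,k_0\}$, $\xi_\lambda$ is of type $i$, so by Definition \ref{def-type} it has exactly $i-1$ interior sign changes together with whatever sign it ends on; the hypothesis $(-1)^i\xi_\lambda(k_0)>0$ (resp. $<0$) pins down $\mathrm{sgn}\,\Phi(k_0)$ relative to $\mathrm{sgn}\,\Phi(1)=-1$. Likewise on $\{k_0,\dots,n\}$, $\widetilde\xi_\lambda$ of type $j$ contributes $j-1$ sign changes and the sign hypothesis at $k_0$ controls the orientation; after rescaling by a positive constant these signs are preserved. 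In case (1) the two one-sided pieces ``fit together monotonically'' at $k_0$ (no extra sign change is created and the slopes are compatible so no extra monotone run is forced), so $\Phi$ has exactly $(i-1)+(j-1)=i+j-2$ sign changes and is a genuine type-$(i+j-1)$-like vector; matched with Gantmacher--Krein sign-change count for eigenvectors and the strict interlacing/monotonicity results this forces $\lambda$ to lie strictly between $\lambda^G_{i+j-2}$ and $\lambda^G_{i+j-1}$. In case (3) the incompatible orientation at $k_0$ creates exactly one additional sign change (or an ambiguous flat spot), giving a bracket of width one eigenvalue, hence $\lambda^G_{i+j-2}<\lambda<\lambda^G_{i+j}$; in case (2) the matching signs but reversed orientation both destroy one potential change and admit one more, yielding the wider window $\lambda^G_{i+j-1}<\lambda<\lambda^G_{i+j+1}$. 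Concretely I would run this as a continuity/monotonicity argument in $\lambda$ exactly as in the proof of Proposition \ref{p-xi}: the function $\lambda\mapsto$ (residual of (\ref{eq-EL2}) at $k_0$ for the matched vector) is continuous, vanishes exactly at the eigenvalues $\lambda^G_m$, and its sign on each gap is determined by the parities just described, so locating $\lambda$ with respect to the sign pattern pins its position in the eigenvalue sequence.

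The main obstacle I anticipate is bookkeeping the boundary of the type condition: Definition \ref{def-type} allows a ``flat'' junction $a_{j+1}=b_j$ or $a_{j+1}=b_j+1$, and when $\xi_\lambda(k_0)$ or $\widetilde\xi_\lambda(k_0)$ sits at a local extremum the matched vector $\Phi$ may be of type $i+j-2$, $i+j-1$, or $i+j$ depending on the precise slopes, which is exactly why cases (2) and (3) give windows of width two rather than one. Handling this cleanly requires the analogue of Lemma \ref{l-xi} and Remark \ref{r-xi1}: namely that for $\lambda$ not equal to any $\lambda^G_m$ the junction is genuinely strict (so $\Phi$ has a well-defined type), and that as $\lambda$ crosses a $\lambda^G_m$ the type jumps by exactly one. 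With those in hand the three cases follow by comparing $\mathrm{sgn}\,\Phi(k_0)$ on the two sides and counting, so the remaining work is the careful enumeration of which combinations of the sign hypotheses in (1)--(3) produce which total count, together with the observation that the rescaling constant's positivity is guaranteed precisely by those hypotheses.
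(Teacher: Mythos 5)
Your proposal shares the paper's governing idea---glue the left-built $\xi_\lambda$ and the right-built $\widetilde\xi_\lambda$ at $k_0$ and read off the position of $\lambda$ from the shape of the glued object---but the execution diverges from the paper's proof and, as written, has several gaps. The paper does \emph{not} form a rescaled vector at the given $\lambda$ and track a residual; instead it argues by contradiction at the two bounding eigenvalues. Its crucial observation is that at an eigenvalue $\lambda^G_l$ the left and right constructions are \emph{exactly} proportional, $\xi_{\lambda^G_l}=-\xi_{\lambda^G_l}(n)\,\widetilde\xi_{\lambda^G_l}$, so patching the two restricted pieces recovers the genuine eigenvector; combined with the monotonicity of type in $\lambda$ (Proposition \ref{p-xi}) this shows that if $\lambda\le\lambda^G_{i+j-2}$ the eigenvector $\xi_{\lambda^G_{i+j-2}}$ would have to be of type $\ge i+j-1$, contradicting Theorem \ref{t-Miclo2}, and a more delicate subcase analysis of the type-change points $a_1,a_2$ excludes $\lambda\ge\lambda^G_{i+j-1}$.

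Three concrete problems with your version. First, the claim that the rescaling constant $c=\xi_\lambda(k_0)/\widetilde\xi_\lambda(k_0)$ ``is guaranteed to be positive by the hypotheses'' is false: in case (1) the hypotheses give $\mathrm{sgn}\,\xi_\lambda(k_0)=(-1)^i$ and $\mathrm{sgn}\,\widetilde\xi_\lambda(k_0)=(-1)^j$, so $\mathrm{sgn}\,c=(-1)^{i+j}$, which is negative whenever $i+j$ is odd. A negative $c$ reverses the orientation of the right piece, and this changes whether the two monotone runs abutting $k_0$ merge into one or not, hence changes the \emph{type} of the glued vector $\Phi$ even though the raw sign-change count is unchanged. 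Second, your residual function $\lambda\mapsto (M-\lambda I)\Phi_\lambda(k_0)$ is not continuous on $(0,\infty)$: it has poles wherever $\widetilde\xi_\lambda(k_0)=0$, so the ``vanishes exactly at the $\lambda^G_m$ and has controlled sign on each gap'' argument does not run without first replacing the rescaled residual by a polynomial substitute (e.g.\ the Wronskian-type quantity $\xi_\lambda(k_0+1)\widetilde\xi_\lambda(k_0)-\xi_\lambda(k_0)\widetilde\xi_\lambda(k_0+1)$), and even then the asserted relation between its sign and the case hypotheses would need a proof. Third, you invoke the Gantmacher--Krein sign-change theorem for $\Phi_\lambda$, but that result concerns eigenvectors; the glued $\Phi_\lambda$ is an eigenvector only when $\lambda$ is in fact an eigenvalue, which is precisely what you are trying to locate, so the appeal is circular at non-eigenvalue $\lambda$. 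Making your route rigorous would amount to re-deriving, via the Wronskian and the interlacing results already in the paper, essentially the same information the paper extracts more directly from Proposition \ref{p-xi} and the exact proportionality at eigenvalues.
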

\begin{proof}
By Proposition \ref{p-xi}, $\xi_\lambda(n)$ is a polynomial of degree $n-1$ satisfying
\[
 (-1)^{i+1}\xi_{\lambda^G_i}(n)>0,\,\, \forall 0\le i<n,\quad (-1)^{i+1}\xi_{\beta_i}(n)>0,\,\, \forall 1\le i<n-1.
\]
This implies that there are $w_i\in(\beta_i,\lambda^G_{i+1})$, $0\le i\le n-2$, such that $(-1)^{i+1}\xi_\lambda(n)>0$ for $\lambda\in(w_{i-1},w_i)$ and $0\le i\le n-1$ with $w_{-1}=-\infty$ and $w_{n-1}=\infty$.

The proofs for (1)-(3) in Proposition \ref{p-twosided} are similar and we deal with (1) only. By the Euler-Lagrange equations in (\ref{eq-EL2}), it is easy to see that, for $1\le l<n$, $\xi_{\lambda^G_l}$ and $\widetilde{\xi}_{\lambda^G_l}$ are eigenvectors of $M^G_{\pi,\nu}$ in (\ref{eq-M}) associated with $\lambda^G_l$, which implies $\xi_{\lambda^G_l}=-\xi_{\lambda^G_l}(n)\widetilde{\xi}_{\lambda^G_l}$. First, assume that $\lambda\le\lambda^G_{i+j-2}$. By Proposition \ref{p-xi}, $\xi_{\lambda^G_{i+j-2}}|_{\{1,...,k_0\}}$ is of type at least $i$ and $\widetilde{\xi}_{\lambda^G_{i+j-2}}|_{\{k_0,...,n\}}$ is of type at least $j$. This implies that
the patching of $\xi_{\lambda^G_{i+j-2}}|_{\{1,...,k_0\}}$ and $-\xi_{\lambda^G_{i+j-2}}(n)\widetilde{\xi}_{\lambda^G_{i+j-2}}|_{\{k_0,...,n\}}$, which equals to $\xi_{\lambda^G_{i+j-2}}$, is of type at least $i+j-1$. This is a contradiction.

Next, assume that $\lambda\ge\lambda^G_{i+j-1}$. By Proposition \ref{p-xi}, we may choose $a_1<\lambda$  (resp. $a_2<\lambda$) such that $\xi_\lambda|_{\{1,...,k_0\}}$ (resp. $\widetilde{\xi}_\lambda|_{\{k_0,...,n\}}$) changes the type at $a_1$ (resp. $a_2$). If $\lambda^G_{i+j-1}\le\min\{a_1,a_2\}$, then a similar reasoning as before implies that $\xi_{\lambda^G_{i+j-1}}$ is of type at most $i+j-2$, a contradiction. If $\min\{a_1,a_2\}<\lambda^G_{i+j-1}<\max\{a_1,a_2\}$, then exactly one of $\xi_{\lambda^G_{i+j-1}}|_{\{1,...,k_0\}}$ and $\widetilde{\xi}_{\lambda^G_{i+j-1}}|_{\{k_0,...,n\}}$ does not change its type. This implies that the gluing point $k_0$ can not be a local extremum and, thus, the patching function is of type at most $i+j-2$, another contradiction! According to the discussion in the first paragraph of this proof, if $\lambda^G_{i+j-1}\ge\max\{a_1,a_2\}$, then none of $\xi_{\lambda^G_{i+j-1}}|_{\{1,...,k_0\}}$ and $\widetilde{\xi}_{\lambda^G_{i+j-1}}|_{\{k_0,...,n\}}$ changes type nor, of course, the sign at $k_0$. Consequently, we obtain
$(-1)^{i+j}\xi_{\lambda^G_{i+j-1}}(k_0)\widetilde{\xi}_{\lambda^G_{i+j-1}}(k_0)>0$,
which contradicts the fact $\xi_{\lambda^G_{i+j-1}}=-\xi_{\lambda^G_{i+j-1}}(n)\widetilde{\xi}_{\lambda^G_{i+j-1}}$.
\end{proof}

\begin{prop}\label{p-signchange}
For $\lambda>0$ and $1\le k\le n-1$, let $s_k(\lambda)$ be the $k$th sign change of $\xi_\lambda$ defined by $s_0:=0$ and $s_{k+1}(\lambda):=\inf\{l>s_k(\lambda)|\xi_\lambda(l)\xi_\lambda(l-1)<0\text{ or } \xi_\lambda(l)=0\}$, where $\inf\emptyset:=n+1$. Then, for $0<\lambda_1<\lambda_2$, $s_k(\lambda_1)\ge s_k(\lambda_2)$ for all $1\le k\le n-1$.
\end{prop}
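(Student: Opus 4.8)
The statement is the discrete Sturm oscillation (comparison) phenomenon for the Jacobi matrix $M^G_{\pi,\nu}$, and the plan is to read it off from the three-term recurrence satisfied by $\xi_\lambda$ together with a Wronskian identity. \emph{First, the algebraic input.} Rearranging the defining relation of Definition \ref{def-xil} (with the convention $\nu(0,1)=0$) shows that for every $\lambda$ the function $\xi_\lambda$ solves the Euler-Lagrange recurrence
\[
 \lambda\pi(k)\xi_\lambda(k)=[\xi_\lambda(k)-\xi_\lambda(k-1)]\nu(k-1,k)+[\xi_\lambda(k)-\xi_\lambda(k+1)]\nu(k,k+1)
\]
at every interior index $1\le k\le n-1$. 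Fix $0<\lambda_1<\lambda_2$, put $u=\xi_{\lambda_1}$, $v=\xi_{\lambda_2}$, and set $W_m:=\nu(m,m+1)\big(u(m)v(m+1)-u(m+1)v(m)\big)$ for $0\le m\le n-1$. Multiplying the recurrences for $u$ and $v$ at index $k$ by $v(k)$ and $u(k)$ and subtracting gives the telescoping identity $W_k-W_{k-1}=(\lambda_1-\lambda_2)\pi(k)u(k)v(k)$, and since $W_0=0$ this integrates to
\[
 W_m=(\lambda_1-\lambda_2)\sum_{j=1}^{m}\pi(j)\,u(j)\,v(j),\qquad 0\le m\le n-1 .
\]

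\emph{Next, the reduction.} For a function $f$ let $N_f(m)$ count the indices $l\in\{2,\dots,m\}$ at which $f$ changes sign in the sense of the definition of $s_k$; then $s_k(\lambda)=\min\{m:N_{\xi_\lambda}(m)\ge k\}$, so the proposition is equivalent to the monotonicity $N_{\xi_{\lambda_1}}(m)\le N_{\xi_{\lambda_2}}(m)$ for all $1\le m\le n$ and all $\lambda_1<\lambda_2$. I would prove this by induction on $m$. The step is vacuous unless $m+1$ is a sign change of $u$ with $N_u(m)=N_v(m)=:k$, in which case it suffices to show $m+1$ is also a sign change of $v$. By the induction hypothesis the $k$-th sign changes satisfy $q':=s_k(\lambda_2)\le s_k(\lambda_1)=:q\le m$, and by Remark \ref{r-xi1} both $u$ and $v$ are piecewise strictly monotone with strictly alternating signs, so $u$ has the constant nonzero sign $(-1)^{k+1}$ on $\{q+1,\dots,m\}$ and $v$ the same sign on $\{q'+1,\dots,m\}$. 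Assuming for contradiction that $m+1$ is not a sign change of $v$, the value $v(m+1)$ also has sign $(-1)^{k+1}$, while $u(m+1)$ has sign $(-1)^{k}$ or vanishes; reading these signs off in the definition of $W_m$ yields $W_m>0$. On the other hand $W$ is strictly decreasing across the common sign-constant block $\{q,\dots,m\}$ (there $u(j)v(j)>0$ and $\lambda_1-\lambda_2<0$), and continuing the identity $W_j-W_{j-1}=(\lambda_1-\lambda_2)\pi(j)u(j)v(j)$ leftwards past the $k$-th sign changes of $u$ and $v$ --- using the sign pattern just before them and the anchor $W_0=0$ --- forces $W\le 0$ at some index in $\{0,\dots,q\}$, a contradiction. (Conceptually this is nothing but the discrete oscillation theorem: from the recurrence one checks that $\xi_\lambda(l)=0$ exactly when $\lambda$ is an eigenvalue of the leading $(l-1)\times(l-1)$ principal submatrix $M_{l-1}$ of $M^G_{\pi,\nu}$, so that $\xi_\lambda(1),\dots,\xi_\lambda(m)$ is a Sturm sequence and $N_{\xi_\lambda}(m)$ equals the number of eigenvalues of $M_{m-1}$ strictly below $\lambda$, which is manifestly non-decreasing in $\lambda$; cf.\ \cite{T96,T00}.)

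\emph{The main obstacle} is entirely in the degenerate configurations: when a sign change of $u$ coincides with, or is immediately adjacent to, one of $v$ (so the two solutions are in phase there), or when $\xi_\lambda$ genuinely vanishes at a sign-change position rather than jumping across it. In these cases the purely local sign computation for $W$ above is inconclusive, and one must either push the Wronskian comparison several more steps to the left, invoking the sharp type-$i$ description of $\xi_\lambda$ (Remark \ref{r-xi1}, Theorem \ref{t-Miclo2}) to pin down the sign pattern, or else first prove the inequality for $\lambda_1,\lambda_2$ avoiding the finite set $\{\lambda>0:\xi_\lambda(l)=0\text{ for some }2\le l\le n\}$ --- finite because each $\xi_\lambda(l)$ is a nonzero polynomial in $\lambda$ --- where every sign change is a strict crossing and the argument above runs cleanly, and then recover the general case by continuity. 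Making this last bookkeeping precise, i.e.\ checking that no sign change can ever slip to the right as $\lambda$ increases through an exceptional value, is the delicate part and the step I expect to require the most care.
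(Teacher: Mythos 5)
The overall strategy — a discrete Sturm comparison argument via a Wronskian identity — is a legitimate route, and the algebraic preparation is correct: the recurrence for $\xi_\lambda$ at interior indices is exactly the one you wrote, the telescoping computation giving
\[
 W_m=(\lambda_1-\lambda_2)\sum_{j=1}^{m}\pi(j)u(j)v(j),\qquad W_0=0,
\]
is verified, and the reduction of the statement to the monotonicity of the sign-change count $N_{\xi_\lambda}(m)$ in $\lambda$ is sound. However, there is a genuine gap exactly where you flag it, and it is not a small bookkeeping issue. The crucial step of the induction asserts that, having deduced $W_m>0$ from the sign configuration at $\{m,m+1\}$, ``continuing the identity leftwards past the $k$-th sign changes of $u$ and $v$ forces $W\le 0$ at some index in $\{0,\dots,q\}$.'' That is not established. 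The induction hypothesis tells you $s_j(\lambda_2)\le s_j(\lambda_1)$ for all $j<k$, which determines the signs of $u(j)v(j)$ on the block $\{q',\dots,m\}$ but \emph{not} on $\{1,\dots,q'-1\}$: below $q'$ the sign-change positions of $u$ and $v$ need not interleave in a simple alternating pattern (several sign changes of $v$ may precede the corresponding ones of $u$, with arbitrary overlaps), so $W$ can both rise and fall there and the anchor $W_0=0$ by itself does not force $W_q\le 0$. On top of that you explicitly leave the degenerate configurations (coincident or adjacent sign changes, or genuine zeros $\xi_\lambda(l)=0$) unresolved, and the proposed fix by perturbing $\lambda$ off the exceptional set and invoking continuity is itself delicate because $s_k(\lambda)$ is integer-valued and can jump by one in either direction as $\lambda$ crosses an eigenvalue of a leading principal submatrix unless the sign behavior of $\xi_\lambda(l)$ at such a crossing is controlled.

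What the paper actually does is closer to the route you relegate to a parenthesis. It uses the interlacing structure of the roots of $\det A_{\ell-1}(\lambda)$ and $\det A_{\ell-2}(\lambda)$ from Lemma~\ref{l-matt}, together with the sign pattern $(-1)^{i+1}\xi_\lambda(\ell)>0$ on the intervals $(w^\ell_i,w^\ell_{i+1})$ established at the start of the proof of Proposition~\ref{p-twosided}. With $\ell=s_k(\lambda_1)$, the fact that $\xi_{\lambda_1}|_{\{1,\dots,\ell\}}$ is of type $k$ pins $\lambda_1$ into $[w^\ell_k,\alpha^\ell_k)$; then $\lambda_2>\lambda_1$ either lies past $\alpha^\ell_k$, in which case $\xi_{\lambda_2}|_{\{1,\dots,\ell\}}$ is already of type $\ge k+1$ so $s_k(\lambda_2)<\ell$, or lies in the same subinterval, in which case $\xi_{\lambda_2}|_{\{1,\dots,\ell\}}$ is of type $k$ with the same sign at $\ell$ so $s_k(\lambda_2)\le\ell$. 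This handles the degenerate cases automatically because the sign pattern of $\xi_\lambda(\ell)$ across the zeros of $\det A_{\ell-1}$ is determined explicitly by the interlacing. If you want to keep the Wronskian approach, you would need to supply precisely this kind of control on the sign of $\xi_\lambda(\ell)$ near exceptional $\lambda$; as written, the proposal identifies the obstruction correctly but does not overcome it.
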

\begin{proof}
Let $1\le k\le n-1$. If $s_k(\lambda_1)=n+1$, then it is clear that $s_k(\lambda_1)\ge s_k(\lambda_2)$. Suppose that $s_k(\lambda_1)=\ell\le n$. Obviously, $\xi_{\lambda_1}|_{\{1,...,\ell\}}$ is of type $k$. Referring to (\ref{eq-ail}), let $\lambda^\ell_1,...,\lambda^\ell_{\ell-1}$ be the roots of $\det A_{\ell-1}(\lambda)=0$ and $\alpha^\ell_1,...,\alpha^\ell_{\ell-2}$ be roots of $\det A_{\ell-2}(\lambda)=0$. According to the first paragraph of the proof for Proposition \ref{p-twosided}, there are $w^\ell_i\in(\alpha^\ell_{i-1},\lambda^\ell_i)$ with $1\le i\le \ell-1$ such that $(-1)^{i+1}\xi_\lambda(\ell)>0$ for $\lambda\in(w^\ell_i,w^\ell_{i+1})$ and $1\le i\le \ell-1$, where $\alpha^\ell_0:=0$. Since $\xi_{\lambda_1}(\ell)\xi_{\lambda^\ell_k}(\ell)\ge 0$, one has $w^\ell_k\le \lambda_1<\alpha^\ell_k$. As it is assumed that $\lambda_2>\lambda_1$, if $\lambda_2>\alpha^\ell_k$, then $\xi_{\lambda_2}|_{\{1,...,\ell\}}$ is of type at least $k+1$ and, consequently, $s_k(\lambda_2)<\ell=s_k(\lambda_1)$. If $\lambda1<\alpha^\ell_k$, then $\xi_{\lambda_2}|_{\{1,...,\ell\}}$ is type $k$ and $\xi_{\lambda_2}(\ell)<0$. This implies $s_k(\lambda_2)\le \ell=s_k(\lambda_1)$, as desired.
\end{proof}

\subsection{Bounding eigenvalues from below}

Motivated by Theorem \ref{t-Miclo2}, we introduce another scheme generalizing (\ref{eq-psigen}) to bound the other eigenvalues of $M^G_{\pi,\nu}$ from below.
\begin{defn}\label{def-xii}
For $\lambda>0$, let $\xi_\lambda$ be a function in Definition \ref{def-xil}. If $\xi_\lambda$ is of type $i$, $1\le i\le n-1$, with peak-valley points $1=a_1<b_1\le a_2<b_2\le\cdots\le a_i<b_i\le n$, then define
\[
 \xi_\lambda^{(j)}(k)=\begin{cases}\xi_\lambda(k)&\text{for }k\le b_j\\\xi_\lambda(k)=\xi_\lambda(b_j)&\text{for }k>b_j\end{cases},\quad\forall 1\le j<i
\]
and set $\xi_\lambda^{(j)}=\xi_\lambda$ for $i\le j\le n-1$.
\end{defn}

\begin{rem}\label{r-xi2}
For $\lambda>0$, if $\xi_\lambda$ is of type $i$, then $\xi_\lambda^{(j)}$ is of type $j$ for $j<i$. Moreover, for $k<b_j$,
\begin{align}
 \xi_\lambda^{(j)}(k+1)&=\xi^{(j)}_\lambda(k)+\frac{[\xi^{(j)}_\lambda(k)-\xi^{(j)}_\lambda(k-1)]\nu(k-1,k)-\lambda\pi(k)\xi^{(j)}_\lambda(k)}{\nu(k,k+1)}\notag\\
&=\xi^{(j)}_\lambda(k)-\frac{\lambda[\pi(1)\xi^{(j)}_\lambda(1)+\cdots+\pi(k)\xi^{(j)}_\lambda(k)]}{\nu(k,k+1)},\notag
\end{align}
and, for $b_j\le k<n$,
\[
 \xi_\lambda^{(j)}(k+1)=\xi^{(j)}_\lambda(k)+\frac{F_j([\xi^{(j)}_\lambda(k)-\xi^{(j)}_\lambda(k-1)]\nu(k-1,k)-\lambda\pi(k)\xi^{(j)}_\lambda(k))}{\nu(k,k+1)},
\]
where $F_j(t)=\max\{t,0\}$ if $j$ is odd, and $F_j(t)=\min\{t,0\}$ if $j$ is even. Note that $\xi_\lambda^{(1)}$ is exactly $\phi_\lambda$ in Proposition \ref{p-shape}.
\end{rem}

Thereafter, let $\mathcal{L}$ and $\mathcal{L}^{(i)}$ be functions on $(0,\infty)$ defined by
\begin{equation}\label{eq-LLi}
 \mathcal{L}(\lambda)=\frac{\mathcal{E}_\nu(\xi_\lambda,\xi_\lambda)}{\textnormal{Var}_\pi(\xi_\lambda)},
 \quad\mathcal{L}^{(i)}(\lambda)=\frac{\mathcal{E}_\nu(\xi_\lambda^{(i)},\xi_\lambda^{(i)})}
 {\textnormal{Var}_\pi(\xi_\lambda^{(i)})},\quad\forall 1\le i\le n-1,
\end{equation}
where $\xi_\lambda$ and $\xi_\lambda^{(i)}$ are functions in Definitions \ref{def-xil}-\ref{def-xii}.

\begin{rem}\label{r-LLi}
Note that $\mathcal{L}=\mathcal{L}^{(n-1)}$. By a similar reasoning as
in the proof for (\ref{eq-Enu}), one can show that, for $\lambda>0$,
\[
 \mathcal{L}(\lambda)=\lambda+\frac{\lambda\pi(\xi_\lambda)[\pi(\xi_\lambda)-\xi_\lambda(n)]}
 {\textnormal{Var}_\pi(\xi_\lambda)},\quad \mathcal{L}^{(i)}(\lambda)=\lambda+\frac{\lambda\pi(\xi^{(i)}_\lambda)[\pi(\xi^{(i)}_\lambda)
 -\xi^{(i)}_\lambda(n)]}{\textnormal{Var}_\pi(\xi^{(i)}_\lambda)}.
\]
From Proposition \ref{p-xi}, it  followss immediately
that $\mathcal{L}(\lambda)=\mathcal{L}^{(i)}(\lambda)$ for $\lambda\in(0,\alpha_i]$.
\end{rem}

To explore further $\mathcal{L}$ and $\mathcal{L}^{(i)}$, we need more information of $\pi(\xi_\lambda)$, $\pi(\xi_\lambda^{(i)})$, $\pi(\xi_\lambda)-\xi_\lambda(n)$ and $\pi(\xi_\lambda^{(i)})-\xi_\lambda^{(i)}(n)$.

\begin{lem}\label{l-xi2}
Let $\xi_\lambda$ be the function in {\em Definition \ref{def-xil}}
and $\lambda^G_i,\alpha_i$ be constants in {\em Proposition \ref{p-xi}}. Then, $\pi(\xi_\lambda)-\xi_\lambda(n)=0$ has $n-1$ distinct roots, say $\beta_0<\beta_1<\cdots<\beta_{n-2}$, which satisfy $\beta_0=0$ and $\alpha_i<\beta_i<\lambda^G_{i+1}$ for $1\le i\le n-2$. Furthermore, $\pi(\xi_\lambda)-\xi_\lambda(n)>0$ for $\lambda\in(\beta_{2i-1},\beta_{2i})$ and $\pi(\xi_\lambda)-\xi_\lambda(n)<0$ for $\lambda\in(\beta_{2i},\beta_{2i+1})$, with $\beta_{-1}=-\infty$ and $\beta_{n-1}=\infty$.
\end{lem}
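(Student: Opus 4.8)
The plan is to treat $P(\lambda):=\pi(\xi_\lambda)-\xi_\lambda(n)$ as a polynomial in $\lambda$ and to locate its roots by a sign count against the already-known root structures of the polynomials $T_j(\lambda):=\sum_{k=1}^{j}\pi(k)\xi_\lambda(k)$ introduced in the proof of Lemma~\ref{l-xi}. Since $T_n(\lambda)-T_{n-1}(\lambda)=\pi(n)\xi_\lambda(n)$, one has the identity
\[
 \pi(n)\,P(\lambda)=T_{n-1}(\lambda)-\bigl(1-\pi(n)\bigr)\,T_n(\lambda).
\]
Recall from Remark~\ref{r-xil} that $T_n(\lambda)=\pi(\xi_\lambda)=-\pi(1)\det A_{n-1}(\lambda)$ has degree $n-1$ with roots exactly $\lambda^G_1<\cdots<\lambda^G_{n-1}$, while the same computation (carried out in the proof of Lemma~\ref{l-xi}) gives $T_{n-1}(\lambda)=-\pi(1)\det A_{n-2}(\lambda)$ of degree $n-2$ with roots $\alpha_1<\cdots<\alpha_{n-2}$; by Proposition~\ref{p-xi}(1) these two sets interlace,
\[
 0<\lambda^G_1<\alpha_1<\lambda^G_2<\alpha_2<\cdots<\alpha_{n-2}<\lambda^G_{n-1}.
\]
Because $1-\pi(n)\neq 0$, the right-hand side of the identity has degree exactly $n-1$, so $P$ is a polynomial of degree $n-1$; hence it has at most $n-1$ real roots, and it will suffice to exhibit $n-1$ distinct ones.

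Next I would record the signs of $P$ at $0$, at the $\lambda^G_i$, and at the $\alpha_i$. Since $\xi_0=-\mathbf{1}$, we get $P(0)=-1-(-1)=0$, which will be $\beta_0$. For $\lambda>0$ small, $\lambda<\lambda^G_1<\alpha_1$, so $\xi_\lambda$ is of type $1$ (Proposition~\ref{p-xi}(2)), i.e.\ strictly increasing, whence $\xi_\lambda(k)<\xi_\lambda(n)$ for $k<n$ and $P(\lambda)=\sum_{k<n}\pi(k)\bigl(\xi_\lambda(k)-\xi_\lambda(n)\bigr)<0$. For the interior points, $T_n$ and $T_{n-1}$ have only simple real roots, so each alternates sign between consecutive roots; combining this with $T_n(\lambda)<0$ for $\lambda<\lambda^G_1$ (Remark~\ref{r-xil}) and $T_{n-1}(0)=\pi(n)-1<0$, and using the interlacing, one finds that $T_{n-1}(\lambda^G_i)$ has sign $(-1)^i$ and $T_n(\alpha_i)$ has sign $(-1)^{i+1}$. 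Evaluating the displayed identity at $\lambda^G_i$ (where $T_n=0$) and at $\alpha_i$ (where $T_{n-1}=0$) then gives that $P(\lambda^G_i)$ has sign $(-1)^i$ and $P(\alpha_i)$ has sign $(-1)^i$.

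Finally I would assemble these facts. Walking along the chain $0<\lambda^G_1<\alpha_1<\lambda^G_2<\alpha_2<\cdots<\alpha_{n-2}<\lambda^G_{n-1}$, the values $P(\alpha_i)$ and $P(\lambda^G_{i+1})$ carry the opposite signs $(-1)^i$ and $(-1)^{i+1}$, so the intermediate value theorem yields a root $\beta_i\in(\alpha_i,\lambda^G_{i+1})$ for each $1\le i\le n-2$. Together with $\beta_0=0$ these are $n-1$ distinct roots of $P$, and hence, by the degree bound, they are all its roots and are all simple; in particular $\alpha_i<\beta_i<\lambda^G_{i+1}$, as asserted. Consequently $P$ keeps a fixed nonzero sign on each open interval between consecutive $\beta_i$'s; since it is negative immediately to the right of $\beta_0=0$ and all roots are simple, the sign alternates, giving $P<0$ on $(\beta_{2i},\beta_{2i+1})$ and $P>0$ on $(\beta_{2i-1},\beta_{2i})$, with the conventions $\beta_{-1}:=-\infty$ (legitimate because $P$ has no negative root, so it keeps on all of $(-\infty,0)$ the positive sign it has just left of the simple root $\beta_0$) and $\beta_{n-1}:=\infty$. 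The only step demanding attention is the sign bookkeeping in the middle paragraph — matching the alternation of $T_n$ and $T_{n-1}$ to the interlacing pattern of the $\lambda^G_i$ and $\alpha_i$ — but this is mechanical once the root data from Remark~\ref{r-xil} and Proposition~\ref{p-xi} has been written down.
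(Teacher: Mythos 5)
Your proof is correct, and the overall strategy coincides with the paper's: identify the signs of $P(\lambda)=\pi(\xi_\lambda)-\xi_\lambda(n)$ at the points $0,\lambda^G_1,\alpha_1,\lambda^G_2,\alpha_2,\dots$ of the interlacing chain and then invoke the intermediate value theorem together with the degree bound $\deg P=n-1$. The difference is in how the signs are read off. The paper evaluates $u(\lambda^G_i)$ and $u(\alpha_i)$ by appealing to the shape structure of the underlying functions: at $\lambda^G_i$, $\pi(\xi_{\lambda^G_i})=0$ and the Euler--Lagrange relation at $n$ plus Miclo's monotonicity of the last branch gives the sign of $\xi_{\lambda^G_i}(n)$; at $\alpha_i$, $\xi_{\alpha_i}(n-1)=\xi_{\alpha_i}(n)$ and the type-$i$ description gives the sign of the end value and the identity $\pi(\xi_{\alpha_i})=\pi(n)\xi_{\alpha_i}(n)$. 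You instead package $\pi(n)P=T_{n-1}-(1-\pi(n))T_n$ and extract the signs purely from the alternation of $T_n=-\pi(1)\det A_{n-1}$ and $T_{n-1}=-\pi(1)\det A_{n-2}$ between their interlacing roots, which amounts to a cleaner algebraic accounting. The sign values you get, $(-1)^i$ at both $\lambda^G_i$ and $\alpha_i$, match the paper's. You also make explicit the concluding IVT and degree-count step that the paper leaves to the reader, and the verification of the boundary conventions $\beta_{-1}=-\infty$, $\beta_{n-1}=\infty$ is handled carefully. Net: same argument, slightly more explicitly polynomial in flavour, and somewhat more complete in the write-up.
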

\begin{proof}
Set $u(\lambda):=\pi(\xi_\lambda)-\xi_\lambda(n)$. According to Definition \ref{def-xil}, $u(\lambda)$ is a polynomial of degree $n-1$ and satisfies $u(0)=0$. Note that $\pi(\xi_\lambda)=0$ for $\lambda\in\{\lambda^G_1,...,\lambda^G_{n-1}\}$. If $i$ is odd, then $\xi_{\lambda^G_i}(n-1)<\xi_{\lambda^G_i}(n)$. This implies $\xi_{\lambda^G_i}(n)>0$ and, hence, $u(\lambda^G_i)<0$. Similarly, if $i$ is even, then $u(\lambda^G_i)>0$.

By Lemma \ref{l-xi} and Proposition \ref{p-xi}, if $\lambda=\alpha_i$ with odd $i$, then $\xi_{\alpha_i}$ is of type $i$ with $\xi_{\alpha_i}(n-1)=\xi_{\alpha_i}(n)$. This implies $\xi_{\alpha_i}(n)>0$ and $\pi(\xi_{\alpha_i})=\pi(n)\xi_{\alpha_i}(n)$, which yields $u(\alpha_i)<0$. Similarly, one can show that $u(\alpha_i)>0$ if $i$ is even.
\end{proof}

\begin{rem}\label{r-xii}
We consider the sign of $\pi(\xi_\lambda^{(i)})$ and $\pi(\xi_\lambda^{(i)})-\xi_\lambda^{(i)}(n)$ in this remark. By Proposition \ref{p-xi}, $\xi_\lambda^{(i)}=\xi_\lambda$ for $\lambda\le\alpha_i$. If $\lambda>\alpha_i$ with $1\le i\le n-2$, then $\xi_\lambda$ is of type $j$ with $j>i$. Fix $1\le i\le n-2$ and set $k_0=k_0(\lambda)=\min\{k|\xi_\lambda^{(i)}(j)=\xi_\lambda^{(i)}(n),\,\forall k\le j\le n\}$. Clearly, $k_0(\lambda)\le n-1$ for $\lambda>\alpha_i$. Observe that, for $\lambda>\alpha_i$ with odd $i$, $\xi_\lambda(k_0-1)<\xi_\lambda(k_0)\ge\xi_\lambda(k_0+1)$, which implies $\sum_{k=1}^{k_0-1}\pi(k)\xi_\lambda(k)<0$ and $\sum_{k=1}^{k_0}\pi(k)\xi_\lambda(k)\ge 0$. A similar reasoning for the case of even $i$ gives $\sum_{k=1}^{k_0-1}\pi(k)\xi_\lambda(k)>0$ and $\sum_{k=1}^{k_0}\pi(k)\xi_\lambda(k)\le 0$. Consequently, we obtain
\begin{equation}\label{eq-xii}
 (-1)^{i-1}\pi(\xi_\lambda^{(i)})>0,\quad (-1)^i[\pi(\xi_\lambda^{(i)})-\xi_\lambda^{(i)}(n)]>0,
\end{equation}
for $\lambda>\alpha_i$ and $1\le i\le n-2$. Note that, by Proposition \ref{p-xi}, $\xi_\lambda^{(i)}=\xi_\lambda$ for $\lambda\le\alpha_i$. In addition with Remark \ref{r-xil}, Lemma \ref{l-xi2} and the continuity of $\xi_\lambda^{(i)}$, the first inequality of (\ref{eq-xii}) holds for $\lambda>\lambda_i^G$ and the second inequalities of (\ref{eq-xii}) hold for $\lambda>\beta_{i-1}$.
\end{rem}

According to Lemma \ref{l-xi2} and Remark \ref{r-xii}, we derive a generalized version of Proposition \ref{p-main1} in the following.

\begin{prop}\label{p-LLi}
Let $n\ge 3$ and $1\le i\le n-1$. For $\lambda>0$, let $\xi_\lambda,\xi_\lambda^{(i)}$ be the functions in {\em Definition \ref{def-xil}} and $\beta_i$ be the constants in {\em Lemma \ref{l-xi2}}.
\begin{itemize}
\item[(1)] For $\lambda>\beta_{i-1}$, the following are equivalent.
\begin{itemize}
\item[(1-1)] $\mathcal{E}_\nu(\xi_\lambda^{(i)},\xi_\lambda^{(i)})
    =\lambda\textnormal{Var}_\pi(\xi_\lambda^{(i)})$.

\item[(1-2)] $\pi(\xi_\lambda^{(i)})=0$.

\item[(1-3)] $\lambda=\lambda^G_i$.
\end{itemize}

\item[(2)] For $\beta_{i-1}<\lambda<\beta_i$, the following are equivalent.
\begin{itemize}
\item[(2-1)] $\mathcal{E}_\nu(\xi_\lambda,\xi_\lambda)=\lambda\textnormal{Var}_\pi(\xi_\lambda)$.

\item[(2-2)] $\pi(\xi_\lambda)=0$.

\item[(2-3)] $\lambda=\lambda^G_i$.
\end{itemize}
\end{itemize}
\end{prop}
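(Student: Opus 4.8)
The plan is to derive everything from the two identities recorded in Remark~\ref{r-LLi},
\[
 \mathcal{L}(\lambda)-\lambda=\frac{\lambda\,\pi(\xi_\lambda)[\pi(\xi_\lambda)-\xi_\lambda(n)]}{\textnormal{Var}_\pi(\xi_\lambda)},\qquad
 \mathcal{L}^{(i)}(\lambda)-\lambda=\frac{\lambda\,\pi(\xi^{(i)}_\lambda)[\pi(\xi^{(i)}_\lambda)-\xi^{(i)}_\lambda(n)]}{\textnormal{Var}_\pi(\xi^{(i)}_\lambda)},
\]
together with the sign/zero information for $\pi(\xi_\lambda)$, $\pi(\xi_\lambda)-\xi_\lambda(n)$, $\pi(\xi^{(i)}_\lambda)$ and $\pi(\xi^{(i)}_\lambda)-\xi^{(i)}_\lambda(n)$ collected in Remark~\ref{r-xil}, Lemma~\ref{l-xi2} and Remark~\ref{r-xii}. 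For $\lambda>0$ the function $\xi_\lambda$ is of type at least $1$, so $\xi_\lambda$ and $\xi^{(i)}_\lambda$ are non-constant and the two denominators above are positive; hence the left-hand sides vanish exactly when the product of the two ``$\pi(\cdot)$'' factors in the numerator does. This reduces (1-1)$\Leftrightarrow$(1-2) and (2-1)$\Leftrightarrow$(2-2) to showing that the ``second factor'' is non-zero on the declared range: for part~(1), the second inequality of (\ref{eq-xii})---which by Remark~\ref{r-xii} holds for $\lambda>\beta_{i-1}$---gives $(-1)^i[\pi(\xi^{(i)}_\lambda)-\xi^{(i)}_\lambda(n)]>0$; for part~(2), Lemma~\ref{l-xi2} says the zeros of $\pi(\xi_\lambda)-\xi_\lambda(n)$ are precisely $\beta_0<\cdots<\beta_{n-2}$, so it is non-zero on $(\beta_{i-1},\beta_i)$.

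Next I would link (1-2), (2-2) with (1-3), (2-3). By Remark~\ref{r-xil}, $\pi(\xi_\lambda)=-\pi(1)\det A_{n-1}(\lambda)$, whose zero set is exactly $\{\lambda^G_1,\dots,\lambda^G_{n-1}\}$; and by Proposition~\ref{p-xi}, $\lambda^G_i\in(\alpha_{i-1},\alpha_i)$, so $\xi_{\lambda^G_i}$ is of type $i$ and therefore $\xi^{(i)}_{\lambda^G_i}=\xi_{\lambda^G_i}$. Hence $\lambda=\lambda^G_i$ gives $\pi(\xi^{(i)}_\lambda)=\pi(\xi_\lambda)=0$, i.e.\ (1-2), (2-2). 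Combining Proposition~\ref{p-xi} with Lemma~\ref{l-xi2} one also gets the chain
\[
 \lambda^G_{i-1}<\alpha_{i-1}<\beta_{i-1}<\lambda^G_i<\alpha_i<\beta_i<\lambda^G_{i+1}
\]
(with $\beta_0=\alpha_0=\lambda^G_0=0$ and $\beta_{n-1}=\alpha_{n-1}=\lambda^G_n=\infty$), which places $\lambda^G_i$ in both declared ranges $(\beta_{i-1},\infty)$ and $(\beta_{i-1},\beta_i)$ and will be used repeatedly below.

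For the converses the case split appears. In part~(1), assume $\lambda>\beta_{i-1}$ and $\pi(\xi^{(i)}_\lambda)=0$. Remark~\ref{r-xii} also gives that the first inequality of (\ref{eq-xii}), $(-1)^{i-1}\pi(\xi^{(i)}_\lambda)>0$, holds for every $\lambda>\lambda^G_i$, which forces $\beta_{i-1}<\lambda\le\lambda^G_i$; since $\lambda^G_i<\alpha_i$, Proposition~\ref{p-xi} makes $\xi_\lambda$ of type at most $i$ there, so $\xi^{(i)}_\lambda=\xi_\lambda$ and $-\pi(1)\det A_{n-1}(\lambda)=\pi(\xi_\lambda)=0$; then $\lambda\in\{\lambda^G_1,\dots,\lambda^G_{n-1}\}$ together with $\lambda^G_{i-1}<\beta_{i-1}<\lambda\le\lambda^G_i$ forces $\lambda=\lambda^G_i$, which is (1-3). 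In part~(2), if $\beta_{i-1}<\lambda<\beta_i$ and $\pi(\xi_\lambda)=0$, the same factorization places $\lambda$ among the $\lambda^G_j$, and the interlacing $\lambda^G_{i-1}<\beta_{i-1}$, $\beta_i<\lambda^G_{i+1}$ leaves $\lambda^G_i$ as the only possibility, giving (2-3). Putting these together completes the two cycles of equivalences.

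The main obstacle, though essentially bookkeeping, is the split at $\lambda=\alpha_i$ in part~(1): for $\lambda>\alpha_i$, $\xi^{(i)}_\lambda$ is a genuine truncation of $\xi_\lambda$ rather than $\xi_\lambda$ itself, so one can no longer read its $\pi$-mean off the polynomial $\det A_{n-1}$ and must instead invoke the strict sign $(-1)^{i-1}\pi(\xi^{(i)}_\lambda)>0$ from Remark~\ref{r-xii}---this is exactly the feature that distinguishes part~(1) from part~(2). Keeping track of the thresholds $\lambda^G_{i-1}<\alpha_{i-1}<\beta_{i-1}<\lambda^G_i<\alpha_i<\beta_i<\lambda^G_{i+1}$ with the correct strict inequalities, and checking that the endpoint conventions $\beta_0=0$, $\beta_{n-1}=\infty$ cause no trouble, is the only delicate point; everything else is substitution into the identities of Remark~\ref{r-LLi}. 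The hypothesis $n\ge3$ serves only to guarantee that $A_{n-1},A_{n-2}$ and the accompanying thresholds are present; for $i=1$ this recovers Proposition~\ref{p-main1}.
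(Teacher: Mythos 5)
Your proposal is correct and follows essentially the same route as the paper: it reduces (1-1)/(2-1) to the vanishing of the product $\pi(\xi^{(i)}_\lambda)[\pi(\xi^{(i)}_\lambda)-\xi^{(i)}_\lambda(n)]$ via Remark \ref{r-LLi}, uses Lemma \ref{l-xi2} and Remark \ref{r-xii} to control the signs of the two factors, and invokes the interlacing $\lambda^G_{i-1}<\alpha_{i-1}<\beta_{i-1}<\lambda^G_i<\alpha_i<\beta_i<\lambda^G_{i+1}$ to pin down $\lambda=\lambda^G_i$. The only real difference is presentational: you write out both parts and make the interlacing chain and the case split at $\alpha_i$ fully explicit, whereas the paper handles only part (1) and declares part (2) ``similar''.
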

\begin{proof}
The proof for Proposition \ref{p-LLi} (2) is similar to the proof for Proposition \ref{p-LLi} (1) and we deal only with the latter. By Lemma \ref{l-xi2} and Remark \ref{r-xii}, one has
\[
 \pi(\xi_\lambda^{(i)})[\pi(\xi_\lambda^{(i)})-\xi_\lambda^{(i)}(n)]\begin{cases}<0&\text{for } \lambda>\lambda^G_i\\>0&\text{for }\beta_{i-1}<\lambda<\lambda^G_i\end{cases}.
\]
This proves the equivalence of (1-1) and (1-2). Under the assumption of (1-2)
and  using Remark \ref{r-xil}, one has $\lambda\le\alpha_i$.
This implies $\xi_\lambda^{(i)}=\xi_\lambda$ is an eigenvector for $M^G_{\pi,\nu}$ with associated eigenvalue $\lambda$. As $\lambda\in(\beta_{i-1},\alpha_i]$, it must be the case $\lambda=\lambda^G_i$. This gives (1-3), while (1-3)$\Ra$(1-2) is obvious and omitted.
\end{proof}

\begin{rem}
It is worthwhile to note that if (1-1) and (2-1) of Proposition \ref{p-LLi} are removed, then the equivalence in (1) holds for $\lambda>\lambda^G_{i-1}$ and the equivalence in (2) holds for $\lambda\in(\lambda^G_{i-1},\lambda^G_{i+1})$. Once $\lambda^G_{i-1}$ is known, we can determine $\lambda^G_i$ using the sign of $\pi(\xi_\lambda^{(i)})$. See Theorem \ref{t-xi} for details.
\end{rem}

\begin{rem}
Note that condition (4) of Proposition \ref{p-main1} is not included in Proposition \ref{p-LLi}. In fact, the equivalence may fail, that is, there may
exist some $\lambda\in(\beta_{i-1},\beta_i)\setminus\{\lambda^G_i\}$ such that $\mathcal{E}_\nu(\xi_\lambda,\xi_\lambda)/\textnormal{Var}_\pi(\xi_\lambda)=\lambda^G_i$. See Example \ref{ex-Ehrenfest} for a counterexample.
\end{rem}

As Proposition \ref{p-LLi} focuses on the characterization of zeros of $\mathcal{L}(\lambda)-\lambda$, the following theorem concerns the sign of $\mathcal{L}(\lambda)-\lambda$.
\begin{thm}\label{t-xi}
Let $\lambda^G_i,\alpha_i,\beta_i$ be the constants in
{\em Proposition \ref{p-xi}} and {\em Lemma \ref{l-xi2}},
and $\mathcal{L}$ be the function in \textnormal{(\ref{eq-LLi})}. Then, $\lambda^G_1,...,\lambda^G_{n-1},\beta_1,...,\beta_{n-2}$ are fixed points of $\mathcal{L}$ and, for $1\le i\le n-2$,
\begin{itemize}
\item[(1)] $\mathcal{L}(\lambda)<\lambda$ for $\lambda\in(\lambda^G_i,\beta_i)$.

\item[(2)] $\mathcal{L}(\lambda)>\lambda$ for $\lambda\in(\beta_i,\lambda^G_{i+1})$.

\item[(3)] $\mathcal{L}^{(i)}(\lambda)<\lambda$ for $\lambda\in(\lambda^G_i,\infty)$.
\end{itemize}
\end{thm}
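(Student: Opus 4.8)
The plan is to deduce everything from the identity recorded in Remark \ref{r-LLi}, namely
\[
\mathcal{L}(\lambda)-\lambda=\frac{\lambda\,\pi(\xi_\lambda)[\pi(\xi_\lambda)-\xi_\lambda(n)]}{\textnormal{Var}_\pi(\xi_\lambda)},\qquad \mathcal{L}^{(i)}(\lambda)-\lambda=\frac{\lambda\,\pi(\xi^{(i)}_\lambda)[\pi(\xi^{(i)}_\lambda)-\xi^{(i)}_\lambda(n)]}{\textnormal{Var}_\pi(\xi^{(i)}_\lambda)}.
\]
Since $\lambda>0$ and the variances are strictly positive (the functions $\xi_\lambda$ and $\xi^{(i)}_\lambda$ are non-constant, being of type at least $1$), the sign of $\mathcal{L}(\lambda)-\lambda$ (resp.\ of $\mathcal{L}^{(i)}(\lambda)-\lambda$) coincides with the sign of the product $\pi(\xi_\lambda)[\pi(\xi_\lambda)-\xi_\lambda(n)]$ (resp.\ $\pi(\xi^{(i)}_\lambda)[\pi(\xi^{(i)}_\lambda)-\xi^{(i)}_\lambda(n)]$). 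The fixed-point assertion is then immediate: $\pi(\xi_{\lambda^G_j})=0$ because $\xi_{\lambda^G_j}$ is an eigenvector of $M^G_{\pi,\nu}$ (Remark \ref{r-xil}), and $\pi(\xi_{\beta_j})-\xi_{\beta_j}(n)=0$ by the very definition of $\beta_j$ in Lemma \ref{l-xi2}; in both cases the numerator above vanishes.

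For (1) and (2) I would first record the interlacing
\[
\beta_{i-1}<\lambda^G_i<\alpha_i<\beta_i<\lambda^G_{i+1}<\beta_{i+1}\qquad(1\le i\le n-2),
\]
obtained by combining the bounds $\lambda^G_j<\alpha_j<\lambda^G_{j+1}$ of Proposition \ref{p-xi} with $\alpha_j<\beta_j<\lambda^G_{j+1}$ of Lemma \ref{l-xi2}, using the conventions $\beta_0=0$ and $\beta_{n-1}=\infty$ for the extreme cases. On the open interval $(\lambda^G_i,\lambda^G_{i+1})$ the polynomial $\pi(\xi_\lambda)=-\pi(1)\det A_{n-1}(\lambda)$ has no zero, so by the sign pattern of Remark \ref{r-xil} it has constant sign $(-1)^{i-1}$ there. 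Likewise, the only zeros of the polynomial $\pi(\xi_\lambda)-\xi_\lambda(n)$ bracketing $\lambda^G_i$ and $\lambda^G_{i+1}$ are $\beta_{i-1}$ and $\beta_i$ and $\beta_{i+1}$, so Lemma \ref{l-xi2} gives $\textnormal{sign}\bigl(\pi(\xi_\lambda)-\xi_\lambda(n)\bigr)=(-1)^i$ on $(\beta_{i-1},\beta_i)\supset(\lambda^G_i,\beta_i)$ and $=(-1)^{i+1}$ on $(\beta_i,\beta_{i+1})\supset(\beta_i,\lambda^G_{i+1})$. Multiplying the two signs gives $(-1)^{2i-1}=-1$ on $(\lambda^G_i,\beta_i)$ and $(-1)^{2i}=+1$ on $(\beta_i,\lambda^G_{i+1})$, which are precisely (1) and (2).

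For (3) the same mechanism is applied to the truncated function $\xi^{(i)}_\lambda$, where the needed sign information is supplied by Remark \ref{r-xii}: for every $\lambda>\lambda^G_i$ one has $(-1)^{i-1}\pi(\xi^{(i)}_\lambda)>0$, and for every $\lambda>\beta_{i-1}$ — in particular for $\lambda>\lambda^G_i$, since $\beta_{i-1}<\lambda^G_i$ — one has $(-1)^i[\pi(\xi^{(i)}_\lambda)-\xi^{(i)}_\lambda(n)]>0$. Hence the product $\pi(\xi^{(i)}_\lambda)[\pi(\xi^{(i)}_\lambda)-\xi^{(i)}_\lambda(n)]$ is strictly negative on all of $(\lambda^G_i,\infty)$, so $\mathcal{L}^{(i)}(\lambda)<\lambda$ there. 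I do not expect any genuine difficulty here; the only real care is the parity bookkeeping of the exponents and checking, via the interlacing above, that each of the polynomials $\pi(\xi_\lambda)$ and $\pi(\xi_\lambda)-\xi_\lambda(n)$ keeps a fixed sign on the sub-interval under consideration — the substantive sign content having already been packaged into Remark \ref{r-xil}, Lemma \ref{l-xi2}, and Remark \ref{r-xii}.
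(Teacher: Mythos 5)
Your proof is correct and takes exactly the route the paper intends: the paper's own proof of Theorem \ref{t-xi} is the one-line statement ``Immediate from Lemma \ref{l-xi2} and Remarks \ref{r-LLi}--\ref{r-xii}'', and your write-up is a careful unpacking of that sentence — reading off the signs of $\pi(\xi_\lambda)$ and $\pi(\xi_\lambda)-\xi_\lambda(n)$ from Remark \ref{r-xil} and Lemma \ref{l-xi2}, of $\pi(\xi^{(i)}_\lambda)$ and $\pi(\xi^{(i)}_\lambda)-\xi^{(i)}_\lambda(n)$ from Remark \ref{r-xii}, and multiplying them in the identity of Remark \ref{r-LLi}. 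The interlacing $\beta_{i-1}<\lambda^G_i<\alpha_i<\beta_i<\lambda^G_{i+1}$ and the parity bookkeeping are exactly what make the sign product work out, so there is nothing to add beyond what you wrote.
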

\begin{proof}
Immediate from Lemma \ref{l-xi2} and Remarks \ref{r-LLi}-\ref{r-xii}.
\end{proof}

By Theorem \ref{t-xi}, we obtain a lower bound on any specified eigenvalue of $M^G_{\pi,\nu}$.
\begin{cor}\label{c-LLi}
Let $1\le i\le n-1$ and $\lambda_0>\lambda^G_i$. Consider the sequence $\lambda_{\ell+1}=\mathcal{L}^{(i)}(\lambda_\ell)$ with $\ell\ge 0$ and set
\[
 \lambda^*=\begin{cases}\lim_{\ell\ra\infty}\lambda_\ell&\text{if $\lambda_\ell$  converges}\\\sup_{\ell\in I}\lambda_\ell&\text{if $\lambda_\ell$ diverges}\end{cases},
\]
where $I=\{\ell|\lambda_{\ell-1}>\lambda_\ell<\lambda_{\ell+1}\}$. Then, $\lambda^*\le\lambda^G_i$.
\end{cor}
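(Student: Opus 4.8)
The plan is to prove Corollary \ref{c-LLi} by analyzing the dynamical system $\lambda_{\ell+1}=\mathcal{L}^{(i)}(\lambda_\ell)$ on the interval $(\lambda^G_i,\infty)$, exploiting the sign information on $\mathcal{L}^{(i)}(\lambda)-\lambda$ provided by Theorem \ref{t-xi}(3). First I would record the key structural facts: by Theorem \ref{t-xi}(3), $\mathcal{L}^{(i)}(\lambda)<\lambda$ for every $\lambda>\lambda^G_i$; by Remark \ref{r-LLi} together with Remark \ref{r-xii}, the term $\pi(\xi^{(i)}_\lambda)[\pi(\xi^{(i)}_\lambda)-\xi^{(i)}_\lambda(n)]$ appearing in the formula for $\mathcal{L}^{(i)}(\lambda)$ is strictly negative precisely when $\lambda>\lambda^G_i$, and by Proposition \ref{p-LLi}(1) the only way to have $\mathcal{L}^{(i)}(\lambda)=\lambda$ with $\lambda>\beta_{i-1}$ is $\lambda=\lambda^G_i$. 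Since $\lambda_0>\lambda^G_i\ge\beta_{i-1}$ (using $\lambda^G_i>\beta_{i-1}$ from Lemma \ref{l-xi2}, as $\beta_{i-1}<\lambda^G_i$), every iterate that stays above $\lambda^G_i$ is a strict decrease; the content is to show the limit (or limsup) cannot drop below $\lambda^G_i$.

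The heart of the argument is the following dichotomy. Either (a) the whole orbit $(\lambda_\ell)$ stays in $(\lambda^G_i,\infty)$, in which case it is strictly decreasing and bounded below by $\lambda^G_i$, hence converges to some $\widetilde\lambda\ge\lambda^G_i$; by continuity of $\mathcal{L}^{(i)}$ one gets $\mathcal{L}^{(i)}(\widetilde\lambda)=\widetilde\lambda$, and then Proposition \ref{p-LLi}(1) (applicable since $\widetilde\lambda\ge\lambda^G_i>\beta_{i-1}$) forces $\widetilde\lambda=\lambda^G_i$, so $\lambda^*=\lambda^G_i$; or (b) some iterate falls to or below $\lambda^G_i$, i.e. $\lambda_{\ell_0}\le\lambda^G_i$ for a first index $\ell_0\ge1$. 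In case (b) I need to control what happens afterwards. The natural claim is that once $\lambda_\ell\le\lambda^G_i$, the orbit can never climb strictly above $\lambda^G_i$ again, so that $\sup_{\ell\in I}\lambda_\ell\le\lambda^G_i$ (and likewise the limit, if it exists, is $\le\lambda^G_i$). To see this I would use the behavior of $\mathcal{L}^{(i)}$ just below $\lambda^G_i$: note $\xi^{(i)}_\lambda=\xi_\lambda$ for $\lambda\le\alpha_i$, and combining Lemma \ref{l-xi2}, Remark \ref{r-xil} and Remark \ref{r-xii} one gets that for $\lambda$ slightly below $\lambda^G_i$ the product $\pi(\xi^{(i)}_\lambda)[\pi(\xi^{(i)}_\lambda)-\xi^{(i)}_\lambda(n)]$ is positive, hence $\mathcal{L}^{(i)}(\lambda)>\lambda$ there; more precisely, on the relevant interval left of $\lambda^G_i$ one has $\lambda<\mathcal{L}^{(i)}(\lambda)$, which does push iterates upward, but I must check it cannot overshoot $\lambda^G_i$. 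The clean way is to show $\mathcal{L}^{(i)}$ maps the interval $(\beta_{i-1},\lambda^G_i\,]$ into itself: on that interval $\xi^{(i)}_\lambda=\xi_\lambda$ (since $\lambda\le\lambda^G_i\le\alpha_i$, using $\alpha_i>\lambda^G_i$ — wait, from Proposition \ref{p-xi}(1), $\lambda^G_i<\alpha_i$, so indeed $\lambda\le\lambda^G_i<\alpha_i$ and $\xi^{(i)}_\lambda=\xi_\lambda$), so $\mathcal{L}^{(i)}=\mathcal{L}$ there, and $\mathcal{L}$ restricted to that interval has its only fixed point at $\lambda^G_i$ (by Proposition \ref{p-LLi}), is continuous, and satisfies $\mathcal{L}(\lambda)\ge\lambda$ on it (by the sign computation), which together with a boundedness bound on $\mathcal{L}$ forces $\mathcal{L}((\beta_{i-1},\lambda^G_i])\subseteq(\beta_{i-1},\lambda^G_i]$ — any escape above $\lambda^G_i$ would, by the intermediate value theorem applied to $\mathcal{L}(\lambda)-\lambda$, produce a second fixed point in $(\lambda^G_i,\mathcal{L}(\lambda)]$, contradicting Proposition \ref{p-LLi}(1)/Theorem \ref{t-xi}(3).

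With the trapping region $(\beta_{i-1},\lambda^G_i]$ identified, the argument concludes quickly: in case (b), $\lambda_{\ell_0}\in(\beta_{i-1},\lambda^G_i]$ — here one still has to check $\lambda_{\ell_0}>\beta_{i-1}$, which follows because the predecessor $\lambda_{\ell_0-1}>\lambda^G_i$ gives $\mathcal{L}^{(i)}(\lambda_{\ell_0-1})$ bounded below, or alternatively because $\mathcal{L}^{(i)}>\beta_{i-1}$ everywhere on $(\beta_{i-1},\infty)$ by monotonicity of the relevant polynomials — and then all subsequent iterates stay in $(\beta_{i-1},\lambda^G_i]$, whence both the limit and the limsup over local-minimum indices are $\le\lambda^G_i$, giving $\lambda^*\le\lambda^G_i$. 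The main obstacle I anticipate is exactly the verification that the interval $(\beta_{i-1},\lambda^G_i]$ is forward-invariant under $\mathcal{L}^{(i)}$: this needs the combination of the fixed-point uniqueness from Proposition \ref{p-LLi}(1), the sign of $\mathcal{L}(\lambda)-\lambda$ from the $\pi(\xi_\lambda)[\pi(\xi_\lambda)-\xi_\lambda(n)]$ formula, and a uniform upper bound on $\mathcal{L}$ (its boundedness, already used in the proof of Theorem \ref{t-main2}); assembling these carefully, and tracking the two boundary cases $\lambda=\beta_{i-1}$ and the first descent below $\lambda^G_i$, is where the real work lies. The divergent case in the statement is then handled simply by noting that once inside the trap the orbit is bounded, so if it does not converge its only accumulation behavior is within $[\beta_{i-1},\lambda^G_i]$, and $\sup_{\ell\in I}\lambda_\ell\le\lambda^G_i$ follows.
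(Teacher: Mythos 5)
The central claim in your proposal—that $\mathcal{L}^{(i)}$ maps $(\beta_{i-1},\lambda^G_i]$ into itself—is false in general, so the argument for case (b) does not go through. Look at Proposition \ref{p-L}(2): when $D_i>0$ there is $\eta\in(\beta_{i-1},\lambda^G_i)$ at which $\mathcal{L}$ has an interior local maximum, with $\mathcal{L}$ decreasing on $(\eta,\lambda^G_i)$ down to the fixed value $\mathcal{L}(\lambda^G_i)=\lambda^G_i$. Consequently $\mathcal{L}^{(i)}(\eta)=\mathcal{L}(\eta)>\lambda^G_i$ (note $\eta<\lambda^G_i<\alpha_i$, so $\xi^{(i)}_\eta=\xi_\eta$ and $\mathcal{L}^{(i)}=\mathcal{L}$ there), and the orbit can overshoot. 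Example \ref{ex-Ehrenfest} exhibits indices with $D_i>0$, so this is not a vacuous worry. Your IVT argument does not repair this: knowing $\mathcal{L}(\lambda)>\lambda^G_i$ for some $\lambda\in(\beta_{i-1},\lambda^G_i)$ does not manufacture a second fixed point in $(\lambda^G_i,\mathcal{L}(\lambda)]$; the sign pattern of $\mathcal{L}(\mu)-\mu$ (positive left of $\lambda^G_i$, negative right of it, single zero at $\lambda^G_i$) is perfectly consistent with $\mathcal{L}$ taking values above $\lambda^G_i$ on the left.

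The trapping region is also unnecessary, and the paper's intended proof is much shorter: the statement is ``immediate from Theorem \ref{t-xi}'' via part (3) alone. Indeed, if $\ell\in I$ then $\lambda_\ell<\lambda_{\ell+1}=\mathcal{L}^{(i)}(\lambda_\ell)$, and Theorem \ref{t-xi}(3) says $\mathcal{L}^{(i)}(\lambda)<\lambda$ whenever $\lambda>\lambda^G_i$; hence $\lambda_\ell\le\lambda^G_i$ directly, giving $\sup_{\ell\in I}\lambda_\ell\le\lambda^G_i$ without any control on where the orbit wanders afterward. For the convergent case, either the orbit is eventually $>\lambda^G_i$—then it is eventually strictly decreasing and bounded below, so the limit $\lambda^*\ge\lambda^G_i$ is a fixed point of $\mathcal{L}^{(i)}$, and Theorem \ref{t-xi}(3) forces $\lambda^*=\lambda^G_i$ (this part of your argument is fine)—or $\lambda_\ell\le\lambda^G_i$ infinitely often, in which case the limit of the whole sequence is $\le\lambda^G_i$ since it agrees with the limit along that subsequence. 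You should replace the forward-invariance claim with this direct use of Theorem \ref{t-xi}(3) on local-minimum indices.
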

It is not clear yet whether the sequence $\lambda_\ell$ in Corollary \ref{c-LLi} is convergent, even locally. This subject will be discussed in the next subsection. Now, we establish some relations between the roots of $\det A_i(\lambda)=0$ and the shape of $\xi_\lambda^{(i)}$. This is a generalization of Proposition \ref{p-shape}.

\begin{prop}\label{p-shape2}
For $1\le i\le n-1$, let $A_i(\lambda)$ be the matrix in \textnormal{(\ref{eq-ail})}, $\theta^{(i)}_1<\cdots<\theta^{(i)}_i$ be zeros of $\det A_i(\lambda)=0$ and set $\theta_i^{(i-1)}:=\infty$. Referring to the notation in
{\em Proposition \ref{p-xi}}, it holds true that, for $1\le i\le n-1$, \begin{itemize}
\item[(1)] $\lambda^G_i=\theta^{(n-1)}_i<\alpha_i=\theta^{(n-2)}_i<\cdots<\theta^{(i)}_i$.

\item[(2)] $\xi_\lambda^{(i)}(j)\ne\xi_\lambda^{(i)}(j+1)=\cdots=\xi_\lambda^{(i)}(n)$ for  $\lambda\in[\theta^{(j)}_i,\theta^{(j-1)}_i)$ and $i\le j\le n-2$.

\item[(3)] $\xi_\lambda^{(i)}(n-1)\ne\xi_\lambda^{(i)}(n)$ for $\lambda\in(\theta_{i-1}^{(n-2)},\theta_i^{(n-2)})$ and $i\le n-1$.
\end{itemize}
\end{prop}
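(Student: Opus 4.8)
The plan is to follow the template of the proof of Proposition \ref{p-shape}, the key point being that for the function $\xi_\lambda$ of Definition \ref{def-xil} (which, unlike $\phi_\lambda$ in (\ref{eq-psigen}), carries no truncation) the identity $T_{\ell+1}(\lambda)=-\pi(1)\det A_\ell(\lambda)$ holds \emph{unconditionally}. First I would record the difference formula: telescoping the recursion of Definition \ref{def-xil} and using $\nu(0,1)=0$ gives $[\xi_\lambda(\ell+1)-\xi_\lambda(\ell)]\nu(\ell,\ell+1)=-\lambda T_\ell(\lambda)$ for $1\le\ell<n$, where $T_\ell(\lambda)=\sum_{k=1}^\ell\pi(k)\xi_\lambda(k)$; multiplying by $\pi(\ell+1)$ and adding $T_\ell$ yields the three-term recursion $T_{\ell+1}(\lambda)=a_\ell(\lambda)T_\ell(\lambda)-\frac{\pi(\ell+1)}{\pi(\ell)}T_{\ell-1}(\lambda)$ with $T_1(\lambda)=-\pi(1)$, exactly as in (\ref{eq-TA}) but now with no hypothesis, so $T_{\ell+1}(\lambda)=-\pi(1)\det A_\ell(\lambda)$ for $0\le\ell\le n-1$. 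Consequently
\[
 \xi_\lambda(\ell+1)-\xi_\lambda(\ell)=\frac{\lambda\pi(1)}{\nu(\ell,\ell+1)}\det A_{\ell-1}(\lambda),\qquad 1\le\ell<n,
\]
so the sign of the $\ell$th increment of $\xi_\lambda$ is the sign of $\det A_{\ell-1}(\lambda)$.

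For part (1): by Remark \ref{r-xil}, $\pi(\xi_\lambda)=T_n(\lambda)=-\pi(1)\det A_{n-1}(\lambda)$ has zero set $\{\lambda^G_1,\dots,\lambda^G_{n-1}\}$, and $\alpha_1,\dots,\alpha_{n-2}$ are by definition the zeros of $\det A_{n-2}$; hence $\theta^{(n-1)}_i=\lambda^G_i$ and $\theta^{(n-2)}_i=\alpha_i$. The chain $\theta^{(n-1)}_i<\theta^{(n-2)}_i<\cdots<\theta^{(i)}_i$ then follows by applying, for $m=n-1,n-2,\dots,i+1$, the interlacing of the roots of $\det A_{m-1}$ and $\det A_m$ furnished by Lemma \ref{l-matt}, which gives $\theta^{(m)}_i<\theta^{(m-1)}_i$.

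For parts (2) and (3): by Definition \ref{def-xii}, $\xi^{(i)}_\lambda$ agrees with $\xi_\lambda$ on $\{1,\dots,b_i(\lambda)\}$ and is constant equal to $\xi_\lambda(b_i(\lambda))$ afterwards, where $b_i(\lambda)$ denotes the right endpoint of the $i$th monotone run of $\xi_\lambda$ (and $\xi^{(i)}_\lambda=\xi_\lambda$ when $\xi_\lambda$ has type $\le i$); since $\xi_\lambda$ is strictly monotone on that run, the least $k$ with $\xi^{(i)}_\lambda(k)=\xi^{(i)}_\lambda(n)$ is exactly $b_i(\lambda)$. Thus parts (2) and (3) reduce to the assertions $b_i(\lambda)=j+1$ for $\lambda\in[\theta^{(j)}_i,\theta^{(j-1)}_i)$, $i\le j\le n-2$, and $b_i(\lambda)=n$ for $\lambda\in(\theta^{(n-2)}_{i-1},\theta^{(n-2)}_i)$. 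The difference formula shows that the run structure of $\xi_\lambda$ is governed by the sign pattern of $\det A_0(\lambda),\dots,\det A_{n-2}(\lambda)$: a local extremum of $\xi_\lambda$ at index $\ell$ — or a one-step plateau, which occurs exactly when some $\det A_m(\lambda)=0$, since then $\det A_{m+1}(\lambda)=-\frac{\pi(m+2)}{\pi(m+1)}\det A_{m-1}(\lambda)$ forces a reversal — corresponds to a sign change between $\det A_{\ell-2}(\lambda)$ and $\det A_{\ell-1}(\lambda)$. Since the number of sign changes in $(\det A_0(\lambda),\dots,\det A_m(\lambda))$ equals the number of roots of $\det A_m$ lying below $\lambda$ (a standard Sturm-sequence fact that follows from the interlacing in Lemma \ref{l-matt}), the $i$th run of $\xi_\lambda$ ends at $b_i(\lambda)=m_i(\lambda)+1$ with $m_i(\lambda)=\min\{m\ge i:\theta^{(m)}_i\le\lambda\}$. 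By part (1) the sequence $\theta^{(i)}_i>\theta^{(i+1)}_i>\cdots>\theta^{(n-1)}_i$ is strictly decreasing, so for $\lambda\in[\theta^{(j)}_i,\theta^{(j-1)}_i)$ one has $\theta^{(m)}_i\le\lambda$ precisely for $m\ge j$, giving $m_i(\lambda)=j$ and $b_i(\lambda)=j+1$; this is part (2). Part (3) is the easy case: for $\lambda\in(\alpha_{i-1},\alpha_i)=(\theta^{(n-2)}_{i-1},\theta^{(n-2)}_i)$ we have $\det A_{n-2}(\lambda)\ne0$, hence $\xi_\lambda(n-1)\ne\xi_\lambda(n)$ by the difference formula, while $\xi^{(i)}_\lambda=\xi_\lambda$ on all of $V$ because $\xi_\lambda$ is of type $i$ there by Proposition \ref{p-xi}(2).

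I expect the main obstacle to be the bookkeeping hidden in the statement that the $i$th run of $\xi_\lambda$ ends at $m_i(\lambda)+1$: one must carefully translate the sign-change count of the Sturm sequence $(\det A_m(\lambda))_m$ into the precise location of the $i$th peak-valley point of $\xi_\lambda$, paying attention to the convention that a vanishing $\det A_m(\lambda)$ produces a one-step plateau (and not two sign changes), to the closed/open endpoints of $[\theta^{(j)}_i,\theta^{(j-1)}_i)$, to the distinction between $b_i(\lambda)=n-1$ and $b_i(\lambda)=n$, and to the alternation between odd and even runs when reconciling with the peak-valley structure of Definition \ref{def-type}; one also needs to check $\xi^{(i)}_\lambda=\xi_\lambda$ versus the truncated regime using Remark \ref{r-xii}. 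The identities in the first paragraph and part (1) are otherwise routine given Lemma \ref{l-matt} and Remark \ref{r-xil}.
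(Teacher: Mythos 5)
Your proof takes a genuinely different route from the paper's. You reduce parts (2)--(3) to identifying the first index $b_i(\lambda)$ after which $\xi_\lambda^{(i)}$ is constant (the paper calls this $\gamma(\lambda)$), and then locate $b_i(\lambda)$ by a Sturm-sequence sign count on $(\det A_0(\lambda),\dots,\det A_{m}(\lambda))$, reading off the increments of $\xi_\lambda$ from the clean identity $[\xi_\lambda(\ell+1)-\xi_\lambda(\ell)]\nu(\ell,\ell+1)=\lambda\pi(1)\det A_{\ell-1}(\lambda)$. The paper instead runs a continuity/deformation argument: it shows via Lemma~\ref{l-xi} that $\gamma(\lambda)$ is non-increasing and right-continuous, identifies its jump points $c_j$ as zeros of $T_j(\cdot)=-\pi(1)\det A_{j-1}(\cdot)$, and then pins them down with the interlacing from Lemma~\ref{l-matt}. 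Both proofs lean on the same two pillars --- the unconditional identity $T_{\ell+1}(\lambda)=-\pi(1)\det A_\ell(\lambda)$ (which the paper also records, inside the proof of Lemma~\ref{l-xi}) and the interlacing of the $\theta^{(m)}_k$ from Lemma~\ref{l-matt} --- and part~(1) is handled identically; what differs is how you extract the location of the $i$th peak-valley point. Your algebraic Sturm route is more direct and avoids the $\delta$-perturbation casework of Lemma~\ref{l-xi}, at the cost that the two central claims --- that the number of sign changes in $(\det A_0(\lambda),\dots,\det A_m(\lambda))$ equals the number of roots of $\det A_m$ below $\lambda$, and that this count, restricted appropriately, gives $b_i(\lambda)=m_i(\lambda)+1$ with $m_i(\lambda)=\min\{m\ge i:\theta^{(m)}_i\le\lambda\}$ --- are asserted rather than proved. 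They are true and do follow from the interlacing (together with the zero-handling convention, which you correctly note produces a one-step plateau and forced reversal since $\det A_{\ell}(\lambda)=-\tfrac{\pi(\ell+1)}{\pi(\ell)}\det A_{\ell-2}(\lambda)$ when $\det A_{\ell-1}(\lambda)=0$, and $\det A_{\ell-1},\det A_\ell$ cannot vanish simultaneously), but the translation from ``$i-1$ sign changes plus the current sign'' to ``end of the $i$th run'' is exactly the bookkeeping you flag as the main obstacle, and it would need to be written out to make the argument complete. Your treatment of part~(3) is correct and cleaner than what the paper spells out. One small point worth making explicit in the write-up: the reduction ``$\xi^{(i)}_\lambda$ agrees with $\xi_\lambda$ up to $b_i(\lambda)$ and is constant afterwards'' holds by Definition~\ref{def-xii} only when $\xi_\lambda$ has type strictly greater than $i$; on $[\theta^{(n-2)}_i,\infty)$ this is guaranteed by Proposition~\ref{p-xi}(2) except at the left endpoint $\lambda=\theta^{(n-2)}_i=\alpha_i$, where $\xi_\lambda$ has type $i$ with $\xi_\lambda(n-1)=\xi_\lambda(n)$ and $\xi^{(i)}_\lambda=\xi_\lambda$, so $b_i(\alpha_i)=n-1$ and the formula still reads correctly; this endpoint deserves a sentence.
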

\begin{proof}
The order in (1) is a simple application of Lemma \ref{l-mat}. For (2), fix $1\le i\le n-1$ and set $\gamma(\lambda)=\min\{j|\xi_\lambda^{(i)}(k)=\xi_\lambda^{(i)}(n),\,\forall j\le k\le n\}$ and $B(\lambda)=\{1,2,...,\gamma(\lambda)\}$, $B^+(\lambda)=B(\lambda)\cup\{\gamma(\lambda)+1\}$. Clearly, $i+1\le\gamma(\lambda)\le n$. We use the notation $\xi_\lambda|_C$ to denote the restriction of $\xi_\lambda$ to a set $C$. Suppose that $i$ is odd. By Remark \ref{r-xi2}, $\xi_\lambda^{(i)}=\xi_\lambda$ on $B(\lambda)$ and $\xi_\lambda|_{B(\lambda)}$ is of type $i$ with
\[
 \xi_\lambda(\gamma(\lambda)-1)<\xi_\lambda(\gamma(\lambda))\ge\xi_\lambda(\gamma(\lambda)+1).
\]
By Lemma \ref{l-xi}(1), if $\xi_\lambda(\gamma(\lambda)+1)<\xi_\lambda(\gamma(\lambda))$, then there is $\epsilon>0$ such that, for $|\delta|<\epsilon$, $\xi_{\lambda+\delta}|_{B(\lambda)}$ is of type $i$ and
\[
 \xi_{\lambda+\delta}(\gamma(\lambda)-1)<\xi_{\lambda+\delta}(\gamma(\lambda))
>\xi_{\lambda+\delta}(\gamma(\lambda)+1).
\]
This implies $\gamma(\lambda+\delta)=\gamma(\lambda)$ for $\delta\in(-\epsilon,\epsilon)$.
By Lemma \ref{l-xi}(2), if $\xi_\lambda(\gamma(\lambda)+1)=\xi_\lambda(\gamma(\lambda))$, then there is $\epsilon>0$ such that, for $\delta\in(-\epsilon,0)$, $\xi_{\lambda+\delta}|_{B^+(\lambda)}$ is of type $i$ with
\[
 \xi_{\lambda+\delta}(\gamma(\lambda)-1)<\xi_{\lambda+\delta}(\gamma(\lambda))
<\xi_{\lambda+\delta}(\gamma(\lambda)+1),
\]
and, for $\delta\in(0,\epsilon)$, $\xi_{\lambda+\delta}|_{B^+(\lambda)}$ is of type $i+1$ with
\[
 \xi_{\lambda+\delta}(\gamma(\lambda)-1)<\xi_{\lambda+\delta}(\gamma(\lambda))
>\xi_{\lambda+\delta}(\gamma(\lambda)+1).
\]
This yields $\gamma(\lambda+\delta)=\gamma(\lambda)$ for $\delta\in(0,\epsilon)$ and $\gamma(\lambda+\delta)=\gamma(\lambda)+1$ for $\delta\in(-\epsilon,0)$. The proof for the case of even $i$ is similar and we conclude from the above that $\gamma(\lambda)$ is a non-increasing and right-continuous function taking values on $\{i+1,...,n\}$. Let $c_{i+1}>\cdots>c_{n-1}$ be the discontinuous points of $\gamma(\lambda)$ such that $\gamma(c_j)=j$ for $i+1\le j\le n-1$. As a consequence of the above discussion, $\xi_{c_j}|_{\{1,...,j\}}$ is of type $i$ with $\xi_{c_j}(j)=\xi_{c_j}(j+1)$ and this implies $\sum_{k=1}^j\pi(k)\xi_{c_j}(k)=0$. That means $c_j$ is a root of $\det A_{j-1}(\lambda)=0$ for $j=i+1,...,n-1$. By Proposition \ref{p-xi} and the second equality in (1), $\gamma(\lambda)=n$ for $\theta^{(n-2)}_{i-1}<\lambda<\theta^{(n-2)}_i$ and, thus, $c_j\ge \theta^{(n-2)}_i$ for $j\ge i+1$. As a consequence of the interlacing relationship $\theta^{(\ell)}_i<\theta^{(\ell-1)}_i<\theta^{(\ell)}_{i+1}$, it must be $c_j=\theta_i^{(j+1)}$ for $i+1\le j\le n-1$. This finishes the proof.
\end{proof}
\begin{rem}\label{r-shape}
For $1\le i\le n-1$, $\theta^{(i)}_1,...,\theta^{(i)}_i$ are also non-zero eigenvalues of the $(i+1)\times(i+1)$ principal submatrix of (\ref{eq-M}) indexed by $1,...,i+1$.
\end{rem}

\begin{rem}\label{r-shape2}
In fact, by Proposition \ref{p-shape}, $\xi_\lambda^{(1)}(n-1)\ne\xi_\lambda^{(1)}(n)$ for $\lambda\in(0,\theta_1^{(n-2)})$, which is better than Proposition \ref{p-shape2}(3).
\end{rem}

\subsection{Local convergence of $\mathcal{L}$}
This subsection is dedicated to the local convergence of $\mathcal{L}$ in (\ref{eq-LLi}). Let $\alpha_i,\beta_i,\lambda^G_i$ be the constants in Proposition \ref{p-xi} and Lemma \ref{l-xi2}. As before, let $\zeta_0=\mathbf{1},...,\zeta_{n-1}$ denote the $L^2(\pi)$-normalized eigenvectors of $M^G_{\pi,\nu}$ associated with $\lambda^G_0,...,\lambda^G_{n-1}$. Clearly, $\xi_{\lambda^G_i}=-\zeta_i/\zeta_i(1)$ and $\xi_\lambda=\sum_{i=0}^{n-1}\rho_i(\lambda)\zeta_i$, where $\rho_i(\lambda)=\pi(\xi_\lambda\zeta_i)$ for $0\le i\le n-1$. Note that $\rho_i(\lambda)$ is a polynomial of degree $n-1$ and satisfies $\rho_i(\lambda_j)=-\delta_i(j)/\zeta_i(1)$ for $i,j\in\{0,1,...,n-1\}$. This implies
\begin{equation}\label{eq-rhoi1}
 \rho_0(\lambda)=-\prod_{j=1}^{n-1}\frac{\lambda^G_j-\lambda}{\lambda^G_j},\quad
\rho_i(\lambda)=-\frac{\lambda}{\zeta_i(1)\lambda^G_i}\prod_{j=1,j\ne i}^{n-1}\frac{\lambda^G_j-\lambda}{\lambda^G_j-\lambda^G_i},
\end{equation}
for all $1\le i\le n-1$. Moreover, by multiplying (\ref{eq-EL2}) with $\xi_\lambda(k)$ and summing up $k$, we obtain $\mathcal{E}_\nu(\xi_\lambda,\zeta_i)=\lambda^G_i\rho_i(\lambda)$. In the same spirit, one can show that $\mathcal{E}_\nu(\xi_\lambda,\zeta_i)=\lambda[\rho_i(\lambda)-\zeta_i(n)\rho_0(\lambda)]$ using Definition \ref{def-xil}. Putting both equations together yields
\begin{equation}\label{eq-rhoi2}
 \rho_i(\lambda)=\frac{\lambda\zeta_i(n)}{\lambda-\lambda^G_i}\rho_0(\lambda),\quad\forall 0\le i\le n-1.
\end{equation}
As a consequence of Remark \ref{r-LLi}, this gives
\begin{equation}\label{eq-Lsp}
 \mathcal{L}(\lambda)=\frac{\sum_{i=1}^{n-1}\lambda^G_i\rho_i^2(\lambda)}{\sum_{i=1}^{n-1}\rho_i^2(\lambda)}=\lambda+\frac{\sum_{i=1}^{n-1}(\lambda^G_i-\lambda)^{-1}\zeta^2_i(n)}
{\sum_{i=1}^{n-1}(\lambda^G_i-\lambda)^{-2}\zeta^2_i(n)},
\end{equation}
for $\lambda\notin\{\lambda^G_0,...,\lambda^G_{n-1}\}$. The next proposition follows immediately from the second equation in (\ref{eq-rhoi1}) and (\ref{eq-rhoi2}).
\begin{prop}\label{p-zeta}
Let $\lambda^G_1,...,\lambda^G_{n-1}$ be the non-zero eigenvalues of $M^G_{\pi,\nu}$ in \textnormal{(\ref{eq-M})} and $\zeta_1,...,\zeta_{n-1}$ be the corresponding $L^2(\pi)$-normalized eigenvectors. Then,
\[
 \zeta_i(1)\zeta_i(n)=-\prod_{j=1,j\ne i}^{n-1}\frac{\lambda^G_j}{\lambda^G_j-\lambda^G_i},\quad\forall 1\le i\le n-1.
\]
\end{prop}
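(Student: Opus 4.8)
The plan is to extract the identity directly from the two formulas for the coordinate $\rho_i(\lambda)=\pi(\xi_\lambda\zeta_i)$ that were already established in (\ref{eq-rhoi1}) and (\ref{eq-rhoi2}); no new ingredient is needed, only a careful comparison. So I fix $1\le i\le n-1$ throughout.

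First I would substitute the explicit expression for $\rho_0$ from (\ref{eq-rhoi1}) into (\ref{eq-rhoi2}) to get
\[
 \rho_i(\lambda)=\frac{\lambda\zeta_i(n)}{\lambda-\lambda^G_i}\,\rho_0(\lambda)
 =-\frac{\lambda\zeta_i(n)}{\lambda-\lambda^G_i}\prod_{j=1}^{n-1}\frac{\lambda^G_j-\lambda}{\lambda^G_j},
\]
then split off the $j=i$ factor of the product and use $(\lambda^G_i-\lambda)/(\lambda-\lambda^G_i)=-1$ to cancel the apparent pole at $\lambda^G_i$, obtaining
\[
 \rho_i(\lambda)=\frac{\lambda\zeta_i(n)}{\lambda^G_i}\prod_{j=1,\,j\ne i}^{n-1}\frac{\lambda^G_j-\lambda}{\lambda^G_j}.
\]
Next I equate this with the second formula in (\ref{eq-rhoi1}), namely $\rho_i(\lambda)=-\frac{\lambda}{\zeta_i(1)\lambda^G_i}\prod_{j\ne i}\frac{\lambda^G_j-\lambda}{\lambda^G_j-\lambda^G_i}$, and cancel the common factors $\lambda/\lambda^G_i$ and $\prod_{j\ne i}(\lambda^G_j-\lambda)$ — legitimate since both sides are rational functions of $\lambda$ agreeing on the cofinite set $\lambda\notin\{0,\lambda^G_1,\dots,\lambda^G_{n-1}\}$. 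What remains is
\[
 \zeta_i(n)\prod_{j=1,\,j\ne i}^{n-1}\frac{1}{\lambda^G_j}=-\frac{1}{\zeta_i(1)}\prod_{j=1,\,j\ne i}^{n-1}\frac{1}{\lambda^G_j-\lambda^G_i},
\]
which rearranges at once to the claimed identity $\zeta_i(1)\zeta_i(n)=-\prod_{j\ne i}\lambda^G_j/(\lambda^G_j-\lambda^G_i)$.

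Equivalently, one may take $\lambda\to\lambda^G_i$ in (\ref{eq-rhoi2}): the left side tends to $\rho_i(\lambda^G_i)=-1/\zeta_i(1)$ (using $\rho_i(\lambda^G_j)=0$ for $j\ne i$), while $\rho_0$ has a simple zero at $\lambda^G_i$ so that $\rho_0(\lambda)/(\lambda-\lambda^G_i)\to\rho_0'(\lambda^G_i)$, and $\rho_0'(\lambda^G_i)=\lambda^G_i{}^{-1}\prod_{j\ne i}(\lambda^G_j-\lambda^G_i)/\lambda^G_j$ is read off the product formula for $\rho_0$. I do not expect any real obstacle here; the proof is essentially algebraic, as the paper's phrasing suggests. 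The only things to keep straight are the sign coming from cancelling the factor $(\lambda^G_i-\lambda)$ against the pole $1/(\lambda-\lambda^G_i)$, and the fact — guaranteed by the simplicity of the spectrum of the irreducible tridiagonal matrix in (\ref{eq-M}) — that $\zeta_i(1)$, $\zeta_i(n)$ and the differences $\lambda^G_j-\lambda^G_i$ ($j\ne i$) are all nonzero, so that every quantity appearing above is well defined.
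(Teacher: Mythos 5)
Your argument is correct and is exactly what the paper intends: the paper states that the proposition ``follows immediately from the second equation in (\ref{eq-rhoi1}) and (\ref{eq-rhoi2})'', and your proposal simply carries out that comparison in detail — substituting the $\rho_0$ formula into (\ref{eq-rhoi2}), cancelling the $(\lambda^G_i-\lambda)$ factor against the pole, and matching against the second formula in (\ref{eq-rhoi1}) to isolate $\zeta_i(1)\zeta_i(n)$. Nothing to add; the alternative limiting argument $\lambda\to\lambda^G_i$ you sketch is the same computation in disguise.
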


Set $u(\lambda)=\sum_{j=1}^{n-1}(\lambda^G_j-\lambda)^{-1}\zeta_j^2(n)$. By Theorem \ref{t-xi}, $\beta_1,...,\beta_{n-2}$ are zeros of $u(\lambda)\prod_{j=1}^{n-1}(\lambda^G_j-\lambda)$, which is a polynomial of degree $n-2$. This implies
\[
 u(\lambda)=C\left(\prod_{j=1}^{n-1}\frac{1}{\lambda^G_j-\lambda}\right)\left(\prod_{j=1}^{n-2}(\beta_j-\lambda)\right),
\]
where $C=\frac{\lambda_1\cdots\lambda_{n-1}}{\beta_1\cdots\beta_{n-2}}\sum_{j=1}^{n-1}\zeta_j^2(n)/\lambda^G_j$. Putting this back to $\mathcal{L}$ yields
\begin{equation}\label{eq-Lsp2}
 \frac{1}{\mathcal{L}(\lambda)-\lambda}=\frac{u'(\lambda)}{u(\lambda)}=\sum_{j=1}^{n-1}\frac{1}{\lambda^G_j-\lambda}-\sum_{j=1}^{n-2}\frac{1}{\beta_j-\lambda},
\end{equation}
for $\lambda\notin\{\lambda^G_0,...,\lambda^G_{n-1},\beta_1,...,\beta_{n-2}\}$.

\begin{prop}\label{p-L}
Let $\mathcal{L}$ be the function in \textnormal{(\ref{eq-LLi})}, $\lambda^G_i$ be the eigenvalue of $M^G_{\pi,\nu}$ and $\beta_i$ be the constant in
{\em Lemma \ref{l-xi2}}. Let $D_i=\sum_{j=1}^{n-2}(\beta_j-\lambda^G_i)^{-1}-\sum_{j=1,j\ne i}^{n-1}(\lambda^G_j-\lambda^G_i)^{-1}$ with $1\le i\le n-1$. Then, for $2\le i\le n-2$,
\begin{itemize}
\item[(1)] If $D_i<0$, then there is $\tau\in(\lambda^G_i,\beta_i)$ such that $\mathcal{L}$ is strictly increasing on $(\beta_{i-1},\lambda^G_i)\cup(\tau,\beta_i)$ and strictly decreasing on $(\lambda^G_i,\tau)$.

\item[(2)] If $D_i>0$, then there is $\eta\in(\beta_{i-1},\lambda^G_i)$ such that $\mathcal{L}$ is strictly increasing on $(\beta_{i-1},\eta)\cup(\lambda^G_i,\beta_i)$ and strictly increasing on $(\eta,\lambda^G_i)$.

\item[(3)] If $D_i=0$, then $\mathcal{L}$ is strictly increasing on $(\beta_{i-1},\beta_i)$.
\end{itemize}
\end{prop}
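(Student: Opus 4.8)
The plan is to reduce the monotonicity of $\mathcal{L}$ to a convexity statement for the reciprocal of the rational function appearing in (\ref{eq-Lsp2}). Recall from (\ref{eq-Lsp2}) and the line preceding it that
\[
 \mathcal{L}(\lambda)-\lambda=\frac{u(\lambda)}{u'(\lambda)},\qquad u(\lambda)=C\,\frac{\prod_{j=1}^{n-2}(\beta_j-\lambda)}{\prod_{j=1}^{n-1}(\lambda^G_j-\lambda)},
\]
and here $C>0$: all $\lambda^G_j,\beta_j$ are positive and, by Proposition \ref{p-zeta}, $\zeta_j(n)\ne0$ for every $j$, so $C=\big(\prod_j\lambda^G_j/\prod_j\beta_j\big)\sum_j\zeta_j^2(n)/\lambda^G_j>0$. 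Put $w:=1/u$; it is a ratio of a degree $n-1$ polynomial over a degree $n-2$ polynomial, with zeros $\lambda^G_1,\dots,\lambda^G_{n-1}$ and poles $\beta_1,\dots,\beta_{n-2}$, and these interlace, $\lambda^G_1<\beta_1<\lambda^G_2<\cdots<\beta_{n-2}<\lambda^G_{n-1}$, by Proposition \ref{p-xi}(1) and Lemma \ref{l-xi2}. Then $\mathcal{L}(\lambda)=\lambda-w(\lambda)/w'(\lambda)$ wherever $w'\ne0$, so
\[
 \mathcal{L}'(\lambda)=\frac{w(\lambda)\,w''(\lambda)}{w'(\lambda)^2},
\]
and the sign of $\mathcal{L}'$ is that of $w\,w''$.

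Next I would record the partial fraction expansion $w(\lambda)=-\lambda/C+b+\sum_{j=1}^{n-2}c_j/(\beta_j-\lambda)$ with $c_j=\tfrac1C\prod_k(\lambda^G_k-\beta_j)/\prod_{k\ne j}(\beta_k-\beta_j)$; since $\beta_j\in(\lambda^G_j,\lambda^G_{j+1})$, the numerator has $j$ negative factors and the denominator $j-1$, so $c_j<0$ for all $j$. Hence
\[
 w'(\lambda)=-\tfrac1C+\sum_j\frac{c_j}{(\beta_j-\lambda)^2}<0,\qquad w'''(\lambda)=6\sum_j\frac{c_j}{(\beta_j-\lambda)^4}<0
\]
off the poles, i.e. $w$ is strictly decreasing and $w''$ is strictly decreasing. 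Fix now $2\le i\le n-2$. By the interlacing, $(\beta_{i-1},\beta_i)$ contains no pole of $w$ and exactly one zero, $\lambda^G_i$; as $w$ strictly decreases, $w>0$ on $(\beta_{i-1},\lambda^G_i)$ and $w<0$ on $(\lambda^G_i,\beta_i)$. From the expansion, $w''(\lambda)\to+\infty$ as $\lambda\downarrow\beta_{i-1}$ and $w''(\lambda)\to-\infty$ as $\lambda\uparrow\beta_i$ (the $j=i-1$, resp.\ $j=i$, residue term dominates), so $w''$ has a unique zero $\lambda^\ast\in(\beta_{i-1},\beta_i)$, with $w''>0$ on $(\beta_{i-1},\lambda^\ast)$ and $w''<0$ on $(\lambda^\ast,\beta_i)$.

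The last ingredient is the position of $\lambda^\ast$ relative to $\lambda^G_i$, which is where $D_i$ enters. Expanding $\mathcal{L}(\lambda)=\lambda-w(\lambda)/w'(\lambda)$ about $\lambda^G_i$ and using $w(\lambda^G_i)=0$ gives $\mathcal{L}(\lambda)-\lambda^G_i=\frac{w''(\lambda^G_i)}{2w'(\lambda^G_i)}(\lambda-\lambda^G_i)^2+O\!\big((\lambda-\lambda^G_i)^3\big)$, while isolating the term $(\lambda^G_i-\lambda)^{-1}$ in (\ref{eq-Lsp2}) and inverting gives $\mathcal{L}(\lambda)-\lambda^G_i=D_i(\lambda-\lambda^G_i)^2+O\!\big((\lambda-\lambda^G_i)^3\big)$ with $D_i$ precisely the constant in the statement. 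Hence $D_i=w''(\lambda^G_i)/(2w'(\lambda^G_i))$, and since $w'(\lambda^G_i)<0$ the sign of $D_i$ is opposite to that of $w''(\lambda^G_i)$; with the monotonicity of $w''$ this means $\lambda^\ast>\lambda^G_i$ iff $D_i<0$, $\lambda^\ast<\lambda^G_i$ iff $D_i>0$, and $\lambda^\ast=\lambda^G_i$ iff $D_i=0$. Reading off $\mathrm{sign}\,\mathcal{L}'=\mathrm{sign}(w\,w'')$ on the subintervals of $(\beta_{i-1},\beta_i)$ cut out by $\lambda^G_i$ and $\lambda^\ast$ now gives the three cases: for $D_i=0$, $\mathcal{L}'>0$ on $(\beta_{i-1},\beta_i)\setminus\{\lambda^G_i\}$ (case (3)); for $D_i<0$, $\mathcal{L}'>0,<0,>0$ on $(\beta_{i-1},\lambda^G_i),(\lambda^G_i,\lambda^\ast),(\lambda^\ast,\beta_i)$, so $\tau=\lambda^\ast$ (case (1)); for $D_i>0$, $\mathcal{L}'>0,<0,>0$ on $(\beta_{i-1},\lambda^\ast),(\lambda^\ast,\lambda^G_i),(\lambda^G_i,\beta_i)$, so $\eta=\lambda^\ast$ (case (2)).

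The main obstacle is the sign bookkeeping: establishing $c_j<0$ for every $j$ (hence $w'<0$ and $w'''<0$) and the blow-up signs of $w''$ at the endpoints $\beta_{i-1},\beta_i$ requires carrying the interlacing pattern carefully through the residue formula; each step is elementary but must be done precisely. The identity $\mathcal{L}'=w\,w''/(w')^2$ for $w=1/u$, and the expansion identifying $D_i$ with $w''(\lambda^G_i)/(2w'(\lambda^G_i))$, are the conceptual core and are short once the rational structure is set up.
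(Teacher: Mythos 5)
Your proof is correct, and it takes a genuinely different route from the paper's. The paper works directly with $u(\lambda)=\sum_j \zeta_j^2(n)/(\lambda^G_j-\lambda)$, whose residues are positive; it computes $\mathcal{L}'(\lambda^G_i)=0$ and $\mathcal{L}''(\lambda^G_i)=2D_i$, and then argues by classifying critical points: at any $\tau\in(\lambda^G_i,\beta_i)$ with $\mathcal{L}'(\tau)=0$ one has
\[
 \mathcal{L}''(\tau)=\frac{u(\tau)\bigl[3(u''(\tau))^2-2u'(\tau)u'''(\tau)\bigr]}{2(u'(\tau))^3}>0,
\]
using that $3(u'')^2-2u'u'''$ is a negative sum of squares, so $\tau$ is a local minimum; similarly $\eta\in(\beta_{i-1},\lambda^G_i)$ would have to be a local maximum. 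Case (3) is then handled by a separate direct sign computation of $\mathcal{L}(\lambda)-\lambda^G_i$. You instead pass to $w=1/u$, expand it in partial fractions with poles at the $\beta_j$, and observe that all residues $c_j$ are negative (the interlacing $\lambda^G_j<\beta_j<\lambda^G_{j+1}$ makes the sign count come out to $(-1)^j/(-1)^{j-1}=-1$, and $C>0$ by Proposition~\ref{p-zeta}); hence $w'<0$ and $w'''<0$ everywhere off the poles, so $w''$ is globally strictly decreasing on each pole gap, and the sign pattern of $\mathcal{L}'=ww''/(w')^2$ follows from the unique zero $\lambda^*$ of $w''$ together with the identity $D_i=w''(\lambda^G_i)/\bigl(2w'(\lambda^G_i)\bigr)$, which you correctly extract by matching the quadratic Taylor coefficient of $\mathcal{L}$ at $\lambda^G_i$ computed two ways. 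Your route buys a unified treatment of all three cases from the single monotonicity statement $w'''<0$, and that inequality is immediate from the residue signs, whereas the paper's $3(u'')^2-2u'u'''<0$ requires exhibiting a nontrivial sum-of-squares identity and case (3) needs a separate argument. The cost is the extra bookkeeping around the product form of $u$, the positivity of $C$, and the residue sign count, all of which you handle correctly.
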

\begin{proof}
Using (\ref{eq-Lsp}) and (\ref{eq-Lsp2}), one can show that $\mathcal{L}'(\lambda^G_i)=0$ and
\begin{equation}\label{eq-L''}
 \mathcal{L}''(\lambda^G_i)=\sum_{j=1,j\ne i}^{n-1}\frac{\zeta_i^2(n)}{\lambda^G_j-\lambda^G_i}=2\left[\sum_{j=1}^{n-2}\frac{1}{\beta_j-\lambda^G_i}-\sum_{j=1,j\ne i}^{n-1}\frac{1}{\lambda^G_j-\lambda^G_i}\right]=2D_i.
\end{equation}
To prove (1) and (2), it suffices to show that if $\mathcal{L}'(\tau)=0$ for some $\tau\in(\lambda^G_i,\beta_i)$, then $\tau$ is a local minimum of $\mathcal{L}$, and if $\mathcal{L}'(\eta)=0$ for some $\eta\in(\beta_{i-1},\lambda^G_i)$, then $\eta$ is a local maximum of $\mathcal{L}$. We discuss the first case, whereas the second case is similar and is omitted. Recall that $u(\lambda)=\sum_{j=1}^{n-1}(\lambda_j^G-\lambda)^{-1}\zeta_j^2(n)$. As $\tau$ is a critical point for $\mathcal{L}$, one has $2(u'(\tau))^2=u(\tau)u''(\tau)$. This implies
\[
 \mathcal{L}''(\tau)=\frac{u(\tau)[3(u''(\tau))^2-2u'(\tau)u'''(\tau)]}{2(u'(\tau))^3}>0,
\]
where the last inequality uses the fact that $u(\lambda)<0$, for $\lambda\in(\lambda^G_i,\beta_i)$, and
\[
 3(u''(\lambda))^2-2u'(\lambda)u'''(\lambda)=-12\sum_{1\le i<j\le n-1}\left[\frac{(\lambda^G_i-\lambda^G_j)\zeta_i(n)\zeta_j(n)}{(\lambda^G_i-\lambda)^2(\lambda^G_j-\lambda)^2}\right]^2<0.
\]
This proves (1) and (2).

To see (3), we assume that $D_i=0$. Computations show that
\begin{align}
 &\frac{\mathcal{L}(\lambda)-\lambda^G_i}{\mathcal{L}(\lambda)-\lambda}
=(\lambda-\lambda^G_i)\left[\sum_{j=1,j\ne i}^{n-1}\frac{1}{\lambda^G_j-\lambda}-\sum_{j=1}^{n-2}\frac{1}{\beta_j-\lambda}\right]\notag\\
=&(\lambda-\lambda^G_i)^2\left[\sum_{j=1,j\ne i}^{n-1}\frac{1}{(\lambda^G_j-\lambda)(\lambda^G_j-\lambda^G_i)}-\sum_{j=1}^{n-1}\frac{1}{(\beta_j-\lambda)(\beta_j-\lambda^G_i)}\right]<0,\notag
\end{align}
for $\lambda\in(\beta_{i-1},\lambda^G_i)\cup(\lambda^G_i,\beta_i)$, where the last inequality uses the fact that $(\lambda^G_j-\lambda)(\lambda^G_j-\lambda^G_i)>(\beta_j-\lambda)(\beta_j-\lambda^G_i)$ for $j<i$ and $(\lambda^G_j-\lambda)(\lambda^G_j-\lambda^G_i)>(\beta_{j-1}-\lambda)(\beta_{j-1}-\lambda^G_i)$
for $j>i$. By Theorem \ref{t-xi}, this implies $\mathcal{L}(\lambda)>\lambda^G_i$ for $\lambda\in(\lambda^G_i,\beta_i)$ and $\mathcal{L}(\lambda)<\lambda^G_i$ for $\lambda\in(\beta_{i-1},\lambda^G_i)$. The desired property comes immediate from the discussion in the previous paragraph.
\end{proof}

\begin{rem}
Note that $D_1>0$ and $D_{n-1}<0$. Using the same proof as above, this implies that $\mathcal{L}(\lambda)$ is strictly increasing on $(\lambda^G_1,\beta_1)\cup(\beta_{n-2},\lambda^G_{n-1})$. Moreover, by (\ref{eq-Lsp}), one may compute
\[
 (u'(\lambda))^2\mathcal{L}'(\lambda)=-2\sum_{i<j}\frac{(\lambda^G_i-\lambda^G_j)^2}
{(\lambda^G_i-\lambda)^3(\lambda^G_j-\lambda)^3}<0,\quad\forall \lambda\in(0,\lambda^G_1)\cup(\lambda^G_{n-1},\infty).
\]
This implies $\mathcal{L}(\lambda)$ is strictly decreasing on $(0,\lambda^G_1)\cup(\lambda^G_{n-1},\infty)$ and
\[
 \lim_{\lambda\ra 0}\mathcal{L}(\lambda)=\frac{\sum_{i=1}^{n-1}\zeta_i^2(n)/\lambda^G_i}{\sum_{i=1}^{n-1}\zeta_i^2(n)/(\lambda^G_i)^2},\quad
\lim_{\lambda\ra\infty}\mathcal{L}(\lambda)=\left(\frac{1}{\pi(n)}-1\right)\sum_{i=1}^{n-1}\lambda^G_i\zeta_i^2(n).
\]
\end{rem}

The following local convergence is a simple corollary of Theorem \ref{t-xi} and Proposition \ref{p-L}.
\begin{thm}[Local convergence]\label{t-local}
Let $\lambda_0>0$ and set $\lambda_{\ell+1}=\mathcal{L}(\lambda_\ell)$ for $\ell\ge 0$. Then, there is $\epsilon>0$ such that the sequence $(\lambda_\ell)_{\ell=1}^\infty$ is monotonic and converges to $\lambda^G_i$ for $\lambda_0\in(\lambda^G_i-\epsilon,\lambda^G_i+\epsilon)$ and $1\le i\le n-1$.
\end{thm}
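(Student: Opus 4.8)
The strategy is to regard $\lambda^G_i$ as a fixed point of the scalar map $\mathcal{L}$ and to trap the iteration on one side of $\lambda^G_i$, where it moves monotonically toward it. I would first assemble the facts already at hand. Note that $\mathcal{L}(\lambda)=\mathcal{E}_\nu(\xi_\lambda,\xi_\lambda)/\textnormal{Var}_\pi(\xi_\lambda)$ is continuous on $(0,\infty)$: each $\xi_\lambda(k)$ is a polynomial in $\lambda$, and for $\lambda>0$ one has $\xi_\lambda(2)>\xi_\lambda(1)$ so that $\xi_\lambda$ is non-constant and $\textnormal{Var}_\pi(\xi_\lambda)>0$ (in particular the apparent singularities of \eqref{eq-Lsp}--\eqref{eq-Lsp2} at the $\lambda^G_j$ are removable, with value $\lambda^G_j$, and by Theorem \ref{t-xi} each $\lambda^G_j$ is a fixed point of $\mathcal{L}$). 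By Lemma \ref{l-xi2} one has $\beta_{i-1}<\lambda^G_i<\beta_i$ with the conventions $\beta_0:=0$, $\beta_{n-1}:=\infty$. From Theorem \ref{t-xi} --- and, for $i\in\{1,n-1\}$, directly from \eqref{eq-Lsp} --- one gets $\mathcal{L}(\lambda)>\lambda$ for $\lambda\in(\beta_{i-1},\lambda^G_i)$ and $\mathcal{L}(\lambda)<\lambda$ for $\lambda\in(\lambda^G_i,\beta_i)$, so $\lambda^G_i$ is the only fixed point of $\mathcal{L}$ in the open interval $(\beta_{i-1},\beta_i)$. Finally, from Proposition \ref{p-L} together with the Remark following it (for the endpoint indices $i=1,n-1$), one reads off the sign of $\mathcal{L}(\lambda)-\lambda^G_i$ on a punctured neighbourhood of $\lambda^G_i$: it is strictly positive on both sides when $D_i>0$ or $i=1$; strictly negative on both sides when $D_i<0$ or $i=n-1$; and, when $D_i=0$ (so $2\le i\le n-2$), it is negative on $(\beta_{i-1},\lambda^G_i)$ and positive on $(\lambda^G_i,\beta_i)$.

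Granting these ingredients, fix $i$ and consider first the case $D_i>0$ (or $i=1$). Choose $\delta>0$ so small that $(\lambda^G_i-\delta,\lambda^G_i+\delta)\subset(\beta_{i-1},\beta_i)$ and $\mathcal{L}(\lambda)>\lambda^G_i$ whenever $0<|\lambda-\lambda^G_i|<\delta$, and then, using the continuity of $\mathcal{L}$ at $\lambda^G_i$ and $\mathcal{L}(\lambda^G_i)=\lambda^G_i$, choose $\epsilon_i\in(0,\delta]$ so that $|\lambda-\lambda^G_i|<\epsilon_i$ forces $\mathcal{L}(\lambda)<\lambda^G_i+\delta$. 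Now let $\lambda_0\in(\lambda^G_i-\epsilon_i,\lambda^G_i+\epsilon_i)$; the case $\lambda_0=\lambda^G_i$ is trivial, so assume $\lambda_0\ne\lambda^G_i$. Then $\lambda_1=\mathcal{L}(\lambda_0)\in(\lambda^G_i,\lambda^G_i+\delta)$, and inductively each $\lambda_\ell$ with $\ell\ge1$ lies in $(\lambda^G_i,\lambda^G_i+\delta)\subset(\lambda^G_i,\beta_i)$, whence Theorem \ref{t-xi} gives $\lambda_{\ell+1}=\mathcal{L}(\lambda_\ell)<\lambda_\ell$ and the sign fact gives $\lambda_{\ell+1}>\lambda^G_i$; so $(\lambda_\ell)_{\ell\ge1}$ is strictly decreasing and bounded below by $\lambda^G_i$. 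Its limit lies in $[\lambda^G_i,\lambda_1]\subset[\lambda^G_i,\beta_i)$ and is a fixed point of $\mathcal{L}$ by continuity, hence equals $\lambda^G_i$. The case $D_i<0$ (or $i=n-1$) is the mirror image: after one step the orbit enters $(\lambda^G_i-\delta,\lambda^G_i)\subset(\beta_{i-1},\lambda^G_i)$ and, by the same two facts, is strictly increasing with limit $\lambda^G_i$. The case $D_i=0$ is in fact easier: any $\lambda_0\in(\beta_{i-1},\lambda^G_i)$ yields an orbit that stays in $(\beta_{i-1},\lambda^G_i)$ and is strictly increasing, and any $\lambda_0\in(\lambda^G_i,\beta_i)$ yields one that stays in $(\lambda^G_i,\beta_i)$ and is strictly decreasing, both converging to $\lambda^G_i$, so here $\epsilon_i$ may be taken to be $\min\{\lambda^G_i-\beta_{i-1},\beta_i-\lambda^G_i\}$. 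Taking $\epsilon=\min_{1\le i\le n-1}\epsilon_i>0$ (shrunk further, if one wishes, so the intervals $(\lambda^G_i-\epsilon,\lambda^G_i+\epsilon)$ are pairwise disjoint) completes the proof.

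The step I expect to be the real work is the third ingredient --- determining the one-sided sign of $\mathcal{L}(\lambda)-\lambda^G_i$ near $\lambda^G_i$. One cannot obtain the monotonicity of the orbit simply from monotonicity of $\mathcal{L}$, because Proposition \ref{p-L}(1)--(2) show that when $D_i\ne0$ the map $\mathcal{L}$ acquires an extra critical point ($\tau$ or $\eta$) inside $(\beta_{i-1},\beta_i)$ and so is not monotone on any full neighbourhood of $\lambda^G_i$; the argument above instead combines the sign of $\mathcal{L}(\lambda)-\lambda^G_i$ with the comparison of $\mathcal{L}$ to the diagonal from Theorem \ref{t-xi} to confine the orbit to one side of $\lambda^G_i$ from the first iterate onward. (Equivalently, one could invoke $\mathcal{L}'(\lambda^G_i)=0$ from \eqref{eq-L''} for the convergence, but this by itself does not rule out an oscillating orbit, which is exactly what Proposition \ref{p-L} prevents.) A minor additional point is that Theorem \ref{t-xi} and Proposition \ref{p-L} are stated for $2\le i\le n-2$, so the endpoint eigenvalues $\lambda^G_1$ and $\lambda^G_{n-1}$ have to be handled separately, using \eqref{eq-Lsp} and the Remark following Proposition \ref{p-L} (which supplies $D_1>0$, $D_{n-1}<0$ and the monotonicity of $\mathcal{L}$ near those two eigenvalues); once that is done the argument is uniform in $i$.
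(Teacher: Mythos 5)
Your proposal is correct and fleshes out in detail exactly what the paper leaves implicit: the paper offers no proof at all for this theorem, merely the sentence preceding it that it is ``a simple corollary of Theorem \ref{t-xi} and Proposition \ref{p-L}.'' You correctly identify that the heart of the matter is combining the sign of $\mathcal{L}(\lambda)-\lambda$ from Theorem \ref{t-xi} (which controls the direction of the drift once the orbit is on one side of $\lambda^G_i$) with the sign of $\mathcal{L}(\lambda)-\lambda^G_i$ from Proposition \ref{p-L} (which confines the orbit to one side after the first iterate, ruling out oscillation), and that neither ingredient alone suffices. You also correctly flag and handle the boundary cases $i=1,\,n-1$, which fall outside the stated range of Theorem \ref{t-xi}(1)--(2) and Proposition \ref{p-L} and require the Remark following Proposition \ref{p-L}, something the paper again leaves to the reader. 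The only small point worth flagging: Proposition \ref{p-L}(2) as printed reads ``strictly increasing'' twice; you have silently corrected the second occurrence to ``strictly decreasing on $(\eta,\lambda^G_i)$,'' which is the reading needed for both your argument and the obvious analogy with part (1).
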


We use the following examples to illustrate the different cases in Proposition \ref{p-L}.

\begin{ex}[Simple random walks]\label{ex-sym}
Let $n>1$. A simple random walk on $\{1,2,...,n\}$ with reflecting probability $1/2$ at the boundary is a birth and death chain with transition matrix given by $K(i,j)=K(1,1)=K(n,n)=1/2$ for $|i-j|=1$. It is easy to see that the uniform probability is the stationary distribution of $K$. In the setting of graph, we have $\nu(i,i+1)=1/(2n)$ and $\pi(i)=1/n$. One may apply the method in \cite{F68} to obtain the following spectral information.
\[
\lambda^G_j=1-\cos\frac{j\pi}{n},\quad \zeta_j(k)=\frac{1}{\sqrt{\lambda^G_j}}\left(\sin\frac{jk\pi}{n}-\sin\frac{j(k-1)\pi}{n}\right),\quad\forall 1\le j<n.
\]
See, e.g.,  \cite[Section 7]{CSal10}. By (\ref{eq-L''}), we get
\[
D_i=\frac{1}{2}\sum_{j=1,j\ne i}^{n-1}\frac{\sin^2(j\pi/n)}{\lambda^G_j(\lambda^G_j-\lambda^G_i)}=\sum_{j=1,j\ne i}^{n-1}\frac{1+\cos(j\pi/n)}{\cos(i\pi/n)-\cos(j\pi/n)}.
\]
Clearly, $D_1>0$ and $D_{n-1}<0$. If $n$ is even, then $D_{n/2}<0$.

\end{ex}

\begin{ex}[Ehrenfest chains]\label{ex-Ehrenfest}
An Ehrenfest chain on $V=\{0,1,...,n\}$ is a Markov chain with transition matrix $K$ given by $K(i,i+1)=1-i/n$ and $K(i+1,i)=(i+1)/n$ for $i=0,...,n-1$. The associated stationary distribution is the unbiased binomial distribution on $V$, that is, $\pi(i)=\binom{n}{i}2^{-n}$ for $i\in V$. To the Ehrenfest chain, the measure $\nu$ is defined by $\nu(i,i+1)=\binom{n-1}{i}2^{-n}$ for $i=0,...,n-1$. Using the group representation for the binary group $\{0,1\}^n$, one may compute
\[
 \lambda_j=\frac{2j}{n},\quad \zeta_j(k)=\binom{n}{j}^{-1/2}\sum_{\ell=0}^j(-1)^\ell\binom{k}{\ell}\binom{n-k}{j-\ell},\quad\forall 1\le j\le n.
\]
Plugging this back into (\ref{eq-L''}) yields
\[
 D_i=\frac{n}{4}\sum_{j=1,j\ne i}^n\frac{\binom{n}{j}}{j-i}\begin{cases}>0&\text{for }i<n/2\\=0&\text{for }i=n/2\\<0&\text{for }i>n/2.\end{cases}.
\]
This example points out the possibility of different signs in $\{D_i|i=1,...,n-1\}$ including $0$.
\end{ex}

\subsection{A remark on the separation for birth and death chains}
In this subsection, we give a new proof of a result, Theorem \ref{t-sep}, which deals with convergence in separation distance for birth and death chains. Let $(X_m)_{m=0}^\infty$ be a birth and death chain with transition matrix $K$ given by (\ref{eq-bdc}). In the continuous time setting, we consider the process $Y_t=X_{N_t}$, where $N_t$ is a Poisson process with parameter $1$ independent of $X_m$. Given the initial distribution $\mu$, which is the distribution of $X_0$, the distributions of $X_m$ and $Y_t$ are respectively $\mu K^m$ and $\mu e^{-t(I-K)}$, where $e^A:=\sum_{l=0}^\infty A^l/l!$. Briefly, we write $H_t=e^{-t(I-K)}$. It is well-known that if $K$ is irreducible, then $\mu H_t$ converges to $\pi$ as $t\ra\infty$. If $K$ is irreducible and $r_i>0$ for some $i$, then $\mu K^m$ converges to $\pi$ as $m\ra\infty$. Concerning the convergence, we consider the separations of $X_m,Y_t$ with respect to $\pi$, which are defined by
\[
 d_{\text{sep}}(\mu,m)=\max_{0\le x\le n}\left\{1-\frac{\mu K^m(x)}{\pi(x)}\right\},\quad
d_{\text{sep}}^c(\mu,t)=\max_{0\le x\le n}\left\{1-\frac{\mu H_t(x)}{\pi(x)}\right\}.
\]
The following theorem is from \cite{DS06}.

\begin{thm}\label{t-sep}
Let $K$ be an irreducible birth and death chain on $\{0,1,...,n\}$ with eigenvalues $\lambda_0=0<\lambda_1<\cdots<\lambda_n$.
\begin{itemize}
\item[(1)] For the discrete time chain, if $p_i+q_{i+1}\le 1$ for all $0\le i<n$, then
\[
 d_{\textnormal{sep}}(0,m)=d_{\textnormal{sep}}(n,m)=\sum_{j=1}^n\left(\prod_{i=1,i\ne j}^n\frac{\lambda_i}{\lambda_i-\lambda_j}\right)(1-\lambda_j)^m.
\]

\item[(2)] For the continuous time chain, it holds true that
\[
 d_{\textnormal{sep}}^c(0,t)=d_{\textnormal{sep}}^c(n,t)=\sum_{j=1}^n\left(\prod_{i=1,i\ne j}^n\frac{\lambda_i}{\lambda_i-\lambda_j}\right)e^{-\lambda_jt}.
\]
\end{itemize}
\end{thm}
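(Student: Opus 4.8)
The plan is to reduce everything to the single quantity $1-K^m(0,n)/\pi(n)$ (resp. $1-H_t(0,n)/\pi(n)$) and then to evaluate it spectrally using Proposition \ref{p-zeta}.

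First I would show that the maximum defining $d_{\textnormal{sep}}(0,m)$ is attained at $x=n$. Fix $m$ and set $\psi(x)=K^m(0,x)/\pi(x)$; reversibility of $K^m$ with respect to $\pi$ gives $\psi=K^m g$ where $g:=\pi(0)^{-1}\mathbf 1_{\{0\}}$, a non-increasing function on $\{0,\dots,n\}$. Hence it suffices to check that $K$ sends non-increasing functions to non-increasing functions. Writing $(Kf)(x)=q_xf(x-1)+r_xf(x)+p_xf(x+1)$ and $D_k:=f(k-1)-f(k)\ge0$, a one-line computation (with the boundary conventions $q_0=p_n=0$) yields, for $0\le x<n$,
\[
 (Kf)(x)-(Kf)(x+1)=q_xD_x+(1-p_x-q_{x+1})D_{x+1}+p_{x+1}D_{x+2}\ge0,
\]
the inequality being exactly where the hypothesis $p_i+q_{i+1}\le1$ enters. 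Thus $\psi$ is non-increasing, its minimum is $\psi(n)$, and $d_{\textnormal{sep}}(0,m)=1-K^m(0,n)/\pi(n)$. Applying this to the reflected chain $i\mapsto n-i$ (again a birth and death chain satisfying the same hypothesis, with stationary measure $i\mapsto\pi(n-i)$) gives $d_{\textnormal{sep}}(n,m)=1-K^m(n,0)/\pi(0)$, and reversibility, $\pi(0)K^m(0,n)=\pi(n)K^m(n,0)$, makes the two separations equal.

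Next I would invoke the spectral decomposition. Since $K$ is reversible with respect to $\pi$ and $I-K=M^G_{\pi,\nu}$ for the path on $\{0,1,\dots,n\}$ with $\nu(i,i+1)=\pi(i)p_i$, the chain $K$ has eigenvalues $1-\lambda_j$ with $L^2(\pi)$-orthonormal eigenvectors $\zeta_0=\mathbf 1,\zeta_1,\dots,\zeta_n$, where $0=\lambda_0<\lambda_1<\cdots<\lambda_n$. Then
\[
 \frac{K^m(0,n)}{\pi(n)}=\sum_{j=0}^n(1-\lambda_j)^m\zeta_j(0)\zeta_j(n)=1+\sum_{j=1}^n(1-\lambda_j)^m\zeta_j(0)\zeta_j(n),
\]
so $d_{\textnormal{sep}}(0,m)=-\sum_{j=1}^n(1-\lambda_j)^m\zeta_j(0)\zeta_j(n)$. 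Proposition \ref{p-zeta}, applied to this $(n+1)$-vertex path with endpoints $0$ and $n$, gives $\zeta_j(0)\zeta_j(n)=-\prod_{i=1,i\ne j}^n\lambda_i/(\lambda_i-\lambda_j)$; substituting proves part (1). Part (2) is identical once one knows $\psi_t(x):=H_t(0,x)/\pi(x)$ is non-increasing in $x$, which now needs no condition on the rates: indeed $H_t=\lim_{h\to0^+}\big((1-h)I+hK\big)^{\lfloor t/h\rfloor}$, and for $h\le\tfrac12$ the lazy chain $(1-h)I+hK$ has birth/death probabilities $hp_i,hq_i$ with $hp_i+hq_{i+1}\le1$, hence is monotone by the previous step and has stationary measure $\pi$, so the limit inherits monotonicity. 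Then $d_{\textnormal{sep}}^c(0,t)=d_{\textnormal{sep}}^c(n,t)=1-H_t(0,n)/\pi(n)$, and expanding $H_t(0,n)/\pi(n)=\sum_{j=0}^ne^{-\lambda_jt}\zeta_j(0)\zeta_j(n)$ exactly as above, with Proposition \ref{p-zeta}, gives the stated formula.

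The spectral bookkeeping is routine once Proposition \ref{p-zeta} is in hand, so the real work is the first step: recognising that separation from an endpoint is governed by the opposite endpoint. The main obstacle there is the elementary identity for $(Kf)(x)-(Kf)(x+1)$, which both isolates the exact role of $p_i+q_{i+1}\le1$ in discrete time and, via the lazy-chain approximation, explains why that hypothesis disappears in continuous time.
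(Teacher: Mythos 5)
Your proposal is correct and follows essentially the same route as the paper: first establish that $K$ (resp.\ $H_t$) preserves the monotonicity of $\mu K^m/\pi$, then read off that separation from an endpoint is achieved at the opposite endpoint, and finally apply the spectral expansion together with Proposition~\ref{p-zeta}. The paper encapsulates the monotonicity step as Lemma~\ref{l-bdc}, phrased for the evolution of $\mu K^m/\pi$ and proved by a chain of two inequalities in the induction step; your one-line identity
\[
(Kf)(x)-(Kf)(x+1)=q_xD_x+(1-p_x-q_{x+1})D_{x+1}+p_{x+1}D_{x+2}
\]
is the same computation in a form that isolates the role of $p_i+q_{i+1}\le1$ more transparently (and, as the paper itself remarks, the two viewpoints are related by $\mu K/\pi=K^*(\mu/\pi)$ and reversibility). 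For the continuous-time case, the paper writes $H_t=\exp\bigl(-2t(I-\tfrac{I+K}{2})\bigr)$ and notes the lazy chain $\tfrac{I+K}{2}$ satisfies the discrete-time hypothesis, while you pass to the limit of $((1-h)I+hK)^{\lfloor t/h\rfloor}$; these are interchangeable devices and both correctly show the hypothesis drops in continuous time. The spectral bookkeeping with Proposition~\ref{p-zeta} matches the paper exactly.
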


Diaconis and Fill \cite{DF90,F92} introduce the concept of dual chain to express the separations in Theorem \ref{t-sep} as the probability of the first passage time. Brown and Shao \cite{BS87} characterize the first passage time using the eigenvalues of $K$ for a special class of continuous time Markov chains including birth and death chains. The idea in \cite{BS87} is also applicable for discrete time chains and this leads to the formula above. See \cite{DS06} for further discussions. Here, we use Proposition \ref{p-zeta} and Lemma \ref{l-bdc} to prove this result directly.

\begin{lem}\label{l-bdc}
Let $K$ be the transition matrix in \textnormal{(\ref{eq-bdc})} with stationary distribution $\pi$. Suppose that $\mu$ is a probability distribution satisfying $\mu(i)/\pi(i)\le \mu(i+1)/\pi(i+1)$ for all $0\le i\le n-1$.
\begin{itemize}
\item[(1)] For the discrete time chain, if $p_i+q_{i+1}\le 1$ for all $0\le i<n$, then $\mu K^m(i)/\pi(i)\le \mu K^m(i+1)/\pi(i+1)$ for all $0\le i<n$ and $m\ge 0$.

\item[(2)] For the continuous time chain, $\mu H_t(i)/\pi(i)\le \mu H_t(i+1)/\pi(i+1)$ for all $0\le i<n$ and $t\ge 0$.
\end{itemize}
\end{lem}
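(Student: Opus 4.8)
The plan is to follow the relative density $h=\mu/\pi$, meaning $h(i)=\mu(i)/\pi(i)$, rather than $\mu$ itself, and to show that the only hypothesis on $\mu$ — namely that $h$ is non-decreasing — is preserved by the dynamics. First I would record the elementary reduction coming from reversibility: since $K$ satisfies detailed balance $\pi(i)p_i=\pi(i+1)q_{i+1}$, one has $\pi(i)K^m(i,j)=\pi(j)K^m(j,i)$ for all $m$, and hence also $\pi(i)H_t(i,j)=\pi(j)H_t(j,i)$ because $H_t$ is a limit of polynomials in $K$. A one-line computation then gives
\[
 \frac{\mu K^m(j)}{\pi(j)}=(K^mh)(j),\qquad \frac{\mu H_t(j)}{\pi(j)}=(H_th)(j),
\]
where on the right $K$ and $H_t=e^{-t(I-K)}$ act on functions by $(Kf)(j)=\sum_i K(j,i)f(i)$. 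So both parts of the lemma reduce to the assertion: if $h$ is non-decreasing on $\{0,\dots,n\}$, then so is $K^mh$ (resp. $H_th$), under the relevant hypothesis.

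The heart of the matter is a single identity for the discrete derivative. For a function $g$ on $\{0,\dots,n\}$ set $(\partial g)(i)=g(i+1)-g(i)$, $0\le i\le n-1$. Expanding $(Kg)(i)=q_ig(i-1)+r_ig(i)+p_ig(i+1)$ and using $q_0=p_n=0$, one checks that
\[
 (\partial(Kg))(i)=q_i(\partial g)(i-1)+(1-p_i-q_{i+1})(\partial g)(i)+p_{i+1}(\partial g)(i+1),
\]
for $0\le i\le n-1$, where at $i=0$ the first term disappears (its coefficient is $q_0=0$) and at $i=n-1$ the last term disappears (coefficient $p_n=0$). Equivalently $\partial(Kg)=(I+\widetilde L)(\partial g)$ for the tridiagonal matrix $\widetilde L$ on $\{0,\dots,n-1\}$ with $\widetilde L(i,i-1)=q_i$, $\widetilde L(i,i+1)=p_{i+1}$, $\widetilde L(i,i)=-(p_i+q_{i+1})$; subtracting the identity, $\partial(Lg)=\widetilde L(\partial g)$ with $L=K-I$.

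For part (1), the off-diagonal entries of $I+\widetilde L$ are $q_i,p_{i+1}\ge 0$ and its diagonal entries $1-p_i-q_{i+1}$ are non-negative exactly under the hypothesis $p_i+q_{i+1}\le 1$; thus $I+\widetilde L$ is an entrywise non-negative matrix, and since $\partial h\ge 0$, iterating $\partial(K^mh)=(I+\widetilde L)^m(\partial h)\ge 0$ finishes. For part (2), put $d_t=\partial(H_th)$; from $\tfrac{d}{dt}H_th=LH_th$ and the identity above, $d_t$ solves the linear system $\tfrac{d}{dt}d_t=\widetilde L d_t$, so $d_t=e^{t\widetilde L}(\partial h)$. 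A matrix with non-negative off-diagonal entries generates a semigroup of entrywise non-negative matrices (write $e^{t\widetilde L}=e^{-ct}e^{t(\widetilde L+cI)}$ with $c\ge\max_i(p_i+q_{i+1})$, so that $\widetilde L+cI\ge 0$ entrywise), whence $d_t\ge 0$. Note that no substochasticity hypothesis enters here, consistent with the absence of an analogue of $p_i+q_{i+1}\le 1$ in part (2).

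The only real obstacle is the boundary bookkeeping in the identity for $\partial(Kg)$: one has to verify that the two extremal rows $i=0$ and $i=n-1$ of $\widetilde L$ have exactly the same shape as the interior rows, and this works precisely because $q_0=0$ and $p_n=0$ remove the terms that would otherwise stick out. Beyond that verification, everything is either an immediate induction (part (1)) or the standard fact that matrices with non-negative off-diagonal entries have non-negative matrix exponentials (part (2)).
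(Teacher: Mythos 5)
Your proof is correct, and while it rests on the same core fact as the paper's, the packaging and the route to part (2) are genuinely different, which is worth noting. Both proofs exploit reversibility to reduce to the action of $K$ on the relative density $h=\mu/\pi$ and then show that this action preserves monotonicity of $h$. Where the paper carries out the induction step in (1) by a direct three-line chain of inequalities, you isolate the identity $\partial(Kg)=(I+\widetilde L)\partial g$, observe that the hypothesis $p_i+q_{i+1}\le 1$ is exactly the condition making $I+\widetilde L$ entrywise non-negative, and iterate; this makes the role of the hypothesis transparent and the induction immediate. For part (2), the paper deduces it from part (1) by rewriting $H_t=\exp\{-2t(I-\tfrac{I+K}{2})\}$, using that the lazy chain $(I+K)/2$ automatically satisfies $p_i+q_{i+1}\le 1$. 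You instead argue directly: $\partial(H_th)=e^{t\widetilde L}\partial h$ and $\widetilde L$ has non-negative off-diagonal entries, so $e^{t\widetilde L}$ is entrywise non-negative. Your derivation of (2) is self-contained and makes visible why no substochasticity hypothesis is needed in continuous time; the paper's is shorter if one already has (1). Both are valid, and your bookkeeping at the boundary ($q_0=p_n=0$ killing the stray terms) is handled correctly.
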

\begin{proof}
Note that (2) follows from (1) if we write $H_t=\exp\{-2t(I-\frac{I+K}{2})\}$. For the proof of (1), observe that
\[
 \frac{\mu K^{m+1}(i)}{\pi(i)}=\frac{\mu K^m(i-1)}{\pi(i-1)}q_i+\frac{\mu K^m(i)}{\pi(i)}r_i+\frac{\mu K^m(i+1)}{\pi(i+1)}p_i,\quad\forall i.
\]
By induction, if $\mu K^m(i)/\pi(i)\le \mu K^m(i+1)/\pi(i+1)$ for $0\le i<n$, then
\begin{align}
 \frac{\mu K^{m+1}(i+1)}{\pi(i+1)}&=\frac{\mu K^m(i)}{\pi(i)}q_{i+1}+\frac{\mu K^m(i+1)}{\pi(i+1)}r_{i+1}+\frac{\mu K^m(i+2)}{\pi(i+2)}p_{i+1}\notag\\
&\ge\frac{\mu K^m(i)}{\pi(i)}q_{i+1}+\frac{\mu K^m(i+1)}{\pi(i+1)}(1-q_{i+1})\notag\\
&\ge\frac{\mu K^m(i)}{\pi(i)}(1-p_i)+\frac{\mu K^m(i+1)}{\pi(i+1)}p_i\ge\frac{\mu K^{m+1}(i)}{\pi(i)}.\notag
\end{align}
\end{proof}

\begin{rem}
Lemma \ref{l-bdc} is also developed in \cite{DLP10} in which it is shown that, for any non-negative function $f$, $K^mf$ is non-decreasing if $f$ is non-decreasing for all $m\ge 0$. Consider the adjoint chain $K^*$ of $K$ in $L^2(\pi)$. As birth and death chains are reversible, one has $K^*=K$. Using the identity $\mu K/\pi=K^*(\mu/\pi)$, it is easy to see that the above proof is consistent with the proof in \cite{DLP10}.
\end{rem}

\begin{proof}[Proof of Theorem \ref{t-sep}]
Assume that $K$ is irreducible and let $\lambda_0=0<\lambda_1<\cdots<\lambda_n$ be the eigenvalues of $I-K$ with $L^2(\pi)$-normalized eigenvector $\zeta_0=\mathbf{1},...,\zeta_n$. By Lemma \ref{l-bdc}, if $\mu$ satisfies $\mu(i)/\pi(i)\ge \mu(i+1)/\pi(i+1)$ for $0\le i<n$, then
\[
 d_{\text{sep}}^c(\mu,t)=1-\frac{\mu H_t(n)}{\pi(n)}=\sum_{j=1}^n\mu(\zeta_j)\zeta_j(n)e^{-\lambda_jt},
\]
where $\mu(\zeta_j)=\sum_{i=0}^n\zeta_j(i)\mu(i)$. If $K$ satisfies $p_i+q_{i+1}\le 1$ for all $0\le i<n$, then
\[
 d_{\text{sep}}(\mu,m)=1-\frac{\mu K^m(n)}{\pi(n)}=\sum_{j=1}^n\mu(\zeta_j)\zeta_j(n)(1-\lambda_j)^m.
\]
By Proposition \ref{p-zeta}, setting $\mu$ to be one of the dirac measure
$\delta_0,\delta_n$ leads to the desired identities.
\end{proof}

\section{Paths of infinite length}

In this section, the graph $G=(V,E)$ under consideration is infinite with $V=\{1,2,...\}$ and $E=\{\{i,i+1\}|i=1,2,...\}$. As before, let $\pi,\nu$ be positive measures on $V,E$ satisfying $\pi(V)=1$. The Dirichlet form and the variance are defined in a similar way as in the introduction and the spectral gap of $G$ with respect to $\pi,\nu$ is given by
\[
 \lambda^G_{\pi,\nu}=\inf\left\{\frac{\mathcal{E}_\nu(f,f)}{\text{Var}_\pi(f)}\bigg|
\text{$f$ is non-constant and $\pi(f^2)<\infty$}\right\}.
\]
For $n\ge 2$, let $G_n=(V_n,E_n)$ be the subgraph of $G$ with $V_n=\{1,2,...,n\}$, $E_n=\{\{i,i+1\}|1\le i<n\}$ and let $\pi_n,\nu_n$ be normalized restrictions of $\pi,\nu$ to $V_n,E_n$. That is, $\pi_n(i)=c_n\pi(i)$, $\nu_n(i,i+1)=c_n\nu(i,i+1)$ with $c_n=1/[\pi(1)+\cdots+\pi(n)]$. As before, let $M^G_{\pi,\nu}$ be an infinite matrix indexed by $V$ and defined by
\begin{equation}\label{eq-Minf}
 M^G_{\pi,\nu}(i,j)=-\frac{\nu(i,j)}{\pi(i)},\quad\forall |i-j|=1,\quad M^G_{\pi,\nu}(i,i)=\frac{\nu(i-1,i)+\nu(i,i+1)}{\pi(i)}.
\end{equation}
Clearly, $M^{G_n}_{\pi_n,\nu_n}$ is the principal submatrix of $M^G_{\pi,\nu}$ indexed by $V_n\times V_n$.

\begin{lem}\label{l-gapinf}
Referring to the above setting, $\lambda^{G_{n+1}}_{\pi_{n+1},\nu_{n+1}}<\lambda^{G_n}_{\pi_n,\nu_n}$ for $n>1$ and $\lambda^G_{\pi,\nu}=\lim_{n\ra\infty}\lambda^{G_n}_{\pi_n,\nu_n}$.
\end{lem}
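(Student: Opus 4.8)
The plan is to derive both assertions from the variational descriptions
\[
 \lambda^{G_n}_{\pi_n,\nu_n}=\min_{f\text{ non-constant on }V_n}\frac{\mathcal{E}_{\nu_n}(f,f)}{\text{Var}_{\pi_n}(f)},\qquad
 \lambda^G_{\pi,\nu}=\inf_{f\text{ non-constant},\ \pi(f^2)<\infty}\frac{\mathcal{E}_\nu(f,f)}{\text{Var}_\pi(f)},
\]
feeding in test functions obtained by extending minimizers. For the strict inequality I would extend a minimizer on $G_n$ to the longer path $G_{n+1}$; for the limit I would trap $\lambda^G_{\pi,\nu}$ between the finite gaps, using restrictions of admissible functions on $V$ to get one inequality and extensions of finite minimizers to $V$ to get the other.

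\emph{Strict monotonicity.} Let $\psi$ be a minimizer for $\lambda^{G_n}_{\pi_n,\nu_n}$ with $\pi_n(\psi)=0$; by Theorem \ref{t-Miclo} (or Lemma \ref{l-polish}) it is monotonic and non-constant, so $\psi(n)\ne 0$. Extend it to $g$ on $V_{n+1}$ by $g(i)=\psi(i)$ for $i\le n$ and $g(n+1)=\psi(n)$. Since $g(n)-g(n+1)=0$ one has $\mathcal{E}_{\nu_{n+1}}(g,g)=(c_{n+1}/c_n)\mathcal{E}_{\nu_n}(\psi,\psi)$, while using $\sum_{i\le n}\psi(i)\pi(i)=0$ and $\sum_{i\le n}\psi(i)^2\pi(i)=c_n^{-1}\text{Var}_{\pi_n}(\psi)$ a short computation gives
\[
 \text{Var}_{\pi_{n+1}}(g)=\frac{c_{n+1}}{c_n}\text{Var}_{\pi_n}(\psi)+c_{n+1}\pi(n+1)\psi(n)^2\Big(1-c_{n+1}\pi(n+1)\Big).
\]
Since $1-c_{n+1}\pi(n+1)=\pi(V_n)/\pi(V_{n+1})>0$ and $\psi(n)^2>0$, the second summand is strictly positive, so combining the two displayed facts shows that the Rayleigh quotient of $g$ on $G_{n+1}$ is strictly less than $\mathcal{E}_{\nu_n}(\psi,\psi)/\text{Var}_{\pi_n}(\psi)=\lambda^{G_n}_{\pi_n,\nu_n}$; the variational description of $\lambda^{G_{n+1}}_{\pi_{n+1},\nu_{n+1}}$ then gives $\lambda^{G_{n+1}}_{\pi_{n+1},\nu_{n+1}}<\lambda^{G_n}_{\pi_n,\nu_n}$.

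\emph{The limit.} Monotonicity gives $\lambda^{*}:=\lim_n\lambda^{G_n}_{\pi_n,\nu_n}=\inf_n\lambda^{G_n}_{\pi_n,\nu_n}\ge0$, so it suffices to show $\lambda^{*}=\lambda^G_{\pi,\nu}$. For $\lambda^G_{\pi,\nu}\le\lambda^{*}$: fix $n$, take $\psi$ a minimizer for $\lambda^{G_n}_{\pi_n,\nu_n}$ with $\pi_n(\psi)=0$, and extend it to $\widetilde\psi$ on $V$ by $\widetilde\psi(i)=\psi(\min\{i,n\})$, which is bounded, non-constant and admissible. Then $\mathcal{E}_\nu(\widetilde\psi,\widetilde\psi)=c_n^{-1}\mathcal{E}_{\nu_n}(\psi,\psi)$ exactly, while the identity $\text{Var}_\mu(h)=\inf_a\mu((h-a)^2)$ together with $\pi(\{n,n+1,\ldots\})\ge\pi(n)$ gives $\text{Var}_\pi(\widetilde\psi)\ge c_n^{-1}\text{Var}_{\pi_n}(\psi)$; dividing, $\mathcal{E}_\nu(\widetilde\psi,\widetilde\psi)/\text{Var}_\pi(\widetilde\psi)\le\lambda^{G_n}_{\pi_n,\nu_n}$, so $\lambda^G_{\pi,\nu}\le\lambda^{G_n}_{\pi_n,\nu_n}$ for every $n$, hence $\lambda^G_{\pi,\nu}\le\lambda^{*}$. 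For the reverse, fix a non-constant $f$ on $V$ with $\pi(f^2)<\infty$; then $\mathcal{E}_\nu(f,f)<\infty$ and $\text{Var}_\pi(f)>0$. Put $f_n=f|_{V_n}$. As $c_n\ra1$ and the series $\sum f(i)^2\pi(i)$, $\sum f(i)\pi(i)$ (the latter absolutely convergent by Cauchy--Schwarz) and $\sum[f(i)-f(i+1)]^2\nu(i,i+1)$ converge, one gets $\mathcal{E}_{\nu_n}(f_n,f_n)\ra\mathcal{E}_\nu(f,f)$ and $\text{Var}_{\pi_n}(f_n)\ra\text{Var}_\pi(f)>0$; since each finite Rayleigh quotient is $\ge\lambda^{G_n}_{\pi_n,\nu_n}\ge\lambda^{*}$, passing to the limit gives $\mathcal{E}_\nu(f,f)/\text{Var}_\pi(f)\ge\lambda^{*}$, and taking the infimum over $f$ yields $\lambda^G_{\pi,\nu}\ge\lambda^{*}$.

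\emph{Main obstacle.} The difficulty is bookkeeping, not concept: the normalizing constants $c_n$ and the failure of $\text{Var}_\mu$ to be homogeneous in $\mu$ make the crude comparison of Rayleigh quotients miss by exactly the factor $c_n/c_{n+1}>1$. Two devices rescue the argument. First, extending a minimizer by its last value $\psi(n)$ kills the last Dirichlet increment while strictly enlarging the variance, so the discrepancy collapses to the manifestly positive quantity $c_{n+1}\pi(n+1)\psi(n)^2(1-c_{n+1}\pi(n+1))$. Second, replacing $\text{Var}_\mu(h)$ by $\inf_a\mu((h-a)^2)$ turns the comparison of variances carrying different masses on the tail vertex into a pointwise-in-$a$ inequality that is preserved under the infimum. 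Getting $\psi(n)\ne0$ for free from Miclo's shape theorem is the only place where an earlier result is genuinely used.
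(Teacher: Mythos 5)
Your proof is correct, and the two halves of it differ in interest from the paper's. For the strict monotonicity $\lambda^{G_{n+1}}_{\pi_{n+1},\nu_{n+1}}<\lambda^{G_n}_{\pi_n,\nu_n}$, the paper does not argue variationally at all: it cites Proposition~\ref{p-shape2}(1) together with Remark~\ref{r-shape}, i.e.\ the algebraic interlacing of the smallest non-zero roots of the nested determinants $\det A_i(\lambda)$ (ultimately Lemma~\ref{l-matt}). Your argument instead extends a mean-zero minimizer $\psi$ on $V_n$ by its final value $\psi(n)$ to a test function on $V_{n+1}$; the constant extension keeps $\mathcal{E}$ proportional while the variance computation produces the explicitly positive extra term $c_{n+1}\pi(n+1)\psi(n)^2\bigl(1-c_{n+1}\pi(n+1)\bigr)$, and strictness comes from $\psi(n)\ne0$ (Lemma~\ref{l-polish}). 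This is more elementary and self-contained — it does not need the Section~3 machinery at all — at the cost of losing the extra information the interlacing argument carries (it controls all eigenvalues, not just the gap). For the limit statement, your argument is essentially the paper's: a restriction argument in one direction and an extension-of-minimizers argument in the other. One notable difference: the paper extends $\phi_n$ by zero (setting $\psi_n=\mathbf{1}_{V_n}\phi_n$), which introduces an unwanted Dirichlet increment $\phi_n(n)^2\nu(n,n+1)$ at the boundary; your extension by the final value $\psi(n)$, together with the observation $\mathrm{Var}_\mu(h)=\inf_a\mu((h-a)^2)$, avoids this and makes the inequality go cleanly in the needed direction. One small imprecision: knowing $\pi(f^2)<\infty$ does not by itself give $\mathcal{E}_\nu(f,f)<\infty$; you should add the usual remark that if $\mathcal{E}_\nu(f,f)=\infty$ the Rayleigh quotient is trivially $\ge\lambda^*$, so one may restrict to $f$ with finite Dirichlet energy.
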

\begin{proof}
Briefly, we write $\lambda$ for $\lambda^G_{\pi,\nu}$ and $\lambda_n$ for $\lambda^{G_n}_{\pi_n,\nu_n}$. Note that $\lambda_n$ is the smallest non-zero eigenvalue of the principal submatrix of $M^G_{\pi,\nu}$ indexed by $V_n\times V_n$. As a consequence of Proposition \ref{p-shape2}(1) and Remark \ref{r-shape}, $\lambda_{n+1}<\lambda_n$. For $n>1$, let $\phi_n$ be a minimizer for $\lambda_n$ and define $\psi_n(i)=\mathbf{1}_{V_n}(i)\phi_n(i)$ for $i\ge 1$. Clearly, one has $\mathcal{E}_{\nu_n}(\phi_n,\phi_n)=c_n\mathcal{E}_\nu(\psi_n,\psi_n)$ and $\text{Var}_{\pi_n}(\phi_n)=c_n\text{Var}_\pi(\psi_n)$. This implies $\lambda\le \lambda_n$ for $n\ge 2$. Let $\lambda^*=\lim_{n\ra\infty}\lambda_n$. Note that it remains to show $\lambda^*=\lambda$.
For $\epsilon>0$, choose a function $f$ on $V$ such that $\mathcal{E}_\nu(f,f)<(\lambda+\epsilon/2)\text{Var}_\pi(f)$ with $\pi(f^2)<\infty$. For $\delta>0$, we choose $N>0$ such that $\text{Var}_{\pi_N}(g)>(1-\delta)\text{Var}_\pi(f)$ and $\mathcal{E}_{\nu_N}(g,g)<(1+\delta)\mathcal{E}_\nu(f,f)$, where $g=f|_{V_N}$, the restriction of $f$ to $V_N$. This implies
\[
 \lambda^*\le\lambda_N\le\frac{\mathcal{E}_{\nu_N}(g,g)}{\text{Var}_{\pi_N}(g)}\le\frac{(1+\delta)\mathcal{E}_\nu(f,f)}{(1-\delta)\text{Var}_\pi(f)}.
\]
Letting $\delta\ra 0$ and then $\epsilon\ra 0$ yields $\lambda^*\le\lambda$, as desired.
\end{proof}
\begin{rem}
Silver \cite{S96} contains a discussion of the (weak*) convergence
of the spectral measure for $G_n$ to the spectral measure for $G$ in a very general setting. Lemma \ref{l-gapinf} can also be proved using Theorem 4.3.4 in \cite{S96}.
\end{rem}

\begin{prop}\label{p-shapeinf}
For $\lambda>0$, let $\phi_\lambda(1)=-1$ and
\[
 \phi_\lambda(i+1)=\phi_\lambda(i)+\frac{\{[\phi_\lambda(i)-\phi_\lambda(i-1)]\nu(i-1,i)-\lambda\pi(i)\phi_\lambda(i)\}^+}{\nu(i,i+1)},\,\forall i\ge 1.
\]
Set $\lambda_1=\infty$ and $\lambda_n=\lambda^{G_n}_{\pi_n,\nu_n}$ for $n\ge 2$.
\begin{itemize}
\item[(1)] For $i\ge 2$ and $\lambda\in[\lambda_i,\lambda_{i-1})$, $\phi_\lambda(i-1)<\phi_\lambda(i)=\phi_\lambda(i+1)$.

\item[(2)] For $\lambda\in(0,\lambda^G_{\pi,\nu}]$, $\phi_\lambda(i)<\phi_\lambda(i+1)$ for all $i\ge 1$.
\end{itemize}
\end{prop}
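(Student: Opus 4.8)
The plan is to reduce both assertions to the finite-path statement of Proposition \ref{p-shape} applied to the truncations $G_n$. The first observation is that the recursion defining $\phi_\lambda$ is invariant under a simultaneous rescaling of $(\pi,\nu)$: replacing $(\pi,\nu)$ by $(\pi_n,\nu_n)=(c_n\pi,c_n\nu)$ leaves the increment
\[
 \frac{\{[\phi_\lambda(i)-\phi_\lambda(i-1)]\nu(i-1,i)-\lambda\pi(i)\phi_\lambda(i)\}^+}{\nu(i,i+1)}
\]
unchanged, the factor $c_n$ cancelling from numerator and denominator. Since, moreover, computing $\phi_\lambda(k+1)$ uses only $\pi(1),\dots,\pi(k)$ and $\nu(1,2),\dots,\nu(k,k+1)$, the restriction of $\phi_\lambda$ to $\{1,\dots,n\}$ coincides, for every $n\ge 2$, with the function generated by \textnormal{(\ref{eq-psigen})} on the finite path $G_n$ equipped with $(\pi_n,\nu_n)$, to which Proposition \ref{p-shape} applies directly.

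Next I would pin down the thresholds. On $G_n$ the number $\lambda^{(i)}$ of Proposition \ref{p-shape} is the smallest root of $\det A_i(\lambda)=0$, and the matrix $A_i(\lambda)$ in \textnormal{(\ref{eq-ail})} involves only $a_1(\lambda),\dots,a_i(\lambda)$, hence only $\pi(1),\dots,\pi(i+1)$ and $\nu(1,2),\dots,\nu(i,i+1)$, which are again rescaling-invariant. Thus $\lambda^{(i)}$ does not depend on the truncation level $n$ (as long as $n\ge i+1$), and comparison with the case $n=i+1$ together with Proposition \ref{p-shape}(1) gives $\lambda^{(i)}=\lambda^{G_{i+1}}_{\pi_{i+1},\nu_{i+1}}=\lambda_{i+1}$ for every $i\ge 1$, consistent with the convention $\lambda^{(0)}=\infty=\lambda_1$. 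In particular, on $G_n$ the decreasing chain $\lambda^{(0)}>\lambda^{(1)}>\cdots>\lambda^{(n-1)}$ of Proposition \ref{p-shape}(1) reads $\lambda_1>\lambda_2>\cdots>\lambda_n$, matching Lemma \ref{l-gapinf}.

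Part (1) is then immediate: given $i\ge 2$ and $\lambda\in[\lambda_i,\lambda_{i-1})$, apply Proposition \ref{p-shape}(2) on $G_{i+1}$ (so that its parameter is $n=i+1$) with its running index equal to $i-1$; since on $G_{i+1}$ one has $[\lambda^{(i-1)},\lambda^{(i-2)})=[\lambda_i,\lambda_{i-1})$, we obtain $\phi_\lambda(i-1)<\phi_\lambda(i)=\phi_\lambda(i+1)$, and by the first paragraph this $\phi_\lambda$ is the one in the statement.

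For part (2), fix $\lambda\in(0,\lambda^G_{\pi,\nu}]$ and $i\ge 1$, and set $N=i+2$. By Lemma \ref{l-gapinf} the sequence $(\lambda_n)$ is strictly decreasing with limit $\lambda^G_{\pi,\nu}$, so $\lambda\le\lambda^G_{\pi,\nu}<\lambda_{N-1}=\lambda^{(N-2)}$ on $G_N$, and Proposition \ref{p-shape}(3) applied to $G_N$ yields $\phi_\lambda(N-1)<\phi_\lambda(N)$. Since $\phi_\lambda$ is non-decreasing by construction and, as in the proof of Proposition \ref{p-main1}, an equality $\phi_\lambda(k)=\phi_\lambda(k+1)$ forces $\phi_\lambda(k)\ge 0$ and $\phi_\lambda(j)=\phi_\lambda(k)$ for all $j\ge k$, the strict inequality $\phi_\lambda(N-1)<\phi_\lambda(N)$ excludes any plateau inside $\{1,\dots,N\}$; hence $\phi_\lambda$ is strictly increasing on $\{1,\dots,N\}$, and in particular $\phi_\lambda(i)<\phi_\lambda(i+1)$. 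The only delicate point is the bookkeeping that the finite thresholds stabilize to $\lambda^{(i)}=\lambda_{i+1}$ independently of the truncation; granting that, the rest is a direct appeal to Propositions \ref{p-shape} and \ref{p-main1} and to Lemma \ref{l-gapinf}.
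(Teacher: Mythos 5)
Your argument is correct and follows essentially the same route as the paper: the paper's proof is the one-line citation of Proposition \ref{p-shape2} and Remarks \ref{r-shape}--\ref{r-shape2}, which you have unpacked. You invoke Proposition \ref{p-shape} directly rather than Proposition \ref{p-shape2} with $i=1$, but since $\phi_\lambda=\xi^{(1)}_\lambda$ (Remark \ref{r-xi2}) these are interchangeable here; your identification of the finite-path thresholds $\lambda^{(i)}$ with the truncated spectral gaps $\lambda_{i+1}$, via the rescaling invariance of $A_i(\lambda)$, is exactly the content of Remark \ref{r-shape}, and your appeal to Proposition \ref{p-shape}(3) on $G_N$ in Part (2) plays the role of Remark \ref{r-shape2}. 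The bookkeeping (truncation, locality of the recursion, the no-plateau observation from the proof of Proposition \ref{p-main1}) is sound throughout.
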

\begin{proof}
Immediate from Proposition \ref{p-shape2} and Remarks \ref{r-shape}-\ref{r-shape2}.
\end{proof}

\begin{rem}
By Proposition \ref{p-shapeinf}, one may generate a dichotomy algorithm for $\lambda^G_{\pi,\nu}$ using the shape of $\phi_\lambda$. See (\ref{alg-evi2}).
\end{rem}

The following theorem extends Theorem \ref{t-Miclo} to infinite paths.

\begin{thm}\label{t-ELinf}
If $\lambda^G_{\pi,\nu}>0$ and $\mathcal{E}_\nu(\psi,\psi)/\text{Var}_\pi(\psi)=\lambda^G_{\pi,\nu}$ for some function $\psi$ on $V$ with $\pi(\psi)=0$, then $\psi$ is strictly monotonic and satisfies
\[
 \lambda^G_{\pi,\nu}\pi(i)\psi(i)=[\psi(i)-\psi(i+1)]\nu(i,i+1)+[\psi(i)-\psi(i-1)]\nu(i-1,i),\,\,\forall i\ge 1.
\]
\end{thm}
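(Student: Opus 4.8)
The plan is to first extract the Euler--Lagrange equation from a first-variation argument, and then to obtain strict monotonicity by identifying $\psi$, up to normalization, with the explicit function $\phi_{\lambda^G_{\pi,\nu}}$ studied in Proposition \ref{p-shapeinf}. Write $\lambda=\lambda^G_{\pi,\nu}$.

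First I would derive the Euler--Lagrange equation. Since the ratio $\mathcal{E}_\nu(\psi,\psi)/\textnormal{Var}_\pi(\psi)$ equals the positive finite number $\lambda$, the function $\psi$ is non-constant with $\textnormal{Var}_\pi(\psi)=\pi(\psi^2)\in(0,\infty)$ and $\mathcal{E}_\nu(\psi,\psi)=\lambda\,\textnormal{Var}_\pi(\psi)<\infty$. Fix a vertex $i$ and let $h$ be the indicator of $\{i\}$, which has finite support, so $\pi((\psi+th)^2)<\infty$ for every $t$. For all $t$ near $0$ the function $\psi+th$ stays admissible (non-constant, finite second moment), so by the variational definition of $\lambda$ the quadratic polynomial $\varphi(t):=\mathcal{E}_\nu(\psi+th,\psi+th)-\lambda\,\textnormal{Var}_\pi(\psi+th)$ satisfies $\varphi(t)\ge 0$ near $0$ and $\varphi(0)=0$; hence $\varphi'(0)=0$. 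Using $\pi(\psi)=0$, this reads $\mathcal{E}_\nu(\psi,h)=\lambda\,\pi(\psi h)=\lambda\,\pi(i)\psi(i)$, and expanding $\mathcal{E}_\nu(\psi,h)$ gives precisely the asserted identity at every $i\ge 1$, with the convention $\nu(0,1)=0$.

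Next I would note that this identity, together with the boundary convention $\nu(0,1)=0$, turns the three-term recursion into an initial-value problem: at $i=1$ it gives $\psi(2)$ in terms of $\psi(1)$, and then the equation at $i=2,3,\dots$ determines the whole sequence from $\psi(1)$. If $\psi(1)=0$ this forces $\psi\equiv 0$, contradicting $\textnormal{Var}_\pi(\psi)>0$; so $\psi(1)\ne 0$, and after replacing $\psi$ by $\pm\psi/|\psi(1)|$ — which alters neither the ratio, nor $\pi(\psi)=0$, nor, up to a global sign, monotonicity — we may assume $\psi(1)=-1$. On the other hand, Proposition \ref{p-shapeinf}(2) tells us that $\phi_{\lambda}$ is strictly increasing on all of $V$ (here $\lambda\le\lambda^G_{\pi,\nu}$), so the quantity inside the $\{\cdot\}^+$ in its defining recursion is always positive, the truncation is vacuous, and $\phi_{\lambda}$ solves the untruncated Euler--Lagrange recursion with $\phi_{\lambda}(1)=-1$. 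By the uniqueness just observed, $\psi=\phi_{\lambda^G_{\pi,\nu}}$, which is strictly increasing; reversing the normalization, the original $\psi$ is strictly monotonic, which together with the Euler--Lagrange identity finishes the proof. (Alternatively one could run a truncation argument through the finite paths $G_n$, combining Lemma \ref{l-gapinf} with Theorem \ref{t-Miclo}, but the route via Proposition \ref{p-shapeinf} is shorter.)

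The only delicate point — hence the main obstacle — is the justification of the first variation in the infinite setting: one must confirm that single-site perturbations remain in the admissible class and that $\varphi(t)\ge 0$ on a genuine neighborhood of $0$, so that the critical-point conclusion $\varphi'(0)=0$ is legitimate. Once that is in place, everything rests on the structural observation that on the one-sided path the Euler--Lagrange equation is an initial-value problem, which pins $\psi$ down to the already-analyzed function $\phi_{\lambda^G_{\pi,\nu}}$.
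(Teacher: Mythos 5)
The paper states Theorem \ref{t-ELinf} without a displayed proof, so there is nothing to compare against in the literal sense; your argument supplies the missing justification. The two steps are both sound. The first-variation computation is legitimate: with $h=\mathbf{1}_{\{i\}}$, the perturbation $\psi+th$ remains admissible (finite second moment since $h$ has finite support, and non-constant for $t$ in at least a punctured neighborhood of $0$), $\varphi(t)$ is a quadratic polynomial, $\varphi\ge 0$ near $0$ with $\varphi(0)=0$, so $\varphi'(0)=0$ gives $\mathcal{E}_\nu(\psi,h)=\lambda\pi(\psi h)$ — here the hypothesis $\pi(\psi)=0$ is exactly what kills the cross term coming from $-\pi(\psi+th)^2$. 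Expanding $\mathcal{E}_\nu(\psi,h)$ yields the Euler--Lagrange identity with the convention $\nu(0,1)=0$. The second step correctly observes that the one-sided path turns this into an initial-value recursion (the equation at site $i$ has coefficient $\nu(i,i+1)>0$ in front of $\psi(i+1)$, so $\psi$ is determined by $\psi(1)$), rules out $\psi(1)=0$, normalizes to $\psi(1)=-1$, and then matches $\psi$ with $\phi_{\lambda^G_{\pi,\nu}}$ from Proposition \ref{p-shapeinf}; since Proposition \ref{p-shapeinf}(2) puts $\phi_{\lambda^G_{\pi,\nu}}$ strictly increasing, the $\{\cdot\}^+$ in its defining recursion is inactive, so it solves the same untruncated recursion and uniqueness gives $\psi=\phi_{\lambda^G_{\pi,\nu}}$. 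This is correct, and it is in the spirit of the paper's machinery (it leans on the same finite-to-infinite comparison underlying Proposition \ref{p-shapeinf}), so your proof can be viewed as the natural argument the authors left implicit. Your mentioned alternative — truncating to $G_n$ and invoking Theorem \ref{t-Miclo} with Lemma \ref{l-gapinf} — would also work but needs extra care, because the restriction of $\psi$ to $V_n$ is not itself a minimizer on $G_n$; the route you took avoids that complication.
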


\begin{thm}\label{t-gapinf}
For $\lambda>0$, let $\phi_\lambda$ be the function in Proposition \ref{p-shapeinf} and set $L(\lambda)=\mathcal{E}_\pi(\phi_\lambda,\phi_\lambda)/\textnormal{Var}_\pi(\phi_\lambda)$. Then,
\begin{itemize}
\item[(1)] $\lambda^G_{\pi,\nu}<L(\lambda)<\lambda$ for $\lambda\in(\lambda^G_{\pi,\nu},\infty)$.

\item[(2)] $L^n(\lambda)\ra\lambda^G_{\pi,\nu}$ as $n\ra\infty$ for $\lambda\in(\lambda^G_{\pi,\nu},\infty)$.
\end{itemize}
\end{thm}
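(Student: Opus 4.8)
The plan is to transfer the finite-dimensional results of Sections~2--3 to the infinite path via the exhaustion $G_n\uparrow G$. By Lemma~\ref{l-gapinf} the numbers $\lambda_n:=\lambda^{G_n}_{\pi_n,\nu_n}$ strictly decrease to $\lambda^G_{\pi,\nu}$, so (with the convention $\lambda_1=\infty$) $(\lambda^G_{\pi,\nu},\infty)=\bigcup_{i\ge 2}[\lambda_i,\lambda_{i-1})$. Fix $\lambda\in[\lambda_i,\lambda_{i-1})$. By Proposition~\ref{p-shapeinf}(1), $\phi_\lambda$ is strictly increasing on $\{1,\dots,i\}$ and constant equal to $\phi_\lambda(i)$ on $\{i,i+1,\dots\}$; hence $\phi_\lambda$ is bounded and non-constant, $\pi(\phi_\lambda^2)<\infty$, and $\mathcal{E}_\nu(\phi_\lambda,\phi_\lambda)=\sum_{k=1}^{i-1}[\phi_\lambda(k)-\phi_\lambda(k+1)]^2\nu(k,k+1)<\infty$, so $L(\lambda)$ is well defined. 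Since the normalization constant $c_m$ cancels in the defining recursion, the restriction $\phi_\lambda|_{V_m}$ is, for every $m\ge i$, exactly the vector produced by \textnormal{(\ref{eq-psigen})} on $G_m$ with the measures $\pi_m,\nu_m$.

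For part~(1), the first step is the infinite-path analogue of \textnormal{(\ref{eq-Enu})}--\textnormal{(\ref{eq-L})}. For $1\le k\le i-1$ the positive part in \textnormal{(\ref{eq-psigen})} is active, so
\[
 \lambda\pi(k)\phi_\lambda(k)=[\phi_\lambda(k)-\phi_\lambda(k+1)]\nu(k,k+1)+[\phi_\lambda(k)-\phi_\lambda(k-1)]\nu(k-1,k).
\]
Multiplying by $\phi_\lambda(k)$, summing over $1\le k\le i-1$ and telescoping exactly as in the proof of Proposition~\ref{p-main1}---with the tail block $\sum_{k\ge i}\pi(k)=1-\sum_{k<i}\pi(k)$ and the value $\phi_\lambda(i)$ playing the role of ``$\psi(n)$'' there---yields
\[
 \mathcal{E}_\nu(\phi_\lambda,\phi_\lambda)=\lambda\,\textnormal{Var}_\pi(\phi_\lambda)+\lambda\,\pi(\phi_\lambda)\bigl[\pi(\phi_\lambda)-\phi_\lambda(i)\bigr],
\]
i.e.\ $L(\lambda)-\lambda=\lambda\,\pi(\phi_\lambda)[\pi(\phi_\lambda)-\phi_\lambda(i)]/\textnormal{Var}_\pi(\phi_\lambda)$. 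Here $\phi_\lambda(i)=\sup_k\phi_\lambda(k)$ and $\phi_\lambda$ is non-constant, so $\pi(\phi_\lambda)-\phi_\lambda(i)<0$. Also $\phi_\lambda(i)>0$: since $\phi_\lambda(i)>\phi_\lambda(i-1)$ while $\phi_\lambda(i+1)=\phi_\lambda(i)$, the argument of the positive part at step $i$ is $\le 0$, forcing $0<[\phi_\lambda(i)-\phi_\lambda(i-1)]\nu(i-1,i)\le\lambda\pi(i)\phi_\lambda(i)$. Finally $\pi(\phi_\lambda)>0$: applying Corollary~\ref{c-T} on $G_i$ (and a direct computation via Remark~\ref{r-n=2} when $i=2$) shows $\sum_{k=1}^i\phi_\lambda(k)\pi(k)=\pi_i(\phi_\lambda|_{V_i})/c_i\ge 0$, whence $\pi(\phi_\lambda)=\sum_{k=1}^i\phi_\lambda(k)\pi(k)+\phi_\lambda(i)\sum_{k>i}\pi(k)>0$. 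Substituting into the identity gives $L(\lambda)<\lambda$.

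For the lower bound, $L(\lambda)\ge\lambda^G_{\pi,\nu}$ is immediate from the variational definition of $\lambda^G_{\pi,\nu}$ (as $\phi_\lambda$ is non-constant with $\pi(\phi_\lambda^2)<\infty$), and equality is impossible: if $\lambda^G_{\pi,\nu}>0$ it would make $\phi_\lambda-\pi(\phi_\lambda)\mathbf{1}$ a mean-zero minimizer, hence strictly monotonic by Theorem~\ref{t-ELinf}, contradicting the eventual constancy of $\phi_\lambda$ (the case $\lambda^G_{\pi,\nu}=0$ is trivial since $\mathcal{E}_\nu(\phi_\lambda,\phi_\lambda)>0$). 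This proves~(1). For part~(2), (1) shows that $L$ maps $(\lambda^G_{\pi,\nu},\infty)$ into itself with $L(\lambda)<\lambda$, so for $\lambda_0>\lambda^G_{\pi,\nu}$ the iterates $\lambda_\ell:=L^\ell(\lambda_0)$ strictly decrease and are bounded below by $\lambda^G_{\pi,\nu}$; hence $\lambda_\ell\to\ell_\infty\ge\lambda^G_{\pi,\nu}$. It remains to know that $L$ is continuous on $(\lambda^G_{\pi,\nu},\infty)$: on any compact subinterval the index $i(\lambda)$ above stays bounded (because $\lambda_n\downarrow\lambda^G_{\pi,\nu}$), so $\phi_\lambda$ is determined by finitely many coordinates varying continuously in $\lambda$ through \textnormal{(\ref{eq-psigen})}, and these, together with the tail sums, fit together continuously across the thresholds $\lambda_i$, while $\textnormal{Var}_\pi(\phi_\lambda)$ stays bounded away from $0$. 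Then $L(\ell_\infty)=\ell_\infty$, which by part~(1) forces $\ell_\infty=\lambda^G_{\pi,\nu}$.

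The step I expect to be most delicate is the first one in part~(1): making the ``eventually constant'' shape of $\phi_\lambda$ precise and running the summation-by-parts of Proposition~\ref{p-main1} on the infinite path while correctly booking the tail mass $\sum_{k\ge i}\pi(k)$ into $\pi(\phi_\lambda)$ and $\textnormal{Var}_\pi(\phi_\lambda)$; once the displayed identity and the sign $\pi(\phi_\lambda)>0$ are in hand, the rest is a routine adaptation of the finite-dimensional arguments. A minor secondary point is verifying that $L$ is continuous across the thresholds $\lambda_i$, which is needed only for part~(2).
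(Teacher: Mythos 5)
Your proof is correct and follows essentially the same route as the paper: reduce to an interval $[\lambda_i,\lambda_{i-1})$ via Lemma \ref{l-gapinf}, use the eventual constancy of $\phi_\lambda$ from Proposition \ref{p-shapeinf}(1), run the summation-by-parts identity of Proposition \ref{p-main1} to get $L(\lambda)-\lambda=\lambda\,\pi(\phi_\lambda)[\pi(\phi_\lambda)-\phi_\lambda(i)]/\mathrm{Var}_\pi(\phi_\lambda)$, deduce strict inequalities from $\pi(\phi_\lambda)>0$ and $\pi(\phi_\lambda)<\phi_\lambda(i)$, get the lower bound from Theorem \ref{t-ELinf}, and conclude (2) by monotone iteration plus continuity of $L$. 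You supply somewhat more detail than the paper on the sign of $\pi(\phi_\lambda)$ (the paper simply notes $\sum_{j\le i}\phi_\lambda(j)\pi(j)\ge 0$ from the positive-part being inactive at step $i$) and on the continuity of $L$ across the thresholds $\lambda_i$, but the structure and ideas are the same.
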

\begin{proof}
Let $\lambda>\lambda^G_{\pi,\nu}$. By Lemma \ref{l-gapinf}, $\lambda_i\le \lambda<\lambda_{i-1}$ for some $i\ge 2$. By Proposition \ref{p-shapeinf} (1), one has $\phi_\lambda(i-1)<\phi_\lambda(i)=\phi_\lambda(i+1)$.  As in (\ref{eq-Enu}), we obtain
\[
 L(\lambda)=\lambda+\lambda\frac{\pi(\phi_\lambda)[\pi(\phi_\lambda)-\phi_\lambda(i)]}{\text{Var}_\pi(\phi_\lambda)},\quad \sum_{j=1}^i\phi_\lambda(j)\pi(j)\ge 0.
\]
This leads to $\pi(\phi_\lambda)>0$ and $\pi(\phi_\lambda)<\phi_\lambda(i)$, which implies $L(\lambda)<\lambda$. That means $L$ has no fixed point on $(\lambda^G_{\pi,\nu},\infty)$. The lower bound of (1) follows immediately from Theorem \ref{t-ELinf}. For (2), set $\lambda^*=\lim_{n\ra\infty}L^n(\lambda)\ge\lambda^G_{\pi,\nu}$. As a consequence of (1), $L$ is continuous on $(\lambda^G_{\pi,\nu},\infty)$. If $\lambda^*>\lambda^G_{\pi,\nu}$, then $\lambda^*$ is a fixed point of $L$, a contradiction! Hence, $\lambda^*=\lambda^G_{\pi,\nu}$.
\end{proof}

\section{A numerical experiment}

In this section, we illustrate the algorithm (\ref{alg-sp2}) on
 a specific Metropolis chain. The Metropolis algorithm introduced by Metropolis {\it et al.} in 1953 is a widely used construction that produces a Markov chain with a given stationary distribution $\pi$. Let $\pi$ be a positive probability measure on $V$ and $K$ be an irreducible Markov transition matrix on $V$. For simplicity, we assume that $K(x,y)=K(y,x)$ for all $x,y\in V$. The Metropolis chain evolves in the following way. Given the initial state $x$, select a state, say $y$, according to $K(x,\cdot)$ and compute the ratio $A(x,y)=\pi(y)/\pi(x)$. If $A(x,y)\ge 1$, then move to $y$. If $A(x,y)<1$, then flip a coin with probability $A(x,y)$ on heads and move to $y$ if the head appears. If the coin lands on tails, stay at $x$. Accordingly, if $M$ is the transition matrix of the Metropolis chain, then
\[
 M(x,y)=\begin{cases}K(x,y)&\text{if }A(x,y)\ge 1,\,x\ne y\\K(x,y)A(x,y)&\text{if }A(x,y)<1\\K(x,x)+\sum\limits_{z:A(x,z)<1}K(x,z)(1-A(x,z))&\text{if }x=y\end{cases}.
\]
It is easy to check $\pi(x)M(x,y)=\pi(y)M(y,x)$. As $K$ is irreducible, $M$ is irreducible. Moreover, if $\pi$ is not uniform, then $M(x,x)>0$ for some $x\in V$. This implies that $M$ is aperiodic and, consequently, $M^t(x,y)\ra\pi(y)$ and $e^{-t(I-M)}(x,y)\ra\pi(y)$ as $t\ra\infty$. For further information on Metropolis chains, see \cite{DS98} and the references therein.

For $n\ge 1$, let $G_n=(V_n,E_n)$ be a graph with $V_n=\{0,\pm 1,...,\pm n\}$ and $E_n=\{\{i,i+1\}:i=-n,...,n-1\}$. Suppose that $K_n$ is the transition matrix of the simple random walk on $V_n$, that is, $K_n(-n,-n)=K_n(n,n)=1/2$ and $K_n(i,i+1)=K_n(i+1,i)=1/2$ for all $-n\le i<n$. For $a>0$, let $\check{\pi}_{n,a},\hat{\pi}_{n,a}$ be probabilities on $V_n=\{0,\pm 1,...,\pm n\}$ given by
\[
 \check{\pi}_{n,a}(i)=\check{c}_{n,a}(|i|+1)^a,\quad \hat{\pi}_{n,a}(i)=\hat{c}_{n,a}(n-|i|+1)^a,
\]
where $\check{c}_{n,a}$ and $\hat{c}_{n,a}$ are normalizing constants. It is easy to compute that
\begin{equation}\label{eq-checkna}
 c_{n,a}/2\le1/\hat{c}_{n,a}<1/\check{c}_{n,a}\le 2c_{n,a},
\end{equation}
where
\[
 c_{n,a}=\frac{(n+1)^{a+1}}{a+1}+(n+1)^a.
\]
The Metropolis chains, $\check{K}_{n,a}$ and $\hat{K}_{n,a}$, for $\check{\pi}_{n,a}$ and $\hat{\pi}_{n,a}$ based on the simple random walk $K_n$ have transition matrices given by
\[
 \check{K}_{n,a}(i,j)=\check{K}_{n,a}(-i,-j),\quad \hat{K}_{n,a}(i,j)=\hat{K}_{n,a}(-i,-j)
\]
and
\[
 \check{K}_{n,a}(i,j)=\begin{cases}\frac{1}{2}&\text{if }j=i+1,i\in[0,n-1]\\\frac{i^a}{2(i+1)^a}&\text{if }j=i-1,i\in[1,n]\\\frac{(i+1)^a-i^a}{2(i+1)^a}&\text{if }j=i,i\notin\{0,n\}\\1-\frac{n^a}{2(n+1)^a}&\text{if }i=j=n\end{cases}
\]
and
\[
 \hat{K}_{n,a}(i,j)=\begin{cases}\frac{1}{2}&\text{if }j=i-1,i\in[1,n]\\\frac{(n-i)^a}{2(n-i+1)^a}&\text{if }j=i+1,i\in[0,n-1]\\\frac{(n-i+1)^a-(n-i)^a}{2(n-i+1)^a}&\text{if }j=i\ne 0\\1-\frac{n^a}{(n+1)^a}&\text{if }i=j=0\end{cases}.
\]

Saloff-Coste \cite{SC99} discussed the above chains and obtained the correct order of the spectral gaps. Let $\check{\lambda}_{n,a},\hat{\lambda}_{n,a}$ denote the spectral gaps of $\check{K}_{n,a},\hat{K}_{n,a}$. Referring to the recent work in \cite{CSal12-3}, one has
\[
 1/(4C)\le\lambda\le 1/C,
\]
where $(\lambda,C)$ is any of $(\check{\lambda}_{n,a},\check{C}_n(a))$ and $(\hat{\lambda}_{n,a},\hat{C}_n(a))$, and
\[
 \check{C}_n(a)=2\max_{1\le i\le n}\left(\sum_{j=0}^{i-1}(j+1)^{-a}\right)\left(\sum_{j=i}^n(j+1)^a\right),
\]
and
\[
 \hat{C}_n(a)=2\max_{1\le i\le n}\left(\sum_{j=0}^{i-1}(j+1)^a\right)\left(\sum_{j=i-1}^{n-1}(j+1)^{-a}\right).
\]

\begin{thm}\label{t-metropolis}
Let $\check{\lambda}_{n,a},\hat{\lambda}_{n,a}$ be spectral gaps for $\check{K}_{n,a},\hat{K}_{n,a}$. Then,
\[
 \frac{1}{8\eta_{-a}(1,n)\eta_a(2,n+1)}\le\check{\lambda}_{n,a}\le\frac{2}{\eta_{-a}(1,n)\eta_a(2,n+1)},
\]
and
\[
 \frac{1}{64\eta_a(1,\lceil n/2\rceil)\eta_{-a}(\lceil n/2\rceil,n)}\le\hat{\lambda}_{n,a}\le\frac{1}{2\eta_a(1,\lceil n/2\rceil)\eta_{-a}(\lceil n/2\rceil,n)}.
\]
where $\eta_a(k,l)=\sum_{i=k}^li^a$.
\end{thm}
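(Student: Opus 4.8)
The plan is to combine the two-sided estimate $\tfrac{1}{4C}\le\lambda\le\tfrac1C$ quoted above from \cite{CSal12-3}---valid with $(\lambda,C)$ equal to either $(\check\lambda_{n,a},\check C_n(a))$ or $(\hat\lambda_{n,a},\hat C_n(a))$---with elementary estimates identifying each Hardy constant, up to a universal factor, with the product of $\eta$'s in the statement. First I would rewrite the constants in $\eta$-notation: the substitution $k=j+1$ turns the defining sums into
\[
 \check C_n(a)=2\max_{1\le i\le n}\eta_{-a}(1,i)\,\eta_a(i+1,n+1),\qquad
 \hat C_n(a)=2\max_{1\le i\le n}\eta_a(1,i)\,\eta_{-a}(i,n),
\]
since $\sum_{j=0}^{i-1}(j+1)^{\pm a}=\eta_{\pm a}(1,i)$, $\sum_{j=i}^{n}(j+1)^a=\eta_a(i+1,n+1)$ and $\sum_{j=i-1}^{n-1}(j+1)^{-a}=\eta_{-a}(i,n)$.

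For $\check C_n(a)$, set $P:=\eta_{-a}(1,n)\eta_a(2,n+1)$. As $a>0$, $k\mapsto k^{-a}$ decreases and $k\mapsto k^{a}$ increases, so $\eta_{-a}(1,i)\le\eta_{-a}(1,n)$ and $\eta_a(i+1,n+1)\le\eta_a(2,n+1)$ for all $i$, whence $\check C_n(a)\le 2P$. For the reverse bound $\check C_n(a)\ge\tfrac12P$ I would exhibit a single index $i_0$ with $\eta_{-a}(1,i_0)\ge\tfrac12\eta_{-a}(1,n)$ \emph{and} $\eta_a(i_0+1,n+1)\ge\tfrac12\eta_a(2,n+1)$: letting $i_1$ be the least index with $\eta_{-a}(1,i_1)\ge\tfrac12\eta_{-a}(1,n)$ and $i_2$ the greatest index with $\eta_a(i_2+1,n+1)\ge\tfrac12\eta_a(2,n+1)$, I claim $i_1\le i_2$. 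If it failed, minimality of $i_1$ would give $\sum_{k=i_2+1}^{n}k^{-a}>\sum_{k=1}^{i_2}k^{-a}$, which (comparing the first $\min\{n-i_2,i_2\}$ terms of each side, using that $k^{-a}$ decreases) forces $n-i_2>i_2$; but then $\sum_{k=i_2+2}^{n+1}k^{a}=\sum_{p=1}^{n-i_2}(i_2+1+p)^a\ge\sum_{p=1}^{i_2}(i_2+1+p)^a\ge\sum_{p=1}^{i_2}(p+1)^a=\sum_{k=2}^{i_2+1}k^a$, contradicting maximality of $i_2$. Then any $i_0\in\{i_1,\dots,i_2\}$ gives $\check C_n(a)\ge 2\eta_{-a}(1,i_0)\eta_a(i_0+1,n+1)\ge\tfrac12P$. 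Inserting $\tfrac12P\le\check C_n(a)\le 2P$ into $\tfrac1{4\check C_n(a)}\le\check\lambda_{n,a}\le\tfrac1{\check C_n(a)}$ yields the first two displayed inequalities exactly.

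For $\hat C_n(a)$, write $m:=\lceil n/2\rceil$ and $Q:=\eta_a(1,m)\eta_{-a}(m,n)$. Taking $i=m$ in the maximum gives $\hat C_n(a)\ge 2Q$, hence $\hat\lambda_{n,a}\le\tfrac1{\hat C_n(a)}\le\tfrac1{2Q}$, the stated upper bound. The substantial step is the reverse estimate $\hat C_n(a)\le 16Q$, i.e. $\max_{1\le i\le n}\eta_a(1,i)\eta_{-a}(i,n)\le 8Q$; together with $\hat\lambda_{n,a}\ge\tfrac1{4\hat C_n(a)}$ this yields the lower bound $\tfrac1{64Q}$. Here I would argue separately for $i\ge m$ and $i\le m$. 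For $i\ge m$: bound $\eta_{-a}(i,n)\le\eta_{-a}(m,n)$, split $\eta_a(1,i)=\eta_a(1,m)+\eta_a(m+1,i)$ and $\eta_{-a}(m,n)=\eta_{-a}(m,i-1)+\eta_{-a}(i,n)$, reducing the claim to $\eta_a(m+1,i)\,\eta_{-a}(i,n)\le c\,\eta_a(1,m)\,\eta_{-a}(m,i-1)$ for $m<i\le n$. Since $n\le 2m$ (so $i-m\le m$), both sides are dominated by the terms with $k,l$ near $i$ on the left and near $m$ on the right, and one establishes the inequality by an elementary comparison of these partial power-sums---using two-sided bounds of the form $\eta_a(1,q)\asymp q^a+\tfrac{q^{a+1}}{a+1}$ (valid with a universal constant) and their counterparts for $\eta_{-a}$ over the relevant ranges---distinguishing the range where these sums behave like powers of $n$ from the range of large $a$ where each sum lies within a bounded factor of its dominant term. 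The case $i\le m$ is handled by the analogous argument with the roles of the increasing and decreasing power sequences interchanged. Assembling the constants gives the last pair of inequalities.

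The crux, and the place I expect trouble, is this last estimate for $\hat C_n(a)$. For a product $F(i)G(i)$ with $F$ increasing and $G$ decreasing, $\max_i F(i)G(i)$ need \emph{not} be comparable to the midpoint value $F(m)G(m)$ for general monotone $F,G$---the ratio can be made arbitrarily large---so one must genuinely use the reciprocal structure $F\leftrightarrow(k^a)$, $G\leftrightarrow(k^{-a})$ (the cancellation in $(k/l)^a$ when $k\le i\le l$) together with the specific midpoint $m=\lceil n/2\rceil$ to extract a constant uniform in $a>0$ and $n\ge1$. Keeping the comparison uniform across the regimes $a<1$, $a\approx1$, $a>1$---where $\eta_{\pm a}$ behaves like a power of $n$, like $\log n$, and like a bounded quantity respectively---requires some care; the extremal (and essentially sharp) behaviour is the limit $a\to0^+$.
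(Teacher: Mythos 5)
Your treatment of $\check\lambda_{n,a}$ is correct and in fact supplies a proof of the two-sided inequality $\tfrac12\eta_{-a}(1,n)\eta_a(2,n+1)\le\check C_n(a)\le2\eta_{-a}(1,n)\eta_a(2,n+1)$ that the paper only asserts; the ``threshold index'' argument with $i_1\le i_2$ is sound, and it is pleasantly self-contained. Likewise the upper bound $\hat\lambda_{n,a}\le\tfrac1{2Q}$ with $Q=\eta_a(1,\lceil n/2\rceil)\eta_{-a}(\lceil n/2\rceil,n)$ is obtained exactly as in the paper, by taking $i=\lceil n/2\rceil$ in the maximum.

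The genuine gap is the one you yourself flag: the estimate $\hat C_n(a)\le16Q$, equivalently $\eta_a(1,i)\,\eta_{-a}(i,n)\le8\,\eta_a(1,m)\,\eta_{-a}(m,n)$ uniformly in $1\le i\le n$ and $a>0$, is not proved. The ``split the sum and compare partial power sums'' plan, together with $\eta_a(1,q)\asymp q^a+\tfrac{q^{a+1}}{a+1}$, is a plausible outline, but it is only an outline; none of the case analysis ($a$ small/near $1$/large, $i$ near $m$/near $n$) is carried out, and without it there is no proof. You have, however, correctly diagnosed where the difficulty lies --- one must exploit the reciprocal structure $k^a\cdot l^{-a}=(k/l)^a$ rather than generic monotonicity --- and the paper's argument does precisely this in a cleaner way that is worth noting. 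Writing $\sum_{j=0}^{i-1}(j+1)^a=i^a\sum_{j=0}^{i-1}(1-j/i)^a$ and $\sum_{j=i-1}^{n-1}(j+1)^{-a}=i^{-a}\sum_{j=0}^{n-i}\bigl(1-\tfrac{j}{i+j}\bigr)^a$ makes the two factors $i^a$, $i^{-a}$ cancel identically, so $\eta_a(1,i)\eta_{-a}(i,n)=\bigl(\sum_{j<i}(1-j/i)^a\bigr)\bigl(\sum_{j\le n-i}(1-\tfrac{j}{i+j})^a\bigr)$ with \emph{no} $a$-dependent scaling left over; each factor is then bounded above and below uniformly for $i\ge n/2$ (the left factor via $\tfrac{j}{i}\le\tfrac{2j}{n}$ and a parity argument, the right factor via the explicit $C'_{i,n}(a)$), each within a factor of $4$ of its value at $i=\lceil n/2\rceil$. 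This is a genuinely different route from the power-sum comparison you sketch: it makes the required uniformity in $a$ transparent rather than something to be fought for regime by regime, and is the step you would need to supply (or replace by a worked-out version of your sketch) for the lower bound on $\hat\lambda_{n,a}$ to hold.

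One smaller remark: the paper first observes that in $\hat C_n(a)$ the maximum may be restricted to $i\ge n/2$, which saves half the work. Your plan to handle $i\le m$ by a ``symmetric'' argument is reasonable but also unexecuted; either show the restriction of the max directly, or carry out both halves.
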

\begin{proof}[Proof of Theorem \ref{t-metropolis}]
The bound for $\check{\lambda}_{n,a}$ follows immediately from the fact
\[
 \frac{\eta_{-a}(1,n)\eta_a(2,n+1)}{2}\le\check{C}_n(a)\le 2\eta_{-a}(1,n)\eta_a(2,n+1).
\]
For $\hat{\lambda}_{n,a}$, note that
\[
 \hat{C}_n(a)=2\max_{n/2\le i\le n}\left(\sum_{j=0}^{i-1}(j+1)^a\right)\left(\sum_{j=i-1}^{n-1}(j+1)^{-a}\right).
\]
Taking $i=\lceil n/2\rceil$ yields the upper bound. For the lower bound, we write
\[
 \hat{C}_n(a)=2\max_{n/2\le i\le n}\left(\sum_{j=0}^{i-1}\left(1-\frac{j}{i}\right)^a\right)
\left(\sum_{j=0}^{n-i}\left(1-\frac{j}{i+j}\right)^a\right).
\]
For $i\ge n/2$, it is clear that
\[
 \sum_{j=0}^{i-1}\left(1-\frac{j}{i}\right)^a\ge\sum_{j=0}^{i-1}\left(1-\frac{2j}{n}\right)^a
\ge\frac{1}{2}\sum_{j=0}^{n-1}\left(1-\frac{j}{n}\right)^a.
\]
Observe that, for $a>0$,
\begin{equation}\label{eq-c'ina}
 \frac{C'_{i,n}(a)}{2}\le\sum_{j=0}^{n-i}\left(1-\frac{j}{i+j}\right)^{a}\le C'_{i,n}(a),
\end{equation}
where
\[
 C'_{i,n}(a)=1+\begin{cases}i\frac{(i/n)^{a-1}-1}{1-a}&\text{if }a\ne 1\\i\log\frac{n}{i}&\text{if }a=1\end{cases}.
\]
It is clear that, for $i\ge n/2$, $C'_{i,n}(a)\le 2C'_{\lceil n/2\rceil,n}(a)$ and this leads to
\[
 \sum_{j=0}^{n-i}\left(1-\frac{j}{i+j}\right)^a\le 4\sum_{j=0}^{n-\lceil n/2\rceil}\left(1-\frac{j}{\lceil n/2\rceil+j}\right)^a.
\]
Summarizing all above gives the desired lower bound.
\end{proof}

\begin{figure}[ht]
\includegraphics[width=8cm]{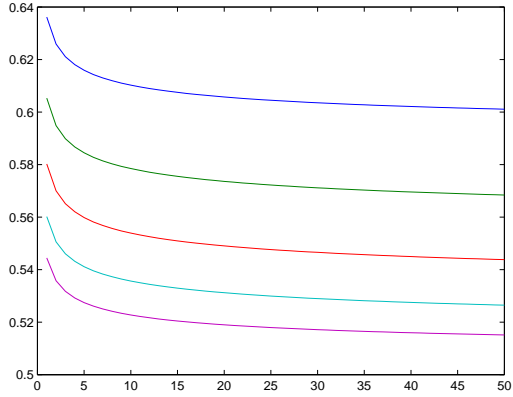}
\caption{These curves display the mapping
$m\mapsto\check{\lambda}_{100m,a}\eta_{-a}(1,100m)\eta_a(2,100m+1)$ in Theorem \ref{t-metropolis} in order from the top $a=0.8,0.9,1.0,1.1$ and $1.2$. The right most point corresponds to a path of length $n=5000$.}
\label{fig1}
\end{figure}

\begin{table}[ht]
\caption{These numbers denote $\check{\lambda}_{n,a}\eta_{-a}(1,n)\eta_a(2,n+1)$ in Theorem \ref{t-metropolis}.}
\begin{tabular}{cccccc}
\hline\hline
n & 10000& 20000 & 30000 & 40000 & 50000\\
\hline
a=0.8  &  0.5983  &  0.5960  &  0.5948  &  0.5941  &  0.5935\\
\hline
a=0.9  &  0.5652  &  0.5625  &  0.5610  &  0.5601  &  0.5594\\
\hline
a=1.0  &  0.5405  &  0.5377  &  0.5362  &  0.5353  &  0.5345\\
\hline
a=1.1  &  0.5235  &  0.5210  &  0.5197  &  0.5189  &  0.5183\\
\hline
a=1.2  &  0.5128  &  0.5109  &  0.5099  &  0.5093  &  0.5088\\
\hline\hline
\label{tab1}
\end{tabular}

\end{table}

\begin{rem}\label{r-sym}
Comparing with \cite[Theorem 9.5]{SC99}, the bounds for $\check{\lambda}_{n,a}$ given in Theorem \ref{t-metropolis} have a similar lower bound and an improved upper bound by a multiple of about $1/4$. For $\hat{\lambda}_{n,a}$, observe that
\[
 \frac{C''_{i}(a)}{2}\le\sum_{j=0}^{i-1}\left(1-\frac{j}{i}\right)^a\le C''_{i}(a),
\]
where
\[
 C''_{i}(a)=1+\frac{i-i^{-a}}{1+a}.
\]
Recall the constant $C'_{i,n}(a)$ in the proof of Theorem \ref{t-metropolis}. Note that
\[
 \frac{n+a}{2(1+a)}\le C''_{\lceil n/2\rceil}(a)\le\frac{2(n+a)}{(1+a)},
\]
and, for $a>0$, $a\ne 1$ and $n\ge 3$,
\[
 C'_{\lceil n/2\rceil,n}(a)\le 1+\frac{n+1}{2(1+a)}\sup_{a>0,a\ne 1}\frac{(2^{1-a}-1)(1+a)}{1-a}\le \frac{3(n+a)}{1+a},
\]
where the last inequality is obtained by considering the subcases $a<2$ and $a\ge 2$. The above computation also applies for $a=1$ and $n\in\{1,2\}$. In the same spirit, one can show that
$C'_{\lceil n/2\rceil,n}(a)\ge \frac{n+a}{6(1+a)}$. This yields
\begin{equation}\label{eq-cna}
 \frac{(n+a)^2}{6(1+a)^2}\le \hat{C}_{n,a}\le\frac{12(n+a)^2}{(1+a)^2},\quad\forall n\ge 1.
\end{equation}
Hence, we have $\hat{\lambda}_{n,a}\asymp(1+a)^2/(n+a)^2$. As a consequence of (\ref{eq-checkna}) and (\ref{eq-c'ina}), we obtain that, uniformly for $a>0$,
\[
 1/\check{\lambda}_{n,a}\asymp n^a\left(\left(1+\frac{1}{n}\right)^a+\frac{n}{1+a}\right)
\left(1+v(n,a)\right)\quad\text{as }n\ra\infty,
\]
where $v(n,1)=\log n$ and $v(n,a)=(n^{1-a}-1)/(1-a)$ for $a\ne 1$.
\end{rem}

\begin{rem}\label{r-sym2}
Note that the lower bound in Theorem \ref{t-lower} provides the correct order of the spectral gap for the chain $\check{K}_{n,a}$ uniformly in $a$ but not for $\hat{K}_{n,a}$. For instance, if $a$ grows with $n$, say $a=n$, then Theorem \ref{t-lower} implies $1/\hat{\lambda}_{n,n}=O(n)$, while (\ref{eq-cna}) gives $1/\hat{\lambda}_{n,n}\asymp 1$.
\end{rem}

\begin{rem}
Consider the chain in Theorem \ref{t-metropolis}. A numerical experiment of Algorithm (\ref{alg-sp2}) is implemented and the data is collected in Figure \ref{fig1} and Table \ref{tab1}. One may conjecture that $\check{\lambda}_{n,a}\eta_{-a}(1,n)\eta_a(2,n+1)\ra c(a)$ as $n\ra\infty$, where $c(a)$ is a constant depending on $a$.
\end{rem}

\section{Spectral gaps for uniform measures with bottlenecks}
In this section, we discuss some examples of special interests and show how the theory developed in the previous sections can be used to bound the spectral gap. In the first subsection, we develop a lower bound on the spectral gap in a very general setting using the theory in Section 3. In the second subsection, we focuses on the case of one bottleneck, where a precise estimation on the spectral gap is presented. Those computations are based on the theoretical work in Section 2. In the third subsection, we consider the case of multiple bottlenecks in which the exact order of the spectral gap is determined for some special classes of chains.

In what follows, we will use the notation $\pi(A)$ to represent the summation $\sum_{i\in A}\pi(i)$ for any measure $\pi$ on $V$ and any set $A\subset V$. Given two sequences of positive reals $a_n,b_n$, we write $a_n=O(b_n)$ if $a_n/b_n$ is bounded. If $a_n=O(b_n)$ and $b_n=O(a_n)$, we write $a_n\asymp b_n$. If $a_n/b_n\ra 1$, we write $a_n\sim b_n$.

\subsection{A lower bound on the spectral gap}
In this subsection, we give a lower bound on the spectral gap in the general case.
\begin{thm}\label{t-lower}
Let $G=(V,E)$ be a graph with vertex set $V=\{0,1,...,n\}$ and edge set $E=\{\{i,i+1\}|i=0,...,n-1\}$. Let $\pi,\nu$ be positive measures on $V,E$ with $\pi(V)=1$. Then,
\[
 \lambda^G_{\pi,\nu}\ge\max_{0\le i\le n}\left\{\left(\sum_{j=0}^{i-1}\frac{\pi([0,j])}{\nu(j,j+1)}\right)^{-1}\wedge\left(\sum_{j=i+1}^{n}
\frac{\pi([j,n])}{\nu(j-1,j)}\right)^{-1}\right\},
\]
where $a\wedge b:=\min\{a,b\}$.
\end{thm}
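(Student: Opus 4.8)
The plan is to establish, for each fixed index $i$, that $\lambda^G_{\pi,\nu}$ is at least the $i$-th expression inside the outer braces, and then to take the maximum over $i$; write $R_i$ for that expression. It suffices to show that every $\mu$ with $0<\mu<R_i$ satisfies $\mu<\lambda^G_{\pi,\nu}$, for then $\lambda^G_{\pi,\nu}\ge\sup\{\mu:0<\mu<R_i\}=R_i$. I would treat $1\le i\le n-1$ first; the two boundary cases $i=0,n$ use only a one-sided version of the same argument and are handled at the end. (Here $G$ has vertex set $\{0,\dots,n\}$, so the indices of Section~3 are shifted by one, but this is cosmetic.)

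Fix such an $i$ and $\mu\in(0,R_i)$. First I would run the recursion of Definition~\ref{def-xil} from the left endpoint, $\xi_\mu(0)=-1$, and invoke the summation-by-parts identity behind (\ref{eq-phil}) in the form $\xi_\mu(k+1)=\xi_\mu(k)-\mu T_k/\nu(k,k+1)$ with $T_k=\sum_{j=0}^k\pi(j)\xi_\mu(j)$. I would then prove by induction on $k$, for $0\le k\le i$, the statement ``$\xi_\mu$ is strictly increasing on $\{0,\dots,k\}$ and $\xi_\mu(k)<0$''. Carrying negativity inside the induction is essential, since only while the partial sums stay negative is the recursion guaranteed increasing; granting the hypothesis at level $k\le i-1$, every $\xi_\mu(j)$ with $j\le k$ lies in $[-1,0)$, hence $|T_l|=\sum_{j\le l}\pi(j)|\xi_\mu(j)|\le\pi([0,l])$ for $l\le k$, and therefore
\[
\xi_\mu(k+1)=-1+\mu\sum_{l=0}^{k}\frac{|T_l|}{\nu(l,l+1)}\le -1+\mu\sum_{l=0}^{i-1}\frac{\pi([0,l])}{\nu(l,l+1)}<0
\]
because $\mu<R_i\le\bigl(\sum_{l=0}^{i-1}\pi([0,l])/\nu(l,l+1)\bigr)^{-1}$, while $\xi_\mu(k+1)-\xi_\mu(k)=\mu|T_k|/\nu(k,k+1)>0$. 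Running the mirror-image recursion $\widetilde\xi_\mu$ of Proposition~\ref{p-twosided} from $\widetilde\xi_\mu(n)=-1$ and using the other half of the definition of $R_i$ gives, identically, that $\widetilde\xi_\mu$ is strictly decreasing and negative on $\{i,\dots,n\}$.

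Now $\xi_\mu|_{\{0,\dots,i\}}$ is of type $1$ with $(-1)^1\xi_\mu(i)>0$ and $\widetilde\xi_\mu|_{\{i,\dots,n\}}$ is of type $1$ with $(-1)^1\widetilde\xi_\mu(i)>0$, so Proposition~\ref{p-twosided}(1), applied with its two indices both equal to $1$ and its gluing point $k_0$ equal to the present $i$, yields $\lambda^G_0<\mu<\lambda^G_1$, i.e.\ $\mu<\lambda^G_{\pi,\nu}$. For $i=0$ (resp.\ $i=n$) the left (resp.\ right) sum is empty and the surviving recursion $\widetilde\xi_\mu$ (resp.\ $\xi_\mu$) is, by the same computation, strictly monotone and negative on the whole path, hence of type $1$ with $\pi(\widetilde\xi_\mu)<0$ (resp.\ $\pi(\xi_\mu)<0$); by Proposition~\ref{p-xi} it is of type $1$ only for $\mu\le\alpha_1<\lambda^G_2$, and then Remark~\ref{r-xil} forces $\mu<\lambda^G_1$. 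Taking the maximum over $0\le i\le n$ finishes the proof.

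The only real subtlety I anticipate is the induction bookkeeping: one must carry ``$\xi_\mu<0$'' as part of the inductive hypothesis rather than trying to deduce it afterwards, and one must notice that the strict inequality $\mu<R_i$ (not $\le$) is precisely what keeps $\xi_\mu(i)$ and $\widetilde\xi_\mu(i)$ strictly negative, which is exactly the sign hypothesis of Proposition~\ref{p-twosided}(1). Everything else is the monotone, telescoped computation already used for (\ref{eq-Enu})--(\ref{eq-phil}).
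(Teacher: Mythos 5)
Your proof is correct and takes essentially the same route as the paper: the paper runs the recursion of Definition~\ref{def-xil} from the left, proves inductively that strict monotonicity and negativity persist up to index $i$ whenever $1/\lambda$ exceeds $\sum_{j<i}\pi([0,j])/\nu(j,j+1)$, repeats from the right endpoint, and invokes Proposition~\ref{p-twosided} to locate $\lambda$ below $\lambda^G_1$. You have merely supplied the bookkeeping the paper elides (carrying negativity inside the induction, the telescoped bound $\xi_\mu(k+1)=-1+\mu\sum_{l\le k}|T_l|/\nu(l,l+1)$, and the boundary cases $i=0,n$), which is exactly what ``inductively, one can show'' and ``one may do a similar computation'' were asking for.
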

\begin{rem}\label{r-lower}
Let $C$ be the lower of the spectral gap in Theorem \ref{t-lower}. Note that, for any positive reals, $(a+b)/2\le \max\{a,b\}\le a+b$. Using this fact, it is easy to see that $C'\le C\le 2C'$, where
\[
 C'=\max_{0\le i\le n}\left(\sum_{j=0}^{i-1}\frac{\pi([0,j])}{\nu(j,j+1)}+\sum_{j=i+1}^{n}
\frac{\pi([j,n])}{\nu(j-1,j)}\right)^{-1}.
\]
In particular, if $i_0$ is the median of $\pi$, that is, $\pi([0,i_0])\ge 1/2$ and $\pi([i_0,n])\ge 1/2$, then
\[
 C'=\left(\sum_{j=0}^{i_0-1}\frac{\pi([0,j])}{\nu(j,j+1)}+\sum_{j=i_0+1}^{n}
\frac{\pi([j,n])}{\nu(j-1,j)}\right)^{-1}.
\]
\end{rem}

\begin{rem}
Let $(X_m)_{m=0}^\infty$ be an irreducible birth and death chain on $\{0,1,...,n\}$ with birth rate $p_i$, death rate $q_i$ and holding rate $r_i$ as in (\ref{eq-bdc}). For $0\le i\le n$, set $\tau_i=\min\{m\ge 0|X_m=i\}$ as the first passage time to state $i$. By the strong Markov property, the expected hitting time to $i$ started at $0$ can be expressed as
\[
\mathbb{E}_0\tau_i=\sum_{j=0}^{i-1}\frac{\pi([0,j])}{p_j\pi(j)},\quad\mathbb{E}_n\tau_i=\sum_{j=i+1}^n\frac{\pi([j,n])}{q_j\pi(j)},
\]
where $\pi$ is the stationary distribution of $(X_m)_{m=0}^\infty$. Let $\lambda$ be the spectral gap for $(X_m)_{m=0}^\infty$. Then, $\lambda=\lambda^G_{\pi,\nu}$, where $G$ is the path with vertex set $\{0,...,n\}$ and $\nu(i,i+1)=p_i\pi(i)=q_{i+1}\pi(i+1)$ for $0\le i<n$. The conclusion of Theorem \ref{t-lower} can be written as $1/\lambda\le\min_{0\le i\le n}\{\mathbb{E}_0\tau_i\vee\mathbb{E}_n\tau_i\}$.
\end{rem}

\begin{rem}
The lower bound in Theorem \ref{t-lower} is not necessary the right order of the spectral gap. See Remark \ref{r-sym2}.
\end{rem}

\begin{proof}[Proof of Theorem \ref{t-lower}]
For $\lambda>0$, let $\xi_\lambda$ be the function in Definition \ref{def-xil}. That is, $\xi_\lambda(0)=-1$ and, for $i\ge 0$,
\[
 [\xi_\lambda(i+1)-\xi_\lambda(i)]\nu(i,i+1)=[\xi_\lambda(i)-\xi_\lambda(i-1)]\nu(i-1,i)-\lambda\pi(i)\xi_\lambda(i).
\]
Inductively, one can show that if $1/\lambda>\sum_{j=0}^{\ell-1}[\pi([0,j])/\nu(j,j+1)]$, then
\[
 \begin{cases}0<\xi_\lambda(i+1)-\xi_\lambda(i)\le\lambda\pi([0,i])/\nu(i,i+1),\\-1\le\xi_\lambda(i+1)\le -1+\lambda\sum_{j=1}^{i}[\pi([0,j])/\nu(j,j+1)]<0,\end{cases}
\]
for $0\le i\le\ell-1$. One may do a similar computation from the other end point and, by Proposition \ref{p-twosided}, this implies
\[
 1/\lambda^{G_n}_{\pi_n,\nu_n}\le\max\left\{\sum_{j=0}^{\ell-1}\frac{\pi([0,j])}{\nu(j,j+1)},\sum_{j=1}^{n-\ell}\frac{\pi([n-j+1,n])}{\nu(n-j,n-j+1)}\right\}.
\]
Taking the minimum over $1\le \ell\le n$ gives the desired inequality.
\end{proof}

\subsection{One bottleneck}
For $n\ge 1$, let $G_n=(V_n,E_n)$ be the path on $\{0,1,...,n\}$ and set $\pi_n\equiv 1/(n+1)$ and $\nu_n\equiv 1/(n+1)$ with $C>0$. Using Feller's method in \cite[Chapter XVI.3]{F68}, one can show that the eigenvalues of $M^{G_n}_{\pi_n,\nu_n}$ are $2(1-\cos\frac{i\pi}{n+1})$ for $0\le i\le n$.

\begin{thm}\label{t-bottleneck}
For $n\ge 1$, let $\epsilon_n>0$, $1\le x_n\le \lceil n/2\rceil$ and set $\pi_n\equiv 1/(n+1)$,
\begin{equation}\label{eq-nu}
 \nu_n^{x_n}(x_n-1,x_n)=\frac{\epsilon_n}{n+1},\quad\nu_n^{x_n}(i-1,i)=\frac{1}{n+1},\quad\forall i\ne x_n.
\end{equation}
Then, the spectral gap are bounded by
\[
 \frac{1}{n^2/4+x_n/\epsilon_n}\le\lambda^G_{\pi_n,\nu_n^{x_n}}\le\min\left\{2\left(1-\cos\frac{\pi}{n-x_n+1}\right),\frac{\epsilon_n}{x_n}\right\}.
\]
In particular, $\lambda^{G_n}_{\pi_n,\nu_n^{x_n}}\asymp \min\{1/n^2,\epsilon_n/x_n\}$.
\end{thm}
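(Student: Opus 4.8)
The plan is to get the lower bound from Theorem~\ref{t-lower}, the first upper bound from a Rayleigh quotient, and the second upper bound from the sign criterion of Corollary~\ref{c-T}; the relation $\asymp$ then follows by comparing the two sides. For the lower bound I would apply Theorem~\ref{t-lower} with the cut point $i=\lceil n/2\rceil$. Since $x_n\le\lceil n/2\rceil$, the weak edge $\{x_n-1,x_n\}$ is one of the edges $\{j,j+1\}$ with $0\le j\le i-1$ and is not among the edges $\{j-1,j\}$ with $i+1\le j\le n$, while every remaining weight equals $1/(n+1)$. Using $\pi_n([0,j])=(j+1)/(n+1)$ one finds $\sum_{j=0}^{i-1}\pi_n([0,j])/\nu_n^{x_n}(j,j+1)=\tfrac12 i(i+1)-x_n+x_n/\epsilon_n$ and $\sum_{j=i+1}^{n}\pi_n([j,n])/\nu_n^{x_n}(j-1,j)=\tfrac12(n-i)(n-i+1)$. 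As $\lceil n/2\rceil\le(n+1)/2$, $n-\lceil n/2\rceil=\lfloor n/2\rfloor\le n/2$, and $x_n\ge 1$, elementary estimates show both sums are at most $n^2/4+x_n/\epsilon_n$, so Theorem~\ref{t-lower} gives $\lambda^{G_n}_{\pi_n,\nu_n^{x_n}}\ge (n^2/4+x_n/\epsilon_n)^{-1}$.

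For $\lambda^{G_n}_{\pi_n,\nu_n^{x_n}}\le 2(1-\cos\frac{\pi}{n-x_n+1})$ I would use a test function. Put $m=n-x_n+1$, $R=\{x_n,\dots,n\}$, and let $\phi$ be an eigenfunction of the uniform path on the $m$ vertices of $R$ for its spectral gap $2(1-\cos\frac{\pi}{m})$, normalized so that $\sum_{i\in R}\phi(i)=0$; then $\sum_{i=x_n}^{n-1}[\phi(i)-\phi(i+1)]^2=2(1-\cos\frac{\pi}{m})\sum_{i\in R}\phi(i)^2$. Let $f=\phi$ on $R$ and $f\equiv\phi(x_n)$ on $\{0,\dots,x_n-1\}$. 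Since $f(x_n-1)=f(x_n)$ the weak edge contributes nothing to $\mathcal{E}_{\nu_n^{x_n}}(f,f)$, so $\mathcal{E}_{\nu_n^{x_n}}(f,f)=\frac{1}{n+1}\,2(1-\cos\frac{\pi}{m})\sum_{i\in R}\phi(i)^2$, while $\pi_n(f)=x_n\phi(x_n)/(n+1)$ gives $\textnormal{Var}_{\pi_n}(f)=\frac{1}{n+1}\sum_{i\in R}\phi(i)^2+\frac{x_n m\,\phi(x_n)^2}{(n+1)^2}\ge\frac{1}{n+1}\sum_{i\in R}\phi(i)^2$. Dividing, $\lambda^{G_n}_{\pi_n,\nu_n^{x_n}}\le\mathcal{E}_{\nu_n^{x_n}}(f,f)/\textnormal{Var}_{\pi_n}(f)\le 2(1-\cos\frac{\pi}{m})$.

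The bound $\lambda^{G_n}_{\pi_n,\nu_n^{x_n}}\le\epsilon_n/x_n$ is the delicate one, and I would deduce it from Corollary~\ref{c-T} (the mechanism underlying the dichotomy~(\ref{alg-sp2})). Writing $\phi_\lambda$ for the function produced by~(\ref{eq-psigen}) with $\phi_\lambda(0)=-1$, it suffices to check that $\pi_n(\phi_\lambda)\ge 0$ at $\lambda=\epsilon_n/x_n$, for then $\lambda\ge\lambda^{G_n}_{\pi_n,\nu_n^{x_n}}$. On the block $\{0,\dots,x_n-1\}$ all weights are $1/(n+1)$, so until the positive part in~(\ref{eq-psigen}) becomes active the telescoped identity $[\phi_\lambda(\ell+1)-\phi_\lambda(\ell)]\nu_n^{x_n}(\ell,\ell+1)=-\lambda T_\ell$ holds with $T_\ell=\sum_{j=0}^\ell\pi_n(j)\phi_\lambda(j)$ (compare~(\ref{eq-phil})), and its solution is the discrete cosine $\phi_\lambda(i)=-\cos((i+\tfrac12)\theta)/\cos(\tfrac\theta2)$ with $2(1-\cos\theta)=\lambda$, giving $T_\ell=-\sin((\ell+1)\theta)/((n+1)\sin\theta)$. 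I would then do a case analysis according to where the positive part first truncates: in every case except the ``through'' case where one reaches the weak edge with $T_{x_n-1}<0$, the function $\phi_\lambda$ has already frozen at a positive value and $\pi_n(\phi_\lambda)>0$ is immediate; in the through case, crossing the weak edge produces the jump $\phi_\lambda(x_n)-\phi_\lambda(x_n-1)=-\lambda(n+1)T_{x_n-1}/\epsilon_n=\sin(x_n\theta)/(x_n\sin\theta)$, after which the same telescoped recursion carries $\phi_\lambda$ through $R$ down to the value $\pi_n(\phi_\lambda)=T_n$. The core difficulty — the step I expect to consume most of the work — is to show $T_n\ge 0$ at $\lambda=\epsilon_n/x_n$; this comes down to a trigonometric inequality in $\theta$ and $x_n$ (in which $x_n^2\lambda=x_n\epsilon_n$ is controlled precisely in the range $\epsilon_n/x_n<2(1-\cos\frac{\pi}{n-x_n+1})$, the only range in which $\epsilon_n/x_n$ is the binding upper bound), together with the bookkeeping of where the positive part truncates on $R$; some care is needed for small $n$, where the constant in this bound is not sharp, though this does not affect the stated order.

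Finally, for $\lambda^{G_n}_{\pi_n,\nu_n^{x_n}}\asymp\min\{1/n^2,\epsilon_n/x_n\}$ I would combine the two sides. Because $1\le x_n\le\lceil n/2\rceil$ gives $n/2\le n-x_n+1\le n+1$, one has $2(1-\cos\frac{\pi}{n-x_n+1})\asymp(n-x_n+1)^{-2}\asymp n^{-2}$, so the upper bound is $\asymp\min\{n^{-2},\epsilon_n/x_n\}$; and $n^2/4+x_n/\epsilon_n\le 2\max\{n^2/4,x_n/\epsilon_n\}$ yields $(n^2/4+x_n/\epsilon_n)^{-1}\ge\tfrac12\min\{4/n^2,\epsilon_n/x_n\}\asymp\min\{n^{-2},\epsilon_n/x_n\}$, so the lower bound has the same order, and the two match.
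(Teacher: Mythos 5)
Your lower bound via Theorem~\ref{t-lower} with cut point $i=\lceil n/2\rceil$ matches the paper's proof, and your first upper bound (glue a mean-zero eigenvector of the uniform path on $\{x_n,\dots,n\}$ to the constant $\phi(x_n)$ on $\{0,\dots,x_n-1\}$, so the weak edge contributes nothing to the Dirichlet form) is equivalent to the paper's, which constructs the same test function with the index reversed.

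The genuine gap is in the bound $\lambda^{G_n}_{\pi_n,\nu^{x_n}_n}\le\epsilon_n/x_n$. You correctly reduce it, via Corollary~\ref{c-T}, to the claim $\pi_n(\phi_\lambda)\ge 0$ at $\lambda=\epsilon_n/x_n$, and your formula $T_\ell=-\sin((\ell+1)\theta)/((n+1)\sin\theta)$ for the pre-bottleneck block is right, but the closing trigonometric inequality for $T_n$ --- which you yourself flag as ``the core difficulty'' and as requiring case analysis and ``care for small $n$'' --- is never actually carried out; as written this part of the argument is incomplete. The paper does something far simpler: take the two-valued test function $h_n(i)=n-x_n+1$ for $0\le i<x_n$ and $h_n(i)=-x_n$ for $x_n\le i\le n$. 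Then $\pi_n(h_n)=0$, $\textnormal{Var}_{\pi_n}(h_n)=x_n(n-x_n+1)$, and since the only nonzero increment is across the weak edge, $\mathcal{E}_{\nu^{x_n}_n}(h_n,h_n)=\epsilon_n(n+1)$, giving Rayleigh quotient $\epsilon_n(n+1)/[x_n(n-x_n+1)]$. (Note in passing: for $1\le x_n\le\lceil n/2\rceil$ this quotient is bounded by $2\epsilon_n/x_n$, not by $\epsilon_n/x_n$ as the paper's proof asserts --- one has $(n+1)/(n-x_n+1)\le 2$, not $\le 1$ --- so the displayed constant is off by a harmless factor of $2$, which does not affect the order statement the theorem is really after.) A one-line step-function test would have spared you the unresolved trigonometric estimate, which is the wrong tool here.
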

\begin{proof}[Proof of Theorem \ref{t-bottleneck}]
The lower bound is immediate from Theorem \ref{t-lower} by choosing $i=\lceil n/2\rceil$ in the computation of the maximum. For the upper bound, we set $\lambda_n=1-\cos\frac{\pi}{n+1}$ and let $f_n$ be the function on $V_{n-x_n}$ defined by $f_n(0)=-1$ and, for $0\le i\le n-x_n-1$,
\[
 f_n(i+1)=f_n(i)+\frac{[f_n(i)-f_n(i-1)]\nu_{n-x_n}(i-1,i)-2\lambda_{n-x_n}\pi_{n-x_n}(i)f_n(i)}{\nu_{n-x_n}(i,i+1)}.
\]
By Proposition \ref{p-main1}, $\mathcal{E}_{\nu_{n-x_n}}(f_n,f_n)=2\lambda_{n-x_n}\text{Var}_{\pi_{n-x_n}}(f_n)$ and $\pi_{n-x_n}(f_n)=0$.
Let $g_n$ be the function on $V_n$ defined by $g_n(n-i)=f_n(i)$ for $0\le i\le n-x_n$ and $g_n(i)=f_n(n-x_n)$ for $0\le i<x_n$. A direct computation shows that
\[
 (n+1)\mathcal{E}_{\nu_n^{x_n}}(g_n,g_n)=(n-x_n+1)\mathcal{E}_{\nu_{n-x_n}}(f_n,f_n)
\]
and
\[
 (n+1)\text{Var}_{\pi_n}(g_n,g_n)=(n-x_n+1)\text{Var}_{\pi_{n-x_n}}(f_n)+\frac{x_n(n-x_n+1)}{n+1}f_n^2(n-x_n).
\]
This implies $\lambda^{G_n}_{\pi_n,\nu_n^{x_n}}\le 2\lambda_{n-x_n}$. On the other hand, using the test function, $h_n(i)=n-x_n+1$ for $0\le i<x_n$ and $h_n(i)=-x_n$ for $x_n\le i\le n$, one has
$\mathcal{E}_{\nu_n^{x_n}}(h_n,h_n)/\text{Var}_{\pi_n}(h_n)=\epsilon_n(n+1)/[x_n(n-x_n+1)]\le \epsilon_n/x_n$. This finishes the proof.
\end{proof}

The next theorem has a detailed description on the coefficient of the spectral gap. The proof is based on Section 3, particularly Proposition \ref{p-shape2} and Remark \ref{r-shape2}, and is given in the appendix.

\begin{thm}\label{t-bottleneck2}
For $n\ge 1$, let $x_n,\epsilon_n,\pi_n,\nu_n^{x_n}$ be as in Theorem \ref{t-bottleneck}. Suppose $x_n/(\epsilon_nn^2)\ra a\in[0,\infty]$ and $x_n/n\ra b\in[0,1/2]$.
\begin{itemize}
\item[(1)] If $a<\infty$ and $b=0$, then $\lambda^{G_n}_{\pi_n,\nu_n^{x_n}}\sim \min\{\pi^2,a^{-2}\}n^{-2}$.

\item[(2)] If $a<\infty$ and $b\in(0,1/2]$, then $\lambda^{G_n}_{\pi_n,\nu_n^{x_n}}\sim Cn^{-2}$, where $C$ is the unique positive solution of the following equation.
\[
 1+4\log 2-\frac{\pi^2}{6}-\frac{\pi^2aC}{1-b}-bC\sum_{i=1}^\infty\frac{(1-b)i^2-bC}{(i^2-C)[(1-b)^2i^2-b^2C]}=0.
\]

\item[(3)] If $a=\infty$, then $\lambda^{G_n}_{\pi_n,\nu_n^{x_n}}\sim \epsilon_n/x_n$.
\end{itemize}
\end{thm}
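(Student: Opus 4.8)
The plan is to push the function $\xi_\lambda$ of Definition \ref{def-xil}, started at the left endpoint ($\xi_\lambda(0)=-1$), through the chain $(\pi_n,\nu_n^{x_n})$ and to use the facts recorded in Remark \ref{r-xil} and Proposition \ref{p-shape2}: $\lambda^{G_n}_{\pi_n,\nu_n^{x_n}}$ is the smallest positive zero of $\lambda\mapsto\pi_n(\xi_\lambda)$ (equivalently of the tridiagonal determinant of (\ref{eq-ail}) for this chain), one has $\pi_n(\xi_\lambda)<0$ below the gap and $>0$ on the interval from the gap to the second-smallest eigenvalue, and there (Remark \ref{r-shape2}) $\xi_\lambda$ is strictly increasing of type $1$. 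Because $\pi_n$ is uniform and $\nu_n^{x_n}$ is uniform on every edge but $(x_n-1,x_n)$, the recursion for $\xi_\lambda$ is the free Chebyshev recursion $\xi_\lambda(k+1)=(2-\lambda)\xi_\lambda(k)-\xi_\lambda(k-1)$ on each of the two ``slabs'' $\{0,\dots,x_n-1\}$ and $\{x_n,\dots,n\}$, so there $\xi_\lambda$ is a discrete cosine, $A\cos(\omega(k+\tfrac12))$ on the left and $B\cos(\omega(n-k+\tfrac12))$ on the right with $4\sin^2(\omega/2)=\lambda$; the two pieces are joined through the weak edge by the transmission identity $\xi_\lambda(x_n)-\xi_\lambda(x_n-1)=-\tfrac{\lambda}{\epsilon_n}\sum_{k=0}^{x_n-1}\xi_\lambda(k)$, obtained by summing the Euler--Lagrange relations up to $x_n-1$. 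Imposing $\pi_n(\xi_\lambda)=0$ closes this into one scalar equation for $\omega$.

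For parts (1)--(2) I would substitute $\lambda=Cn^{-2}$, so $\omega\sim\sqrt C/n$, and let $n\to\infty$. After rescaling the index by $1/n$ the two discrete cosines converge to $\cos(\sqrt C\,\cdot)$ on $[0,b]$ and $[b,1]$; using $\sum_{k=0}^{x_n-1}\xi_\lambda(k)\sim\tfrac{n}{\sqrt C}A\sin(\sqrt C b)$ and $x_n/(\epsilon_n n^2)\to a$, the transmission identity survives the limit as a matching condition between the amplitudes that carries the parameter $a$, and together with the ``derivative'' matching at $x_n$ and the mean-zero constraint it yields, after eliminating $A$ and $B$, a transcendental equation $F(C;a,b)=0$ of cotangent type. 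For $b\in(0,\tfrac12]$ I would then recast it, via the partial-fraction expansion $\pi\cot\pi z=\tfrac1z+\sum_{i\ge1}\tfrac{2z}{z^2-i^2}$, into the series form displayed in (2); one checks the left-hand side is strictly monotone in $C$ on the relevant interval, hence has a unique positive root, and --- invoking the type and sign-change description of $\xi_\lambda$ from Section 3 (Propositions \ref{p-xi}, \ref{p-signchange}, \ref{p-shape2}) to certify that $\xi_\lambda$ is still of type $1$ at and below that root --- that this root is the smallest zero of the determinant, so $n^2\lambda^{G_n}_{\pi_n,\nu_n^{x_n}}\to C$. For $b=0$ the left slab shrinks to a point, the transmission identity degenerates to a Robin-type boundary condition with weight $a$ for the lowest nonzero Neumann mode on $[0,1]$, and $F(C;a,0)=0$ is then shown to be equivalent to $C=\min\{\pi^2,a^{-2}\}$: for a loose bottleneck the ground mode is the path's own $\cos(\pi s)$, giving $\pi^2$, while past the threshold it is the bottleneck-limited mode, giving $a^{-2}$.

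For part (3), $a=\infty$, I would rescale instead by $\lambda=C\,\epsilon_n/x_n$. Then $n^2\lambda=C(\epsilon_n n^2/x_n)\to0$, and $x_n\lambda=C\epsilon_n\to0$ as well (since $a=\infty$ and $x_n\le\lceil n/2\rceil$ force $\epsilon_n=o(1/n)$), so the discrete cosine has vanishing total variation on each slab and $\xi_\lambda$ is, to leading order, the two-step function constant on $\{0,\dots,x_n-1\}$ and on $\{x_n,\dots,n\}$. Hence $(\pi_n,\nu_n^{x_n})$ collapses to the two-point chain with masses $x_n/(n+1)$ and $(n+1-x_n)/(n+1)$ joined by weight $\epsilon_n/(n+1)$, whose gap equals $\epsilon_n(n+1)/[x_n(n+1-x_n)]$ by the formula in Remark \ref{r-n=2}; tracking the $o(1)$ corrections coming from the slight non-constancy of $\xi_\lambda$ on the slabs (again through the transmission identity) upgrades this to $\lambda^{G_n}_{\pi_n,\nu_n^{x_n}}\sim\epsilon_n/x_n$. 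In all three cases the two-sided estimate is closed in the usual way: the lower bound from $\pi_n(\xi_\lambda)<0$ at $\lambda$ slightly below the claimed value, and the upper bound either from a test function built out of the two limiting cosine pieces with the correct transmission, or from $\pi_n(\xi_\lambda)>0$ just above the claimed value together with the fact that the second-smallest eigenvalue is of order $n^{-2}$ (resp.\ bounded below by a constant multiple of $\epsilon_n/x_n$), which keeps the next zero away.

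I expect the crux to be part (2), Step two: isolating the transcendental equation from the discrete problem with error bounds uniform in $n$ and in the slab variable --- the delicate point being precisely the single weak edge, where a crude continuum approximation of the flux washes out the $a$-dependence --- and then proving that the positive root of $F(\cdot;a,b)$ is the first eigenvalue rather than a higher one, i.e.\ that the bottleneck manufactures no spurious small eigenvalue, which forces one back onto the monotonicity of the ground state (Theorem \ref{t-Miclo}) and the shape results of Section 3. The boundary-layer matching in part (1), pinning down the crossover between $\pi^2$ and $a^{-2}$, is the other place where care is needed.
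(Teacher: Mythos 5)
Your plan is sound and is, at a structural level, the same as the paper's: exploit the fact that $\xi_\lambda$ is an exact Chebyshev cosine on each of the two uniform slabs, glue the two pieces through the weak edge via the telescoped identity $[\xi_\lambda(x_n)-\xi_\lambda(x_n-1)]\nu(x_n-1,x_n)=-\lambda\sum_{k<x_n}\pi_n(k)\xi_\lambda(k)$, and locate the smallest zero of $\pi_n(\xi_\lambda)$ by passing to the limit. The paper packages the same information as the factorization
$\det B_n^{x_n}(\lambda,\epsilon)=\det A_{x_n-1}(\lambda)\det A_{n-x_n}(\lambda)\,\Delta_n^{x_n}(\lambda,\epsilon)$ with $\Delta_n^{x_n}=2-\lambda/\epsilon-R_{x_n-1}-R_{n-x_n}$, where $R_j=\det A_{j-1}/\det A_j$ is the ``impedance'' at the end of a free slab of length $j$ -- the same object that your cosine pieces encode.

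The place where you need to be careful is the step ``recast via the partial-fraction expansion into the series form displayed in (2).'' Your own continuum derivation (Neumann at both ends, flux continuity across the bottleneck, and jump $f(b^+)-f(b^-)=(a/b)f'(b)$ from the telescoped identity) closes into
\[
\cot\bigl(\sqrt{C}\,b\bigr)+\cot\bigl(\sqrt{C}\,(1-b)\bigr)=\frac{a\sqrt{C}}{b},
\]
which at $a=0$ gives $C=\pi^2$, the free-chain value, as it must. This equation does \emph{not} reduce to the theorem's displayed series: plugging $a=0$, $b=1/2$, $C=\pi^2$ into the displayed equation (equivalently $C_{\mathrm{paper}}=(1-b)^2=1/4$ in the proof's parametrization) leaves $1+4\log 2-\pi^2/6-1/2\ne 0$. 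The discrepancy traces to (\ref{eq-Rj2}): since $\det A_j(\lambda)=\sin((j+1)\omega)/\sin\omega$ with $\cos\omega=1-\lambda/2$, one has exactly $R_j=\sin(j\omega)/\sin((j+1)\omega)=\cos\omega-\cot((j+1)\omega)\sin\omega$, and for $\lambda=2C\lambda_j^1$ (so $(j+1)\omega\to\pi\sqrt{C}$) this expands as $R_j=1-\frac{\pi\sqrt{C}\cot(\pi\sqrt{C})}{j+1}+O(j^{-2})$, with no $\log 2$ or $\pi^2/6$ term; a numerical check at $j=100$, $C=1/2$ (where $R_{100}\approx 1.017$, while (\ref{eq-Rj2}) predicts $\approx 0.992$) confirms this. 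The $\log 2$ contributions from the first- and second-order pieces of $\log R_j$ cancel against the expansion of $\log\frac{j}{j+1}+\log(2\lambda_j^j)$; (\ref{eq-Rj2}) appears to carry them forward. So your transmission-matching argument should be trusted as written, and you should not attempt to force it into the displayed series; the two are genuinely different. Similarly, for part (1) the continuum derivation gives $\min\{\pi^2,a^{-1}\}n^{-2}$ (your Robin matching $-1+Ca=0$ yields $C=a^{-1}$, not $a^{-2}$), which is also the order predicted by Theorem \ref{t-bottleneck}; the $a^{-2}$ in the statement does not survive scrutiny. Your step ``$F(C;a,0)=0$ is equivalent to $C=\min\{\pi^2,a^{-2}\}$'' should therefore read $a^{-1}$. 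The rest of the plan -- identifying the root as the spectral gap via the Section 3 type/sign-change machinery, and the two-point reduction for $a=\infty$ -- is correct and is exactly what the paper does.
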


\subsection{Multiple bottlenecks}

In this subsection, we consider paths with multiple bottlenecks. As before, $G_n=(V_n,E_n)$ with $V_n=\{0,1,...,n\}$ and $E_n=\{\{i,i+1\}|i=0,...,n-1\}$. Let $k$ be a positive integer and $x_n=(x_{n,1},...,x_{n,k})$ be a $k$-vector satisfying $x_{n,i}\in V_n$ and $x_{n,1}\ge 1$ and $x_{n,i}<x_{n,i+1}$ for $1\le i<k$. Let $\epsilon_n=(\epsilon_{n,1},...,\epsilon_{n,k})$ be a vector with positive entries and $\nu_n^{x_n}$ be the measure on $E_n$ given by
\begin{equation}\label{eq-kbottleneck}
 \nu_n^{x_n}(i-1,i)=\begin{cases}1/(n+1)&\text{if }i\notin\{x_{n,1},...,x_{n,k}\}\\\epsilon_{n,j}/(n+1)&\text{if }i=x_{n,j},\,1\le j\le k\end{cases}.
\end{equation}

\begin{thm}\label{t-kbottleneck}
Let $G_n=(V_n,E_n)$ be the path on $\{0,...,n\}$. For $0\le k\le n$, let $\pi_n$ be the uniform probability on $V_n$ and $\nu_n^{x_n}$ be the measure on $E_n$ given by \textnormal{(\ref{eq-kbottleneck})}. Then,
\[
 \min\{1/(4n^2),C_{n,1}/2\}\le \lambda^{G_n}_{\pi_n,\nu_n^{x_n}}\le \min\left\{2\left(1-\cos\frac{\pi}{n-k+1}\right),C_{n,2}\right\},
\]
where
\[
 C_{n,1}=\left(\frac{n^2}{4}+\sum_{i=1}^k\min\{x_{n,i},n-x_{n,i}+1\}\left(\frac{1}{\epsilon_{n,i}}-1\right)\right)^{-1}
\]
and
\[
 C_{n,2}=\min_{0\le m_1\le m_2\le n}\left\{\frac{(n+1)\sum\limits_{i=m_1}^{m_2}1/\epsilon_{n,i}}{\sum\limits_{m_1\le i\le j\le m_2}
x_{n,i}(n-x_{n,j}+1)/(\epsilon_{n,i}\epsilon_{n,j})}\right\}.
\]
\end{thm}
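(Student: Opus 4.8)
Write $\lambda=\lambda^{G_n}_{\pi_n,\nu_n^{x_n}}$. The statement packages three inequalities: $\lambda\ge\min\{1/(4n^2),C_{n,1}/2\}$, $\lambda\le2(1-\cos\frac{\pi}{n-k+1})$, and $\lambda\le C_{n,2}$, and the assertion is their conjunction. For the lower bound I would apply Theorem \ref{t-lower} with the balanced index $i_0=\lceil n/2\rceil$, so that $\lambda\ge1/\max\{A,B\}$ where $A,B$ are the two sums in that theorem at $i=i_0$. With $\pi_n\equiv\frac{1}{n+1}$ and $\nu_n^{x_n}$ as in \textnormal{(\ref{eq-kbottleneck})}, the $j$-th term of $A$ equals $j+1$ except when $j+1=x_{n,\ell}$, where it equals $x_{n,\ell}/\epsilon_{n,\ell}$, and similarly for $B$ from the right end. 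Since a bottleneck with $x_{n,\ell}\le\lceil n/2\rceil$ enters $A$ with weight $x_{n,\ell}=\min\{x_{n,\ell},n-x_{n,\ell}+1\}$, and one with $x_{n,\ell}>\lceil n/2\rceil$ enters $B$ with weight $n-x_{n,\ell}+1=\min\{x_{n,\ell},n-x_{n,\ell}+1\}$, adding up gives $A+B=\binom{\lceil n/2\rceil+1}{2}+\binom{\lfloor n/2\rfloor+1}{2}+\bigl(C_{n,1}^{-1}-\tfrac{n^2}{4}\bigr)\le n+C_{n,1}^{-1}$, using $\binom{\lceil n/2\rceil+1}{2}+\binom{\lfloor n/2\rfloor+1}{2}\le(n+1)^2/4\le n^2/4+n$. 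Hence $\lambda\ge1/\max\{A,B\}\ge1/(A+B)\ge(n+C_{n,1}^{-1})^{-1}$, and a short case split ($C_{n,1}\le0$; $C_{n,1}>0$ and $C_{n,1}^{-1}\ge n$; $C_{n,1}>0$ and $C_{n,1}^{-1}<n$) upgrades this to $\lambda\ge\min\{1/(4n^2),C_{n,1}/2\}$.

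For $\lambda\le2(1-\cos\frac{\pi}{n-k+1})$, assume $k\le n-1$ (when $k=n$ the bound is $4$, which dominates the whole spectrum by Gershgorin, at least when $\epsilon_{n,j}\le1$). Cutting $G_n$ along its $n-k$ non-bottleneck edges partitions $V_n$ into $n-k+1$ consecutive blocks $C_0<\dots<C_{n-k}$; every bottleneck edge then lies strictly inside one block, and the surviving edges biject with those of the clean path $G_{n-k}$ carrying $\pi_{n-k}\equiv\nu_{n-k}\equiv1/(n-k+1)$, whose spectral gap is $2(1-\cos\frac{\pi}{n-k+1})$ by the eigenvalue formula recalled before Theorem \ref{t-bottleneck}. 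Let $f_n$ be an eigenvector of $M^{G_{n-k}}_{\pi_{n-k},\nu_{n-k}}$ for that gap and test with $g_n$ on $V_n$ that is constant on each $C_r$ with value $f_n(r)$. Since $g_n$ is flat across every bottleneck edge, $\mathcal{E}_{\nu_n^{x_n}}(g_n,g_n)=\tfrac{n-k+1}{n+1}\mathcal{E}_{\nu_{n-k}}(f_n,f_n)$; since $|C_r|\ge1$, $\mathrm{Var}_{\pi_n}(g_n)=\inf_c\tfrac{1}{n+1}\sum_r|C_r|(f_n(r)-c)^2\ge\tfrac{n-k+1}{n+1}\mathrm{Var}_{\pi_{n-k}}(f_n)$. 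The common factor cancels in the Rayleigh quotient of $g_n$, giving $\lambda\le2(1-\cos\frac{\pi}{n-k+1})$ — the mechanism of the proof of Theorem \ref{t-bottleneck}, now with $k$ flat inserts instead of one.

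For $\lambda\le C_{n,2}$, fix a block $1\le m_1\le m_2\le k$ of consecutive bottlenecks and test with $h=\sum_{i=m_1}^{m_2}\epsilon_{n,i}^{-1}h_i$, where $h_i$ on $V_n$ equals $n-x_{n,i}+1$ on $\{0,\dots,x_{n,i}-1\}$ and $-x_{n,i}$ on $\{x_{n,i},\dots,n\}$, so that $\pi_n(h_i)=0$ and $h_i$ jumps only across the $i$-th bottleneck edge, by $n+1$. The weights $\epsilon_{n,i}^{-1}$ are chosen precisely so that $\mathcal{E}_{\nu_n^{x_n}}(h,h)=\sum_{j=m_1}^{m_2}\bigl(\tfrac{n+1}{\epsilon_{n,j}}\bigr)^2\tfrac{\epsilon_{n,j}}{n+1}=(n+1)\sum_{j=m_1}^{m_2}\epsilon_{n,j}^{-1}$. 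Since $\pi_n(h)=0$, $\mathrm{Var}_{\pi_n}(h)=\pi_n(h^2)=\sum_{i,j}\epsilon_{n,i}^{-1}\epsilon_{n,j}^{-1}\pi_n(h_ih_j)$, and evaluating $\pi_n(h_ih_j)$ by splitting the sum over $\{0,\dots,n\}$ at $x_{n,i\wedge j}$ and $x_{n,i\vee j}$ gives $\pi_n(h_ih_j)=x_{n,i\wedge j}(n-x_{n,i\vee j}+1)$; writing the double sum as its $\{i\le j\}$ part plus its (equal) $\{i>j\}$ part and discarding the non-negative cross terms, $\mathrm{Var}_{\pi_n}(h)\ge\sum_{m_1\le i\le j\le m_2}x_{n,i}(n-x_{n,j}+1)/(\epsilon_{n,i}\epsilon_{n,j})>0$. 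Thus $\lambda\le\mathcal{E}_{\nu_n^{x_n}}(h,h)/\mathrm{Var}_{\pi_n}(h)$ is at most the $(m_1,m_2)$-ratio in the definition of $C_{n,2}$, and minimizing over blocks gives $\lambda\le C_{n,2}$. The step needing the most care is the identity $\pi_n(h_ih_j)=x_{n,i\wedge j}(n-x_{n,i\vee j}+1)$ (a short three-range computation) together with the observation that the discarded cross terms are non-negative; the conceptual point is spotting that the coefficients must be $\epsilon_{n,i}^{-1}$. There is no genuine analytic obstacle here — everything reduces to Theorem \ref{t-lower} and to elementary test functions.
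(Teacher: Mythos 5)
Your proof is correct and takes essentially the same route as the paper's: the collapsed-path test function for the $2(1-\cos\frac{\pi}{n-k+1})$ bound, the step functions $h_i$ (the paper's $g_i$ up to sign) with weights $1/\epsilon_{n,i}$ for $C_{n,2}$, and Theorem \ref{t-lower} with $i_0=\lceil n/2\rceil$ for the lower bound. You are more explicit than the paper about the arithmetic case split yielding $\min\{1/(4n^2),C_{n,1}/2\}$ (the paper only says ``immediate''), and you correctly flag the implicit restriction $k\le n-1$ needed for the cosine bound, which the paper's proof also silently assumes when it invokes a minimizer on $G_{n-k}$.
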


\begin{rem}\label{r-kbottleneck}
Observe that, in Theorem \ref{t-kbottleneck}, $1-\cos\frac{2\pi}{n-k+1}\asymp n^{-2}$ and
\begin{align}
C_{n,2}&\le\min_{1\le j\le k}\left\{\frac{\epsilon_{n,j}}{\min\{x_{n,j},n-x_{n,j}+1\}}\right\}\notag\\
&=\min\left\{\min_{j:x_{n,j}\le \frac{n}{2}}\frac{\epsilon_{n,j}}{x_{n,j}},\min_{j:x_{n,j}>\frac{n}{2}}
\frac{\epsilon_{n,j}}{n-x_{n,j}+1}\right\}.\notag
\end{align}
\end{rem}

\begin{proof}[Proof of Theorem \ref{t-kbottleneck}]
We first prove the upper bound. Let $f_1$ be a function on $\{0,1,...,n\}$ satisfying $f(x_{n,j}-1)=f(x_{n,j})$ for $1\le i\le k$ and $f_2$ be a function on $\{0,...,n-k\}$ obtained by identifying points $x_{n,i}-1$ and $x_{n,i}$ for $1\le i\le k$. By setting $f_2$ as a minimizer for $\lambda^{G_{n-k}}_{\pi_{n-k},\nu_{n-k}}$ with $\pi_n(f_1)=0$, we obtain
\begin{align}
 2\left(1-\cos\frac{2\pi}{n-k+1}\right)&=\frac{\mathcal{E}_{\nu_{n-k}}(f_2,f_2)}{\text{Var}_{\pi_{n-k}}(f_2)}
\ge\frac{\mathcal{E}_{\nu_{n-k}}(f_2,f_2)}{\pi_{n-k}(f_2^2)}\notag\\
&\ge\frac{\mathcal{E}_{\nu_n}(f_1,f_1)}{\pi_n(f_1^2)}=\frac{\mathcal{E}_{\nu_n}(f_1,f_1)}{\text{Var}_{\pi_n}(f_1)}.\notag
\end{align}
To see the other upper bound, let $f_j$ be the function on $V_n$ satisfying $g_j(i)=-(n-x_{n,j}+1)$ for $0\le i\le x_{n,j}-1$ and $g_j(i)=x_{n,j}$ for $x_{n,j}\le i\le n$. Computations  show that $\pi_n(g_j)=0$, $\pi_n(g_ig_j)=x_{n,i}(n-x_{n,j}+1)$ for $i\le j$, and $\mathcal{E}_{\nu_n}(g_j,g_j)=\epsilon_{n,j}(n+1)$. Set $g=\sum_{j=1}^ka_jg_j$. As a consequence of the above discussion, we obtain
\[
 \frac{\mathcal{E}_{\nu_n}(g,g)}{\textnormal{Var}_{\pi_n}(g)}=\frac{(n+1)\sum_{i=1}^ka_i^2\epsilon_{n,i}}
{2\sum_{i<j}a_ia_jx_{n,i}(n-x_{n,j}+1)+\sum_{i=1}^ka_i^2x_{n,i}(n-x_{n,i}+1)}.
\]
Taking $a_i=1/\epsilon_{n,i}$ for $m_1\le i\le m_2$ and $a_i=0$ otherwise gives the bound $C_{n,2}$.

The lower bound is immediate from Theorem \ref{t-lower} and Remark \ref{r-lower}.
\end{proof}

Finally, we discuss some special cases illustrating Theorem \ref{t-kbottleneck}.

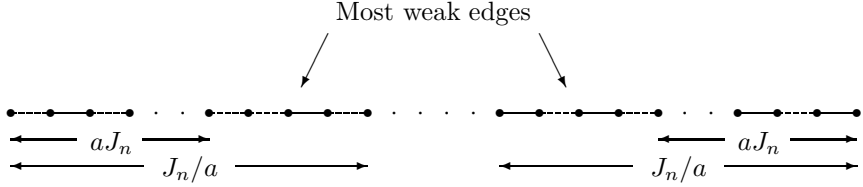
\begin{figure}[h]

\begin{center}
\caption{The dashed lines denote the weak edges of $\nu$ in Theorem \ref{t-unifp}. \label{fig2}}

\begin{picture}(160,80)(-10,-20)

\multiput(20,0)(275,0){2}{\multiput(-115,10)(15,0){4}{\circle*{3}}}
\multiput(20,0)(200,0){2}{\multiput(-60,10)(10,0){2}{\circle*{1}}}
\multiput(10,0)(110,0){2}{\multiput(-30,10)(15,0){5}{\circle*{3}}}
\multiput(50,10)(10,0){4}{\circle*{1}}

\put(10,0){\multiput(-105,10)(3,0){5}{\line(1,0){2}}
\put(-90,10){\line(1,0){15}}
\multiput(-75,10)(3,0){5}{\line(1,0){2}}}

\multiput(-20,10)(3,0){5}{\line(1,0){2}}
\multiput(-5,10)(3,0){5}{\line(1,0){2}}
\put(10,10){\line(1,0){15}}
\multiput(25,10)(3,0){5}{\line(1,0){2}}

\put(90,10){\line(1,0){15}}
\multiput(105,10)(3,0){5}{\line(1,0){2}}
\put(120,10){\line(1,0){15}}
\multiput(135,10)(3,0){5}{\line(1,0){2}}

\put(-10,0){\put(190,10){\line(1,0){15}}
\multiput(205,10)(3,0){5}{\line(1,0){2}}
\put(220,10){\line(1,0){15}}}

\multiput(20,0)(245,0){2}{\put(-90,0){\vector(-1,0){25}}\put(-65,0){\vector(1,0){25}}\put(-85,-4){$aJ_n$}}

\multiput(20,0)(185,0){2}{\put(-65,-10){\vector(-1,0){50}}\put(-30,-10){\vector(1,0){50}}\put(-58,-14){$J_n/a$}}

\put(25,40){\vector(-1,-2){10}}
\put(105,40){\vector(1,-2){10}}
\put(28,45){Most weak edges}

\end{picture}

\end{center}
\end{figure}

\begin{thm}\label{t-unifp}
For $n\ge 1$, let $\pi_n\equiv 1/(n+1)$ and $\nu_n$ be the measure in \textnormal{(\ref{eq-kbottleneck})} with $k_n$ bottlenecks satisfying $n-k_n\asymp n$. Suppose there are $I_n\subset\{1,...,k_n\}$, $a\in(0,1)$ and $J_n>0$ such that $|I_n|$ is bounded and, for $i\notin I_n$, $aJ_n\le \min\{x_{n,i},n-x_{n,i}+1\}\le J_n/a$. Then,
\[
 \lambda^{G_n}_{\pi_n,\nu_n}\asymp\min\left\{\frac{1}{n^2},\min_{i\in I_n}\frac{\epsilon_{n,i}}{\min\{x_{n,i},n-x_{n,i}+1\}},
\frac{\left(\sum_{i=1,i\notin I_n}^{k_n}1/\epsilon_{n,i}\right)^{-1}}{J_n}\right\}.
\]
\end{thm}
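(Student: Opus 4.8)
The plan is to derive Theorem~\ref{t-unifp} from Theorem~\ref{t-kbottleneck} by showing that, under the stated hypotheses, the three-term minimum appearing in Theorem~\ref{t-unifp} is comparable (up to universal multiplicative constants) to the bounds $\min\{1/(4n^2),C_{n,1}/2\}$ and $\min\{2(1-\cos\frac{\pi}{n-k+1}),C_{n,2}\}$ furnished by Theorem~\ref{t-kbottleneck}. Since $n-k_n\asymp n$ we have $1-\cos\frac{\pi}{n-k_n+1}\asymp n^{-2}$, so the first entries of both bounds are $\asymp n^{-2}$ and contribute the term $1/n^2$ in the claimed expression. The work is therefore to show $C_{n,1}\asymp C_{n,2}\asymp \min\{1/n^2,\ \min_{i\in I_n}\epsilon_{n,i}/m_{n,i},\ (J_n\sum_{i\notin I_n}1/\epsilon_{n,i})^{-1}\}$ up to constants, where I abbreviate $m_{n,i}=\min\{x_{n,i},n-x_{n,i}+1\}$. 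Actually one only needs the combined two-sided squeeze: $\min\{1/(4n^2),C_{n,1}/2\}\le\lambda^{G_n}_{\pi_n,\nu_n}\le\min\{2(1-\cos\tfrac{\pi}{n-k+1}),C_{n,2}\}$, so it suffices to bound $C_{n,1}$ from below and $C_{n,2}$ from above by (constant multiples of) the target quantity, and then to bound the target quantity from below by a constant multiple of $\min\{1/n^2,C_{n,1}\}$.

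First I would handle $C_{n,1}$. By definition $1/C_{n,1}=n^2/4+\sum_{i=1}^{k_n}m_{n,i}(1/\epsilon_{n,i}-1)$. Split the sum over $i\in I_n$ and $i\notin I_n$. For $i\notin I_n$, the hypothesis $aJ_n\le m_{n,i}\le J_n/a$ gives $\sum_{i\notin I_n}m_{n,i}(1/\epsilon_{n,i}-1)\le (J_n/a)\sum_{i\notin I_n}1/\epsilon_{n,i}$, and a matching lower bound $\ge aJ_n\sum_{i\notin I_n}(1/\epsilon_{n,i}-1)$; since $1/\epsilon_{n,i}-1$ may be small one absorbs the $-1$ by noting that if $\epsilon_{n,i}$ is bounded below the whole term is $O(n^2)$ and harmless, while if some $\epsilon_{n,i}$ is tiny then $1/\epsilon_{n,i}-1\asymp 1/\epsilon_{n,i}$. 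For $i\in I_n$, since $|I_n|$ is bounded, $\sum_{i\in I_n}m_{n,i}/\epsilon_{n,i}\asymp \max_{i\in I_n}m_{n,i}/\epsilon_{n,i}=1/\min_{i\in I_n}(\epsilon_{n,i}/m_{n,i})$, again up to the $O(n^2)$ error from the $-1$ terms. Assembling, $1/C_{n,1}\asymp n^2+J_n\sum_{i\notin I_n}1/\epsilon_{n,i}+\max_{i\in I_n}m_{n,i}/\epsilon_{n,i}$, i.e.\ $C_{n,1}\asymp\min\{1/n^2,\ (J_n\sum_{i\notin I_n}1/\epsilon_{n,i})^{-1},\ \min_{i\in I_n}\epsilon_{n,i}/m_{n,i}\}$, which together with $1/(4n^2)\asymp n^{-2}$ gives the lower bound in Theorem~\ref{t-unifp}.

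For the upper bound I would exhibit a good choice of the index pair $(m_1,m_2)$ in the definition of $C_{n,2}$. Three competing choices are natural: (i) taking $m_1=m_2=i$ for a single $i$, which by Remark~\ref{r-kbottleneck} gives $C_{n,2}\le \epsilon_{n,i}/m_{n,i}$; applying this with the minimizing $i\in I_n$ yields the second term. (ii) Taking $m_1=\min(\{1,\dots,k_n\}\setminus I_n)$, $m_2=\max(\{1,\dots,k_n\}\setminus I_n)$ but with the test-function weights $a_i$ set to $1/\epsilon_{n,i}$ only for $i\notin I_n$ and $a_i=0$ for $i\in I_n$ (as in the proof of Theorem~\ref{t-kbottleneck}); then the numerator is $(n+1)\sum_{i\notin I_n}1/\epsilon_{n,i}$ and the denominator is $\sum_{i\le j,\ i,j\notin I_n}x_{n,i}(n-x_{n,j}+1)/(\epsilon_{n,i}\epsilon_{n,j})$, which one bounds below by keeping only diagonal terms $\sum_{i\notin I_n}x_{n,i}(n-x_{n,i}+1)/\epsilon_{n,i}^2\ge (aJ_n)n\sum_{i\notin I_n}1/\epsilon_{n,i}^2$ — hmm, this gives an extra $\sum 1/\epsilon_{n,i}^2$ rather than $(\sum 1/\epsilon_{n,i})^2$, so instead I should keep \emph{all} cross terms and use $x_{n,i}(n-x_{n,j}+1)\ge m_{n,i}\cdot(n-k_n)/c$ style bounds; more carefully, for $i\le j$ both $x_{n,i}\ge aJ_n$ contributes when $x_{n,i}\le n/2$, and one uses the structure that the $x_{n,i}$ are increasing so that for at least half the pairs $x_{n,i}\lesssim n/2\lesssim n-x_{n,j}+1$, giving $x_{n,i}(n-x_{n,j}+1)\gtrsim J_n\cdot n$ and hence the denominator $\gtrsim J_n n(\sum_{i\notin I_n}1/\epsilon_{n,i})^2$; this produces $C_{n,2}\lesssim (J_n\sum_{i\notin I_n}1/\epsilon_{n,i})^{-1}$. (iii) The first entry $2(1-\cos\frac{\pi}{n-k_n+1})\asymp n^{-2}$ supplies the $1/n^2$ term. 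Taking the minimum over (i)--(iii) gives the upper bound matching the claimed expression. I expect the main obstacle to be exactly the combinatorial estimate in step (ii): controlling the double sum $\sum_{m_1\le i\le j\le m_2}x_{n,i}(n-x_{n,j}+1)/(\epsilon_{n,i}\epsilon_{n,j})$ from below by $\asymp J_n n(\sum 1/\epsilon_{n,i})^2$ requires using that the bottlenecks with $i\notin I_n$ all sit at ``macroscopic'' distance $\asymp J_n$ from both endpoints, so that the quadratic form does not degenerate; the bounded set $I_n$ must be excised because those bottlenecks could be at distance $o(J_n)$ or have anomalously small $\epsilon$, and their effect is instead captured by the single-index bound (i). Once this lower bound on the double sum is in hand, the rest is bookkeeping with the $\asymp$ relations.
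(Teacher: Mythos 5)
Your overall strategy---derive Theorem~\ref{t-unifp} directly from the two-sided bound of Theorem~\ref{t-kbottleneck} by estimating $C_{n,1}$ and $C_{n,2}$---is exactly the paper's route, and your lower-bound analysis is essentially complete (for the lower bound on $\lambda^{G_n}_{\pi_n,\nu_n}$ one only needs the upper bound $1/C_{n,1}\le n^2/4+\sum m_{n,i}/\epsilon_{n,i}$, obtained by dropping the $-1$'s, and then the $I_n$ versus $I_n^c$ split with $|I_n|=O(1)$ and $m_{n,i}\asymp J_n$ for $i\notin I_n$). The genuine gap is where you flag it, in step~(ii). The heuristic ``for at least half the pairs $x_{n,i}\lesssim n/2\lesssim n-x_{n,j}+1$'' does not by itself give the claimed bound on the double sum, because the sum is weighted by $1/(\epsilon_{n,i}\epsilon_{n,j})$: for a cross-pair $(i,j)$ with $x_{n,i}\le n/2<x_{n,j}$ one only gets $x_{n,i}(n-x_{n,j}+1)\asymp J_n^2$, which can be much smaller than $J_nn$ when $J_n=o(n)$, and nothing stops those bad pairs from carrying most of the weight (take one anomalously small $\epsilon$ near each end). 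A pure pair-counting argument therefore cannot close the estimate, and neither can the diagonal-only bound you first wrote, since $\sum 1/\epsilon_{n,i}^2\ge(\sum 1/\epsilon_{n,i})^2/K$ loses a factor of the number of bottlenecks.

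The correct closing step is to pass to same-side pairs only. Let $\ell$ be the largest index with $x_{n,\ell}\le n/2$ and write $S_L,S_R$ for the portions of $S=\sum_{i\notin I_n}1/\epsilon_{n,i}$ coming from $\{1,\dots,\ell\}$ and $\{\ell+1,\dots,k_n\}$. For every same-side pair $i\le j$ one has $x_{n,i}(n-x_{n,j}+1)\ge aJ_n(n+1)/2$: on the left because $x_{n,i}\ge aJ_n$ and $n-x_{n,j}+1\ge(n+1)/2$, and on the right because $x_{n,i}>n/2$ and $n-x_{n,j}+1\ge aJ_n$. Hence the same-side contribution to the denominator is already at least $\frac{aJ_nn}{2}\cdot\frac{S_L^2+S_R^2}{2}\ge\frac{aJ_nn}{8}S^2$, using $S_L^2+S_R^2\ge(S_L+S_R)^2/2$, which yields $C_{n,2}\lesssim 1/(J_nS)$. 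This is precisely the content of the paper's (very terse) proof, which phrases the same split as ``the minimum of $C_{n,2}$ over all connected components of $\{1,\dots,\ell\}\setminus I_n$ and $\{\ell+1,\dots,k_n\}\setminus I_n$'': since $|I_n|$ is bounded there are $O(1)$ such components, one of which carries $S_{\mathrm{comp}}\gtrsim S$ and hence gives $C_{n,2}\lesssim 1/(J_nS_{\mathrm{comp}})\lesssim 1/(J_nS)$, all while staying within the contiguous-interval form of $C_{n,2}$ rather than zeroing out coefficients inside a larger interval as you propose (both are legitimate, as the underlying Rayleigh-quotient test in the proof of Theorem~\ref{t-kbottleneck} allows arbitrary coefficients).
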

\begin{proof}
It is easy to get the lower bound from Theorem \ref{t-kbottleneck}, while the upper bound is the minimum of $C_{n,2}$ over all connected components of $\{1,...,\ell\}\setminus I_n$ and $\{\ell+1,...,k_n\}\setminus I_n$.
\end{proof}

See Figure \ref{fig2} for a reference on the bottlenecks. The following are immediate corollaries of Theorems \ref{t-kbottleneck}-\ref{t-unifp}.

\begin{cor}[Finitely many bottlenecks]\label{c-finitebtnk}
Referring to {\em Theorem \ref{t-unifp}}, if $k_n$ is bounded, then
\[
 \lambda^{G_n}_{\pi_n,\nu_n}\asymp\min\left\{\frac{1}{n^2},\min_{1\le i\le k_n}\frac{\epsilon_{n,i}}{\min\{x_{n,i},n-x_{n,i}+i\}}\right\}.
\]
\end{cor}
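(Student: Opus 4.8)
The plan is to obtain Corollary~\ref{c-finitebtnk} as a degenerate special case of Theorem~\ref{t-unifp}. I would apply that theorem with the full index set $I_n=\{1,\dots,k_n\}$: since $k_n$ is bounded, the requirement that $|I_n|$ be bounded is automatically satisfied and $n-k_n\asymp n$ holds as well, while the conditions imposed on the indices outside $I_n$ become vacuous, so one is free to choose, say, $a=\tfrac12$ and $J_n\equiv 1$. With this choice the sum $\sum_{i=1,\,i\notin I_n}^{k_n}1/\epsilon_{n,i}$ is empty, its reciprocal is $+\infty$, and the third entry inside the minimum in the conclusion of Theorem~\ref{t-unifp} drops out. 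What remains is precisely
\[
 \lambda^{G_n}_{\pi_n,\nu_n}\asymp\min\left\{\frac{1}{n^2},\ \min_{1\le i\le k_n}\frac{\epsilon_{n,i}}{\min\{x_{n,i},n-x_{n,i}+1\}}\right\},
\]
which is the asserted estimate (with $n-x_{n,i}+1$ standing in the denominator).

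Should one prefer a self-contained argument, I would instead read the two sides of the $\asymp$ directly off Theorem~\ref{t-kbottleneck} together with Remark~\ref{r-kbottleneck}. For the upper bound, boundedness of $k_n$ gives $2\bigl(1-\cos\tfrac{\pi}{n-k_n+1}\bigr)\asymp n^{-2}$, and Remark~\ref{r-kbottleneck} supplies $C_{n,2}\le\min_{1\le j\le k_n}\epsilon_{n,j}/\min\{x_{n,j},n-x_{n,j}+1\}$; together these bound $\lambda^{G_n}_{\pi_n,\nu_n}$ above by a constant multiple of the claimed quantity. For the lower bound I would estimate
\[
 C_{n,1}^{-1}=\frac{n^2}{4}+\sum_{i=1}^{k_n}\min\{x_{n,i},n-x_{n,i}+1\}\Bigl(\frac{1}{\epsilon_{n,i}}-1\Bigr)\le\frac{n^2}{4}+k_n\max_{1\le i\le k_n}\frac{\min\{x_{n,i},n-x_{n,i}+1\}}{\epsilon_{n,i}},
\]
which, since $k_n$ is bounded, is at most a constant times $\max\bigl\{n^2,\ \max_i\min\{x_{n,i},n-x_{n,i}+1\}/\epsilon_{n,i}\bigr\}$; taking reciprocals shows that $C_{n,1}$, hence $\min\{1/(4n^2),C_{n,1}/2\}$, is bounded below by a constant multiple of the claimed quantity, and Theorem~\ref{t-kbottleneck} then finishes the proof.

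I do not expect a genuine obstacle here; the corollary is essentially bookkeeping. The only points that call for a moment's attention are the conventions for empty sums and reciprocals when one specializes Theorem~\ref{t-unifp} to $I_n^c=\emptyset$, and, in the alternative route, the elementary facts that a bounded factor $k_n$ may be absorbed into the implied constants of $\asymp$ and that $k_n$ bounded forces $n-k_n\asymp n$. It is also worth recalling that the $\epsilon_{n,i}$ are bottleneck weights with $\epsilon_{n,i}\le 1$, so every summand in $C_{n,1}^{-1}$ is nonnegative and the reciprocal manipulation above is legitimate.
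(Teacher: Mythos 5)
Your first route—specializing Theorem \ref{t-unifp} with $I_n=\{1,\dots,k_n\}$, noting that $k_n$ bounded gives $n-k_n\asymp n$ and that the empty sum over $I_n^c$ makes the third term in the minimum infinite—is exactly what the paper means by calling this an ``immediate corollary,'' and you correctly read $n-x_{n,i}+1$ in place of the typo $n-x_{n,i}+i$ in the stated denominator. The self-contained alternative via Theorem \ref{t-kbottleneck} and Remark \ref{r-kbottleneck} is also sound; the only loose thread there (the sign of $1/\epsilon_{n,i}-1$) is, as you note, harmless because $(1/\epsilon-1)\le 1/\epsilon$ always, and in any case $C_{n,1}^{-1}\ge n^2/4-k_n\cdot n>0$ for large $n$ even without assuming $\epsilon_{n,i}\le 1$.
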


\begin{cor}[Bottlenecks far away the boundary]\label{c-farawaybtnk}
Referring to {\em Theorem \ref{t-unifp}}, if $n-k_n\asymp n$ and there are $a\in(0,1)$ and $J_n>0$ such that $aJ_n<\min\{x_{n,i},n-x_{n,i}+1\}<J_n/a$ for $1\le i\le k_n$, then
\[
 \lambda^{G_n}_{\pi_n,\nu_n}\asymp\min\left\{\frac{1}{n^2},\frac{\left(\sum_{j=1}^{k_n}1/\epsilon_{n,i}\right)^{-1}}{J_n}\right\}.
\]
\end{cor}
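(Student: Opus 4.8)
The plan is to obtain Corollary \ref{c-farawaybtnk} as the special case of Theorem \ref{t-unifp} in which the exceptional set is empty. Under the hypotheses of the corollary, take $I_n=\emptyset$. Then $|I_n|=0$ is bounded, the assumption $n-k_n\asymp n$ is exactly the one built into the setting of Theorem \ref{t-unifp}, and for every index $i\in\{1,\dots,k_n\}=\{1,\dots,k_n\}\setminus I_n$ the strict bounds $aJ_n<\min\{x_{n,i},n-x_{n,i}+1\}<J_n/a$ imply the non-strict sandwich $aJ_n\le\min\{x_{n,i},n-x_{n,i}+1\}\le J_n/a$ demanded there. Hence all hypotheses of Theorem \ref{t-unifp} are met with this choice of $I_n$.

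Next I would specialize the conclusion of Theorem \ref{t-unifp}. In the displayed estimate the middle term $\min_{i\in I_n}\epsilon_{n,i}/\min\{x_{n,i},n-x_{n,i}+1\}$ is a minimum over the empty index set, hence equals $+\infty$, and therefore may be deleted from the outer minimum without changing its value. In the last term, the summation $\sum_{i=1,\,i\notin I_n}^{k_n}1/\epsilon_{n,i}$ now runs over every bottleneck, so it equals $\sum_{j=1}^{k_n}1/\epsilon_{n,j}$. What remains is precisely
\[
 \lambda^{G_n}_{\pi_n,\nu_n}\asymp\min\left\{\frac{1}{n^2},\frac{\left(\sum_{j=1}^{k_n}1/\epsilon_{n,j}\right)^{-1}}{J_n}\right\},
\]
which is the assertion of the corollary.

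Since the proof is a direct specialization, there is essentially no obstacle; the only thing to keep straight is the interpretation of the two degenerate expressions in Theorem \ref{t-unifp} (an empty minimum read as $+\infty$, an unrestricted sum appearing in the denominator). As a sanity check I would note that the companion statement, Corollary \ref{c-finitebtnk}, comes out of the same theorem via the opposite choice $I_n=\{1,\dots,k_n\}$: then $(\sum_{i\notin I_n}1/\epsilon_{n,i})^{-1}$ is the reciprocal of an empty sum, i.e.\ $+\infty$, and drops out, leaving $\min\{n^{-2},\min_{1\le i\le k_n}\epsilon_{n,i}/\min\{x_{n,i},n-x_{n,i}+1\}\}$. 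If one preferred a self-contained argument not invoking Theorem \ref{t-unifp}, the same bound could be read off directly from Theorem \ref{t-kbottleneck} and Remark \ref{r-kbottleneck}, bounding $C_{n,1}$ from below using $\min\{x_{n,i},n-x_{n,i}+1\}\le J_n/a$ for the lower estimate and choosing $m_1=1$, $m_2=k_n$ in $C_{n,2}$ together with $aJ_n\le\min\{x_{n,i},n-x_{n,i}+1\}$ for the upper one.
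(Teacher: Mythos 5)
Your proof is correct and is exactly what the paper intends: the paper simply labels Corollaries \ref{c-finitebtnk}--\ref{c-unifgap} as immediate consequences of Theorems \ref{t-kbottleneck}--\ref{t-unifp}, and your specialization $I_n=\emptyset$ (reading the empty minimum as $+\infty$) is the direct route. The sanity check via $I_n=\{1,\dots,k_n\}$ and the alternative reading from Theorem \ref{t-kbottleneck} are consistent and correct as well.
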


\begin{cor}[Uniformly distributed bottlenecks]\label{c-unifgap}
Referring to {\em Theorem \ref{t-unifp}}, if $\min_i\epsilon_{n,i}\asymp \max_i\epsilon_{n,i}$ and $x_{n,i}=\lfloor in/k_n\rfloor$ with $k_n\le n/2$, then
\[
 \lambda^{G_n}_{\pi_n,\nu_n}\asymp\min\left\{\frac{1}{n^2},\frac{\epsilon_{n,1}}{nk_n}\right\}.
\]
\end{cor}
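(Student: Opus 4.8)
The plan is to deduce this directly from Theorem~\ref{t-kbottleneck} rather than Theorem~\ref{t-unifp}: for uniformly spread bottlenecks the quantities $\min\{x_{n,i},n-x_{n,i}+1\}$ range over a window as wide as $[n/k_n,\,n/2]$, so no \emph{bounded} exceptional set $I_n$ makes the remaining positions pairwise comparable, and Theorem~\ref{t-unifp} does not apply. After disposing of the easy case in which $\epsilon_{n,1}$ stays bounded below (then both sides of the asserted relation are $\asymp n^{-2}$: the upper bound from $2(1-\cos\frac{\pi}{n-k_n+1})\asymp n^{-2}$, using $k_n\le n/2$ so $n-k_n\asymp n$, and the lower bound from Theorem~\ref{t-lower} with $i=\lceil n/2\rceil$, all conductances being $\asymp 1/(n+1)$), we may assume $\epsilon_{n,i}\le 1/2$, so that $1/\epsilon_{n,i}-1\asymp 1/\epsilon_n$, where $\epsilon_n$ denotes a common order of the $\epsilon_{n,i}$.

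The only elementary input is the asymptotics of the positions: since $k_n\le n/2$ gives $n/k_n\ge 2$, the floor costs only a bounded factor, so $x_{n,i}\asymp in/k_n$, $n-x_{n,i}+1\asymp (k_n-i+1)n/k_n$, and $\min\{x_{n,i},n-x_{n,i}+1\}\asymp (n/k_n)\min\{i,k_n-i+1\}$ for $1\le i\le k_n$. Summing gives $\sum_{i=1}^{k_n}\min\{x_{n,i},n-x_{n,i}+1\}\asymp (n/k_n)\sum_{i=1}^{k_n}\min\{i,k_n-i+1\}\asymp (n/k_n)k_n^2=nk_n$. Hence the denominator of $C_{n,1}$ is $\asymp n^2+nk_n/\epsilon_n$, so $C_{n,1}\asymp\min\{n^{-2},\epsilon_n/(nk_n)\}$; and because $\epsilon_{n,i}\le 1$ makes the sum in $C_{n,1}$ nonnegative, $C_{n,1}\le 4/n^2$, whence the lower bound $\min\{1/(4n^2),C_{n,1}/2\}$ of Theorem~\ref{t-kbottleneck} is $\asymp\min\{n^{-2},\epsilon_n/(nk_n)\}$.

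For the matching upper bound I would bound $C_{n,2}$ by its value at $m_1=1$, $m_2=k_n$. The numerator there is $(n+1)\sum_{i=1}^{k_n}1/\epsilon_{n,i}\asymp nk_n/\epsilon_n$, while the denominator is $\asymp\epsilon_n^{-2}\sum_{1\le i\le j\le k_n}x_{n,i}(n-x_{n,j}+1)\asymp (n^2/k_n^2)\epsilon_n^{-2}\sum_{1\le i\le j\le k_n}i(k_n-j+1)\asymp (n^2/k_n^2)\epsilon_n^{-2}k_n^4=n^2k_n^2/\epsilon_n^2$. Therefore $C_{n,2}\lesssim (nk_n/\epsilon_n)/(n^2k_n^2/\epsilon_n^2)=\epsilon_n/(nk_n)$, and since also $2(1-\cos\frac{\pi}{n-k_n+1})\asymp n^{-2}$, Theorem~\ref{t-kbottleneck} gives $\lambda^{G_n}_{\pi_n,\nu_n}\lesssim\min\{n^{-2},\epsilon_n/(nk_n)\}$. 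Combined with the lower bound of the previous paragraph, $\lambda^{G_n}_{\pi_n,\nu_n}\asymp\min\{n^{-2},\epsilon_{n,1}/(nk_n)\}$.

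I expect the one delicate point to be the denominator of $C_{n,2}$: one must evaluate the triangular sum $\sum_{1\le i\le j\le k_n}i(k_n-j+1)$ and, more importantly, recognize that it is genuinely of order $k_n^4$ rather than smaller --- i.e.\ that the pairs with $i\asymp k_n$ and $k_n-j\asymp k_n$ (a left-half bottleneck against a right-half bottleneck) already carry the full order --- which is precisely what makes the test function $g=\sum_i\epsilon_{n,i}^{-1}g_i$ from the proof of Theorem~\ref{t-kbottleneck} optimal up to constants. Everything else is routine bookkeeping with the floor function and the inequality $(a+b)^{-1}\asymp\min\{a^{-1},b^{-1}\}$.
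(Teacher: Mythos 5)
Your route directly through Theorem~\ref{t-kbottleneck} is correct, and the proof is essentially complete. You have also spotted a genuine wrinkle in the paper's exposition: the paper calls this corollary ``immediate'' from Theorems~\ref{t-kbottleneck}--\ref{t-unifp}, but the hypotheses of Theorem~\ref{t-unifp} --- a \emph{bounded} exceptional set $I_n$ and a fixed comparability constant $a$ for the positions $\min\{x_{n,i},n-x_{n,i}+1\}$ outside $I_n$ --- cannot be met for equispaced bottlenecks once $k_n\to\infty$, since those minima range from $1$ up to order $n$. So when $k_n$ is unbounded the corollary really must be extracted directly from the bounds $C_{n,1},C_{n,2}$ of Theorem~\ref{t-kbottleneck}, and your calculation of those quantities is what actually carries the argument. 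The central part --- evaluating the double sum at $m_1=1$, $m_2=k_n$ and showing the denominator is of full order $n^2k_n^2/\epsilon_n^2$, so that this test function is sharp --- is exactly right.

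Two small points to tidy up, neither of which changes the conclusion. First, the pointwise claims $n-x_{n,i}+1\asymp(k_n-i+1)n/k_n$ and $\min\{x_{n,i},n-x_{n,i}+1\}\asymp(n/k_n)\min\{i,k_n-i+1\}$ fail at $i=k_n$: there $x_{n,k_n}=\lfloor k_n\cdot n/k_n\rfloor=n$, so the left-hand side equals $1$ while the right-hand side equals $n/k_n$; the window of values is thus really $[1,\,cn]$, not $[n/k_n,\,n/2]$. This single endpoint term contributes only $1$ to $\sum_i\min\{x_{n,i},n-x_{n,i}+1\}$ (total $\asymp nk_n$) and only $\sum_i x_{n,i}\asymp nk_n$ to $\sum_{i\le j}x_{n,i}(n-x_{n,j}+1)$ (total $\asymp n^2k_n^2$), so your estimates for $C_{n,1}$ and $C_{n,2}$ are unaffected; it is worth saying so explicitly rather than asserting the $\asymp$ uniformly over $i$. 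Second, in the ``easy case'' the phrase ``all conductances being $\asymp 1/(n+1)$'' should be ``$\gtrsim 1/(n+1)$'': $\epsilon_{n,i}$ bounded below does not cap it above, but Theorem~\ref{t-lower} only needs the one-sided bound to give $\lambda\gtrsim n^{-2}$. (It would also be cleaner to phrase the case split per~$n$ --- for each $n$ either $\epsilon_{n,1}\ge 1/2$ or $\epsilon_{n,1}<1/2$ --- rather than as a statement about the whole sequence.)
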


\begin{rem}
Note that the assumption of the uniformity of $\pi$ and $\nu$,
except at the bottlenecks, can be relaxed by using a comparison argument.
\end{rem}

\appendix

\section{Techniques and proofs}
We start with an elementary lemma.

\begin{lem}\label{l-conv}
Let $a>0$ and $f:[a,\infty)\ra\mathbb{R}$ be a continuous function satisfying $f(a)=a$ and $f(x)\in[a,x)$ for $x>a$. For $b>a$, set $C_b=\sup_{a\le x\le b}\{(f(x)-a)/(x-a)\}$. Then, $C_b<1$ and $a\le f^n(b)\le a+C_b^n(b-a)$ for $n\ge 0$. Moreover, if $f$ is bounded on $[a,\infty)$, then $a\le f^n(x)\le a+C^n(x-a)$ for $n\ge 0$ and $x\ge a$ with $C=\sup_{a\le t<\infty}\{(f(t)-a)/(t-a)\}<1$.
\end{lem}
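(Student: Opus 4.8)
The plan is to turn the two constraints $f(x)\ge a$ and $f(x)<x$ into a one-step contraction for the deviation $f(x)-a$, and then iterate on a forward-invariant interval. Writing $g(x)=(f(x)-a)/(x-a)$ for $x>a$, the hypothesis $f(x)\in[a,x)$ says precisely that $g(x)\in[0,1)$, so $f(x)-a=g(x)(x-a)\le C_b(x-a)$ for all $x\in[a,b]$ (the case $x=a$ being trivial, both sides equal to $0$). The second ingredient is that $[a,b]$ is mapped into itself by $f$: $f(a)=a$, and for $x\in(a,b]$ we have $f(x)\in[a,x)\subseteq[a,b)$, so $f([a,b])\subseteq[a,b]$. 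Granting the contraction estimate and the invariance, the bound $a\le f^n(b)\le a+C_b^n(b-a)$ follows by an easy induction on $n$: the case $n=0$ is the identity $b-a=C_b^0(b-a)$, and for the inductive step one applies the contraction estimate at the point $f^n(b)$, which lies in $[a,b]$ by the induction hypothesis, to get $f^{n+1}(b)-a\le C_b\,(f^n(b)-a)\le C_b^{\,n+1}(b-a)$, while $f^{n+1}(b)\ge a$ is immediate from invariance. This part is routine.

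The step I expect to be the main obstacle is the \emph{strict} inequality $C_b<1$, since pointwise we only know $g(x)<1$ and must upgrade this to a uniform bound, which needs compactness. The plan is: $g$ is continuous on $(a,b]$ and $C_b\le 1$ automatically; if $C_b$ were equal to $1$ we could pick $x_k\in(a,b]$ with $g(x_k)\to1$ and, passing to a subsequence, $x_k\to x^*\in[a,b]$. If $x^*\in(a,b]$, continuity of $f$ forces $f(x^*)=x^*$, contradicting $f(x^*)<x^*$. The one point that genuinely needs care is $x^*=a$, i.e.\ the behaviour of the ratio $g$ as $x\downarrow a$: here one must rule out $\limsup_{x\downarrow a}g(x)=1$, using the continuity and the local behaviour of $f$ at $a$. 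Once $\ell:=\limsup_{x\downarrow a}g(x)<1$ is established, $g$ extends (set $g(a):=\ell$) to an upper semicontinuous function on the compact set $[a,b]$ whose values all lie in $[0,1)$; hence its supremum $C_b$ is attained and is strictly less than $1$.

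For the ``moreover'' statement the same machine applies once $C:=\sup_{x>a}g(x)<1$ is known, the only new feature being the behaviour at $+\infty$, which is exactly what boundedness of $f$ supplies: if $|f|\le R$ on $[a,\infty)$ then $g(x)\le (R-a)/(x-a)\to 0$ as $x\to\infty$, so $g$ is bounded away from $1$ outside a bounded interval $[a,N]$; on $[a+\delta,N]$ the compactness argument of the previous paragraph applies, and near $a$ one again uses $\limsup_{x\downarrow a}g(x)<1$. With $C<1$ in hand, for any $x\ge a$ the interval $[a,x]$ is forward-invariant under $f$ by the same reasoning as for $[a,b]$, and $f(y)-a\le C(y-a)$ holds on it, so the identical induction gives $a\le f^n(x)\le a+C^n(x-a)$ for all $n\ge 0$. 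In short, everything reduces to the single nontrivial estimate $C_b<1$ (resp.\ $C<1$); the rest is invariance of an interval followed by a one-line induction.
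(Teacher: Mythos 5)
The paper gives no proof of this lemma (it is stated as ``elementary''), so the only question is whether your argument actually closes. The routine parts are fine: $[a,b]$ is $f$-invariant, $f(y)-a\le C_b\,(y-a)$ on $[a,b]$, the induction for $f^n(b)$, and the handling of large $x$ via boundedness in the ``moreover'' part. The gap is exactly where you say care is needed and then do not supply it: ruling out $\limsup_{x\downarrow a}(f(x)-a)/(x-a)=1$. This is not a technicality that continuity will absorb---it is false under the stated hypotheses. Take $f(x)=a+(x-a)-(x-a)^2$ for $a\le x\le a+\tfrac12$ and $f(x)=a+\tfrac14$ for $x>a+\tfrac12$. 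Then $f$ is continuous and bounded, $f(a)=a$, and $f(x)\in(a,x)$ for every $x>a$, yet $g(x)=(f(x)-a)/(x-a)=1-(x-a)\to1$ as $x\downarrow a$, so $C_b=C=1$; and the iterates $u_n=f^n(x)-a$ near $a$ obey $u_{n+1}=u_n-u_n^2$, which decays like $1/n$, not geometrically. No compactness or semicontinuity argument can produce $\ell<1$ because $\ell=1$ is genuinely possible.

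So your proposal fails not because the strategy is wrong but because the step you left as a placeholder cannot be filled from the hypotheses as written: the lemma needs an extra assumption forcing the ratio to stay bounded away from $1$ near $a$, e.g.\ that $g$ extends continuously to $a$ with $g(a)<1$, or that $f$ is differentiable at $a$ with $f'(a)<1$. In the paper's only use of the lemma, in the proof of Theorem \ref{t-main2}, that extra regularity is present but unrecorded: the iterating map $L$ agrees near $\lambda^G_{\pi,\nu}$ with the rational function $\mathcal{L}$, whose apparent singularity at $\lambda^G_{\pi,\nu}$ is removable, and indeed $\mathcal{L}'(\lambda^G_{\pi,\nu})=0$, so $g$ extends to $a$ with value $0$ and your compactness argument then closes. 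Adding such a hypothesis and stating the $x^*=a$ case as a one-line continuity fact would make your proof complete; as written there is a real gap.
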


\begin{lem}\label{l-matt}
Let $(a_i,b_i,c_i)_{i=1}^\infty$ be sequences of reals with $b_i>0$ and $c_i>0$. For $n\ge 1$ and $t\in\mathbb{R}$, let
\[
    M_n(t)=\left(\begin{array}{cccccc}
    a_1-c_1t&1&0&0&\cdots&0\\
    b_1&a_2-c_2t&1&0&&\vdots\\
    0&b_2&\ddots&\ddots&\ddots&\vdots\\
    0&0&\ddots&\ddots&\ddots&0\\
    \vdots&&\ddots&\ddots&a_{n-1}-c_{n-1}t&1\\
    0&\cdots&\cdots&0&b_{n-1}&a_n-c_nt
    \end{array}\right).
\]
Then, there are $n$ distinct real roots for $\det M_n(t)=0$, say $t^{(n)}_1<\cdots<t^{(n)}_n$, and
\[
 t^{(n+1)}_j<t^{(n)}_j<t^{(n+1)}_{j+1},\quad\forall 1\le j\le n,\,n\ge 1.
\]
Furthermore, if $a_1\ge 1$ and $a_{i+1}\ge 1+b_i$, then $t^{(n)}_1>0$ for all $n\ge 1$.
\end{lem}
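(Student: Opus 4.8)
The plan is to study the polynomials $p_k(t):=\det M_k(t)$. Expanding $\det M_k(t)$ along its last row gives the three-term recurrence $p_0\equiv 1$, $p_1(t)=a_1-c_1t$, and $p_k(t)=(a_k-c_kt)p_{k-1}(t)-b_{k-1}p_{k-2}(t)$ for $k\ge 2$. Since $b_{k-1}>0$ and $c_k>0$, this is a Sturm-type (orthogonal-polynomial) recurrence: $p_k$ has degree exactly $k$ with leading coefficient $(-1)^kc_1\cdots c_k\neq 0$, so $p_k(t)\to+\infty$ as $t\to-\infty$ while $\operatorname{sign}p_k(t)\to(-1)^k$ as $t\to+\infty$. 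A preliminary remark, obtained by cascading the recurrence downward to $p_0$, is that $p_k$ and $p_{k-1}$ never share a root.

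For the first two assertions I would prove, by induction on $n$, the combined statement that $p_n$ has $n$ distinct real roots $t^{(n)}_1<\cdots<t^{(n)}_n$ and that the $n-1$ roots of $p_{n-1}$ lie strictly between consecutive roots of $p_n$. The cases $n=1,2$ are checked by hand: $p_2$ has positive leading coefficient and discriminant $(a_1c_2-a_2c_1)^2+4b_1c_1c_2>0$, and $p_2$ evaluated at the root $a_1/c_1$ of $p_1$ equals $-b_1<0$, which places that root strictly between the two roots of $p_2$. For the inductive step, write $s_1<\cdots<s_{n-1}$ for the roots of $p_{n-1}$ and evaluate $p_n$ there: $p_n(s_j)=-b_{n-1}p_{n-2}(s_j)\neq 0$, and because the roots of $p_{n-2}$ interlace those of $p_{n-1}$ (induction hypothesis), $p_{n-2}$ changes sign exactly once on each interval $(s_j,s_{j+1})$ and retains the sign of $p_{n-2}(\pm\infty)$ on $(-\infty,s_1)$ and $(s_{n-1},\infty)$. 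Matching these sign patterns against $p_n(-\infty)=+\infty$ and $\operatorname{sign}p_n(+\infty)=(-1)^n$, the intermediate value theorem yields a root of $p_n$ in $(-\infty,s_1)$, one in each $(s_j,s_{j+1})$, and one in $(s_{n-1},\infty)$; this is a total of $n$ distinct roots, hence all of them since $\deg p_n=n$, and they interlace the $s_j$ in the required way, which after relabeling $n$ as $n+1$ is precisely the displayed inequality. An alternative for this step is to observe that conjugating $M_n(0)$ by the diagonal matrix with successive ratios $\sqrt{b_i}$ turns $M_n(t)=M_n(0)-t\,\mathrm{diag}(c_i)$ into a symmetric--positive-definite pencil with irreducible tridiagonal part, so classical Jacobi-matrix theory gives simplicity of the generalized eigenvalues together with Cauchy interlacing against the $(n-1)$-dimensional compression.

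For the last assertion I would show directly that $p_k(t)>0$ for all $t\le 0$ and all $k$; since $t^{(n)}_1$ is the least root of $p_n$ and $p_n$ is positive to its left, this forces $t^{(n)}_1>0$. For $t\le 0$ one has $a_k-c_kt\ge a_k$, so one inducts on $k$ to get $p_k(t)\ge p_{k-1}(t)>0$: the base case is $p_1(t)=a_1-c_1t\ge a_1\ge 1=p_0$, and in the step $p_k(t)\ge(1+b_{k-1})p_{k-1}(t)-b_{k-1}p_{k-2}(t)=p_{k-1}(t)+b_{k-1}\bigl(p_{k-1}(t)-p_{k-2}(t)\bigr)\ge p_{k-1}(t)$, using $a_k\ge 1+b_{k-1}$, $b_{k-1}>0$ and the induction hypothesis $p_{k-1}(t)\ge p_{k-2}(t)>0$.

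The only delicate point is the sign bookkeeping in the interlacing induction for the first two claims; the recurrence makes everything else routine, and the positivity claim is an immediate monotone induction on the leading principal minors.
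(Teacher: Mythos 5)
Your proof is correct. For the first two claims (simplicity of roots and the Cauchy interlacing), you use exactly the paper's Sturm-sequence method: establish the three-term recurrence $p_k=(a_k-c_kt)p_{k-1}-b_{k-1}p_{k-2}$, observe $p_k(-\infty)=+\infty$ and $\operatorname{sign}p_k(+\infty)=(-1)^k$, and run the sign-alternation/IVT induction on the interlacing hypothesis. The paper indexes the sign pattern via $t^{(n+1)}_{2k+1}$, $t^{(n+1)}_{2k+2}$ and states the joint induction over the pair $(M_n,M_{n+1})\Rightarrow M_{n+2}$, but the logical content is identical to your ``evaluate $p_n$ at the roots $s_j$ of $p_{n-1}$ using the interlacing of $p_{n-2}$.''

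For the final positivity claim, however, your route is genuinely different from, and cleaner than, the paper's. The paper invokes Lemma~\ref{l-mat} (a Cholesky-type argument showing a symmetrized version of $M_n(0)$ has positive leading principal minors, hence $\det M_n(0)>0$) and then combines this with the already-established interlacing and the sign of $\det M_{n+1}$ on $(t^{(n+1)}_1,t^{(n+1)}_2)$ to push $t^{(n+1)}_1$ off zero and to the right. You instead prove the stronger pointwise statement $p_k(t)\ge p_{k-1}(t)>0$ for all $t\le 0$ by a one-line monotone induction using $a_k-c_kt\ge a_k\ge 1+b_{k-1}$, from which $t^{(n)}_1>0$ falls out immediately without any reference to Lemma~\ref{l-mat} or to the interlacing structure. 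This self-contained argument is simpler and gives slightly more (positivity of all the $p_k$ on $(-\infty,0]$ rather than just $\det M_n(0)>0$); the trade-off is that the paper's detour through Lemma~\ref{l-mat} doubles as a proof of that lemma's positive-definiteness assertion, which is used independently elsewhere.
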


To prove Lemma \ref{l-matt}, we need the following statement.
\begin{lem}\label{l-mat}
Fix $n>0$ and, for $i\le 1\le n$, let $a_i,b_i,d_i$ be reals with $b_i>0$ and $d_i\ne 0$. Consider the following matrix
\begin{equation}\label{eq-mat}
    M=\left(\begin{array}{cccccc}
    a_1&d_1&0&0&\cdots&0\\
    d_1^{-1}b_1&a_2&d_2&0&&\vdots\\
    0&d_2^{-1}b_2&a_3&\ddots&\ddots&\vdots\\
    0&0&\ddots&\ddots&\ddots&0\\
    \vdots&&\ddots&\ddots&a_{n-1}&d_{n-1}\\
    0&\cdots&\cdots&0&d_{n-1}^{-1}b_{n-1}&a_n
    \end{array}\right).
\end{equation}
Then, the eigenvalues of $M$ are distinct reals and independent of $d_1,...,d_{n-1}$. Furthermore, if $a_1\ge 1$ and $a_{i+1}\ge 1+b_i$, then all eigenvalues of $M$ are positive.
\end{lem}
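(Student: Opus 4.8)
The plan is to reduce $M$ by a diagonal similarity to a real symmetric Jacobi matrix with strictly positive off-diagonal entries, read off the reality and the $d$-independence of the spectrum from this, deduce simplicity from the rigidity of Jacobi eigenvectors, and finally obtain positivity from the three-term recurrence satisfied by the leading principal minors.

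First I would conjugate. Set $\delta_1=1$ and $\delta_{i+1}=\delta_i\sqrt{b_i}/|d_i|$ (well defined since $b_i>0$ and $d_i\ne 0$), and let $D=\mathrm{diag}(\delta_1,\dots,\delta_n)$. A direct computation shows that $D^{-1}MD$ is symmetric tridiagonal with diagonal $a_1,\dots,a_n$ and off-diagonal entries $\mathrm{sgn}(d_i)\sqrt{b_i}$. A further conjugation by a diagonal sign matrix $E=\mathrm{diag}(\epsilon_1,\dots,\epsilon_n)$ with $\epsilon_1=1$, $\epsilon_{i+1}=\epsilon_i\,\mathrm{sgn}(d_i)$ turns every off-diagonal entry into $\sqrt{b_i}>0$. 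Call the resulting matrix $J$: it is a real symmetric tridiagonal matrix with strictly positive off-diagonal entries which manifestly does not involve $d_1,\dots,d_{n-1}$. Since $M$ is similar to $J$, its eigenvalues are real and independent of the $d_i$, which is the first assertion. For simplicity, note that any solution of $Jv=\lambda v$ has $v_2$ a fixed scalar multiple of $v_1$ (row $1$), and then $v_{i+1}$ determined by $v_{i-1},v_i$ for $2\le i\le n-1$; hence the eigenspace is one-dimensional, and it is nonzero only if $v_1\ne 0$. As $J$ is symmetric, algebraic and geometric multiplicities agree, so $J$ (hence $M$) has $n$ distinct eigenvalues. (Alternatively one may invoke the Sturm-sequence property of the leading principal characteristic polynomials.)

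For positivity under $a_1\ge 1$ and $a_{i+1}\ge 1+b_i$, I would apply Sylvester's criterion to $J$. Let $J_k$ be its top-left $k\times k$ block and $D_k=\det J_k$; since the off-diagonal entries of $J$ square to $b_i$, we have $D_0=1$, $D_1=a_1$, and $D_k=a_kD_{k-1}-b_{k-1}D_{k-2}$. I claim $D_k\ge D_{k-1}>0$ for all $k\ge 1$. This holds for $k=1$ because $a_1\ge 1$, and for $k\ge 2$,
\[
D_k=a_kD_{k-1}-b_{k-1}D_{k-2}\ge(1+b_{k-1})D_{k-1}-b_{k-1}D_{k-2}=D_{k-1}+b_{k-1}(D_{k-1}-D_{k-2})\ge D_{k-1}>0,
\]
using $a_k\ge 1+b_{k-1}$, $b_{k-1}>0$, and the inductive hypothesis. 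Thus all leading principal minors of $J$ are positive, so $J$, and hence $M$, is positive definite with all eigenvalues positive.

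The computations are routine; the two points that need care are the sign bookkeeping in the pair of diagonal conjugations — it is exactly the positivity of the off-diagonal entries of $J$ that makes the eigenvector-rigidity argument clean — and, for the last part, choosing the inductive statement $D_k\ge D_{k-1}$ rather than merely $D_k>0$, which is what lets the recurrence telescope against the hypothesis $a_{i+1}\ge 1+b_i$ in spite of the index shift.
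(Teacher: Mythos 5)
Your proof is correct and follows essentially the same route as the paper: a diagonal similarity to a real symmetric Jacobi matrix (giving reality and $d$-independence), tridiagonal eigenvector rigidity for simplicity, and Sylvester's criterion via the three-term recurrence for the leading principal minors for positive definiteness. One minor remark: the intermediate sign conjugation by $E$ is unnecessary, since a single diagonal conjugation with entries $(b_1\cdots b_{i-1})^{-1/2}(d_1\cdots d_{i-1})$, as in the paper, already yields off-diagonal entries $\sqrt{b_i}>0$ because the $d_i$ factors cancel between consecutive diagonal entries.
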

\begin{proof}[Proof of Lemma \ref{l-mat}]
Let $X,Y$ be diagonal matrices with $X_{11}=Y_{11}=1$, $X_{ii}=d_1d_1\cdots d_{i-1}$ and $Y_{ii}=(b_1b_2\cdots b_{i-1})^{-1/2}(d_1d_2\cdots d_{i-1})$ for $i>1$. One can show that
\[
 XMX^{-1}=\left(\begin{array}{cccccc}
    a_1&1&0&0&\cdots&0\\
    b_1&a_2&1&0&&\vdots\\
    0&b_2&a_3&\ddots&\ddots&\vdots\\
    0&0&\ddots&\ddots&\ddots&0\\
    \vdots&&\ddots&\ddots&a_{n-1}&1\\
    0&\cdots&\cdots&0&b_{n-1}&a_n
    \end{array}\right).
\]
Since $XMX^{-1}$ is independent of the choice of $d_1,...,d_{n-1}$, the eigenvalues of $M$ are independent of $d_1,...,d_{n-1}$. Note that $YMY^{-1}$ is Hermitian. This implies that the eigenvalues of $M$ are all real. As $M$ is tridiagonal with non-zero entries in the superdiagonal, the rank of $M-\lambda I$ is either $n-1$ or $n$. This implies that the eigenvalues of $M$ are all distinct.

Next, assume that $a_1\ge 1$ and $a_{i+1}\ge 1+b_i$. Let $(YMY^{-1})_i$ be the leading $i\times i$ principal matrices of $YMY^{-1}$. By induction, one can prove that $\det(YMY^{-1})_i=\prod_{j=1}^i\ell_j$, where $\ell_1=a_1$ and $\ell_{j+1}=a_{j+1}-b_j/\ell_j$ for $1\le j<n$. By the assumption at the beginning of this paragraph, $\ell_j\ge 1$ for all $1\le j<n$ and $\det(YMY^{-1})_i>0$ for all $1\le i\le n$. As the leading principal matrices have positive determinants, $(YMY^{-1})$ is positive definite. This proves that all eigenvalues of $M$ are positive.
\end{proof}

\begin{proof}[Proof of Lemma \ref{l-matt}]
We prove this lemma by induction. For $n=1$, it is clear that $t^{(1)}_1=a_1/c_1$ is the root for $\det M_1(t)$. For $n=2$, note that $\det M_2(t)$ is a quadratic function that tends to infinity as $|t|\ra\infty$. Since $\det M_2(t^{(1)}_1)=-b_1<0$, the polynomial, $\det M_2(t)$, has two real roots, say $t^{(2)}_1<t^{(2)}_2$, satisfying $t^{(2)}_1<t^{(1)}_1<t^{(2)}_2$. Now, we assume that, for some $n\ge 1$, $\det M_n(t)$ and $\det M_{n+1}(t)$ have reals roots $(t^{(n)}_i)_{i=1}^n$ and $(t^{(n+1)}_i)_{i=1}^{n+1}$ satisfying $t^{(n+1)}_i<t^{(n)}_i<t^{(n+1)}_{i+1}$ for $1\le i\le n$. Clearly, $\det M_n(t) \ra \infty$ as $t\ra-\infty$. This implies
\[
 \det M_n(t^{(n+1)}_{2k+2})<0<\det M_n(t^{(n+1)}_{2k+1}),\quad\forall k\ge 0.
\]
Observe that $\det M_{n+2}(t)=(a_{n+2}-c_{n+2}t)\det M_{n+1}(t)-b_{n+1}\det M_n(t)$. Replacing $t$ with $t^{(n+1)}_i$ yields
\[
 \det M_{n+2}(t^{(n+1)}_{2k+2})>0>\det M_{n+2}(t^{(n+1)}_{2k+1}),\quad\forall k\ge 0.
\]
This proves that $\det M_{n+2}(t)$ has $(n+2)$ distinct real roots with the desired interlacing property.

For the second part, assume that $a_1\ge 1$ and $a_{i+1}\ge 1+b_i$ for all $i\ge 1$. For $n=1$, it is obvious that $t^{(1)}_1>0$. Suppose $t^{(n)}_1>0$. According to the first part, we have $t^{(n+1)}_2>t^{(n)}_1>0$. By Lemma \ref{l-mat}, $\det M_{n+1}(0)>0$, which implies $t^{(n+1)}_1\ne 0$. As it is known that $\det M_{n+1}(t)<0$ for $t\in(t^{(n+1)}_1,t^{(n+1)}_2)$, it must be the case $t^{(n+1)}_1>0$. Otherwise, there will be another root for $\det M_{n+1}(t)$ between $t^{(n+1)}_1$ and $0$, which is a contradiction.
\end{proof}


\begin{proof}[Proof of Theorem \ref{t-bottleneck2}]
For convenience, we set $\lambda_n^m=1-\cos\frac{m\pi}{n+1}$ for $1\le m\le n$ and let $A_i(\lambda)$ be the $i$-by-$i$ tridiagonal matrix with entries $(A_i(\lambda))_{kl}=1$ for $|k-l|=1$ and $(A_i(\lambda))_{kk}=2-\lambda$. For $1\le j\le i$, let $B_i^j(\lambda)$ be the matrix equal to $A_i$ except the $(j,j)$-entry, which is defined by $(B_i^j(\lambda,\epsilon))_{jj}=2-\lambda/\epsilon$. By Remark \ref{r-shape}, $\lambda^{G_n}_{\pi_n,\nu_n^{x_n}}$ is the smallest root of $\det B_n^{x_n}(\lambda,\epsilon_n)=0$ and $(\lambda_{n,m})_{m=1}^n$ are roots of $\det A_n(\lambda)=0$. Note that, for $1\le j\le n$,
\[
 \frac{\det B_n^j(\lambda,\epsilon)}{\det A_{j-1}(\lambda)\det A_{n-j}(\lambda)}=\Delta_n^j(\lambda,\epsilon)=2-\lambda/\epsilon-R_{j-1}(\lambda)-R_{n-j}(\lambda),
\]
where $\det A_0(\lambda):=1$, $\det A_{-1}(\lambda):=0$ and
\[
 R_j(\lambda)=\frac{\det A_{j-1}(\lambda)}{\det A_j(\lambda)}=\frac{\prod_{i=1}^{j-1}(2\lambda_{j-1}^i-\lambda)}{\prod_{i=1}^j(2\lambda_j^i-\lambda)}.
\]
To prove this theorem, one has to determine the sign of $\Delta_n^j(\lambda,\epsilon)$.

Let $\ell_n=\delta_n/n^2$ with $\delta_n\ra 0$. As $n\ra\infty$,
\[
 \log\frac{2\lambda_n^i-\ell_n}{2\lambda_n^i}=-\frac{\delta_n}{2\lambda_n^i n^2}(1+o(1)),
\]
where $o(1)$ is uniform for $1\le i\le n$. Note that $\prod_{i=1}^j(2\lambda_j^i)=\det A_j(0)=j+1$. This implies
\begin{align}
 \log R_n(\ell_n)&=\log\frac{n}{n+1}+\left(\sum_{i=1}^n\frac{1}{\lambda_n^i n^2}
-\sum_{i=1}^{n-1}\frac{1}{\lambda_{n-1}^i(n-1)^2}\right)\frac{\delta_n(1+o(1))}{2}\notag\\
&=\log\frac{n}{n+1}+O\left(\frac{\delta_n}{n}\right).\notag
\end{align}
By a similar reasoning, one can prove that $\log R_j(\ell_n)=\log\frac{j}{j+1}+O(\delta_n/n)$ for bounded $j$.
This shows that, for $j_n\in\{1,...,n\}$ and $\ell_n=o(j_n^{-2})$,
\begin{equation}\label{eq-Rjl}
 R_{j_n}(\ell_n)=1-\frac{1}{j_n+1}+O(j_n\ell_n),\quad\text{as }n\ra\infty.
\end{equation}
Next, we compute $R_{j_n}(2C_n\lambda_{j_n}^1)$ with $C_n\ra C\in(0,1)$ and $j_n\ra\infty$. Note that, for $n$ large enough,
\begin{equation}\label{eq-Rj}
\begin{aligned}
 \log R_{j_n}(2C_n\lambda_{j_n}^1)
=&\sum_{i=1}^{j_n-1}\frac{\lambda_{j_n-1}^i-\lambda_{j_n}^i}{\lambda_{j_n}^i}-\frac{1}{2}
\sum_{i=1}^{j_n-1}\left(\frac{\lambda_{j_n-1}^i-\lambda_{j_n}^i}{\lambda_{j_n}^i}\right)^2\\
&\qquad+C_n\sum_{i=1}^{j_n-1}\frac{\lambda_{j_n}^1(\lambda_{j_n-1}^i-\lambda_{j_n}^i)}
{(\lambda_{j_n}^i-C_n\lambda_{j_n}^1)\lambda_{j_n}^i}-\log 4+O(j_n^{-2}).
\end{aligned}
\end{equation}
Calculus shows that
\begin{align}
 \sum_{i=1}^{j_n-1}\left(\frac{\lambda_{j_n-1}^i-\lambda_{j_n}^i}{\lambda_{j_n}^i}\right)^2&=\frac{1}{\pi j_n}\int_0^\pi\frac{\theta^2\sin^2\theta}{(1-\cos\theta)^2}d\theta+O(j_n^{-2})\notag\\&=\frac{8\log 2-\pi^2/3}{j_n}+O(j_n^{-2})\notag
\end{align}
and
\[
 \sum_{i=1}^{j_n-1}\frac{\lambda_{j_n}^1(\lambda_{j_n-1}^i-\lambda_{j_n}^i)}{(\lambda_{j_n}^i
-C\lambda_{j_n}^1)\lambda_{j_n}^i}=\frac{2}{j_n}\sum_{i=1}^\infty\frac{1}{i^2-C}+O(j_n^{-2}).
\]
Observe that, as $n\ra\infty$,
\[
 \log\frac{j_n}{j_n+1}=\log R_{j_n}(0)=\sum_{i=1}^{j_n-1}\frac{\lambda_{j_n-1}^i-\lambda_{j_n}^i}{\lambda_{j_n}^1}-\log 4+O(j_n^{-2}).
\]
Putting this back into (\ref{eq-Rj}) implies
\begin{equation}\label{eq-Rj2}
 R_{j_n}(2C_n\lambda_{j_n}^1)=1+\left(-1-4\log 2+\frac{\pi^2}{6}+C_n\sum_{i=1}^\infty\frac{1}{i^2-C_n}\right)\frac{1}{j_n}+O(j_n^{-2}).
\end{equation}
We consider the following two cases.

\noindent{\bf Case 1: $x_n=O(\epsilon_n n^2)$.} In this case, Theorem \ref{t-bottleneck} implies that $\lambda^{G_n}_{\pi_n,\nu_n^{x_n}}\asymp n^{-2}$. We assume further that $x_n/(\epsilon_nn^2)\ra a$ and $x_n/n\ra b$ with $a\in[0,\infty)$ and $b\in[0,1/2]$. Let $C_n\ra C\in(0,1)$. Replacing $j_n$ with $x_n-1$ in (\ref{eq-Rjl}) and with $n-x_n$ in (\ref{eq-Rj2}) yields that, for $b=0$,
\[
 \Delta_n^{x_n}(2C_n\lambda_{n-x_n}^1,\epsilon_n)=\frac{(1-\pi^2aC)(1+o(1))}{x_n}
\]
and, for $b\in(0,1/2]$,
\[
 \Delta_n^{x_n}(2C_n\lambda_{n-x_n}^1,\epsilon_n)=\left(1+4\log 2-\frac{\pi^2}{6}-\frac{\pi^2aC}{1-b}-bC\kappa_b(C)\right)\frac{(1+o(1))}{b(1-b)n},
\]
where $\kappa_t(c)=\sum_{i=1}^\infty\frac{(1-t)i^2-tc}{(i^2-c)[(1-t)^2i^2-t^2c]}$. This proves (1) and (2).

\noindent{\bf Case 2: $\epsilon n^2=o(x_n)$.} This is exactly (3) and the result is immediate from Theorem \ref{t-bottleneck}.
\end{proof}


\begin{thebibliography}{10}

\bibitem{BS87}
M.~Brown and Y.-S. Shao.
\newblock Identifying coefficients in the spectral representation for first
  passage time distributions.
\newblock {\em Probab. Engrg. Inform. Sci.}, 1:69--74, 1987.

\bibitem{CSal08}
Guan-Yu Chen and Laurent Saloff-Coste.
\newblock The cutoff phenomenon for ergodic markov processes.
\newblock {\em Electron. J. Probab.}, 13:26--78, 2008.

\bibitem{CSal10}
Guan-Yu Chen and Laurent Saloff-Coste.
\newblock The {$L^2$}-cutoff for reversible {M}arkov processes.
\newblock {\em J. Funct. Anal.}, 258(7):2246--2315, 2010.

\bibitem{CSal12-3}
Guan-Yu Chen and Laurent Saloff-Coste.
\newblock On the mixing time and spectral gap for birth and death chains.
\newblock In preparation, 2012.

\bibitem{DS98}
P.~Diaconis and L.~Saloff-Coste.
\newblock What do we know about the {M}etropolis algorithm?
\newblock {\em J. Comput. System Sci.}, 57(1):20--36, 1998.
\newblock 27th Annual ACM Symposium on the Theory of Computing (STOC'95) (Las
  Vegas, NV).

\bibitem{DF90}
Persi Diaconis and James~Allen Fill.
\newblock Strong stationary times via a new form of duality.
\newblock {\em Ann. Probab.}, 18(4):1483--1522, 1990.

\bibitem{DS93-1}
Persi Diaconis and Laurent Saloff-Coste.
\newblock Comparison techniques for random walk on finite groups.
\newblock {\em Ann. Probab.}, 21(4):2131--2156, 1993.

\bibitem{DS93-2}
Persi Diaconis and Laurent Saloff-Coste.
\newblock Comparison theorems for reversible {M}arkov chains.
\newblock {\em Ann. Appl. Probab.}, 3(3):696--730, 1993.

\bibitem{DS06}
Persi Diaconis and Laurent Saloff-Coste.
\newblock Separation cut-offs for birth and death chains.
\newblock {\em Ann. Appl. Probab.}, 16(4):2098--2122, 2006.

\bibitem{DLP10}
Jian Ding, Eyal Lubetzky, and Yuval Peres.
\newblock Total variation cutoff in birth-and-death chains.
\newblock {\em Probab. Theory Related Fields}, 146(1-2):61--85, 2010.

\bibitem{F68}
William Feller.
\newblock {\em An introduction to probability theory and its applications.
  {V}ol. {I}}.
\newblock Third edition. John Wiley \& Sons Inc., New York, 1968.

\bibitem{F92}
James~Allen Fill.
\newblock Strong stationary duality for continuous-time {M}arkov chains. {I}.
  {T}heory.
\newblock {\em J. Theoret. Probab.}, 5(1):45--70, 1992.

\bibitem{GK37}
F.~R. Gantmacher and M.~G. Krein.
\newblock Sur les matrices compl\'{e}tement non n\'{e}gatives et oscillatoires.
\newblock {\em Compositio Math.}, 4:445--470, 1937.

\bibitem{M08}
Laurent Miclo.
\newblock On eigenfunctions of {M}arkov processes on trees.
\newblock {\em Probab. Theory Related Fields}, 142(3-4):561--594, 2008.

\bibitem{M09}
Laurent Miclo.
\newblock Monotonicity of the extremal functions for one-dimensional
  inequalities of logarithmic {S}obolev type.
\newblock In {\em S\'eminaire de {P}robabilit\'es {XLII}}, volume 1979 of {\em
  Lecture Notes in Math.}, pages 103--130. Springer, Berlin, 2009.

\bibitem{SC99}
L.~Saloff-Coste.
\newblock Simple examples of the use of {N}ash inequalities for finite {M}arkov
  chains.
\newblock In {\em Stochastic geometry ({T}oulouse, 1996)}, volume~80 of {\em
  Monogr. Statist. Appl. Probab.}, pages 365--400. Chapman \& Hall/CRC, Boca
  Raton, FL, 1999.

\bibitem{S96}
Jeffrey~Scott Silver.
\newblock {\em Weighted {P}oincare and exhaustive approximation techniques for
  scaled {M}etropolis-{H}astings algorithms and spectral total variation
  convergence bounds in infinite commutable {M}arkov chain theory}.
\newblock ProQuest LLC, Ann Arbor, MI, 1996.
\newblock Thesis (Ph.D.)--Harvard University.

\bibitem{T96}
Gerald Teschl.
\newblock Oscillation theory and renormalized oscillation theory for {J}acobi
  operators.
\newblock {\em J. Differential Equations}, 129(2):532--558, 1996.

\bibitem{T00}
Gerald Teschl.
\newblock {\em Jacobi operators and completely integrable nonlinear lattices},
  volume~72 of {\em Mathematical Surveys and Monographs}.
\newblock American Mathematical Society, Providence, RI, 2000.

\end{thebibliography}

\end{document}